\newenvironment{sqcases}{%
	\matrix@check\sqcases\env@sqcases
}{%
	\endarray\right.%
}
\def\env@sqcases{%
	\let\@ifnextchar\new@ifnextchar
	\left[
	\def\arraystretch{1.2}%
	\array{@{}l@{\quad}l@{}}%
}
\newcommand{\raisemath}[1]{\mathpalette{\raisem@th{#1}}}
\newcommand{\raisem@th}[3]{\raisebox{#1}{$#2#3$}}
\newcommand{\subalign}[1]{%
	\vcenter{%
		\Let@ \restore@math@cr \default@tag
		\baselineskip\fontdimen10 \scriptfont\tw@
		\advance\baselineskip\fontdimen12 \scriptfont\tw@
		\lineskip\thr@@\fontdimen8 \scriptfont\thr@@
		\lineskiplimit\lineskip
		\ialign{\hfil$\m@th\scriptstyle##$&$\m@th\scriptstyle{}##$\hfil\crcr
			#1\crcr
		}%
	}%
}
\def\f{\longrightarrow}
\def\N{\mathbb{N}}
\def\l{\lambda}
\def\J{\mathcal{J}}
\def\x{\bar{x}}
\def\y{\bar{y}}
\def\u{\bar{u}}
\def\a{\alpha}
\def\b{\beta}
\def\<{\langle}
\def\>{\rangle}
\def\r{\mathcal{R}}
\def\I{\mathcal{I}}
\def\R{\mathbb{R}}
\def\inte{\textnormal{int}\,}
\def\clo{\textnormal{cl}\,}
\def\epi{\textnormal{epi}\,}
\def\bdry{\textnormal{bdry}\,}
\def\dom{\textnormal{dom}\,}
\def\conv{\textnormal{conv}\,}
\def\Gr{\textnormal{Gr}\,}
\def\gk{\gamma_k}
\def\CO{\mathcal{C}}
\def\c{\mathsf{c}}
\def\-{\textnormal{-}}
\newcommand*{\tran}{^{\mkern-1.5mu\mathsf{T}}}
\def\bp{\hspace{-0.08cm}}
\def\bbp{\hspace{-0.04cm}}
\def\0gk{\scaleto{0,\gk}{3.5pt}}
\def\supp{\textnormal{supp}\,}
\theoremstyle{thmstyleone}%
\newtheorem{theorem}{Theorem}[section]
\newtheorem{proposition}[theorem]{Proposition}%
\newtheorem{lemma}[theorem]{Lemma}
\newtheorem{corollary}[theorem]{Corollary}
\theoremstyle{thmstyletwo}%
\newtheorem{example}[theorem]{Example}%
\newtheorem{remark}[theorem]{Remark}%
\theoremstyle{thmstylethree}%
\titleformat{\section}{\large}{\thesection. }{0em}{\textbf}
\titleformat{\subsection}{}{\thesubsection. }{0em}{\textbf}
	\title{Optimal control for coupled sweeping processes under minimal assumptions}
	\author[1]{Samara Chamoun}
	\author[2]{Vera Zeidan\footnote{Research of the second author was partly supported by SIMONS Fondation under grant MP-TSM-00002506}}
	\affil[1]{Department of Mathematics, Michigan State University, East Lansing, MI 48824-1027, USA\\ chamouns@msu.edu}
	\affil[2]{Department of Mathematics, Michigan State University, East Lansing, MI 48824-1027, USA\\ zeidan@msu.edu}
	\date{}                     
\begin{document}

		\maketitle


	\begin{abstract}
  	In this paper,   the study of nonsmooth optimal control problems $(P)$ involving a controlled sweeping process with {\it three} main characteristics is launched. First, the sweeping sets are {\it nonsmooth},   {\it time-dependent},  and uniformly prox-regular. Second, the sweeping process is {\it coupled} with a controlled differential equation. Third,  a {\it joint}-state endpoints constraint set $S$ is present. This general model incorporates different important controlled submodels, such as a class of second order sweeping processes, and coupled evolution variational inequalities. A full form of the  {\it nonsmooth} Pontryagin maximum principle  for {\it strong} local minimizers in $(P)$ is derived for {\it bounded or unbounded moving} sweeping sets  satisfying {\it local} constraint qualifications (CQ) {\it without}  any additional restriction. The existence and uniqueness of a Lipschitz solution for the Cauchy problem of our dynamic is established and the existence of an optimal solution for $(P)$ is obtained.  Two of the novelties in achieving the first goal   are (i) the construction of a  problem  over  {\it truncated}  sweeping sets and {\it truncated} joint endpoints constraint  set that has the same strong local minimizer as $(P)$ and its (CQ) automatically holds, and (ii)  the {\it  complete redesign} of the exponential-penalty approximation technique for problems with  moving sweeping sets that {\it do not require} any  special assumption on the sets, their corners, or on the gradients of their generators.  The  utility of the optimality conditions is illustrated with an  example.
	\end{abstract}	    				
		
\maketitle

\section{Introduction and Preliminaries}\label{intro} 
\setcounter{footnote}{0}
\subsection{Introduction}
\textbf{Sweeping process.} J.J. Moreau introduced the \textit{sweeping process} as being  a differential inclusion in which the set-valued map is the normal cone to a nicely moving closed set $C(t)$, called the \textit{sweeping set} (see \cite{moreau1,moreau2,moreau3}). Its simplest form is given by  \vspace{-0.2cm}\begin{equation*} \vspace{-0.2cm}
	\dot{x}(t) \in -N_{C(t)}(x(t)), \; \text{ a.e. } t \in [0,T].
\end{equation*}  When  $C(t)\subset \mathbb{R}^n$ is a non-empty convex closed set,  $N_{C(t)}$ is taken to be the  normal cone of convex analysis. When $C(t)$  is {\it non-convex},  as is the case of this paper, $N_{C(t)}$ denotes the Clarke normal cone and the set $C(t)$ is commonly taken to be uniformly prox-regular.  When a perturbation or external force $f$ exists, we call the dynamic a perturbed sweeping process, and when $f$ depends on a control $u$, we call it a perturbed controlled sweeping process, that is, \vspace{-0.1cm} \begin{equation}\label{sweeping2}
\dot{x}(t) \in f(t,x(t),u(t)) -N_{C(t)}(x(t)) \text{ a.e. } t \in [0,T], \; x(0)=x_0 \in C(0).
\end{equation}   
 Sweeping processes appear in many applications including elastoplasticity,  hysteresis, ferromagnetism, electric circuits, phase transitions, traffic equilibrium, etc., see, for instance, \cite{outrata,bergqvist1997,brokate1996, krejci1998,visintin1994}. During the last  decade, the study of sweeping process has intensified  due to their natural presence in newly developed applications such as the mobile robot model \cite{colombo2019}, the pedestrian traffic flows model \cite{colombo2019}, and the crowd motion model for emergency evacuation (see, e.g.,  \cite{cao2021} and \cite{cmo}). In these models, the main concern is to  control the state of events efficiently, that is, to optimize a certain objective function over controlled sweeping process. \\
 Due to the unboundedness  and discontinuity of the normal cone, standard results involving differential inclusions cannot be used for sweeping processes.  Extensive literature exists on the question of existence and {\it uniqueness} of an {\it absolutely continuous} or {\it Lipschitz}  solution for the Cauchy problem associated with different forms of the perturbed controlled sweeping process \eqref{sweeping2} 
	in which the constraint $x(t)\in C(t)$ is implicit. Initially, such  results commonly required the   
	{\it absolute or Lipschitz  continuity} of the set-valued map $C(\cdot)$  (see,  e.g., \cite{edmond}). However, motivated by the need to consider set-valued map $C(\cdot)$ for which these conditions are too strong (see \cite{tolstonogov17}), similar results are derived by merely assuming the same conditions on the  $\rho$-{\it truncated} set-valued map $C(\cdot)\cap\rho\bar{B}$ (see e.g., \cite{nacrythibault19,thibault16, tolstonogov17}). 
In \cite{henrion23}, when $C(t)$ is polyhedral, a constraint qualification is shown to be {\it sufficient} for those conditions to be satisfied on the $\rho$-{\it truncated} polyhedral sets.  \\
Numerous efforts have been made to derive existence theory for  {\it optimal} solutions and/or  {\it optimality} conditions in terms of {\it Euler-Lagrange} equation or {\it Pontryagin-type} maximum principle for  optimal control problems driven by variants of \eqref{sweeping2}. The main approach used to solve different versions of such an optimal control problem  is the method of approximation, either  \textit{discrete} (see, e.g., \cite{ccmn,ccmn2,cmo,cmo2,cg,chhm2,chhm,cmn0}), or   \textit{continuous}  (see,  \cite{brokate,depinho1,depinho2,depinho3,nourzeidan,nourzeidan2,nourzeidan3, verachadihassan}). Our focus in this paper is on the latter, and more specifically, on the {\it exponential penalty-type}.\\

\noindent\textbf{Exponential penalty approximation.} The  exponential penalty approximation technique was first used in  \cite{depinho1,pinhoEr} to derive existence of solution of \eqref{sweeping2}, existence of optimal solution  and Pontryagin-type maximum principle for {\it global} minimizers of a Mayer problem over \eqref{sweeping2}, in which  $f$ is {\it smooth},  $C$ is a {\it constant} {\it compact} set defined as  the zero-sublevel set of a $\mathcal{C}^2$-{\it convex} function $\psi$ satisfying a {\it constraint qualification on $\R^n$}, the  initial state-constraint is a set $C_0\subset C$,  and the final state is {\it free}. 
The {\it importance} of this technique resides in  approximating  $N_C(\cdot)$ by the exponential penalty term $\gamma_k e^{\gamma_k\psi(\cdot)}\nabla\psi(\cdot)$ such that the so-obtained approximating dynamic is a {\it standard} control system {\it without} state constraints, but for which the set $C$ is {\it invariant}:
\begin{equation}\label{approxC}
		\dot{x}(t) = f(t,x(t),u(t)) -\gamma_k e^{\gamma_k\psi(x(t))}\nabla\psi(x(t)) \text{ a.e.}, \; \;x(0)=x_0 \in C(0).
	\end{equation} 
The  {\it absence} in \eqref{approxC} of the explicit state constraint, $x(t)\in C$, that is implicitly present in  \eqref{sweeping2}, has also shown to be instrumental in constructing {\it numerical algorithms} for controlled sweeping processes (see \cite{depinhonum,verachadinum,nourzeidan4}). \\
The domain of applicability of the exponential penalization technique for the results  in \cite{depinho1,pinhoEr}  was later enlarged  in \cite{verachadihassan, nourzeidan, nourzeidan3},  to  include {\it strong} local minimizers for  controlled sweeping processes having:  {\it nonsmooth} perturbation $f,$  a {\it final} state constraint set $C_T\subset \mathbb{R}^n$, the cost depends on {\it both} state-endpoints,  a  \textit{constant} sweeping set $C$ that is {\it nonsmooth} (i.e., $C$ is the intersection of a finite number of zero-sublevel sets of $\mathcal{C}^{1,1}$-{\it generators} $\psi_1(x),\cdots ,\psi_r(x)$  near $C$), and the functions $\psi_i$'s satisfy a {\it constraint qualification on} the set $C$.  Furthermore, therein, the normal cone, $N_C$,  in \eqref{sweeping2} is replaced  by a subdifferential, $\partial\varphi$,  of a   function $\varphi$ with domain $C$, and it is shown that such a system  is equivalent  to \eqref{sweeping2}  with  a different $f$.  However, when  $C_T \subsetneq \R^n$, the {\it convexity} of the sets $f(t,x,U(t))$ is  required in \cite{nourzeidan, nourzeidan3}, and   when $C$ is unbounded,  a  {\it restrictive} assumption, (A2.4),  is imposed in \cite{nourzeidan3} on the set $C$ and is shown to hold for  convex, compact boundary, or polyhedral sets, but {\it not}  for {\it general}  prox-regular sets.  In  \cite{depinho2} and later in \cite{depinho3} (independently from \cite{nourzeidan3}), the authors extended, under the {\it compactness} of Gr $C(\cdot)$, their previous {\it smooth} Pontryagin principle for {\it global} minimizers in \cite{depinho1}  to the case where the sweeping set  $C(t)$ is {\it time-dependent} and {\it nonsmooth}. However,   under the Gramian matrix of $(\nabla_x\psi_i(t,x))_{i\in \mathcal{I}^0_{(t,x)}}$ being diagonally dominant,  a  {\it demanding} condition is assumed in \cite{depinho3}, that is,  $\nabla_x\psi_i(t,\cdot)=0$ on the complement in $C(t)$ of a uniform band around the boundary of $C(t)$. The functions $\tilde{\psi}_i$, constructed in \cite[equation 8]{depinho3} to automatically satisfy this latter condition,  does not seem to meet the diagonally dominant condition.  Furthermore, they required   the {\it restrictive} condition:  $ \left<\nabla_x\psi_i(t,x),  \nabla_x\psi_j(t,x)\right>  \ge 0$ in a band around the boundary of $C(t)$,  that is, all the {\it corners} of $C(t)$  must have  {\it obtuse} angles. This last assumption excludes many important sets, including simple ones, like  triangles, polytopes or sets with one or more acute angles, etc. In addition, therein, the sweeping sets  are assumed to have $\mathcal{C}^2$- generators $(\psi_i(t,x))_{i=1}^r$ satisfying a {\it global} constraint qualification, the perturbation $f(t,\cdot,u)$ is smooth, $f(t,x,U)$ is convex, and   the final state  set  $C_T\subset C(T)$  and is compact.\\ 
In \cite{hermosillapalladino}, a {\it different}  approach  is used to establish a  {\it variant} of {\it nonsmooth} Pontryagin maximum principle for  {\it strong} local minimizers of the problem in \cite{depinho3},  in which the initial state is {\it fixed}, the final state is {\it free}, and  $C(\cdot)$ is {\it Lipschitz}. Therein,  instead of the standard {\it nontriviality} condition ($\lambda=1$ in their case),  their maximum principle  has an {\it atypical} nondegeneracy condition.\\ 
\noindent {\bf Conclusion I.} \label{openproblemI}
	Therefore,  the question of establishing a  Pontryagin maximum principle in its {\it expected} form (i.e.,  standard nontriviality condition, adjoint equation, transversality condition, and the maximality condition on the Hamiltonian) 
	for  optimal control problems over the sweeping process \eqref{sweeping2}, remains {\it open} in each of the following settings:  (i) when  the  {\it nonsmooth} {\it moving} sweeping sets $C(t)$  are   {\it bounded} and {\it general} (no restriction);  (ii)   when the  {\it nonsmooth}  sweeping sets  are  {\it unbounded} ({\it constant} or {\it moving}) and are {\it general} (no extra assumptions); 
	(iii)   when {\it joint} state endpoints constraint set is {\it present},  the {\it convexity} of $f(t,x,U(t))$ is {\it absent}, or  the {\it global} constraint qualification is only {\it local}, for all types of sweeping sets:  {\it smooth}, {\it nonsmooth}, {\it constant}, {\it moving}, {\it bounded}, or {\it unbounded}. \\

\noindent\textbf{Coupled model with joint-endpoint constraints.} In addition to the open problems in Conclusion I, new challenges arise when coupling \eqref{sweeping2}   with a standard controlled differential equation, and  when the joint endpoints constraint is on both states, that is, our optimal control problem $(P)$ introduced in Section \ref{assumptions}, is governed by the following coupled dynamic $(D)$,
where $x(t)\in \R^n$, $y(t)\in \R^l$, and $u(t)\in U(t)$ a.e., \vspace{-0.5cm}
\begin{equation*}\vspace{-0.1cm}
	(D)\begin{cases}
		\dot{x}(t) \in f(t,x(t),y(t), u(t)) - N_{C(t)}(x(t)), \text{ a.e. } t \in [0,T], \\
		\dot{y}(t) = g(t, x(t), y(t), u(t)), \text{ a.e. } t \in [0,T], \\
	\end{cases}
	\end{equation*}
	\vspace{-.15 in}
	\begin{equation*}
	\hspace{-1.5 in}(x(0),y(0),x(T),y(T)) \in S.	\end{equation*}
Our model incorporates different controlled submodels as  particular cases:  {\it coupled evolution variational inequalities} (see \cite{outrata},  \cite{bensoussan},  \cite{brokate}),  a subclass of {\it Integro-Differential sweeping processes} of Volterra type (see \cite{bouach1}),  {\it second order sweeping processes}, in which the sweeping set is solely time-dependent (see, e.g., \cite{nacry} for the general setting), and {\it Bolza-type} { problems} associated to $(P)$.
In other words, optimal control problems governed by either of the four submodels can readily be  formulated as a special case of ($P$) to which all the results of this paper are applicable. \\
In \cite{cg},   necessary conditions  in the form of a {\it weak}  maximum principle are derived  for a  certain form of a {\it Bolza}  problem over a sweeping process. Excluding the {\it part} of their {\it integrand} involving $\dot{x}$ that is not covered in our setting,  the remaining problem, therein,   can be phrased as a {\it special} form of our problem $(P)$ over a {\it coupled} sweeping process $(D)$, where the sweeping set  is  a {\it  constant polyhedron},  and the  state endpoints are at most  {\it periodic}.\\ 
On the other hand, in \cite{brokate}, a  smooth Pontryagin maximum principle in its {\it expected} form is derived  for a special case of our problem ($P$), namely, where the sweeping  set is {\it constant}, {\it smooth}, and {\it strictly convex}, the perturbation $f$ is {\it linear} in $u$,  the function $g=(g_1,g_2)$ in the {\it coupled}  controlled differential equations has $g_1$  {\it linear} in $u$ and $g_2$  is {\it  quadratic and convex} in $u$, the   initial  state is {\it fixed}, and the final state is {\it free}.
The authors of \cite{brokate}  clearly noted  that their method  of {\it standard smooth} penalization does not apply 
even for  the case of a {\it constant polyhedron} (which is a particular case of our general  sweeping sets), and  that including an ``{\it additional terminal constraint}'', a fortiori joint endpoints constraint, causes issues that are {\it not} treated therein.\\ 
{\bf Conclusion II.}\label{openproblemII}
Therefore, all the problems stated in Conclusion I are {\it open}  when replacing the sweeping process \eqref{sweeping2} by $(D),$  even when the sweeping set  is {\it constant polyhedral}.\\

\noindent{\textbf{Our findings and results}.}  In this paper, we  solve {\it all} the aforementioned open problems in Conclusions I and II, and we establish existence results for  solutions to ($D$) and ($P$).   
Indeed, the \textit{first} result, which is local, answers collectively all the open questions displayed above and generalizes all previously known results on  Pontryagin maximum principle in multiple ways. More specifically,  in Theorem \ref{pmp-ps} we derive  under {\it minimal} assumptions on the data,  a {\it complete} set of necessary conditions in the form of  {\it nonsmooth}  Pontryagin maximum principle  for a  \textit{strong} local minimizer $((\x,\y),\bar{u})$  of the Mayer problem ($P$) governed by the {\it coupled} sweeping system $(D)$ together with the  {\it joint} endpoints constraint set $S$.  The {\it moving} sweeping  sets $C(t)$ are  {\it general}, {\it nonsmooth}, {\it bounded} or {\it unbounded}, uniformly  {\it prox-regular},  and defined  as the intersection of a finite number of zero sub-level sets of  the generators $(\psi_i(t,\cdot))_{i=1}^r$.\\
The optimal  control problems studied in \cite{depinho3,nourzeidan3} are over \eqref{sweeping2} and not over the general system $(D)$. Noteworthy, unlike the result  derived in  \cite{depinho3}  where the sweeping sets are not only assumed to be  {\it bounded},  but satisfy {\it restrictive} assumptions  on their corners (obtuse angles) and on  the gradients of their generators (satisfying $\nabla_x\psi_i(t,\cdot)=0$ in a zone in $C(t)$  under the diagonal dominance),  {\it no} such {\it restrictive} assumptions are required in our result over ($D$), whether the {\it nonsmooth} moving sweeping sets $C(t)$  are {\it bounded or unbounded}.  While  when  $C(t)\equiv C$  is a {\it constant} set this corner assumption in \cite{depinho3} was removed in \cite{nourzeidan3},  its removal is far more intricate when $C(t)$ are  {\it moving sets} (see Section \ref{approx} and Theorem  \ref{invariance}).   In contrast of the result  in \cite{nourzeidan3}  established for a {\it restrictive} class of {\it constant unbounded}  sweeping sets,  our  result here is valid for {\it general unbounded, moving}, and  {\it prox-regular} sets that do not necessarily satisfy the restrictive assumption (A2.4) of \cite{nourzeidan3}.  In addition, the {\it convexity} assumption of the sets $f(t,x,U(t))$  in \cite{depinho3} and \cite{nourzeidan3} is now  discarded and  not only for the {\it separable}  endpoints  case treated therein, but also for  general {\it joint endpoints constraints}.  Furthermore,  as opposed to the {\it global} constraint qualification on the generators of the sweeping set $C$ in \cite{nourzeidan3} and of $C(t)$ in  \cite{depinho3},  our constraint qualifications  are required to  \textit{only}  hold at $\x(t)$ (see (A3.2) and (A3.3)), where (A3.3) is vacuous in the smooth case ($r=1$). 
Our {\it nontriviality} condition is simply $\lambda+\|p(T)\|=1$ and does not invoke  the measure corresponding to $x(t)\in {C(t)}$.  This is the {\it expected} form  in a Pontryagin  maximum principle for problems over controlled sweeping processes (see \cite{nourzeidan, nourzeidan3, depinho3}).   In our adjoint inclusion and transversality condition we employ  the recently introduced subdifferentials in \cite{nourzeidan}  that are strictly smaller than the Clarke and Mordukhovich subdifferentials.  \\
 The \textit{second} and \textit{third} results are global. The second  provides in Theorem \ref{cauchyglobal} the  existence and uniqueness of a {\it Lipschitz} solution for the {\it Cauchy problem} corresponding to our dynamic $(D)$ without requiring any {\it Lipschitz} behavior on the {\it nonsmooth} moving  sets $C(t)$. Instead,  we assume  Gr $C(\cdot)$ is bounded and  the gradients of the active generators are positively linear independent  $(A3.2)_G$. Note that this is the first result of its kind for  {\it general nonsmooth moving} sweeping sets, even for system \eqref{sweeping2}, that  is  based on the method of exponential penalty approximation. It is essential for developing a {\it numerical algorithm} to solve optimal control problems over such sweeping processes, which is the topic of our forthcoming paper.    
 The third is furnished in {Theorem \ref{existenceglobal}} on the existence of a global {\it optimal solution} for our problem $(P)$ over ($D$) with joint endpoints constraint set $S$ which  justifies the pursuit  of a Pontryagin maximum principle for an optimal solution of $(P)$. \\  
   
 \noindent {\textbf{Novelty of the methods employed.}}  There are {\it three} separate matters to tackle when establishing a Pontryagin maximum principle for a $\bar\delta$-minimizer $((\bar{x},\bar{y}),\bar{u})$  of our problem $(P)$.  The first matter is the  possible  {\it unboundedness} of the moving sweeping sets $C(t)$ and the joint endpoints set $S$, and the unboundedness of $\R^l$ (the sweeping set for the coupled controlled ODE).  The second, which is present {\it even}  if  $C(t)$ is {\it bounded} and/or the sweeping process is taken to be \eqref{sweeping2}  instead of $(D)$, is that the constraint qualification, (A3.2), on the active generators of $C(t)$ is  {\it only} valid at $\bar{x}$. The third is the {\it absence} of  a Pontryagin maximum principle in its expected form for a Mayer problem over \eqref{sweeping2}, where the  {\it nonsmooth} {\it moving} sweeping sets are {\it general} and {\it bounded}.\\
To address the  first and second matters, we first introduce a {\it new} dynamic of sweeping processes, $(\bar{D})$ (see  \eqref{Dtilda}),   obtained from $(D)$ through truncating the given {\it prox-regular} {\it nonsmooth} moving sweeping sets $C(t)$  and the space $\R^l$ by respectively the balls $\bar{B}_{\bar\varepsilon}(\x(t))$ and $\bar{B}_{\bar\delta}(\y(t))$. The radius   $\bar{\varepsilon}\in (0,\bar{\delta})$ is particularily chosen via (A3.2) so that  
the {\it bounded} truncated sets $C(t) \cap \bar{B}_{\bar{\varepsilon}}(\bar{x}(t))$ are {\it uniformly prox-regular}, and their $(r+1)$ generators, $(\psi_i(t,\cdot))_{i=1}^{r+1}$,  satisfy on the graph of $C(\cdot)\cap  \bar{B}_{\bar{\varepsilon}}(\bar{x}(\cdot))$ a  {\it uniform constraint qualification}  (see \eqref{conditionii}); here $\psi_{r+1}(t,x)$,  defined in \eqref{psi-r+1},  is the generator of the  ball $\bar{B}_{\bar{\varepsilon}}(\bar{x}(t)).$  Note that the system $(\bar{D})$ takes the form of \eqref{sweeping2}, where the state's dimension is $n+l$. Next,   in \eqref{Sdeltadelta},  we define for any $\delta \in (0,\bar\varepsilon)$, a compact truncation  $S_{ \delta,\delta}$  of $S$, and we show that the same $((\x,\y),\u)$ is a  $\delta$-{\it strong local minimizer} for the problem $(\bar{P}_{ \delta,\delta})$, whose  objective function is the same as $(P)$, but its dynamic is $(\bar{D})$ and  its  joint endpoint set  is $S_{ \delta,\delta}$ (see Remark \ref{optimality}).  At this point, our attention  {\it completely} shifts  away from  $((\x,\y),\u)$ being   $\bar\delta$-{\it strong local minimizer} for $(P)$ over $(D)$ and $S$, to  being a $\delta$-{\it strong local minimizer} for the problem $(\bar{P}_{ \delta,\delta})$ over $(\bar{D})$ and $S_{ \delta,\delta}$, where the moving sweeping sets    $\bar{\mathscr{N}}_{(\bar\varepsilon, \bar\delta)}(t):= [C(t) \cap \bar{B}_{\bar{\varepsilon}}(\bar{x}(t))]\times \bar{B}_{\bar{\delta}}(\bar{y}(t))$ are  {\it bounded} and  {\it nonsmooth}, and $S_{ \delta,\delta}$ is compact.  Observe that our truncation approach always leads to {\it moving} sweeping sets even when $C(t)\equiv C$ is constant,  and hence, it cannot be employed if we only admit  constant sweeping sets, as in \cite{nourzeidan3}. This  fact is behind the need to  impose in \cite{nourzeidan3} the unnatural assumption (A2.4), which is  not needed here due to our {\it form} of the truncation approach. \\
Our main objective is to obtain the \textit{nonsmooth} Pontryagin maximum principle for $(P)$ via that for $(\bar{P}_{ \delta,\delta})$, for one $\delta \in (0,\bar{\varepsilon})$. However,  in the literature, there is {\it no} Pontryagin maximum principle in its expected form that  applies  to  problems like  $(\bar{P}_{ \delta,\delta})$. This is so,  not only because of the presence of the joint endpoints constraint set, but  because such a result does not exist for  sweeping processes in the form of  \eqref{sweeping2}, where $C(t)$ is  a {\it general}, {\it nonsmooth},  {\it bounded},  and {\it moving} sweeping set (see Conclusion \ref{openproblemI}).  This is the third matter stated above.\\ 
\noindent  To resolve this matter, for a specific choice $\delta_o$ of $\delta$, we establish  a \textit{nonsmooth} Pontryagin maximum principle for  the  $\delta_o$-{\it strong local minimizer}  $((\x,\y),\u)$ of $(\bar{P}_{ \delta_o,\delta_o})$     that does {\it not} require any demanding conditions like the ones in \cite{depinho3} described prior to Conclusion \ref{openproblemI}.   As such, we derive this result  for problems over \eqref{sweeping2} with  {\it general} {\it nonsmooth}, {\it bounded}, and  {\it moving} sweeping sets,  including the {\it joint} state endpoints constraints. This is accomplished 
by developing  a {\it complete} new design of  the method of exponential penalty approximation that drastically differs from the one in \cite{depinho3} and generalizes  the one in \cite{nourzeidan3} to the {\it complex} setting of {\it time-dependent} sweeping sets and to {\it joint} endpoint constraints. Indeed, observe that in \cite{depinho3}, the stringent condition on the corners is imposed so that the exponential penalty method works for {\it nonsmooth} bounded moving sweeping sets in the {\it same} manner as it did  for  {\it smooth} sets in \cite{depinho2}, that is, by forcing  their sweeping set $C(t)$ itself to be {\it invariant} for their approximating dynamic. However, in this paper, we  produce a carefully crafted sequence of {\it smooth} sets, $(\bar{C}^{\gamma_k}(t) \times \bar{B}_{\bar{\rho}_k}(\bar{y}(t)))_k$ (see \eqref{tildeCgkkdef}), approximating   our {\it nonsmooth} moving sweeping set    $\bar{\mathscr{N}}_{(\bar\varepsilon, \bar\delta)}(t):= [C(t) \cap \bar{B}_{\bar{\varepsilon}}(\bar{x}(t))]\times \bar{B}_{\bar{\delta}}(\bar{y}(t))$  from its {\it interior},  and we show that this smooth sequence  (as opposed to the sweeping set itself) is actually {\it invariant} (see Remark \ref{invariant}) for our approximating dynamic $(\bar{D}_{\gk})$,  this is different than \cite{depinho3} and far more intricate than \cite{nourzeidan3}.  Furthermore, in the next step,  to obtain  the boundedness of the multipliers for the approximated normal cones  in their approximating dynamic,  the authors in \cite{depinho3} approximated $C(t)$ from the {\it outside} by a sequence of  invariant sets  (as  they did  for the smooth bounded sets in \cite{depinho2}). Instead,  we  construct another {\it smooth} approximation $\bar{\mathscr{A}}(t,k) $ of $\bar{\mathscr{N}}_{(\bar\varepsilon, \bar\delta)}(t)$, from its {\it interior}, where $\bar{\mathscr{A}}(t,k):=\bar{C}^{\gk}(t,k) \times \bar{B}_{\bar{\rho}_k}(\bar{y}(t)) \subset \inte \bar{C}_{\gk}(t) \times \bar{B}_{\bar{\rho}_k}(\bar{y}(t)) $. We then show that $\bar{\mathscr{A}}(t,k) $  is {\it invariant} for  {\it our} approximated dynamic, the multipliers therein  are {\it bounded}, and its solutions  {\it approximate} the solutions to the original dynamic. As our sweeping sets and their approximations are {\it time dependent}, the derivation of these results turns out to be quite involved and  challenging in comparison with the constant sweeping set $C$  treated in   \cite{nourzeidan3} (see section \ref{approx} and Theorem \ref{theoremrelationship}).\\
Next, to address  the presence of {\it joint} endpoint constraints,  we  pick carefully the value $\delta_o$ of $\delta$  so that we can successfully craft an approximation $S^{\gk}(k)$ of $S_{\delta_o,\delta_o}$ such that for $k$ large, any solution of $(\bar{D}_{\gk})$ with state endpoints  in $S^{\gk}(k)$ remains at all times in the invariant set $\bar{\mathscr{A}}(t,k).$ \\    
At this point, we are ready to design for $(\bar{P}_{ \delta_o,\delta_o})$ an approximating problem $(P^{\alpha,\beta}_{\gk})$, defined  over $(\bar{D}_{\gk}^{\beta})$  (closely related to $(\bar{D}_{\gk})$) and the joint endpoints constraint set $S^{\gk}(k),$ whose optimal solution converges to $((\bar{x},\bar{y}),\bar{u})$, see Proposition \ref{approximating}.   Then, we show  that  the corresponding adjoint variable sequence is {\it uniformly} of {\it bounded variation}, which is an essential requirement for the exponential penalty approximation method to work. We achieve this fact  by introducing  a {\it novel}  approach (see page 44 in the proof of Theorem \ref{pmp-ps}),  given that the  corresponding approaches  employed in \cite{depinho3, nourzeidan3} are not valid.
 A {\it careful} analysis of the limit  as $k\to \infty$ to the {\it standard} maximum principle for $(P^{\alpha,\beta}_{\gk})$ leads to our nonsmooth Pontryagin principle. To remove the convexity assumption on $(f,g)(t,x,y,U(t))$, we extend the {\it relaxation} technique  from \cite{verachadihassan}  to address: \text{(a)} strong local minimizers, \text{(b)} time-dependent sweeping sets, $C(t)$,  not necessarily moving in an absolutely continuous way, and \text{(c)} {\it{general joint state  endpoints}} constraint set $S$. \\
Using our  approach to the exponential penalty method  without truncating  $C(t)$, we prove in Section \ref{global} the {\it existence} and {\it uniqueness} of a Lipschitz solution to the Cauchy problem associated with $({D})$, Theorem \ref{cauchyglobal}. The {\it existence} of an {\it optimal}  solution for the problem $(P)$, Theorem \ref{existenceglobal},   established also  in Section \ref{global},  employs general results developed  in the Appendix. \\
\\
\textbf{Outline of the paper.}	The outline of the paper is as follows. In Section \ref{assumptions}, we display the problem ($P$), the underlying assumptions, and the local and global main results. In Section \ref{auxiliary}, we offer results for the dynamic $(\bar{D})$ with truncated sweeping sets, which are crucial when employing the penalty approximating method to prove the maximum principle. In Section \ref{global}, we ascertain the existence of a solution to the Cauchy problem governed by $(D)$ under global assumptions, and we prove the existence of optimal solution for the problem $(P)$. Section \ref{maximumprinciple} is dedicated to  proving the maximum principle for $(P)$. Finally, Section \ref{examplesec} offers an example demonstrating the importance of our initial model and the practical utility of the results. Section \ref{appendix} serves as an appendix. 
\subsection{Preliminaries}
In this subsection, we present the basic notations and concepts used in this article. \\
\textbf{General notations.} We denote by $\|\cdot\|$ and $\<\cdot,\cdot\>$  the Euclidean norm and the usual inner product, respectively. For $x \in \mathbb{R}^n$ and $a>0$, we denote, respectively, by $B_{a}(x)$ and $\bar{B}_{a}(x)$ the open and closed  ball centered at $x$ and of radius $a$. More particularly, $B$ and $\bar{B}$ represent the open unit ball and the closed unit ball, respectively. The effective domain and epigraph of an extended-real-valued function $f\colon\R^n\f\R\cup\{\infty\}$ are denoted by $\dom f$ and $\epi f$, respectively. A vector function $f=(f_1, \cdots, f_n): [0,T] \f \R^n$ is said to  be positive if $f_i$ is positive for each $i=1, \cdots, n$. We use $\mathscr{M}_{m\times n}[a, b]$ to indicate  the set of $m\times n$-matrix functions  on $[a,b]$. For $r\in\mathbb{N}$, we denote the identity matrix in $\mathscr{M}_{r\times r}$ by I$_{r\times r}$.\\
\textbf{Notations on sets.} The interior, boundary, closure, convex hull, and complement of a set $S \subset \R^n$ are represented by $\inte S$, $\bdry S$, {\text{cl}} ${S}$,  $\conv{S}$ and $S^c$, respectively. 	 For $S\subset\R^n$, we denote by $\sigma(\cdot,S)$ the support function of $S$. 
For $(S_k)_k$ a sequence of nonempty closed convex subsets of $\mathbb{R}^n$, we have, by \cite[Theorem 6]{SW}, that 
\begin{equation}\label{limitsetconvergence}	(S_k)_k\xrightarrow[k \to \infty]{\text{Hausdorff}}  S\;\;\iff\;\;
	\sigma (s^*, S_k)  \xrightarrow[k \to \infty]{\text{unif in} \;s^*}  \sigma (s^*, S), \;\;  \quad \forall  s^* \in \mathbb{R}^n: \|s^*\|\le 1.
	\end{equation}	
{For a set valued-map $S(\cdot): [0,T] \rightsquigarrow \mathbb{R}^n$, Gr $S(\cdot)$ denotes its graph.}\\		
\textbf{Notations on spaces.} For $S\subset\R^n$ compact, $\mathcal{C}(S;\R^n)$ denotes to the set of  continuous functions from  $S$ to $\R^n$. The Lebesgue space of $p$-integrable functions $f\colon [a,b]\f\R^n$ is denoted by $L^p([a,b];\R^n)$, where the norms in $L^p([a,b];\R^n)$ and $L^{\infty}([a,b];\R^n)$ (or $\mathcal{C}([a,b];\R^n)$) are written as $\|\cdot\|_p$ and $\|\cdot\|_{\infty}$, respectively.  The space  $W^{1,p}([a,b];\R^n)$ denotes the set of continuous functions $z\colon [a,b] \to \R^n$ having $\dot{z} \in L^{p}([a,b]; \mathbb{R}^n)$. The set of all functions $z\colon [a,b]\f\R^n$ of bounded variations is denoted by $BV([a,b];\R^n)$. The space $\mathcal{C}^*([a,b]; \R)$ denotes the dual of $\mathcal{C}([a,b];\R)$ equipped with the supremum norm. We denote by   $\|\cdot\|_{\textnormal{T.V.}}$ the induced norm on $\mathcal{C}^*([a,b]; \R)$. By Riesz representation theorem, each element in  $\mathcal{C}^*([a,b]; \R)$ can be interpreted as an element in the space of finite signed Radon measures on $[a,b]$ equipped with the weak* topology.  For  $\nu\in \mathcal{C}^*([a,b]; \R^n)$,  its  support is denoted by $\supp\{\nu\}.$ Denote  $\mathfrak{M}^1_+(S)$  to be the set of probability Radon measures on $S$.       \\
\textbf{Notations from nonsmooth analysis.} For standard references, see the monographs \cite{clarkeold,clsw,mordubook,Rockawets}. Let $S$ be a nonempty and closed subset of $\R^n$, and let $s\in S$. The {\it proximal}, the {\it Mordukhovich} ({also known as \it limiting}), and the {\it Clarke normal} cones to $S$ at $s$ are denoted by $N^P_S(s)$, $N_S^L(s)$, and $N_S(s)$, respectively. 
For  $\rho >0$, a set $S$ is  $\rho$-prox-regular whenever,  $\forall s\in S$, $\forall \xi \in N_S(s)$ with $\|\xi\|=1$, we have
$$ \langle \xi, z-s\rangle \le \frac{1}{2\rho}\|z-s\|^2, \forall z\in S.$$
In this case, $N^P_S(s)=N_S^L(s)= N_S(s)$, for all $s\in S$.
Given a {\it lower semicontinuous} function $f\colon\R^n\f\R\cup\{\infty\}$, and  $x \in \dom f$, the \textit{proximal}, the {\it Mordukhovich} (or \textit{limiting}), and the \textit{Clarke subdifferential}  of $f$ at  $x$ are denoted by  $\partial^P f (x)$, $\partial^L f(x)$, and $\partial f(x)$, respectively. Note that if  $x\in\inte (\dom{f})$ and $f$ is Lipschitz near $x$, \cite[Theorem 2.5.1]{clarkeold} yields that the Clarke subdifferential of $f$ at  $x$ coincides  with the {\it Clarke generalized gradient} of ${f}$ at $x$, also denoted here by $\partial {f}(x)$. If ${f}$ is $\CO^{1,1}$ near $x\in \inte(\dom f)$,  $\partial^2f(x)$ denotes the {\it Clarke generalized Hessian} of ${f}$ at $x$. For  $g\colon\R^n \f \R^n$ Lipschitz near $x\in\R^n$, $\partial{g}(x)$ denotes the {\it Clarke generalized Jacobian} of $g$ at $x$. Note that we will be using in this article {\it nonstandard} notions of subdifferentials that are strictly smaller than the Clarke and Mordukhovich subdifferentials, as seen in Section \ref{assumptions}. We  note that  $\nabla f$ of a function $f$  is taken here to be a column vector, that is, the {\it transpose} of the standard gradient vector. 

\section{Model, Assumptions, and Main results}  \label{assumptions}
The aim of this paper is to derive existence result for the Cauchy problem associated with $(D)$, global existence of optimal solutions and  necessary conditions in the form of a maximum principle for  the fixed time Mayer problem$(P)$, where $(D)$ and $(P)$ are given by the following: 
\begin{equation*}
	(P) \begin{cases}
		\text{minimize} \quad   {J}(x(0), y(0), x(T),y(T)) \\
		\text{over } ((x,y),u) \in W^{1,1}( [0,T], \mathbb{R}^n \times \mathbb{R}^l)\times \mathcal{U} \;\textnormal{ such that } \\
		\hspace{.2 in}(D)\begin{cases}
			\dot{x}(t) \in f(t,x(t),y(t), u(t)) - N_{C(t)}(x(t)), \text{ a.e. } t \in [0,T], \\
			\dot{y}(t) = g(t, x(t), y(t), u(t)), \text{ a.e. } t \in [0,T], \\
		\end{cases} \\
			\hspace{.2 in}(x(0),y(0),x(T),y(T)) \in S, \quad \textbf{ (B.C.)}\\
	\end{cases}
\end{equation*}
where $T>0$ is fixed, $J: \mathbb{R}^n \times \mathbb{R}^l \times \mathbb{R}^n  \times \mathbb{R}^l \longrightarrow \mathbb{R} \cup \{\infty\}, $ $ f: [0,T] \times \mathbb{R}^n \times \mathbb{R}^l \times \mathbb{R}^m  \longrightarrow \mathbb{R}^n,$ \\$ g: [0,T] \times \mathbb{R}^n \times \mathbb{R}^l \times \mathbb{R}^m \longrightarrow \mathbb{R}^l$,
 $C(t)$ is the intersection of the zero-sublevel sets of a finite sequence of functions $\psi_i(t,\cdot)$ where $ \psi_i: [0,T] \times \mathbb{R}^n \longrightarrow \mathbb{R}$, $i=1, \dots,r$,  $N_{C(t)}$ is the {\it Clarke} normal cone to $C(t)$,   $S  \subset C(0) \times \mathbb{R}^{l} \times \mathbb{R}^n \times \mathbb{R}^{l}$ is {\it closed}, $ U(\cdot): [0,T] \rightsquigarrow \mathbb{R}^m$ is nonempty, closed, and  Lebesgue- measurable set-valued map,  and  the set of control functions $\mathcal{U}$ is defined by  \begin{equation}\label{control}
	\mathcal{U}:= \{ u: [0,T] \longrightarrow \mathbb{R}^m: u \text{ is measurable and } u(t) \in U(t), \text{ a.e. } t \in [0,T]\}.
\end{equation}
A pair $((x,y),u)$ is {\it admissible} for $(P)$ if $((x,y),u) \in W^{1,1}( [0,T]; \mathbb{R}^n \times \mathbb{R}^l)\times \mathcal{U}$  satisfies the dynamic $(D)$ and the boundary conditions \textbf{(B.C.)}.  
An admissible pair  $((\bar{x},\bar{y}),\bar{u})$ is said to be a $\bar{\delta}$-{\it strong local minimizer} for $(P)$, for some $\bar{\delta} >0$, if for all $((x,y),u)$ admissible for $(P)$ and satisfying $ \|(x,y)-(\bar{x},\bar{y})\|_{\infty}  \le \bar{\delta}$, we have 
$$J(\bar{x}(0),\bar{y}(0),\bar{x}(T), \bar{y}(T)) \le J(x(0),y(0),x(T),y(T)).$$ 
A {\it combination} of these assumptions is used at different places of the paper.
\begin{enumerate}[label=(\textbf{A\arabic*})]
	\item \textbf{Assumption on $U(\cdot)$:} {The  measurable set-valued map $U(\cdot)$ has compact images}.
		\item \textbf{Assumption on $C(\cdot)$:}  For $t\in[0,T],$ the set $C(t)$  is {\it nonempty}, closed,  {\it uniformly} $\rho$-prox-regular, for some $\rho>0$, and  is given by 
	\begin{equation}\label{C}C(t) :=\bigcap_{i=1}^{r} C_i(t), \; \textnormal{ where } C_i(t):= \{ x \in \mathbb{R}^n: \psi_i(t,x) \le 0\} \subset \mathbb{R}^n, 
	\end{equation}
	where  $ (\psi_i)_{1\le i\le r}$  is a family of {\it continuous} functions $\psi_i: [0,T] \times \mathbb{R}^n \longrightarrow \mathbb{R}$.	\\
	\end{enumerate}
	In organizing this section, we separate the results into local and global results to address different sets of assumptions. Therein, we shall use the following notations. For  $x(\cdot)\in \mathcal{C}([0,T];\R^n)$ such that $ x(t) \in  C(t)$ for all $ t\in [0,T]$, and for $ (\tau, z) \in \text{Gr }C(\cdot)$, we define
\begin{eqnarray}
&&	\hspace{-.3 in} I_i^{\-}(x):=\{t\in [0,T]:  x(t)\in \inte C_i(t)\}\;\;\hbox{and}\;\;I_i^{0}(x):= [0,T]\setminus I_i^{\-}(x), \; \;  \forall i=1,\dots,r, \label{Ibtaui} \\
	&&	I^{\-}(x):=\bigcap\limits_{i=1}^r  I_i^{\-}(x)= \{t\in [0,T]:  x(t)\in \inte C(t)\}, \label{Ibtau}\\ 
&&I^0({{x}}):=\{t\in[0,T] : {x}(t)\in\bdry C(t)\}=[0,T]\setminus I^{\-}(x)=\{t : \I^0_{(t,{x}(t))}\not=\emptyset\}, \label{I0(x)} \\
&&\text{where }\;\; \I^0_{(\tau,z)}:=\{i\in\{1,\dots,r\} : \psi_i(\tau, z)=0\}.\label{Isub(t,x)}
	\end{eqnarray} 	

\subsection{Local assumptions and results}	
For a given pair $(\bar{x}(\cdot),\bar{y}(\cdot))\in \mathcal{C} ([0,T]; \mathbb{R}^n \times \mathbb{R}^{l })$  such that $\bar{x}(t)\in C(t) \; \forall t\in[0,T]$, and for a constant $\bar{\delta}>0$, we say that the following assumptions hold true at $((\bar{x}, \bar{y});\bar{\delta})$ if the corresponding conditions hold true. \\
	 \begin{enumerate}[resume*]
	\item \textbf{Local assumptions on the functions $\psi_i$ at $(\bar{x};\bar{\delta})$:}
	\begin{enumerate}[label=(\textbf{A3.\arabic*})]
	\item There exist   $\rho_o >0$ and $L_{\psi}>0$  such that, for each $i$,  
	$\nabla_x \psi_i(\cdot,\cdot)$ exists  on \\$\Gr\left(C(\cdot)\cap \bar{B}_{\bar{ \delta}}(\bar{x}(\cdot))\right) $ $+\{0\}\times \rho_o B$,  and $\psi_i(\cdot,\cdot)$  and $\nabla_x \psi_i(\cdot,\cdot)$ satisfy, \\
	for all $(t_1,x_1)$, $(t_2,x_2) \in$ $ \Gr \left(C(\cdot)\cap \bar{B}_{\bar{ \delta}}(\bar{x}(\cdot))\right) +\{0\}\times \cfrac{\rho_o}{2} \bar{B}$,
		\begin{eqnarray*}	\hspace{-.5 in}\max\left\{\abs{\psi_i(t_1,x_1)-\psi_i(t_2,x_2)},	\|\nabla_x \psi_i(t_1,x_1)-\nabla_x \psi_i(t_2,x_2) \| \right\} \le L_{ \psi} (\; \abs{t_1-t_2}+ \|x_1-x_2\|).
	\end{eqnarray*} 
\item For every $t \in I^0(\bar{x})$,  {the following constraint qualification at $\x(t)$ holds:}
$$\left[ \sum_{i\in\I^0_{(t,\bar{x}(t))}}\lambda_i\nabla_x\psi_i(t,\bar{x}(t)) =0, \text{ with each } \lambda_i \ge 0\right] \Longrightarrow \left[\lambda_i =0, \; \, \forall i \in  \I^0_{(t,\bar{x}(t))}\right].$$ 
\item There exists a positive Lipschitz function $\bar{\beta}(\cdot)=(\bar{\beta}_1(\cdot), \cdots, \bar{\beta}_r(\cdot)): [0,T] \f \mathbb{R}^r  $ such that  
	$$\hspace{-0.9 in}\sum_{\substack{j\in\I^0_{(t,\bar{x}(t))}\\ j\not=i}}\bar{\beta}_j(t)\abs{\< \nabla_x\psi_j(t,\bar{x}(t)),\nabla_x\psi_i(t,\bar{x}(t))\>} \; < \bar{\beta}_i(t) \|\nabla_x\psi_i(t,\bar{x}(t))\|^2,\;\,   \forall t\in I^0(\bar{x}), \;\; \forall i\in\I^0_{(t,\bar{x}(t))}. $$ 
	\end{enumerate}
\hspace{-1.3cm} 
For the given $\bar{\delta}$  and for any $a,b >0$, we introduce the following sets
\begin{eqnarray}
	&& \hspace{-.3 in} \mathbb{C}_{\bar{x} } := \bigcup\limits_{t \in [0,T]} \left[C(t)  \cap \bar{B}_{\bar{\delta}}(\bar{x}(t)) \right], \quad  \mathbb{B}_{\bar{y} }: =  \bigcup\limits_{t \in [0,T]}  \bar{B}_{\bar{\delta}}(\bar{y}(t)), \;\;\quad \mathbb{U}:=\bigcup\limits_{t \in [0,T]} U(t),\label{mathbbU}\\
	&&\bar{\mathscr{N}}_{(a,b)}(t):= \left[C(t)\cap \bar{B}_{a}(\bar{x}(t)) \right]\times \bar{B}_{b}(\bar{y}(t)), \quad \text{for }\; t\in [0,T], \label{scriptN}	\\
	&& \bar{\mathscr{B}}_{a}:= \bar{B}_{a}(\bar{x}(0))\times\bar{B}_{a}(\bar{y}(0))\times \bar{B}_{a}(\bar{x}(T))\times\bar{B}_{a}(\bar{y}(T)).\label{mathscrB}
		\end{eqnarray} 
\item  \textbf{Local assumptions on  $h(t,x,y,u):=(f,g)(t,x,y, u)$ at $((\bar{x}, \bar{y});\bar{\delta})$:}
		\begin{enumerate}[label=(\textbf{A4.\arabic*})]
\item For  $(x,y,u) \in \mathbb{C}_{\bar{x}}  \times \mathbb{B}_{\bar{y}} \times  \mathbb{U}$, $h(\cdot, x,y  ,u )$ is Lebesgue-measurable and, for a.e. $t \in [0,T] $,
			$ h(t,\cdot,\cdot,\cdot)$  is continuous on  $\bar{\mathscr{N}}_{(\bar{\delta},\bar{\delta})}(t) \times  U(t)$. There exist  $M_h>0$,  and  $L_h \in L^2([0,T]; \mathbb{R}^{+})$,   such that, for {a.e.} $t \in [0,T]$, for all $(x,y), \; (x',y') \in \bar{\mathscr{N}}_{(\bar{\delta},\bar{\delta})}(t)$ and  $u \in U(t)$,
		\begin{eqnarray*}
	\|h(t,x,y,u) \| \le M_h\;\;\text{and}\;\;\| h(t,x,y,u) -h(t,x',y',u)\| \le L_h(t) \|(x,y)-(x',y') \|.
		\end{eqnarray*}
	\item The set $h(t,x,y,U(t))$ is convex for all $(x,y) \in  \bar{\mathscr{N}}_{(\bar{\delta},\bar{\delta})}(t)$ and $t \in [0,T] \text{ a.e. }$\footnote{ This condition is only needed for the existence of an optimal solution,Theorem \ref{existenceglobal}, and not for Theorem \ref{pmp-ps} or  Theorem \ref{cauchyglobal}.}  	
	\end{enumerate}
\item \textbf{Local assumption on $J$ at $((\bar{x}, \bar{y});\bar{\delta})$:} 
 	There exist $\rho_1>0$ and $L_{J} >0$ such that $J$ is $L_{J}$-Lipschitz on $S(\bar{\delta})$, where 
	$$S(\bar{\delta}):= \left( \left[S \cap \bar{\mathscr{B}}_{\bar{\delta}} \right] + \rho_1 \bar{B}\right) \cap \left (\bar{\mathscr{N}}_{(\bar{\delta},\bar{\delta})}(0)  \times\bar{\mathscr{N}}_{(\bar{\delta},\bar{\delta})}(T) \right).$$
\end{enumerate}

The {\it first} main result of this paper provides necessary conditions, in the form of an extended Pontryagin's maximum principle, for a $\bar{\delta} $-strong local minimizer $((\bar{x},\bar{y}),\bar{u})$  for the problem $(P)$. 
We refer the reader to Section \ref{maximumprinciple} for the proof. First, we introduce the following {\it nonstandard} notions of subdifferentials that shall be used in Theorem \ref{pmp-ps}. \\
\begin{itemize}
	\item  $\partial^{(x,y)}_{\ell} h(t,\cdot,\cdot,u)$  denotes  the {\it extended Clarke generalized Jacobian} of $h(t,\cdot,\cdot,u)$  that extends from the interior to the boundary of ${ \bar{\mathscr{N}}_{(\bar{\delta},\bar{\delta})}(t):}= \left[C(t) \cap \bar{B}_{\bar{\delta}}(\bar{x}(t)) \right]\times \bar{B}_{\bar{\delta}}(\bar{y}(t))$ the notion of the Clarke generalized Jacobian (see \cite[Equation(11)] {nourzeidan}), 
	\item $\partial^{xx}_{\ell} \psi_i(t,\cdot)$ is the {\it Clarke generalized Hessian relative to}  ${\inte [C(t) \cap \bar{B}_{\bar\delta}(\x(t))]}$ of $\psi_i(t,\cdot)$ (see \cite[Equation(12)] {nourzeidan}),
	\item $\partial^L_{\ell} J(\cdot,\cdot,\cdot,\cdot)$ is the {\it limiting subdifferential of} $J(\cdot,\cdot,\cdot,\cdot)$ {\it relative to} $\inte S(\bar{\delta})$ (see \cite[Equation(8)] {nourzeidan}).\\
\end{itemize}
\begin{theorem}\label{pmp-ps}\textnormal{\textbf{[Generalized Pontryagin principle for $(P$)]}} \label{maxprincipleS}  Assume that $(A1)$-$(A2)$  are satisfied. Let $((\bar{x},\bar{y}),\bar{u})$ be a $\bar{\delta} $-strong local minimizer for $(P)$  such that $(A3.1)$, $(A3.3)$, {$(A4.1)$} and $(A5)$ are satisfied at $((\bar{x},\bar{y});\bar{\delta})$. Then, {whenever  $(A4.2)$ holds true, or if  sets $U(t)$ are uniformly bounded,} there exist an adjoint vector $p=(q, v)$ with $q \in BV([0,T]; \mathbb{R}^n)$ and $v \in W^{1,2}([0,T]; \mathbb{R}^l)$, finite signed Radon measures $(\nu^i)_{i=1}^r$ on $[0,T]$,  nonnegative functions $(\xi^i)_{i=1}^{r}$ in $L^{\infty}([0,T]; \mathbb{R}^{+})$,  $L^{2}$-measurable functions $\bar{A}(\cdot)$ in $\mathscr{M}_{n \times n}([0,T])$, $\bar{E}(\cdot)$ in $\mathscr{M}_{n \times l}([0,T])$, $\bar{\mathcal{A}}(\cdot)$  in $\mathscr{M}_{l \times n}([0,T])$, and $\bar{\mathcal{E}}(\cdot)$ in $\mathscr{M}_{l \times l}([0,T])$, $L^{\infty}$-measurable functions $(\vartheta^i(\cdot))_{i=1}^r$ in $\mathscr{M}_{n \times n}([0,T])$, and a scalar $\lambda \ge 0$, satisfying the following: 
	\begin{enumerate}[label=$({\roman*})$]
		\item  \textbf{Primal-dual admissible equation}
		\begin{eqnarray*}
			\begin{cases}
				\dot{\bar{x}}(t) = f(t, \bar{x}(t), \bar{y}(t), \bar{u}(t)) - \sum_{i=1}^{r}\xi^i(t) \nabla_x \psi_i(t,\bar{x}(t)) \text{ a.e. } t \in [0,T], \\
				\dot{\bar{y}}(t)= g(t, \bar{x}(t), \bar{y}(t), \bar{u}(t)) \text{ a.e. } t \in [0,T],\\
				\psi_i(t,\bar{x}(t)) \le 0, \; \forall t \in [0,T], \; \; \; \forall i \in \{1, \cdots, r \}.
			\end{cases} 
		\end{eqnarray*}
		\item \textbf{Non-triviality condition} 
		$$\lambda + \|p(T) \|=1.$$
		\item \textbf{Adjoint equations} \\
		For any $z(\cdot) \in \mathcal{C}([0,T], \mathbb{R}^n)$ 
		\begin{eqnarray*}
			&& \int_{[0,T]} \< z(t), dq(t)\> 
			= \int_{0}^{T}  \< z(t), -\bar{A}(t)^T q(t) \> dt +  \int_{0}^{T}  \< z(t),- \bar{\mathcal{A}}(t)^{T} v(t) \>dt \;\; \nonumber \\ &&+ \sum_{i=1}^{r} \left(\int_{0}^{T} \xi^i(t) \< z(t), {\vartheta^i(t)}q(t) \> dt +  \int_{0}^{T}  \< z(t),{\nabla}_x \psi_i(t, \bar{x}(t))\> d\nu^i(t)\right),\label{adjointS}  \\
			\nonumber \\
			&& \dot{v}(t)
			=  -\bar{E}(t)^Tq(t)- \bar{\mathcal{E}}(t)^{T} v(t),
		\end{eqnarray*}
		where  for all $t \in [0,T]$ a.e., \begin{eqnarray*}
			& &  (\bar{A}(t), \bar{E}(t)) \in \partial_{\ell}^{(x,y)} f(t, \bar{x}(t), \bar{y}(t), \bar{u}(t)), \;\;\quad (\bar{\mathcal{A}}(t), \bar{\mathcal{E}}(t)) \in \partial_{\ell}^{(x,y)} g(t, \bar{x}(t), \bar{y}(t), \bar{u}(t)),
			\\
			& & \vartheta^i(t) \in \partial_{\ell}^{xx} \psi_i(t,\bar{x}(t)), \; \text{ for } i=1, \cdots, r. 
		\end{eqnarray*}
		\item  \textbf{Maximization condition}  \begin{equation*}
			\max \limits_{u \in U(t)} \;\bigg \{ \langle q(t), f(t, \bar{x}(t), \bar{y}(t), u) \rangle + \langle v(t), g(t, \bar{x}(t), \bar{y}(t), u) \rangle  \bigg \} 
		\end{equation*}
		is attained at $u=\bar{u}(t)$ for a.e. $t \in [0,T]$. 
		\item \textbf{Complementary Slackness condition} 	For $i=1, \cdots, r$, we have: \\
		$\xi^i(t)=0\; \forall t \in I^{\-}_i(\bar{x}), $  \; and \; $\xi^i(t)  \<\nabla_x \psi_i(t,\bar{x}(t)), q(t) \> =0 \text{ a.e. } t \in [0,T].$			
		\item \textbf{Measures Properties} 	For $i=1, \cdots, r$, we have: \\ $\supp\{\nu^i\}  \subset I^{0}_i(\bar{x})$ \; and  the measure $ \< q(t),\nabla_x \psi_i(t,\bar{x}(t))\> d\nu^i(t)$  is nonnegative.		
		\item  \textbf{Transversality condition} \begin{eqnarray*}
			((q,v)(0), -(q, v)(T))   \in \lambda \; \partial^L_{\ell} J((\bar{x}, \bar{y})(0), (\bar{x}, \bar{y})(T)) +    N_{S}^L((\bar{x}, \bar{y})(0), (\bar{x}, \bar{y})(T)).\quad  
		\end{eqnarray*}
	\end{enumerate}
	In addition, if $S= C_0\times \mathbb{R}^{n+l} $, for a closed set $C_0 \subset C(0)\times\mathbb{R}^l$, then $\lambda=1$, {and the non-triviality condition is discarded}.   
\end{theorem}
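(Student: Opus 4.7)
The plan is to follow the two-stage reduction strategy outlined in the introduction: first reduce $(P)$ to a problem over \emph{truncated} sweeping sets that shares $((\bar{x},\bar{y}),\bar{u})$ as a local minimizer, and then apply the exponential-penalty approximation to this truncated problem. Concretely, pick $\bar{\varepsilon}\in (0,\bar{\delta})$ as supplied by Lemma \ref{final-a3.2(ii)}, so that the truncated sweeping sets $C(t)\cap \bar{B}_{\bar{\varepsilon}}(\bar{x}(t))$ are uniformly prox-regular and admit the enlarged family of generators $(\psi_i(t,\cdot))_{i=1}^{r+1}$, with $\psi_{r+1}$ defining $\bar{B}_{\bar{\varepsilon}}(\bar{x}(t))$, satisfying a \emph{uniform} constraint qualification on the graph of $C(\cdot)\cap \bar{B}_{\bar{\varepsilon}}(\bar{x}(\cdot))$. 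Form the truncated dynamic $(\bar{D})$ and, using the specific radius $\delta_o\in(0,\bar{\varepsilon})$ of Remark \ref{optimality}, a truncated joint-endpoint set $S_{\delta_o,\bar{\varepsilon}}$; then $((\bar{x},\bar{y}),\bar{u})$ is a $\delta_o$-strong local minimizer for $(\bar{P}_{\delta_o,\bar{\varepsilon}})$. If (A4.2) is not assumed but $U(t)$ is uniformly bounded, first pass through an extension of the relaxation technique of \cite{verachadihassan} applied on the truncated sets, which reduces to the case in which the convexity condition holds on the truncated domain.

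Next, implement the exponential-penalty approximation for $(\bar{P}_{\delta_o,\bar{\varepsilon}})$. Construct the nested sets $\bar{\mathscr{A}}(t,k):=\bar{C}^{\gamma_k}(t,k)\times \bar{B}_{\bar{\rho}_k}(\bar{y}(t))$ lying in the interior of $\bar{\mathscr{N}}_{(\bar{\varepsilon},\bar{\delta})}(t)$, which by Theorem \ref{theoremrelationship} are invariant under the standard (unconstrained) control system $(\bar{D}_{\gamma_k})$ whose normal-cone term is replaced by $\sum_{i=1}^{r+1}\gamma_k e^{\gamma_k\psi_i(t,x)}\nabla_x\psi_i(t,x)$. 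Approximate $S_{\delta_o,\bar{\varepsilon}}$ by sets $S^{\gamma_k}(k)$ tailored so that any solution of $(\bar{D}_{\gamma_k})$ with endpoints in $S^{\gamma_k}(k)$ remains in $\bar{\mathscr{A}}(\cdot,k)$ for all $t$. Introduce an approximating problem $(P^{\alpha,\beta}_{\gamma_k})$ with a quadratic tracking penalty $\alpha \int_0^T \|(x,y)-(\bar{x},\bar{y})\|^2\,dt$ and an endpoint penalty parameter $\beta$, and use Proposition \ref{approximating} to guarantee existence of minimizers $((x_k,y_k),u_k)$ whose states converge uniformly to $(\bar{x},\bar{y})$. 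Since $(P^{\alpha,\beta}_{\gamma_k})$ is a smooth/nonsmooth \emph{standard} control problem without state constraints, apply the classical nonsmooth Pontryagin principle (as in Clarke--Vinter) to obtain multipliers $(\lambda_k, p_k=(q_k,v_k))$ and selections $(\bar{A}_k,\bar{E}_k)\in\partial^{(x,y)}f(t,x_k,y_k,u_k)$, $(\bar{\mathcal{A}}_k,\bar{\mathcal{E}}_k)\in\partial^{(x,y)}g(t,x_k,y_k,u_k)$, and $\vartheta^i_k\in\partial^{xx}\psi_i(t,x_k)$.

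The third stage passes to the limit $k\to\infty$. Set $\xi^i_k(t):=\gamma_k e^{\gamma_k\psi_i(t,x_k(t))}$ for $i=1,\dots,r+1$, so that the approximating adjoint equation has the schematic form
\begin{equation*}
\dot q_k = -\bar{A}_k^{\!\top} q_k - \bar{\mathcal{A}}_k^{\!\top} v_k + \sum_{i=1}^{r+1}\xi^i_k\,\vartheta^i_k\, q_k + \text{(penalty terms)}.
\end{equation*}
The crucial uniform estimates are $L^{\infty}$ bounds on $\xi^i_k\langle\nabla_x\psi_i(t,x_k),q_k\rangle$ and total-variation bounds on the Radon measures $d\mu^i_k := \xi^i_k\,dt$, which are extracted from the \emph{local diagonal dominance} (A3.3) of the Gramian matrix of $(\nabla_x\psi_i)$; this is exactly where that assumption enters the proof. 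After normalizing by $\lambda_k+\|p_k(T)\|$ and extracting subsequences, obtain $\lambda_k\to\lambda$, $v_k\to v$ uniformly, $q_k\to q$ in BV (pointwise of bounded variation with weak$^*$ convergence of $dq_k$), $\mu^i_k \xrightarrow{w^*} \nu^i$ in $\mathcal{C}^*([0,T];\R)$, $\xi^i_k \xrightarrow{w^*} \xi^i$ in $L^{\infty}$, and weak/measurable-selection limits of the subdifferential selections landing in the \emph{extended} objects $\partial^{(x,y)}_{\ell} f$, $\partial^{(x,y)}_{\ell} g$, $\partial^{xx}_{\ell}\psi_i$, with transversality limiting into $\lambda\,\partial^L_{\ell} J + N^L_S$. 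Finally, the measure $\nu^{r+1}$ arising from the truncating ball $\bar{B}_{\bar{\varepsilon}}(\bar{x}(t))$ is discarded because $\bar{x}(t)\in\inte\bar{B}_{\bar{\varepsilon}}(\bar{x}(t))$ for every $t$, so $\xi^{r+1}\equiv 0$ and $\supp\{\nu^{r+1}\}=\emptyset$.

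The main obstacle will be this limit passage, and specifically three points: (a) securing the uniform bounds on the $\xi^i_k$ and the total-variation bounds on $d\mu^i_k$ via (A3.3), which is delicate because the active-index set $\I^0_{(t,\bar{x}(t))}$ may vary with $t$; (b) identifying the weak limits of the Clarke selections with elements of the \emph{extended} (relative-interior) subdifferentials $\partial^{(x,y)}_{\ell}$, $\partial^{xx}_{\ell}$, $\partial^L_{\ell}$ rather than the larger full Clarke/Mordukhovich objects, which requires the states $(x_k,y_k)$ to lie in the interior of the truncated domain thanks to the invariance property of $\bar{\mathscr{A}}(\cdot,k)$; and (c) proving the measure nonnegativity $\langle q(t),\nabla_x\psi_i(t,\bar{x}(t))\rangle\, d\nu^i(t)\ge 0$, inherited from the nonnegativity of $\xi^i_k$ together with the complementary slackness that emerges in the limit. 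The final addendum follows from standard nondegeneracy reasoning: if $S=C_0\times\R^{n+l}$ and $\lambda=0$, the transversality condition forces $(q,v)(T)=0$, contradicting $\lambda+\|p(T)\|=1$; hence $\lambda>0$, and after rescaling we take $\lambda=1$.
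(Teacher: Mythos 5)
Your overall architecture matches the paper's: truncate the sweeping sets using the $\bar{\varepsilon}$ of Lemma \ref{final-a3.2(ii)} and the radius $\delta_o$, invoke the invariance result (Theorem \ref{theoremrelationship}) and Proposition \ref{approximating} to build approximating standard control problems with joint endpoint sets $S^{\gamma_k}(k)$, apply the classical nonsmooth maximum principle, pass to the limit with (A3.3) supplying the key estimates, and remove (A4.2) by a relaxation argument on the truncated data. Your treatment of the addendum (if $S=C_0\times\mathbb{R}^{n+l}$ and $\lambda=0$ then transversality forces $p(T)=0$, contradicting nontriviality, so $\lambda>0$ and one rescales) is a legitimate alternative to the paper's route, which instead obtains $\lambda_{\gamma_k}=1$ already at the approximating stage because the right endpoint is free there. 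Two steps, however, would not go through as you describe them.

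First, your approximating problem is built with a quadratic state-tracking penalty $\alpha\int_0^T\|(x,y)-(\bar{x},\bar{y})\|^2dt$ and ``$\beta$ as an endpoint penalty parameter.'' In the paper, $(P^{\alpha,\beta}_{\gamma_k})$ comes out of Ekeland's variational principle: the penalty is $\alpha\|u-u_{\gamma_k}\|_{L^1}$ plus $\alpha$ times the distance of the endpoint vector to $(c_{\gamma_k},d_{\gamma_k},e_{\gamma_k},\omega_{\gamma_k})$, and $\beta$ enters the \emph{dynamics} through $f^{\beta},g^{\beta}$ as the convex-combination parameter blending $\bar{u}$ with $u$. The $L^1$ control term is what yields $u_{\gamma_k}\to\bar{u}$ strongly in $L^1$, and that convergence is indispensable in your limit passage: without it you cannot identify the limits of the Clarke selections as elements of $\partial^{(x,y)}_{\ell}f(t,\bar{x}(t),\bar{y}(t),\bar{u}(t))$ and $\partial^{(x,y)}_{\ell}g(t,\bar{x}(t),\bar{y}(t),\bar{u}(t))$, nor recover the maximization condition at $\bar{u}(t)$. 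A tracking penalty on the state alone provides no mechanism forcing the controls to converge.

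Second, the measures. You propose $\nu^i$ as weak$^*$ limits of $d\mu^i_k:=\xi^i_k\,dt$, with ``total-variation bounds on $d\mu^i_k$'' and ``$L^\infty$ bounds on $\xi^i_k\langle\nabla_x\psi_i(t,x_k),q_k\rangle$'' as the estimates extracted from (A3.3). Both of those bounds are trivially available from the uniform bounds on $\xi^i_k$ and $q_k$ and need no diagonal dominance; moreover, the weak$^*$ limit of $\xi^i_k\,dt$ is the absolutely continuous measure $\xi^i\,dt$, which already enters the adjoint equation through the $\vartheta^i$ terms and cannot account for the jumps of $q$ (recall $q$ is only $BV$). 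The measures must instead be taken, as in the paper, as weak$^*$ limits of $d\nu^i_{\gamma_k}:=\gamma_k\,\xi^i_{\gamma_k}(t)\,\langle\nabla_x\psi_i(t,x_{\gamma_k}(t)),q_{\gamma_k}(t)\rangle\,dt$, and what (A3.3) (after the reduction to strict diagonal dominance via $\tilde{\psi}_i=\beta_i\psi_i$) actually delivers is a uniform $L^1$ bound on these $\gamma_k$-weighted densities, i.e.\ on the term $\mathcal{Y}_{\gamma_k}$ in $\dot{q}_{\gamma_k}$, hence a uniform bound on $\|\dot{q}_{\gamma_k}\|_1$. That is what gives $q_{\gamma_k}\to q$ in $BV$, the weak$^*$ convergence of $d\nu^i_{\gamma_k}$ to $\nu^i$, and the nonnegativity of $\langle q,\nabla_x\psi_i\rangle\,d\nu^i$ in the limit. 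As written, your limit passage produces no singular part in $dq$ and therefore cannot yield the stated adjoint measure equation.
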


\subsection{Global results}{\label{globalmain}}
We now introduce the following {\it global} versions of the previous assumptions that shall be used for the global results in our paper. 
	(\textbf{A3.1})$_G$ and (\textbf{A4})$_G$ are, respectively,  assumptions (A3.1) and (A4) when satisfied for $\bar{\delta}=\infty$ {(the same constants' labels are kept), that is, $\bar{x},\bar{y}$ and the balls around them are not involved therein, and	(\textbf{A3.2})$_G$	 is  the following global version of ({A3.2}) which will imply  the uniform prox-regularity of $C(t)$: }\\
\\
	(\textbf{A3.2})$_G$  \textbf{Global version of} (A3.2){\bf :} For every $(t,x)\in  \text{Gr}\; C(\cdot)$ with $\I^0_{(t,x)}\not=\emptyset$ we have 
	$$\left[ \sum_{i\in\I^0_{(t,x)}}\lambda_i\nabla_x\psi_i(t,x) =0, \text{ with each } \lambda_i \ge 0\right] \Longrightarrow \left[\lambda_i =0, \; \, \forall i \in  \I^0_{(t,x)}\right].$$ 
	
The {\it second}	main result of our paper consists of obtaining the existence and uniqueness of solutions for  the Cauchy problem corresponding to $(D)$ via penalty approximations. Its proof can be found in Section \ref{global}. 
\begin{theorem} \label{cauchyglobal} \textnormal{\textbf{[Existence \& uniqueness of Lipschitz solutions  for  $(D)$]}} 
	Assume that $\psi_i(\cdot,\cdot)$ continuous, and,  for $t\in [0,T]$, $C(t)$ is non-empty, closed,  and given by \eqref{C}. Assume that $(A3.1)$$_G$, $(A3.2)_G$ and $(A4.1)_G$ are satisfied, and that $\Gr C(\cdot)$ is bounded. Given $(x_0,y_0)\in C(0)\times \R^l$ and $u \in \mathcal{U}$, the Cauchy problem corresponding to $(D)$ and $((x(0),y(0));u) = ((x_0,y_0);u)$ has a unique solution $(x,y)$, which is Lipschitz and is the uniform limit of a subsequence (not relabeled) of $(x_{\gk},y_{\gk})_k$,   where  $(x_{\gk},y_{\gk})$ is the solution of a standard control system {corresponding to $u$ with $x_{\gk}(t)\in \inte C(t)$, for all $ t\in [0,T]$}.  \\
\end{theorem}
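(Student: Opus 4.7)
The plan is to approximate $(D)$ by a sequence of standard controlled ODEs obtained through the exponential-penalty substitution, namely
\begin{equation*}
(D_{\gk})\quad \dot x_{\gk}(t)= f(t,x_{\gk}(t),y_{\gk}(t),u(t))-\sum_{i=1}^{r}\gk\,e^{\gk \psi_i(t,x_{\gk}(t))}\,\nabla_x\psi_i(t,x_{\gk}(t)),\quad \dot y_{\gk}(t)=g(t,x_{\gk}(t),y_{\gk}(t),u(t)),
\end{equation*}
with $\gk\to\infty$ and initial condition $(x_{0,\gk},y_0)$, where $x_{0,\gk}\in\inte C(0)$ is a small inward perturbation of $x_0$ (possible thanks to the constraint qualification $(A3.2)_G$ at $x_0$) with $x_{0,\gk}\to x_0$. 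Carath\'eodory conditions from $(A3.1)_G$ and $(A4.1)_G$ provide local solutions; the aim is to show that every such solution extends to $[0,T]$, that $(x_{\gk},y_{\gk})$ is uniformly Lipschitz in $k$ with $x_{\gk}(t)\in\inte C(t)$ throughout, and that any uniform limit is a Lipschitz solution of $(D)$.

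The first technical step is interior invariance. Adapting the invariance mechanism of Theorem \ref{theoremrelationship} (proved in the local, truncated setting) to the present global, untruncated setting, which is available because $\Gr C(\cdot)$ is bounded and $(A3.2)_G$ holds on all of $\Gr C(\cdot)$, I would construct a nested sequence of inner approximants $C^{\gk}(t)\subset\inte C(t)$, $C^{\gk}(t)\nearrow C(t)$, that are forward invariant under $(D_{\gk})$. Together with the a priori bound on $(x_{\gk},y_{\gk})$ supplied by the boundedness of $\Gr C(\cdot)$ and the $M_h$-bound on $(f,g)$ from $(A4.1)_G$, this both extends the solutions to all of $[0,T]$ and confines $x_{\gk}(t)$ to $\inte C(t)$. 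The uniform $\rho$-prox-regularity of $C(t)$ that $(A3.1)_G$ and $(A3.2)_G$ force globally, together with the Lipschitz character of the data, should then yield a uniform $L^{\infty}$ bound on the penalty sum, hence a uniform Lipschitz bound on $x_{\gk}$.

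With this in hand, Arzel\`a--Ascoli extracts a subsequence $(x_{\gk},y_{\gk})\to(x,y)$ converging uniformly to a Lipschitz pair with $x(t)\in C(t)$. Writing $\xi_{\gk}^i(t):=\gk\,e^{\gk\psi_i(t,x_{\gk}(t))}\ge 0$, passing to weak-$*$ limits in $L^\infty$ yields nonnegative selections $\xi^i$; for indices $i\notin\I^0_{(t,x(t))}$ one has $\psi_i(t,x(t))<0$ on a neighborhood, so $\xi^i_{\gk}\to 0$ exponentially and $\xi^i$ vanishes there. Under $(A3.2)_G$ the Clarke normal cone has the explicit representation $N_{C(t)}(x)=\{\sum_{i\in\I^0_{(t,x)}}\lambda_i\nabla_x\psi_i(t,x):\lambda_i\ge 0\}$, so $\sum_i\xi^i(t)\nabla_x\psi_i(t,x(t))\in N_{C(t)}(x(t))$, and $(x,y)$ solves $(D)$ with the required initial datum. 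Uniqueness then follows from a Gronwall argument: given two Lipschitz solutions sharing the initial datum, the hypomonotonicity of $N_{C(t)}(\cdot)$ on the $\rho$-prox-regular set $C(t)$ combined with the Lipschitz property of $(f,g)$ in $(x,y)$ from $(A4.1)_G$ yields a differential inequality of the form $\tfrac{d}{dt}(\|x_1-x_2\|^2+\|y_1-y_2\|^2)\le K(\|x_1-x_2\|^2+\|y_1-y_2\|^2)$, which closes the argument.

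The hardest part is the third step: producing a uniform (in $k$) bound on the penalty multipliers $\xi_{\gk}^i$ and then identifying their weak-$*$ limit as a bona fide selection of $N_{C(t)}(x(t))$. The exponential penalty can in principle concentrate unboundedly near $\bdry C(t)$, and without the global constraint qualification $(A3.2)_G$ the tidy representation of $N_{C(t)}$ through active gradients would break down, making both the vanishing of $\xi^i$ off the active index set and the normal-cone membership much more delicate to justify.
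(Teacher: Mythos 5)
Your proposal follows essentially the same route as the paper: exponential-penalty approximation $(D_{\gk})$ started from an inward-perturbed initial point, forward invariance of inner approximating sets $C^{\gk}(t)\subset\inte C(t)$ (and of the smaller sublevel sets that give the uniform $L^\infty$ bound $\sum_i\gk e^{\gk\psi_i}\le\frac{2\mu}{\eta^2}$), Arzel\`a--Ascoli plus weak-$*$ limits of the multipliers, identification of the limit via the normal-cone formula guaranteed by $(A3.2)_G$, and uniqueness by hypomonotonicity of the prox-regular normal cone together with Gronwall. This matches the paper's proof (which packages the last identification and uniqueness steps into Lemmas \ref{convergence} and \ref{lipschitz-global}), so the proposal is correct.
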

The {\it third} main result demonstrates the \textit{global} existence of an optimal solution for $(P)$ when the global assumptions are satisfied. The proof can also be found in Section \ref{global}.
\begin{theorem}\label{existenceglobal} \textnormal{\textbf{[Global existence of optimal solutions for $(P)$] }} 
	Assume that $(A1)$ holds, $\psi_i(\cdot,\cdot)$ continuous, and,  for $t\in [0,T]$, $C(t)$ is non-empty, closed,  and given by \eqref{C}. Assume that $(A3.1)$$_G$, $(A3.2)_G$ and $(A4)_G$ are satisfied, and that $\Gr C(\cdot)$ and $\pi_2(S)$ are bounded, where $\pi_2$ is the projection of $S$ into the second component. Let ${J}:\mathbb{R}^n \times \mathbb{R}^l \times \mathbb{R}^n \times \mathbb{R}^l \to \mathbb{R} \cup \{ \infty\}$ be merely lower semicontinuous.  Then, $(P)$ has a global optimal solution if and only if it has at least one admissible pair $(({x},{y}), {u}) $ with $({x}(0),{y}(0),{x}(T),{y}(T)) \in \dom J$. 
\end{theorem}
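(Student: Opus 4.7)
The plan is to apply the direct method of the calculus of variations. The ``only if'' direction is immediate: a global optimal solution is admissible with finite cost, so its endpoint tuple lies in $\dom J$. For the ``if'' direction, fix an admissible pair $((\tilde{x},\tilde{y}),\tilde{u})$ with endpoint tuple in $\dom J$, so that $m:=\inf(P)\le J(\tilde{x}(0),\tilde{y}(0),\tilde{x}(T),\tilde{y}(T))<\infty$. Pick a minimizing sequence $((x_n,y_n),u_n)$ of admissible pairs with $J((x_n,y_n)(0),(x_n,y_n)(T))\to m$; the goal is to extract a limit that is admissible and attains $m$.

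\emph{Step 1: uniform compactness.} Boundedness of $\Gr C(\cdot)$ confines each $x_n$ to a fixed compact set. By $(A4)_G$ (the global version of $(A4)$), $(f,g)$ is uniformly bounded by $M_h$ on the relevant domain, so $\dot{y}_n=g(\cdot,x_n,y_n,u_n)$ is bounded by $M_h$ in $L^\infty$; combined with $y_n(0)\in\pi_2(S)$ bounded, this makes $(y_n)$ uniformly bounded. Under $(A3.2)_G$, which forces uniform $\rho$-prox-regularity of $C(t)$ (cf.\ the discussion preceding Theorem \ref{cauchyglobal}), the penalty-approximation strategy underlying that theorem yields a uniform Lipschitz constant for $x_n$ depending only on $M_h$ and the prox-regularity data. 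Thus $(x_n,y_n)$ is equi-Lipschitz. By Arzela--Ascoli and Banach--Alaoglu, a subsequence (not relabeled) satisfies $(x_n,y_n)\to(x^*,y^*)$ uniformly with $(x^*,y^*)$ Lipschitz and $(\dot{x}_n,\dot{y}_n)\rightharpoonup(\dot{x}^*,\dot{y}^*)$ weakly-$*$ in $L^\infty$. Closedness of $\Gr C(\cdot)$ and of $S$ give $x^*(t)\in C(t)$ for every $t$ and $((x^*,y^*)(0),(x^*,y^*)(T))\in S$.

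\emph{Step 2: passage to the limit in the dynamics.} Define $w_n(t):=(f,g)(t,x_n(t),y_n(t),u_n(t))\in h(t,x_n(t),y_n(t),U(t))$. The $w_n$ are uniformly bounded, so along a subsequence $w_n\rightharpoonup w^*=(w_1^*,w_2^*)$ weakly in $L^2$. Using Mazur's lemma, the uniform convergence $(x_n,y_n)\to(x^*,y^*)$, the continuity of $h(t,\cdot,\cdot,\cdot)$ on $U(t)$, and the convexity of $h(t,x^*(t),y^*(t),U(t))$ supplied by $(A4)_G$, a standard Aubin-type convergence argument yields $w^*(t)\in h(t,x^*(t),y^*(t),U(t))$ a.e. Filippov's measurable selection theorem then produces $u^*\in\mathcal{U}$ with $(f,g)(t,x^*(t),y^*(t),u^*(t))=w^*(t)$ a.e. Passing to the weak-$*$ limit in $\dot{y}_n=g(\cdot,x_n,y_n,u_n)$ gives $\dot{y}^*=w_2^*=g(\cdot,x^*,y^*,u^*)$. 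Setting $\eta_n:=f(\cdot,x_n,y_n,u_n)-\dot{x}_n\in N_{C(t)}(x_n(t))$, we have $\eta_n\rightharpoonup\eta^*:=w_1^*-\dot{x}^*$ weakly in $L^2$, and by the closedness of the graph of $(t,x)\mapsto N_{C(t)}(x)$ under uniform $\rho$-prox-regularity---extracted via the hypomonotonicity inequality tested against admissible variations combined with Mazur's lemma---one gets $\eta^*(t)\in N_{C(t)}(x^*(t))$ a.e. Hence $\dot{x}^*\in f(\cdot,x^*,y^*,u^*)-N_{C(\cdot)}(x^*)$, and $((x^*,y^*),u^*)$ is admissible. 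Finally, uniform convergence at the endpoints together with lower semicontinuity of $J$ gives
\[
J((x^*,y^*)(0),(x^*,y^*)(T))\ \le\ \liminf_{n\to\infty} J((x_n,y_n)(0),(x_n,y_n)(T))\ =\ m,
\]
so $((x^*,y^*),u^*)$ is a global minimizer of $(P)$.

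\emph{Main obstacle.} The hardest step is closing the sweeping inclusion: the multifunction $x\mapsto -N_{C(t)}(x)$ is neither convex-valued nor upper semicontinuous in any naively useful topology, and its closure under the combination of uniform convergence of $x_n$ and merely weak-$*$ convergence of $\dot{x}_n$ must be extracted by exploiting the uniform $\rho$-prox-regularity of $C(t)$---the very geometric property underwritten by $(A3.2)_G$. A secondary subtlety is that the joint convexity in $(A4)_G$ applied to $h=(f,g)$ is exactly what lets Filippov's lemma produce a \emph{single} measurable control $u^*$ realizing both components $w_1^*$ and $w_2^*$ simultaneously, rather than two unrelated selections---an issue that would be genuinely obstructive under separate convexity of $f(t,x,y,U(t))$ and $g(t,x,y,U(t))$ alone.
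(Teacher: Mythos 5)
Your proposal is correct in substance, but the way you close the dynamics is genuinely different from the paper's route, so a comparison is worthwhile. The paper never works with the normal cone at the limiting stage: using the global constraint qualification $(A3.2)_G$ it replaces the sweeping inclusion by the multiplier form of Lemma \ref{lipschitz-global} (each admissible $((x_n,y_n),u_n)$ satisfies $\dot x_n=f-\sum_i\lambda_i^n\nabla_x\psi_i$, $\dot y_n=g$, with the uniform bound $\|\sum_i\lambda_i^n\|_\infty\le \mu/(4\eta^2)$ and hence equi-Lipschitz states), extracts weak-$*$ limits of the multipliers via Lemma \ref{convergence}$(i)$, recovers a single control by the support-function/Filippov argument of Lemma \ref{convergence}$(ii)$, Case 2 (exactly the joint-convexity point you flag), and then re-enters the normal cone by showing each limit multiplier vanishes off $I^0_i(\hat x)$, so that admissibility follows from the equivalence in Lemma \ref{lipschitz-global}. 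You instead keep $N_{C(t)}$ abstract and close the inclusion through the hypomonotonicity characterization of uniformly prox-regular sets, tested against selections of $C(\cdot)$ and passed to the limit by weak convergence (or Mazur), in the spirit of \cite{edmond,cst}; this is legitimate here because $\|f(\cdot,x_n,y_n,u_n)-\dot x_n\|_\infty$ is uniformly bounded, and it buys an argument that does not need the explicit formula \eqref{normalconeg} at the limit, whereas the paper's route buys the multiplier structure that it reuses throughout (and a slightly more self-contained support argument in place of your closed-graph step). The convexity/Filippov step producing one control for both components is the same in both proofs.

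One caveat on your Step 1: the uniform Lipschitz constant for the minimizing sequence does not come from ``$M_h$ and the prox-regularity data'' alone, since $C(\cdot)$ is not assumed to move absolutely continuously or in a Lipschitz way; in this paper the only source of that bound is the constraint qualification $(A3.2)_G$ together with $(A3.1)_G$, through the multiplier estimate in Lemma \ref{lipschitz-global} (equivalently, through uniqueness plus the Lipschitz solution built by penalization in Theorem \ref{cauchyglobal}, since uniqueness identifies each $x_n$ with that constructed solution). So even on your route you cannot bypass that CQ-based estimate; with the attribution corrected, your argument is complete.
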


	\section{Truncated Sweeping Sets}\label{consequence-local}\label{auxiliary}
  	In this section, we provide results that are instrumental for Section \ref{maximumprinciple},  where the penalty approximation method shall be used to prove  the maximum principle for a $\bar{\delta}$-strong local minimizer $((\x,\y),\bar{u})$   (Theorem \ref{pmp-ps}).  To avoid imposing the {\it boundedness} of $\Gr C(\cdot)$ {and a {\it global} constraint qualification on the sweeping sets $C(t)$ of $(D)$, we shall truncate $C(t)$} by a ball around $\x(t)$ of a specific radius $\bar{\varepsilon}$ (that will be determined in Remark \ref{prox-tilde{C}}),   so that the  uniform prox-regularity of $C(t)\cap \bar{B}_{\bar{\varepsilon}}(\x(t))$ is ensured, its  {\it constraint qualification}  is satisfied, and its  normal cone explicit formula is valid (see Remarks \ref{prox-tilde{C}}- \ref{C-prop} and Lemma \ref{final-a3.2(ii)}). In other words, the so-truncated sweeping set is  the sub-level set of $\psi_1(t,\cdot),\cdots,\psi_r(t,\cdot)$, and $\psi_{r+1}(t,\cdot)$, where $\psi_{r+1}$ is given by 	\begin{equation}\label{psi-r+1}
		\psi_{r+1}(t,x)=\psi_{r+1}(t,x;\bar{x},\bar{\varepsilon}):= \dfrac{1}{2}[\|x-\bar{x}(t)\|^2-\bar{\varepsilon}^2].
	\end{equation}  Moreover, the differential equation in $(D)$ will now be replaced by a differential inclusion obtained by adding    $- N_{\bar{B}_{\bar{\delta}}(\bar{y}(t))}$ to its right hand side.  For this purpose,  we define $\varphi:[0,T]\times \R^l \longrightarrow \R$ as
	\begin{equation}\label{phi}
		\varphi(t,y)=\varphi(t,y;\bar{y}, \bar{\delta}):= \dfrac{1}{2}[\|y-\bar{y}(t)\|^2-\bar{\delta}^2],
	\end{equation}
	and we write \begin{equation} \label{ball-phi} \bar{B}_{\bar{\delta}}(\bar{y}(t)):=\{y\in \R^l:\varphi(t,y)\le 0\}.  \end{equation} 
	Denote by $(\bar{D})$  the aforementioned {\it truncated} system obtained from  $(D)$ by localizing $C(\cdot)$ around $\bar{x}$ and $\mathbb{R}^l$ around $\bar{y}$, that is, 
	\begin{equation} \label{Dtilda} 
		(\bar{D})\begin{cases}
			\dot{x}(t) \in f(t,x(t),y(t), u(t)) - N_{C(t) \cap \bar{B}_{\bar{\varepsilon}}(\bar{x}(t))}(x(t)), \text{ a.e. } t \in [0,T], \\
			\dot{y}(t) \in g(t, x(t), y(t), u(t)) -N_{\bar{B}_{\bar{\delta}}(\bar{y}(t))}(y(t)), \text{ a.e. } t \in [0,T]. 
					\end{cases}
	\end{equation}\label{dynamic-tildeD}
	Notice that the  sweeping set  for ``$x$'' in $(\bar{D})$ is 
		$$  C(t) \cap C_{r+1}(t):=C(t) \cap \bar{B}_{\bar{\varepsilon}}(\bar{x}(t)) = \bigcap\limits_{i=1}^{r+1} \{ x \in \mathbb{R}^n: \psi_i(t,x) \le 0\} =C(t) \cap \{ x \in \mathbb{R}^n: \psi_{r+1}(t,x) \le 0\},$$ 
	and hence, it is always generated by at least two functions, while the sweeping inclusion for ``$y$''  is generated by the single function $\varphi$. \\
		\begin{remark}\label{admissibility} 
		Any admissible pair $((x,y),u)$ for $(D)$  such that $(x(t),y(t))\in \bar{\mathscr{N}}_{(\bar{\varepsilon},  \bar{\delta})}(t)$ for all $ t\in [0,T],$ 
		is also  admissible  for $(\bar{D})$.  On the other hand, any admissible pair $((x,y),u)$ for $(\bar{D})$ such that $({x}(t),{y}(t))\in \bar{\mathscr{N}}_{(\delta_1,  \delta_2)}(t)$ with $\delta_1 < \bar{\varepsilon}$ and $\delta_2 < \bar{\delta}$ is also admissible for $(D)$. 
		This is due to the fact that  $({x}(t),{y}(t))\in  {B}_{\bar{\varepsilon}}(\bar{x}(t))\times {B}_{\bar{\delta}}(\bar{y}(t)), \forall t \in [0,T]$, and hence, using the local property of the proximal normal cone, we have
		$N_{C(t)}^P({x}(t))=N_{C(t) \cap \bar{B}_{\bar{\varepsilon}}(\bar{x}(t)) }^P({x}(t))$ and $\{0\}= N^P_{\bar{B}_{\bar{\delta}}(\bar{y}(t))}({y}(t))$. In particular, $((\bar{x}, \bar{y}), \bar{u})$ solves $(D)$ if and only if it solves $(\bar{D})$. \\
	\end{remark}
For  $x(\cdot)\in \mathcal{C}( [0,T]; \mathbb{R}^n)$ such that $x(t)\in C(t) \cap \bar{B}_{\bar{\varepsilon}}(\bar{x}(t))$ $\forall t \in [0,T]$, and  $(\tau,z) \in $ Gr $\left (C(\cdot) \cap \bar{B}_{\bar{\varepsilon}}(\bar{x}(\cdot)) \right)$, we define the following  sets obtained through adding to  those in  \eqref{Ibtaui}-\eqref{Isub(t,x)}   the extra constraint produced by $\psi_{r+1}$:
\begin{eqnarray}
	&&	I_{r+1}^{\-}(x):=\{t\in [0,T]:  x(t)\in B_{\bar{\varepsilon}}(\bar{x}(t))\}\;\;\hbox{and}\;\;I_{r+1}^{0}(x):= [0,T]\setminus I_{r+1}^{\-}(x), \nonumber\; \;  \\
	&&	\bar{I}^{\-}(x):=\bigcap\limits_{i=1}^{r+1}  I_i^{\-}(x)= \{t\in [0,T]:  x(t)\in \inte C(t) \cap B_{\bar{\varepsilon}}(\bar{x}(t))\}={I}^{\-}(x) \cap \{t\in [0,T]:  x(t)\in B_{\bar{\varepsilon}}(\bar{x}(t))\}, \nonumber \\
	&&	\bar{I}^{0}(x)=[0,T]\setminus \bar{I}^{\-}(x) = {I}^{0}(x) \cup \{ t\in [0,T]:  \|x(t)-\bar{x}(t)\| = \bar{\varepsilon} \}	=\{t\in [0,T]: \bar{\I}^0_{(t,x(t))}\neq\emptyset\}, \nonumber\\
&&\text{where }\bar{\I}^0_{(\tau,z)}:=\{i\in\{1,\dots,r, r+1\} : \psi_i(\tau, z)=0\}  \label{newI}.	
\end{eqnarray} 	
Since  $\bar{x}(t)\in B_{\bar{\varepsilon}}(\bar{x}(t))$,  then $\psi_{r+1}(t,\bar{x}(t))<0$ and hence, 
$\bar{I}^0(\bar{x})=I^0{(\bar{x})} $ and, for $t\in \bar{I}^0(\bar{x})$, $\bar{\I}^0_{(t,\bar{x}(t))}=\I^0_{(t,\bar{x}(t))}.$
	\subsection{Preparatory results  for  truncated sweeping sets}\label{truncated}
	In this subsection, we present some properties pertaining to both $C(t)$ and $C(t) \cap \bar{B}_{\bar{\varepsilon}}(\bar{x}(t))$ as well as  the  sweeping processes $(D)$ and $(\bar{D})$. 	\\
	 The following lemma provides an equivalent condition to (A3.2) which allows to obtain the formula for  the normal cone  to  $C(t)$  at points $x$ in $C(t)$ near $\bar{x}(t)$ (Remark \ref{normal-C}).   
	\begin{lemma}\label{final-a3.2(i)} \textnormal{\textbf{[Assumption (A3.2)]}} Let $C(\cdot)$ satisfying $(A2)$ for $\rho>0$. Consider  $\bar{x}(\cdot)\in \mathcal{C}([0,T];\mathbb{R}^n)$ with $\bar{x}(t)\in C(t)$ for all $ t \in [0,T]$, and $\bar{\delta}>0$  such that $(A3.1)$ holds at $(\bar{x};\bar{\delta})$. Then, the validity of assumption $(A3.2)$  at $\bar{x}$ is equivalent to the existence of $0<\varepsilon_o<\bar{ \delta}$ and  $\eta_o >0$  such that  
			\begin{equation}\label{3.1(i)} \left\|\sum_{i\in\I^0_{(t,c)}}\lambda_i\nabla_x\psi_i(t,c)\right\|>2\eta_o, \;\;  \forall (t,c)\in 
				\left\{(\tau,x)\in \text{Gr} \,\left(C(\cdot) \cap\bar{B}_{\varepsilon_o}(\bar{x}(\cdot)\right): \I^0_{(\tau,x)}\not=\emptyset\right\},	\end{equation} 
			where $\I^0_{(\tau,x)}$ is defined in \eqref{Isub(t,x)} and $(\l_i)_{i\in\I^0_{(t,c)}}$ is any sequence of nonnegative numbers satisfying $\sum_{i\in\I^0_{(t,c)}}\l_i=1. $
	\end{lemma}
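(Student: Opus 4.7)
The plan is to establish the two implications separately, with the reverse implication being essentially immediate and the forward implication reducing to a routine compactness argument. For ($\Leftarrow$), assume the uniform bound \eqref{3.1(i)} holds and fix $t\in I^0(\bar{x})$. Then $(t,\bar{x}(t))$ lies in the graph over which \eqref{3.1(i)} applies, and $\I^0_{(t,\bar{x}(t))}\neq\emptyset$. If nonnegative coefficients $(\lambda_i)_{i\in \I^0_{(t,\bar{x}(t))}}$ satisfy $\sum_i \lambda_i \nabla_x\psi_i(t,\bar{x}(t))=0$ but are not all zero, then the renormalized weights $\mu_i:=\lambda_i/\sum_j\lambda_j$ sum to one and yield $\|\sum_i \mu_i \nabla_x\psi_i(t,\bar{x}(t))\|=0$, contradicting \eqref{3.1(i)}. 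Hence (A3.2) holds.

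For ($\Rightarrow$), I would argue by contradiction. Negating \eqref{3.1(i)} with the choices $\varepsilon_o=\eta_o=1/n$ yields, for each sufficiently large $n$, points $t_n\in [0,T]$, $c_n\in C(t_n)\cap \bar{B}_{1/n}(\bar{x}(t_n))$ with $\I^0_{(t_n,c_n)}\neq\emptyset$, together with nonnegative $(\lambda_i^n)_{i\in\I^0_{(t_n,c_n)}}$ summing to $1$, such that
$$\left\|\sum_{i\in \I^0_{(t_n,c_n)}}\lambda_i^n \nabla_x\psi_i(t_n,c_n)\right\|\le \frac{2}{n}.$$
Since $\I^0_{(t_n,c_n)}\subseteq \{1,\dots,r\}$ takes only finitely many values, I may extract a subsequence along which $\I^0_{(t_n,c_n)}\equiv I^*$ for some nonempty $I^*$. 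By compactness of $[0,T]$, extract further so $t_n\to t^*$; continuity of $\bar{x}$ combined with $\|c_n-\bar{x}(t_n)\|\le 1/n$ then forces $c_n\to \bar{x}(t^*)$. Each bounded sequence $(\lambda_i^n)_n$ has a convergent subsequence with limit $\lambda_i^*\ge 0$, and $\sum_{i\in I^*}\lambda_i^*=1$.

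By continuity of each $\psi_i$, $\psi_i(t^*,\bar{x}(t^*))=0$ for every $i\in I^*$, so $I^*\subseteq \I^0_{(t^*,\bar{x}(t^*))}$ and hence $t^*\in I^0(\bar{x})$. For $n$ large enough that $1/n<\bar{\delta}$, the points $(t_n,c_n)$ lie in $\Gr(C(\cdot)\cap\bar{B}_{\bar{\delta}}(\bar{x}(\cdot)))$, so the Lipschitz continuity of $\nabla_x\psi_i$ supplied by (A3.1) yields $\nabla_x\psi_i(t_n,c_n)\to \nabla_x\psi_i(t^*,\bar{x}(t^*))$. Passing to the limit in the displayed inequality gives $\sum_{i\in I^*}\lambda_i^*\nabla_x\psi_i(t^*,\bar{x}(t^*))=0$, and extending by $\lambda_j^*:=0$ for $j\in \I^0_{(t^*,\bar{x}(t^*))}\setminus I^*$ exhibits a nontrivial (the coefficients sum to $1$) nonnegative combination of $\{\nabla_x\psi_i(t^*,\bar{x}(t^*))\}_{i\in\I^0_{(t^*,\bar{x}(t^*))}}$ that vanishes, contradicting (A3.2) at $t^*$. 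I expect no serious obstacle beyond the bookkeeping of the pigeonhole step that stabilizes the active-index sets and the verification that $t^*\in I^0(\bar{x})$, which is needed so that (A3.2) can be invoked at the limit.
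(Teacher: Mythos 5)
Your proposal is correct and follows essentially the same route as the paper: negate \eqref{3.1(i)} with $\varepsilon_o=\eta_o=1/n$, stabilize the active index sets along a subsequence, pass to the limit using the Lipschitz bounds of $(A3.1)$, extend the limiting coefficients by zero, and contradict $(A3.2)$. The only cosmetic differences are that you spell out the easy reverse implication (which the paper dismisses in one line) and prove the index-set stabilization inline by pigeonhole and continuity rather than citing the paper's auxiliary Lemma \ref{lemmaaux1}, which contains exactly that statement.
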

	
	\begin{proof} It suffices to show that (A3.2) implies $\eqref{3.1(i)}$. If not, then there exist sequences $t_n\in [0,T]$, $c_n\in C(t_n)\cap \bar{B}_{\frac{1}{n}}(\bar{x}(t_n))$   with $\I^0_{(t_n,c_n)} \neq\emptyset$, and  $(\l_i^n)_{i\in\I^0_{(t_n,c_n)}}$ with $\sum_{i\in\I^0_{(t_n,c_n)}}\l_i^n=1$ and $\l_i^n\ge 0$, for all $i\in \I^0_{(t_n,c_n)}$ and $n\in \N$, such that
		$\left\|\sum_{i\in\I^0_{(t_n,c_n)}}\lambda_i^n\nabla_x\psi_i(t_n,c_n)\right\|\le \frac{2}{n}, \forall n\in\mathbb{N}.$ As up to a subsequence,   $(t_n, c_n)\rightarrow (t_o,c_o):=(t_o, \bar{x}(t_o))$, Lemma \ref{lemmaaux1}  yields the existence of $ \emptyset\not=\J_o\subset \{1,\dots,r\}$ and a subsequence of  $(t_n, c_n)_n$  we do not relabel, such that  $\I^{0}_{(t_n,c_n)}= \J_o\subset \I^{0}_{(t_o,c_o)}\;\,\hbox{for all}\;\,n\in\N.$ It follows that
		\begin{equation}\label{v1}
			\left\|\sum_{i\in\J_o}\lambda_i^n\nabla_x\psi_i(t_n,c_n)\right\|\le \frac{2}{n} \text{,}\;\; \sum_{i\in\J_o}\l_i^n=1 \; (\forall n \in \N) ,\;\;  \text{and,}\;\; \l_i^n\ge 0 \;(\forall i\in \J_o,  \forall\; n\in\N). \end{equation}
		Hence, after going to a subsequence if necessary, it follows  that for all $i\in \J_o$, $\l_i^n\rightarrow \l_i^o\ge0$ with $\sum_{i\in\J_o}\l_i^o=1$. Upon taking the limit as $n\rightarrow\infty$ in \eqref{v1} and by defining $\lambda_i^0=0$ for all $i\in \I^0_{(t_o,c_o)} \setminus \mathcal{J}_o$, (A3.1) implies $\sum_{i\in\I^0_{(t_o,c_o)}}\lambda_i^o\nabla_x\psi_i(t_o,c_o)=0$, which contradicts (A3.2).
\end{proof}
	
		\begin{remark}\label{normal-C}
		Let $C(\cdot)$ satisfying $(A2)$ for $\rho>0$. Consider  $\bar{x}(\cdot)\in \mathcal{C}([0,T];\mathbb{R}^n)$ with $\bar{x}(t)\in C(t)$ for all $ t \in [0,T]$, and $\bar{\delta}>0$  such that $(A3.1)$ and $(A3.2)$ hold at $(\bar{x};\bar{\delta})$. Let  $\varepsilon_o$   be the constant from Lemma \ref{final-a3.2(i)}. Then,   the prox-regularity assumption of $C(t)$, equation \eqref{3.1(i)}, \cite[Corollary 4.15]{prox}, and \cite[Corollary 10.44]{clarkebook},  yield  that\\
		$$ N_{C(t)}(x)= N_{C(t)}^P(x)= N_{C(t)}^L(x),\quad \forall x\in C(t),$$
		and,  for all  $(t,x)\in \text{Gr} \,\left(C(\cdot) \cap\bar{B}_{\varepsilon_o}(\bar{x}(\cdot)\right)$,			
		\begin{equation} \label{normalcone-C}
			N_{C(t)}(x) =
			\begin{cases} \Bigg\{\sum_{i\in\mathcal{I}^0_{(t,x)}}\lambda_i\nabla_x \psi_i(t,x): \lambda_i \ge 0\Bigg\} \neq\{0\} \; & \textnormal{ if } x \in \textnormal{bdry }C(t) \\
				\{0 \} \; & \textnormal{ if } x \in \textnormal{int }C(t).
			\end{cases}
		\end{equation}
		
	\end{remark}	
	 Next we show that,  in addition to providing  in Remark \ref{normal-C} the formula  for the normal cone to $C(t)$,  Lemma  \ref{final-a3.2(i)} is also essential in proving a key result, namely,    the closed graph property of $N_{C(\cdot)}(\cdot)$ in the domain where \eqref{normalcone-C} is valid.	 	 
	 	\begin{lemma}\label{closed} Let $C(\cdot)$ satisfying $(A2)$ for $\rho>0$. Consider  $\bar{x}(\cdot)\in \mathcal{C}([0,T];\mathbb{R}^n)$ with $\bar{x}(t)\in C(t)$ for all $ t \in [0,T]$, and $\bar{\delta}>0$  such that $(A3.1)$ and $(A3.2)$ hold at $(\bar{x};\bar{\delta})$. Then,  for $\varepsilon_o$ obtained in Lemma \ref{final-a3.2(i)}, the set-valued map $(t,y) \to N_{C(t)}(y)$  
	has closed graph on the set   Gr\,$\left(C(\cdot)\cap\bar{B}_{\varepsilon_o}(\bar{x}(\cdot))\right)$.
\end{lemma}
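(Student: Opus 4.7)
The plan is to verify closed graph directly: take a sequence $(t_n,y_n,\zeta_n)$ in the graph with $(t_n,y_n)\to(t,y)\in\text{Gr}(C(\cdot)\cap\bar{B}_{\varepsilon_o}(\bar{x}(\cdot)))$, $\zeta_n\in N_{C(t_n)}(y_n)$, and $\zeta_n\to\zeta$, and show $\zeta\in N_{C(t)}(y)$. First I would dispose of the easy case $y\in\inte C(t)$: by continuity of each $\psi_i$ (from (A3.1)), $\psi_i(t,y)<0$ for every $i$ forces $\psi_i(t_n,y_n)<0$ eventually, so $y_n\in\inte C(t_n)$ and $\zeta_n=0$ by \eqref{normalcone-C}, yielding $\zeta=0\in N_{C(t)}(y)$.

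Now assume $\I^0_{(t,y)}\neq\emptyset$ (the boundary case). I would invoke Lemma \ref{lemmaaux1} (already used in the proof of Lemma \ref{final-a3.2(i)}) to extract a subsequence along which the active index set is constant: there is a nonempty $\mathcal{J}_o\subset\I^0_{(t,y)}$ with $\I^0_{(t_n,y_n)}=\mathcal{J}_o$ for all $n$ (or $\I^0_{(t_n,y_n)}=\emptyset$ along a subsequence, in which case $\zeta_n=0$ and we are done as above). By Remark \ref{normal-C}, along this subsequence
\begin{equation*}
\zeta_n=\sum_{i\in\mathcal{J}_o}\lambda_i^n\,\nabla_x\psi_i(t_n,y_n),\qquad \lambda_i^n\ge0.
\end{equation*}

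The main obstacle is controlling the multipliers $\lambda_i^n$, and this is exactly where the quantitative constraint qualification \eqref{3.1(i)} of Lemma \ref{final-a3.2(i)} enters. Setting $\Lambda_n:=\sum_{i\in\mathcal{J}_o}\lambda_i^n$ (assumed positive; otherwise $\zeta_n=0$), I normalize to $\mu_i^n:=\lambda_i^n/\Lambda_n$, which sum to $1$. Then \eqref{3.1(i)} applied at $(t_n,y_n)$, whose active set contains $\mathcal{J}_o$ (padding with zero multipliers), yields
\begin{equation*}
\|\zeta_n\|=\Lambda_n\,\Bigl\|\sum_{i\in\mathcal{J}_o}\mu_i^n\nabla_x\psi_i(t_n,y_n)\Bigr\|>2\eta_o\,\Lambda_n,
\end{equation*}
so $\Lambda_n<\|\zeta_n\|/(2\eta_o)$ stays bounded since $\zeta_n\to\zeta$. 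Hence each $\lambda_i^n$ is bounded, and I extract a further subsequence with $\lambda_i^n\to\lambda_i\ge0$. Continuity of $\nabla_x\psi_i$ from (A3.1) gives $\nabla_x\psi_i(t_n,y_n)\to\nabla_x\psi_i(t,y)$, and passing to the limit produces $\zeta=\sum_{i\in\mathcal{J}_o}\lambda_i\nabla_x\psi_i(t,y)$. Since $\mathcal{J}_o\subset\I^0_{(t,y)}$, padding with $\lambda_i=0$ for $i\in\I^0_{(t,y)}\setminus\mathcal{J}_o$ places $\zeta$ in $N_{C(t)}(y)$ by the explicit formula \eqref{normalcone-C}, completing the argument.
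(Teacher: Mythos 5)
Your proof is correct and follows essentially the same route as the paper: both arguments rest on Lemma \ref{lemmaaux1} to freeze the active index set along a subsequence, the uniform constraint qualification \eqref{3.1(i)} of Lemma \ref{final-a3.2(i)} to control the multipliers, and the formula \eqref{normalcone-C} to identify the limit. The only (cosmetic) difference is that you bound the multiplier sum directly by $\|\zeta_n\|/(2\eta_o)$ and extract convergent subsequences, whereas the paper normalizes the multipliers and deduces convergence of their sum from $v_n\to v_o\neq 0$; likewise your interior/boundary case split plays the role of the paper's $v_o=0$ versus $v_o\neq0$ dichotomy.
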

\begin{proof}
	Let $v_n \in N_{C(t_n)}(y_n)$ such that $v_n \to v_o$ and $(t_n, y_n) \to (t_o, y_o)$ in Gr\,$\left(C(\cdot)\cap\bar{B}_{\varepsilon_o}(\bar{x}(\cdot))\right)$. We shall prove that $v_o \in N_{C(t_o)}(y_o)$.  If $v_o=0$ then obviously $v_o \in N_{C(t_o)}(y_o)$.  Now, let $v_o\neq 0$, then for $n$ large enough, $v_n\neq 0$, and hence,  equation (\ref{normalcone-C}) implies that  $ y_n \in \bdry C(t_n)$  and 
 $v_n=\sum_{i\in\I^0_{(t_n,y_n)}}\lambda_i^n\nabla_x\psi_i(t_n,y_n)$ for some $ (\lambda_i^n)_i \ge 0 $. By Lemma \ref{lemmaaux1}, we deduce the existence of $ \emptyset\not=\J_o\subset \{1,\dots,r\}$ and a subsequence of  $(t_n, y_n)_n$  we do not relabel, such that we have
	$\I^{0}_{(t_n,y_n)}= \J_o\subset \I^{0}_{(t_o,y_o)}\;\,\hbox{for all}\;\,n\in\N.$
	Hence, for $n$ large enough, $v_n=\sum_{i\in \J_o}\lambda_i^n\nabla_x\psi_i(t_n,y_n)$ and  $\sum_{i\in \J_o}\lambda_i^n >0$.	
	Define, for each $i\in\J_o$, the bounded sequence $(\beta_{i}^{n})_n$, where $\beta_{i}^{n}:=\frac{\lambda_{i}^{n}}{\sum_{j \in \J_o} \lambda_j^{n} }\ge 0$. Since also $\sum_{i \in \J_o}\beta_{i}^{n}=1$ for all $n$, then for each $i\in\J_o$, along a subsequence (we do not relabel), $\beta_{i}^{n} \to \beta_{i}\ge 0$ with $\sum_{i \in \J_o}\beta_{i}=1$. Using $(A3.1)$ and Lemma \ref{final-a3.2(i)}, we have  $0\neq \sum_{i\in \J_o}\beta_i^n\nabla_x\psi_i(t_n,y_n) \to \sum_{i\in \J_o}\beta_i\nabla_x\psi_i(t_o,y_o) \neq 0$.  By writing $$v_n= \big(\sum_{j \in \J_o} \lambda_j^n \big)  \big (\sum_{i\in \J_o}\beta_i^n\nabla_x\psi_i(t_n,y_n) \big),$$ and using the fact that $v_n \to v_o\neq 0$, we deduce that $ \sum_{j \in \J_o} \lambda_j^n $ is convergent to a limit $\beta_o> 0$. Hence, $v_o=  \sum_{i\in \J_o}\beta_o\beta_i\nabla_x\psi_i(t_o,y_o). $ Now, define $$\alpha_i:=\begin{cases}
		\beta_o \beta_i & \text{if } i \in \J_o\\
		0 & \text{if } i \in \I^{0}_{(t_o,y_o)} \setminus \J_o.
	\end{cases}$$ 
	Then, $v_o=\sum_{i\in \I^{0}_{(t_o,y_o)}}\alpha_i\nabla_x\psi_i(t_o,y_o) \in N_{C(t_o)}(y_o). $ 	
\end{proof}
 {Combining Lemma \ref{closed} with Lemma \ref{proxregular}  immediately produces a range for  $\bar{\varepsilon} >0$  ensuring  the uniform prox-regularity of the truncated sets $C(t)\cap \bar{B}_{{\bar{\varepsilon}}}(\bar{x}(t))$.  
 
 \begin{remark}\label{prox-tilde{C}}Let $C(\cdot)$ satisfying $(A2)$ for $\rho>0$. Consider  $\bar{x}(\cdot)\in \mathcal{C}([0,T];\mathbb{R}^n)$ with $\bar{x}(t)\in C(t)$ for all $ t \in [0,T]$, and $\bar{\delta}>0$  such that $(A3.1)$ and $(A3.2)$ hold at $(\bar{x};\bar{\delta})$.	  Then, for $\bar\varepsilon\in(0, \rho)\cap (0,\varepsilon_o]$, where $\varepsilon_o$ is given   in Lemma \ref{final-a3.2(i)},  there exists $\rho_{{\bar\varepsilon}}>0$, obtained from Lemma  \ref{proxregular},  such that for all $t\in[0,T], \;\; C(t) \cap \bar{B}_{\bar\varepsilon}(\bar{x}(t))$  is 
 ${\rho}_{\bar\varepsilon}$-prox-regular. 
\end{remark}}
Another  important consequence of Lemma \ref{final-a3.2(i)}  and Remark \ref{normal-C} is manifested in the following result that establishes the Lipschitz continuity and the uniqueness of the solutions near $(\bar{x}, \bar{y})$ for the Cauchy problem of $(D)$ via its equivalent form. We note that, under global assumptions, the existence of a  solution for the Cauchy problem of $(D)$ is given in Theorem \ref{cauchyglobal}, which will be established  in Section \ref{global}.  First, define  $\mu$  to be 
	\begin{equation}\label{def-mu}
		{\mu}:= 	L_\psi (1+M_h).
	\end{equation}
	\begin{lemma}\label{lipschitz-local}  Let $C(\cdot)$ satisfying $(A2)$ for $\rho>0$.  Consider  $(\bar{x}(\cdot),\bar{y}(\cdot))\in \mathcal{C}([0,T];\mathbb{R}^n \times \mathbb{R}^l)$ with $\bar{x}(t)\in C(t)$ for all $ t \in [0,T]$, and $\bar{\delta}>0$  such that $(A3.1)$ and $(A3.2)$ hold at $(\bar{x};\bar{\delta})$, and $(A4.1)$ is satisfied by $(f,g)$ at $((\bar{x},\bar{y});\bar{\delta})$. Let $u\in \mathcal{U}$ and $(x_0,y_0)\in  \bar{\mathscr{N}}_{(\varepsilon_0, \bar{\delta})}(0)$   be fixed.  Then, a pair $(x(\cdot),y(\cdot)) \in  W^{1,1}([0,T];\R^{n+l})$ such that $(x(t),y(t)) \in \bar{\mathscr{N}}_{(\varepsilon_0, \bar{\delta})}(t) \; \forall t \in [0,T]$ is a solution of  $(D)$ corresponding to $((x_0,y_0),u)$   if and only if  there exist measurable  functions $(\lambda_1, \cdots, \lambda_{r})$ such that, for all $i=1,\cdots,r$,   $\lambda_i(t)= 0$  for $ t\in I_i^{\-}(x)$,  and $((x,y),u)$ together with  $(\lambda_1,\cdots,\lambda_r)$ satisfies	
	\begin{eqnarray}\label{lipschitz-D}	\begin{cases}					
			& \dot{x}(t)=f(t,x(t),y(t),u(t))-\sum_{i\in \I^0_{(t,{ x}(t))}} \lambda_i(t) \nabla_x\psi_i(t,x(t))\;\;\textnormal{a.e.}\; t\in[0,T], \\
			&\dot{y}(t) = g(t, x(t), y(t), u(t)), \text{ a.e. } t \in [0,T],\\
			& (x(0),y(0))=(x_0,y_0).
		\end{cases}
	\end{eqnarray}
	Furthermore, we have the following bounds \begin{eqnarray}\label{bound} \begin{cases}
			& \|\lambda_i\|_{\infty}\le \|\sum_i ^r\lambda_i\|_{\infty} \le \frac{\mu}{4 \eta_0^2},\;\; \forall i=1,\cdots,r,\\
			&	\|\dot{x}\|_{\infty} \le M_h + \frac{\mu}{4 \eta_0^2}  L_{\psi}, \;\; \|\dot{y}\|_{\infty}\le  M_h.  \end{cases}
	\end{eqnarray}
	{Consequently,   the pair $(x,y)$  is the unique solution of $(D)$ in $\bar{\mathscr{N}}_{({\varepsilon_0},\bar{\delta})}(\cdot)$ corresponding to  $((x_0, y_0), u)$.   In particular,  if $((\x,\y),\bar{u})$ solves $(D)$, then $(\bar{x}(\cdot),\bar{y}(\cdot))$ is Lipschitz and  is the unique solution of  $(D)$ corresponding to  $ ((\x(0),\y(0)), \u)$.} 
		\end{lemma}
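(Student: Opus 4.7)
The proof naturally splits into three parts: the equivalence between $(D)$ and \eqref{lipschitz-D} with admissible measurable multipliers, the bounds \eqref{bound}, and finally uniqueness together with the Lipschitz conclusion.

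For the equivalence, assume first that $(x,y)$ solves $(D)$ in the tube $\bar{\mathscr{N}}_{(\varepsilon_0,\bar\delta)}(\cdot)$. The measurable function $v(t):=f(t,x(t),y(t),u(t))-\dot{x}(t)$ belongs to $N_{C(t)}(x(t))$ a.e., and since $x(t)\in C(t)\cap\bar{B}_{\varepsilon_0}(\bar{x}(t))$, Remark \ref{normal-C} yields the representation $v(t)=\sum_{i\in\I^0_{(t,x(t))}}\lambda_i(t)\nabla_x\psi_i(t,x(t))$ with $\lambda_i(t)\ge 0$. To extract measurable multipliers I would apply the Kuratowski--Ryll-Nardzewski selection theorem to the set-valued map $t\rightsquigarrow \{(\lambda_1,\ldots,\lambda_r)\in\mathbb{R}^r_+ :\lambda_i=0 \text{ for } i\notin\I^0_{(t,x(t))},\ v(t)=\sum_i\lambda_i\nabla_x\psi_i(t,x(t))\}$, which is closed- and nonempty-valued by Remark \ref{normal-C} and measurable thanks to the closed-graph property from Lemma \ref{closed}; the relation $\lambda_i\equiv 0$ on $I_i^-(x)$ then holds automatically from the support condition. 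The converse direction is immediate from formula \eqref{normalcone-C}.

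For the bounds, I would exploit that the Lipschitz composite $t\mapsto\psi_i(t,x(t))$ attains its maximum value $0$ exactly on $I_i^0(x)$, so its derivative vanishes at a.e.\ density point of $I_i^0(x)$. Splitting the difference quotient into its $x$-increment (which converges to $\langle\nabla_x\psi_i(t,x(t)),\dot{x}(t)\rangle$ by the $\mathcal{C}^{1,1}$-in-$x$ regularity afforded by $(A3.1)$) and its $t$-increment (bounded by $L_\psi$ through $(A3.1)$), I deduce $|\langle\nabla_x\psi_i(t,x(t)),\dot{x}(t)\rangle|\le L_\psi$ a.e.\ on $I_i^0(x)$. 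Setting $w(t):=\sum_i\lambda_i(t)\nabla_x\psi_i(t,x(t))=f(t,x,y,u)-\dot{x}(t)$ and using $\supp \lambda_i\subset I_i^0(x)$, $\|f\|\le M_h$, and $\|\nabla_x\psi_i\|\le L_\psi$, I compute
\begin{equation*}
\|w\|^2=\langle f,w\rangle-\langle\dot{x},w\rangle=\sum_i\lambda_i\bigl(\langle f,\nabla_x\psi_i\rangle-\langle\dot{x},\nabla_x\psi_i\rangle\bigr)\le L_\psi(1+M_h)\sum_i\lambda_i=\mu\sum_i\lambda_i.
\end{equation*}
Combined with the homogeneous bound $\|w(t)\|\ge 2\eta_0\sum_i\lambda_i(t)$ obtained by rescaling \eqref{3.1(i)} in Lemma \ref{final-a3.2(i)}, this yields $4\eta_0^2\bigl(\sum_i\lambda_i\bigr)^2\le\mu\sum_i\lambda_i$, hence $\sum_i\lambda_i\le\mu/(4\eta_0^2)$; the bounds on $\|\dot{x}\|_\infty$ and $\|\dot{y}\|_\infty$ follow at once from $\dot{x}=f-w$ and $\|g\|\le M_h$.

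For uniqueness, I would shrink $\varepsilon_0$ a priori so that $\varepsilon_0<\rho$, ensuring by Remark \ref{prox-tilde{C}} the existence of $\rho_{\varepsilon_0}>0$ making $C(t)\cap\bar{B}_{\varepsilon_0}(\bar{x}(t))$ uniformly $\rho_{\varepsilon_0}$-prox-regular. Given two solutions $(x_j,y_j)$ of $(D)$ in the tube associated with the same $((x_0,y_0),u)$, Remark \ref{admissibility} lets me recast them as solutions of $(\bar{D})$, to which the uniform $L^\infty$ bounds on $\dot{x}_j,\dot{y}_j$ derived above apply. The standard hypomonotonicity estimate for the Clarke normal cone of a prox-regular set, combined with the Lipschitz property $(A4.1)$ of $(f,g)$, then yields a Gr\"onwall inequality $\frac{d}{dt}\|(x_1,y_1)-(x_2,y_2)\|^2\le K\,\|(x_1,y_1)-(x_2,y_2)\|^2$ a.e., which together with the common initial datum forces $(x_1,y_1)\equiv(x_2,y_2)$. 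The ``in particular'' assertion then follows since $(\bar{x},\bar{y})$ trivially lies in $\bar{\mathscr{N}}_{(\varepsilon_0,\bar\delta)}(\cdot)$. The main technical obstacle is the differentiation step for $t\mapsto\psi_i(t,x(t))$: because $\psi_i$ is only Lipschitz (not $\mathcal{C}^1$) in $t$, the $t$-partial derivative need not exist separately at every density point of $I_i^0(x)$, and the convergence of the difference quotient has to be handled as a whole, with the $t$-piece merely controlled in modulus by $L_\psi$.
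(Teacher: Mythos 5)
Your treatment of the equivalence (measurable selection of multipliers from the normal-cone representation \eqref{normalcone-C}) and of the bounds \eqref{bound} is correct and in substance the same as the paper's: the paper routes the time-dependence through the generalized time-derivative $\hat{\partial}_t\psi_i$ and a chain-rule inclusion, obtaining $0=\theta_i(t)+\langle\nabla_x\psi_i(t,x(t)),\dot{x}(t)\rangle$ with $|\theta_i(t)|\le L_{\psi}$ on the active set, which is exactly what your difference-quotient splitting delivers; the ensuing estimate $4\eta_0^2\big(\sum_i\lambda_i\big)^2\le\mu\sum_i\lambda_i$ via \eqref{3.1(i)} is the same computation, and your final remark correctly identifies why the $t$-increment can only be controlled in modulus.

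The uniqueness step, however, has a genuine gap. You propose to shrink $\varepsilon_0$ below $\rho$, recast the two solutions as solutions of $(\bar{D})$ via Remark \ref{admissibility}, and invoke the prox-regularity of the truncated sets from Remark \ref{prox-tilde{C}}. But Remark \ref{admissibility} converts a solution of $(D)$ into a solution of $(\bar{D})$ only when that solution stays inside the truncated tube used to define $(\bar{D})$; a competing solution is only assumed to lie in $\bar{\mathscr{N}}_{(\varepsilon_0,\bar{\delta})}(\cdot)$ for the original $\varepsilon_0$ (which need not be smaller than $\rho$), so after shrinking the radius you cannot recast it, and what your argument yields is uniqueness only among solutions confined to the smaller tube --- a strictly weaker conclusion than the one claimed. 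The detour through $(\bar{D})$ is also unnecessary: both solutions solve the un-truncated inclusion, so $f(t,x(t),y(t),u(t))-\dot{x}(t)\in N_{C(t)}(x(t))$ with norm bounded by $\frac{\mu}{4\eta_0^2}L_{\psi}$ thanks to \eqref{bound}, and one can apply the hypomonotonicity inequality for the $\rho$-prox-regular sets $C(t)$ themselves (assumption (A2)), together with the $L_h$-Lipschitz property in (A4.1), to two solutions $X$ and $\tilde{X}$; this gives $\frac{1}{2}\frac{d}{dt}\|X(t)-\tilde{X}(t)\|^2\le\big(L_h(t)+\frac{\mu L_{\psi}}{4\rho\eta_0^2}\big)\|X(t)-\tilde{X}(t)\|^2$ and Gronwall finishes, with no shrinking of $\varepsilon_0$. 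This is how the paper argues, and it is what delivers uniqueness in the full tube $\bar{\mathscr{N}}_{(\varepsilon_0,\bar{\delta})}(\cdot)$ as stated.
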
 	
	\begin{proof}
		The equivalence in the first part of this lemma follows  immediately from Filippov selection theorem and the normal cone formula in \eqref{normalcone-C}. 
		Now, we proceed to prove the bounds in \eqref{bound}. Since  for all $i=1,\cdots,r$,   $\psi_i(\cdot,x(\cdot))\in W^{1,1}$, then $\frac{d}{dt} \psi_i(t, x(t))$ exists for almost all $t\in[0,T]$.   Using assumption $(A3.1)$ and \cite[equation (3.1)]{zeidan}, we deduce that,   $\forall i=1,\cdots,r$, 
		\begin{eqnarray*} \label{inclusion-vera} \frac{d}{dt} \psi_i(t,x(t)) \subset \partial_{(t,x)} \psi_i(t,x(t)). (1, \dot{x}(t)) =  \hat{\partial}_t \psi_i(t,x(t)) + \langle\nabla_x \psi_i(t,x(t)), \dot{x}(t) \rangle,  \; t\in[0,T]\; \text{a.e.,}
		\end{eqnarray*}
where,  for $(t,z)\in \text{Gr} (C(\cdot)\cap \bar{B}_{\bar{\delta}}(\x(\cdot)))$, 
\begin{equation}\label{deltahat} \hat{\partial}_t \psi_i(t,z):= \text{conv}\{\lim_{j\to\infty}\nabla_t\psi_i(t_j,z_j): (t_j,z_j)\to (t, z)\}.\end{equation}
Thus,  there exist measurable  $\theta_i(\cdot) \in \hat{\partial}_t \psi_i(\cdot,x(\cdot))$ a.e., such that 
	\begin{equation}\label{dpsi-r}\frac{d}{dt} \psi_i(t,x(t)) = \theta_i(t) + \langle \nabla_x \psi_i(t,x(t)), \dot{x}(t) \rangle\;\; \; t\in[0,T]\; \text{a.e.,}\; \;\forall i=1,\cdots,r.		\end{equation}
Note that,  by  $($A3.1$)$, we have,   for   $t\in [0,T]$ a.e., and $\forall  \theta_i(t)\in \hat{\partial}_t \psi_i(t,x(t))$, 
		\begin{eqnarray} \label{mu}
\hspace{-.5 in} &&\hspace{-.5in} \left|\theta_i(t) + \langle \nabla_x \psi_i(t,x(t)), {f}(t,x(t), y(t), u(t)) \rangle \right|\le   L_\psi(1+     \|f(t,x(t), y(t), u(t))\|), \; \forall i=1,\cdots,r. \end{eqnarray}
	Define in $[0,T]$ the set	of full measure:  
	\begin{equation}\label{scriptT}\mathscr{T}:= \{t\in (0,T):  \dot{x}(t) \; \text{and}\;\frac{d}{dt} \psi_i(t, x(t))  \; \text{exist}, \; \forall i=1,\cdots,r\}.\end{equation} 
Let $t \in I^{\-}({{x}})\cap \mathscr{T}$.  Then, $\I^0_{(t,{ x}(t))} = \emptyset$, and hence,   $\forall i=1,\cdots,r, \; \lambda_i(t)=0$. This implies that $\dot{x}(t)=f(t,x(t),y(t),u(t))$, and hence $\| \dot{x}(t)   \| \le M_h.$ \\
\\		
Let $t\in I^0({{x}})\cap \mathscr{T}$ with $ \sum_{i\in  \I^0_{(t,x(t))}} \lambda_i(t)  \ne 0$; otherwise we join the conclusion of the previous case.  Since for all $ i \in  \I^0_{(t,x(t))}$, we have $\psi_i(t,x(t)) =0$ and $x(s)\in C(s)\; \forall s\in [0,T]$, it follows that  
		$ \frac{d}{dt} \psi_i(t, x(t))=0$, for all $ i \in  \I^0_{(t,x(t))}$. Hence, for the
		finite sequence $(\theta_i)_{i=1}^r$ in \eqref{dpsi-r},  we have 
		\begin{equation} \label{deripsi22}
			0= \theta_i(t) + \langle \nabla_x \psi_i(t,x(t)), \dot{x}(t) \rangle.\;\; 
		\end{equation}
		Multiplying \eqref{deripsi22} by $\lambda_i(t)$, and using the fact that $x(\cdot)$ satisfies the first equation of \eqref{lipschitz-D}, we get that 
		\begin{eqnarray}
		\hspace{-.5 in}	0 	=   \lambda_i(t) \theta_i(t) + \lambda_i(t) \left\< \nabla_x \psi_i(t,x(t)), f(t,x(t),y(t),u(t)) - \sum_{j\in\mathcal{I}^0_{(t,x(t))}}\lambda_j(t)\nabla_x \psi_j(t,x(t)) \right\>. \label{lambdai}
		\end{eqnarray}
		Summing  \eqref{lambdai} over all $ i \in \I^0_{(t,x(t))}$ and using \eqref{mu}, we deduce that	
				\begin{eqnarray*}
		\hspace{-.2 in}	\|   \sum_{i\in\mathcal{I}^0_{(t,x(t))}}\lambda_i(t)\nabla_x \psi_i(t,x(t))  \|^2	&=&  \sum_{i\in  \I^0_{(t,x(t))}} \lambda_i(t) \left (\theta_i(t) +  \left\< \nabla_x \psi_i(t,x(t)), f(t,x(t),y(t),u(t))  \right\> \right)\nonumber \\
		\hspace{-.2 in}	& \le& L_{\psi}(1+  \|f(t,x(t),y(t),u(t))\|)  \sum_{i\in  \I^0_{(t,x(t))}} \lambda_i(t).  		
		\end{eqnarray*}
		Hence, utilizing  \eqref{3.1(i)} on the term on the left hand side, and then dividing by  $ \sum_{i\in  \I^0_{(t,x(t))}} \lambda_i(t) \ne 0$ the last inequality, we deduce from  \eqref{def-mu} that
		\begin{eqnarray}
			&& \sum_{i\in  \I^0_{(t,x(t))}} \lambda_i(t) 
			\le \frac{ L_{\psi}}{4\eta_0^2}(1+  \|f(t,x(t),y(t),u(t))\|)\overset{(A4.1)} {\le} \frac{\mu}{4 \eta_0^2}.  \label{lambda-bound}		\end{eqnarray}
		Therefore,  $\| \sum_{i=1}^r \lambda_i \|_{\infty} \le \frac{\mu}{4 \eta_0^2}$.  Finally,   employing  (A4.1) for  $f$  and $g$, along with \eqref{lipschitz-D},  the bounds on $\| \dot{x}\|_{\infty}  $ and  $\|\dot{y}\|_{\infty}$ follow.
		\\
		For the uniqueness, let $X:=(x, y),$ $\tilde{X}:=(\tilde{x}, \tilde{y})$  in $\bar{\mathscr{N}}_{(\varepsilon_0,\bar\delta)}(\cdot)$ be two solutions of $(D)$ corresponding to $((x_0, y_0), u)$,  and let    $(\lambda_i)_{i=1}^{r}, (\tilde{\lambda}_i)_{i=1}^{r}$ be their corresponding multipliers satisfying \eqref{lipschitz-D}. 
		Using the hypomonoticity of the normal cone to the $\rho$-prox-regular sets $C(t)$,  the $L_h$-Lipschitz property of $h(t,\cdot,\cdot,u(t))$, and the bounds in \eqref{bound} for the multipliers,  we deduce that
		\begin{eqnarray}
			&&	\frac{1}{2} \frac{d}{dt}(\|X(t)-\tilde{X}(t)\|^2)  =  \langle \dot{X}(t)-\dot{\tilde{X}}(t), X(t)-\tilde{X}(t) \rangle \nonumber\\
			& \le &{({L}_h(t) + {\frac{{\mu}}{4{\rho}{\eta}_o^2}}L_{\psi})} \|X(t)-\tilde{X}(t)\|^2:=\kappa(t)\|X(t)-\tilde{X}(t)\|^2.\label{gron}
		\end{eqnarray}
		Hence using Gronwall's lemma, we deduce that 
		$$\|X(t)-\tilde{X}(t)\|^2 \le e^{2\int_{0}^{t}\kappa(s)ds} \|X(0)-\tilde{X}(0)\|^2 =0.$$ 
		Then, $X(t)=\tilde{X}(t) \quad \forall t \in [0,T],$ and the uniqueness is proved. 	\end{proof}
	
	{The following lemma provides  a second condition, \eqref{conditionii}, equivalent to (A3.2) which, unlike \eqref{3.1(i)},   validates  the formula for  the normal cone to  the uniform prox-regular  truncated sets $C(t)\cap \bar{B}_{{\bar\varepsilon}}(\bar{x}(t)),$  obtained in Remark \ref{prox-tilde{C}}, (see  Remark \ref{C-prop} stated below).}  Note that since $\psi_{r+1}$, given by \eqref{psi-r+1}, is a function of $\bar{\varepsilon}$, this lemma is of a different nature than Lemma \ref{final-a3.2(i)}. Observe that,  for any given $\bar{\varepsilon}>0$, we have that $\psi_{r+1}(t,x)$ and $\nabla_x\psi_{r+1}(t,x):=x-\x(t)$ exist and continuous everywhere.
	\begin{lemma}\label{final-a3.2(ii)} \textnormal{\textbf{[Assumption (A3.2)]}} Let $C(\cdot)$ satisfying $(A2)$ for $\rho>0$. Consider  $\bar{x}(\cdot)\in \mathcal{C}([0,T];\mathbb{R}^n)$ with $\bar{x}(t)\in C(t)$ for all $ t \in [0,T]$, and $\bar{\delta}>0$  such that $(A3.1)$ holds at $(\bar{x};\bar{\delta})$. Then, $(A3.2)$ is satisfied at $\bar{x}$ if and only if for  $\bar{\varepsilon}\in (0,\rho)\cap (0,\varepsilon_o]$ and its corresponding $\psi_{r+1}$ given by \eqref{psi-r+1}, there exists $\bar{\eta} \in (0, \eta_0)$  (without loss of generality  $\bar{\eta}\le \frac{\bar{\varepsilon}}{2} $) such that
			\begin{equation}{\label{conditionii}}      \hspace{-.2 in}\left\|\sum_{i\in\bar\I^0_{(t,c)}}\lambda_i\nabla_x\psi_i(t,c)\right\|>2\bar\eta, \;\;  \forall (t,c)\in 
				\{(\tau,x)\in \text{Gr} \,\left(C(\cdot) \cap\bar{B}_{\bar\varepsilon}(\bar{x}(\cdot)\right): \bar\I^0_{(\tau,x)}\not=\emptyset\},\end{equation}
			where  $(\l_i)_{i\in\bar\I^0_{(t,c)}}$ is any sequence of nonnegative numbers satisfying $\sum_{i\in\bar\I^0_{(t,c)}}\l_i=1$, and $\bar\I^0_{(\tau,x)}$ is given by \eqref{newI}. 
	
	\end{lemma}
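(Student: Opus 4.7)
My plan is to prove the equivalence $(A3.2) \Leftrightarrow \eqref{conditionii}$ by combining Lemma \ref{final-a3.2(i)} with the $\rho$-prox-regularity of $C(t)$ from $(A2)$. The reverse direction should be immediate: specializing \eqref{conditionii} to points $(t,\bar{x}(t))$ with $t \in I^0(\bar{x})$ — where $\psi_{r+1}(t,\bar{x}(t)) = -\bar{\varepsilon}^2/2 < 0$, hence $\bar{\I}^0_{(t,\bar{x}(t))} = \I^0_{(t,\bar{x}(t))}$ — and normalizing any hypothetical family $(\lambda_i)$ violating $(A3.2)$ immediately contradicts \eqref{conditionii}.

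For the forward direction, I would first invoke Lemma \ref{final-a3.2(i)} to obtain \eqref{3.1(i)} with constants $\varepsilon_o, \eta_o$, and then argue by contradiction: if no $\bar{\eta}$ works, I extract sequences $(t_n, c_n) \to (t_o, c_o)$ in $\Gr(C(\cdot) \cap \bar{B}_{\bar{\varepsilon}}(\bar{x}(\cdot)))$ with $\bar{\I}^0_{(t_n, c_n)} \neq \emptyset$ and nonnegative multipliers $\lambda_i^n$ summing to one such that $\sum_i \lambda_i^n \nabla_x \psi_i(t_n, c_n) \to 0$. Mimicking the extraction of Lemma \ref{final-a3.2(i)} — with Lemma \ref{lemmaaux1} applied now to indices in $\{1,\ldots,r+1\}$ — I may assume that the active sets stabilize along a subsequence to a fixed $\bar{\J}_o \subseteq \bar{\I}^0_{(t_o, c_o)}$ and that $\lambda_i^n \to \lambda_i^o \geq 0$ with $\sum_{i \in \bar{\J}_o} \lambda_i^o = 1$ and $\sum_{i \in \bar{\J}_o} \lambda_i^o \nabla_x \psi_i(t_o, c_o) = 0$. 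A short case analysis on whether $r+1 \in \bar{\J}_o$ and whether $\lambda_{r+1}^o > 0$ should then reduce everything — by extending $\lambda_i^o$ by zero onto $\I^0_{(t_o, c_o)}$ — either to a direct contradiction with \eqref{3.1(i)}, or to the single hard configuration in which $r+1 \in \bar{\J}_o$, $\lambda_{r+1}^o > 0$, and $\|c_o - \bar{x}(t_o)\| = \bar{\varepsilon}$.

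The hard part will be handling this remaining configuration, because $(A3.2)$ and \eqref{3.1(i)} carry no built-in interaction with the ball-generator $\psi_{r+1}$. In it, the vector $w := \sum_{i \in \bar{\J}_o \setminus \{r+1\}} \lambda_i^o \nabla_x \psi_i(t_o, c_o)$ equals $\lambda_{r+1}^o(\bar{x}(t_o) - c_o) \neq 0$, so by Remark \ref{normal-C} the unit vector $\xi := w/\|w\| = (\bar{x}(t_o) - c_o)/\bar{\varepsilon}$ is a unit normal to $C(t_o)$ at $c_o$. The resolution is to invoke the $\rho$-prox-regularity of $C(t_o)$ with $z := \bar{x}(t_o) \in C(t_o)$: the defining inequality $\langle \xi, z - c_o \rangle \le \|z - c_o\|^2/(2\rho)$ reads $\bar{\varepsilon} \le \bar{\varepsilon}^2/(2\rho)$, i.e., $\bar{\varepsilon} \geq 2\rho$, which contradicts the standing hypothesis $\bar{\varepsilon} < \rho$ and simultaneously explains the tightness of the threshold $\bar{\varepsilon} \in (0,\rho)$. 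Finally, tracking the bounds shows that any $\bar{\eta}$ with $0 < \bar{\eta} \leq \min\{\eta_o/2, \bar{\varepsilon}/2\}$ works; the factor $\bar{\varepsilon}/2$ is forced by the trivial configuration $\bar{\I}^0_{(t,c)} = \{r+1\}$, in which the left-hand side of \eqref{conditionii} equals exactly $\|c - \bar{x}(t)\| = \bar{\varepsilon}$.
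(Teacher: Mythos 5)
Your proof is correct and follows essentially the same route as the paper's: reduce via Lemma \ref{final-a3.2(i)} to the configurations where $\psi_{r+1}$ is active with positive weight, pass to the limit along a contradicting sequence (stabilizing the active index sets as in Lemma \ref{lemmaaux1}), and rule out the limiting identity $\sum_{i}\lambda_i^o\nabla_x\psi_i(t_o,c_o)=-\lambda_{r+1}^o(c_o-\bar{x}(t_o))$ by prox-regularity of $C(t_o)$. The one difference is cosmetic but welcome: where the paper invokes \cite[Equation (14)]{nourzeidan} (the disjointness $N^P_{C(t_o)}(c_o)\cap-N^P_{\bar{B}_{\bar\varepsilon}(\bar{x}(t_o))}(c_o)=\{0\}$, checked to hold without compactness), you apply the defining inequality of $\rho$-prox-regularity directly with the test point $\bar{x}(t_o)\in C(t_o)$, obtaining $\bar{\varepsilon}\le\bar{\varepsilon}^2/(2\rho)$, hence $2\rho\le\bar{\varepsilon}$, against $\bar{\varepsilon}<\rho$; this is self-contained and even shows $\bar{\varepsilon}<2\rho$ would suffice. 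One caveat: your closing claim that any $\bar{\eta}\le\min\{\eta_o/2,\bar{\varepsilon}/2\}$ works is not delivered by the contradiction argument (which only yields existence of some $\bar{\eta}>0$), and taking $\bar{\eta}=\bar{\varepsilon}/2$ is even refuted by the very configuration $\bar{\I}^0_{(t,c)}=\{r+1\}$ you mention, since \eqref{conditionii} is a strict inequality; fortunately the remark is unnecessary, because any smaller $\bar{\eta}$ trivially still satisfies \eqref{conditionii}, which is all the lemma asserts.
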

	
	\begin{proof} 
		 We only need to show that $(A3.2)$ yields  \eqref{conditionii}.  For this, assume $(A3.2)$ is valid and let $\bar\varepsilon\in(0,\rho)$ with    $\bar\varepsilon\le\varepsilon_o$. From Lemma \ref{final-a3.2(i)}, it follows that,  for  any $\bar{\eta} \in (0,\eta_o)$,  \eqref{conditionii} holds   for all  $(t,c)\in \{(\tau,x)\in \text{Gr} \,\left(C(\cdot) \cap\bar{B}_{\bar\varepsilon}(\bar{x}(\cdot)\right): \bar\I^0_{(\tau,x)}\not=\emptyset\}$ such that $(r+1)\notin \bar\I^0_{(t,c)}$. It remains to prove that \eqref{conditionii}  is valid  for all  $(t,c)$ such that  $(r+1)$  is necessarily in $\bar\I^0_{(t,c)}$, that is, when $\bar\I^0_{(t,c)}= \I^0_{(t,c)}\cup \{r+1\}$ and   $\l_{r+1}\not= 0$. Arguing  by contradiction, then there exist sequences $t_n\in [0,T]$,  $c_n\in C(t_n)$ with $\|c_n-\bar{x}(t_n)\|=\bar{\varepsilon}$, and $(\l_i^n)_{i\in \bar\I^0_{(t_n,c_n)}}$ with 
		$\l_i^n\ge 0$, for all $i\in\I^0_{(t_n,c_n)}$,  $\l_{r+1}^n >0$, and
		\begin{equation}\label{lambda}
		(\sum_{i\in\I^0_{(t_n,c_n)}}\l_i^n) +\l_{r+1}^n=1,
		\end{equation}
		such that  $\left\|\sum_{i\in\I^0_{(t_n,c_n)}}\lambda_i^n\nabla_x\psi_i(t_n,c_n) +\l_{r+1}^n (c_n-\bar{x}(t_n))\right\|\le \frac{2}{n},\quad \forall n\in\mathbb{N}$. Using the compactness of $[0,T]$, (A3.1),  and the continuity of $\bar{x}$, it follows that up to subsequences, $t_n\rightarrow t_o\in [0,T]$ and $c_n\rightarrow c_o\in C(t_o)$ with $\|c_o-\bar{x}(t_o)\|=\bar{\varepsilon}$. Note that $\I^0_{(t_n,c_n)} \not=\emptyset$, since otherwise, \eqref{lambda} yields $\l_{r+1}^n=1$, and in this case the above inequality becomes  $\|c_n-\bar{x}(t_n)\|\le \frac{2}{n}$, which is  invalid  for $n$ large.  Thus, by Lemma \ref{lemmaaux1},  for some $ \emptyset\not=\J_o\subset \{1,\dots,r\}$,   $\I^{0}_{(t_n,c_n)}= \J_o\subset \I^{0}_{(t_o,c_o)}$, for $n$ large. This implies that, for  $n$ large enough,
		\begin{eqnarray}\label{v2}
			\left\|\sum_{i\in\J_o}\lambda_i^n\nabla_x\psi_i(t_n,c_n) +\l_{r+1}^n(c_n-\bar{x}(t_n))\right\|\le \frac{2}{n}, \\
			\sum_{i\in\J_o}\l_i^n +\l_{r+1}^n=1, \;\;  \l_{r+1}^n > 0, \; \;\;  \text{and}\; \l_i^n\ge 0 \;\; \forall i\in\J_o. \nonumber \end{eqnarray}
		Hence, up to a subsequence,   $\l_i^n\rightarrow \l_i^o\ge0$ for all $i\in \J_o$, and $\l_{r+1}^n\rightarrow \l_{r+1}^o \ge 0$.  
		Upon taking the limit as $n\rightarrow\infty$ in \eqref{v2},   (A3.1) yields that    
		\begin{equation}\label{v3}
			\sum_{i\in\J_o}\lambda_i^o\nabla_x\psi_i(t_o,c_o)+ \l_{r+1}^o(c_o-\bar{x}(t_o))=0,\quad \sum_{i\in\J_o}\l_i^o +\l_{r+1}^o=1, \;\;\l_i^o\ge 0 \;\; \forall i\in\J_o\cup  \{r+1\}.
		\end{equation}
		From \eqref{v3} and  Lemma \ref{final-a3.2(i)} we get that $\l_{r+1}^o >0$. As $\|c_o-\bar{x}(t_o)\|=\bar{\varepsilon}$,   \eqref{v3} is translated to saying
		$$0\not= v:= \sum_{i\in\J_o}\lambda_i^o\nabla_x\psi_i(t_o,c_o)=-\l_{r+1}^o(c_o-\bar{x}(t_o)),$$
		and hence, per \eqref{normalcone-C}, $0\not= v \in N^P_{C(t_o)}(c_o) \cap -N^P_{\bar{B}_{\bar\varepsilon}(\bar{x}(t_o))}(c_o)$. As $\bar{\varepsilon}\in (0,\rho)$, then,  this  inclusion contradicts \cite[Equation (14)]{nourzeidan}, where we take  $S:=C(t_o)$. Note that  this equation can be easily verified to hold without the compactness  of the $\rho-$prox-regular  set $``S"$, considered therein.
	\end{proof} 
The  following result,  which shall be used in subsection \ref{proofpmp}, is  an immediate consequence of  Lemma \ref{final-a3.2(ii)} obtained via a simple argument by contradiction  and  the continuity in (A3.1) of $(\psi_i)_{1\le i\le r}$  and $(\nabla_x \psi_i)_{1\le i\le r}$ on the compact set   {Gr}\,$\left(C(\cdot)\cap \bar{B}_{\bar{\varepsilon}}(\bar{x}(\cdot)\right)$.
	\begin{remark}\label{pisiepse}
	Let $C(\cdot)$ satisfying $(A2)$ for some $\rho>0$. Consider  $\bar{x}(\cdot)\in \mathcal{C}([0,T];\mathbb{R}^n)$ with $\bar{x}(t)\in C(t)$ for all $ t \in [0,T]$, and $\bar{\delta}>0$  such that $(A3.1)$ and $(A3.2)$ hold at $(\bar{x};\bar{\delta})$.  Then,   for  $\bar\varepsilon\in (0,\rho)\cap (0,\varepsilon_o]$, and its corresponding $\psi_{r+1}$ and $\bar{\eta}$ from  Lemma \ref{final-a3.2(ii)}, there exists  $a_o>0$ such that for all $i\in\{1,\dots,r+1\}$ we have 
	\begin{equation} \label{pisieps} \big[(t,x) \in \text{Gr}\,\left(C(\cdot)\cap \bar{B}_{\bar{\varepsilon}}(\bar{x}(\cdot)\right)\;\hbox{and}\; \|\nabla_x\psi_i(t,x)\|\leq \bar\eta\big]\implies \psi_i(t,x)<-a_o.\end{equation}
\end{remark}
{Important consequences of Lemma \ref{final-a3.2(ii)} are the following explicit formulae for the normal cone to the truncated sets  $C(t) \cap \bar{B}_{\bar{\varepsilon}}(\bar{x}(t))$ and  for their prox-regularity constant, which shall replace $\rho_{\bar{\varepsilon}}$. Assume WLOG that $L_{\psi}\ge \frac{4\bar{\eta}}{\rho_o}$,  where $\rho_o$ is the constant from (A3.1).  
\begin{remark}\label{C-prop} Under the assumption of Remark \ref{pisiepse}, let $\bar{\varepsilon}\in (0,\rho)\cap (0,\varepsilon_o]$ with its corresponding $\psi_{r+1}$, given by \eqref{psi-r+1}, and $\bar{\eta}$ from Lemma \ref{final-a3.2(ii)}.
Let  {$\rho_{\bar{\varepsilon}}$} be the uniform prox-regular constant  of $C(t) \cap \bar{B}_{\bar\varepsilon}(\bar{x}(t))$ obtained from Remark \ref {prox-tilde{C}}.  Using Lemma \ref{final-a3.2(ii)},  \cite[Corollary 4.15]{prox}, and \cite[Corollary 10.44]{clarkebook}, it follows that, for all  $(t,x)\in \text{Gr}\,\left(C(\cdot) \cap\bar{B}_{\bar\varepsilon}(\bar{x}(\cdot)\right)$, 
\\
	$$ N_{C(t) \cap \bar{B}_{\bar\varepsilon}(\bar{x}(t))}(x)= N_{C(t) \cap \bar{B}_{\bar\varepsilon}(\bar{x}(t))}^P(x)= N_{C(t) \cap \bar{B}_{\bar\varepsilon}(\bar{x}(t))}^L(x),$$
	and 			
	\begin{equation} \label{normalcone}
	\hspace{-.2 in}	N_{C(t) \cap \bar{B}_{\bar\varepsilon}(\bar{x}(t))}(x) =
		\begin{cases} \displaystyle\{\sum_{i\in\mathcal{\bar{I}}^0_{(t,x)}}\lambda_i\nabla_x \psi_i(t,x): \lambda_i \ge 0\} \neq\{0\}&\textnormal{ if } x \in\textnormal{bdry}(C(t) \cap \bar{B}_{\bar\varepsilon}(\bar{x}(t))) \\
			\{0 \}&\textnormal{ if } x \in \textnormal{int}(C(t) \cap \bar{B}_{\bar\varepsilon}(\bar{x}(t))).
		\end{cases}
	\end{equation}
Furthermore,  $C(t) \cap \bar{B}_{\bar\varepsilon}(\bar{x}(t))$ is uniformly $\frac{2\bar{\eta}}{L_{\psi}}$-prox-regular and  epi-Lipschitz at every $x \in C(t) \cap \bar{B}_{\bar{\varepsilon}}(\bar{x}(t)) $,   with \begin{equation}\label{clint}
	\text{ cl} \left(\inte \left (C(t) \cap \bar{B}_{\bar{\varepsilon}}(\bar{x}(t))  \right) \right) = C(t) \cap \bar{B}_{\bar{\varepsilon}}(\bar{x}(t)). 
\end{equation} 
Indeed, for the prox-regularity constant,  given that $\nabla_x\psi_{r+1}(t,x)= x-\x(t)$  and condition $(A3.1)$ is satisfied, then  \cite[Theorem  9.1]{adly}$(i)$ holds for $m:=r+1$,  $g_i:=\psi_i$, and   $\rho:=\frac{\rho_o}{2}$, and the inequality in \cite[Theorem 9.1]{adly}$(ii)$ is satisfied by $\psi_1,\cdots, \psi_{r+1}$   for $\gamma:=L_{\psi}$. The last condition of \cite[Theorem  9.1]{adly} is obtained by translating the second equation of \cite[Lemma 6.1]{nourzeidan3} to our setting.  Hence,  the conclusion follows from  the proof of that theorem, and by using  Lemma \ref{final-a3.2(ii)} and $L_{\psi}\ge \frac{4\bar{\eta}}{\rho_o}$. To prove that $C(t) \cap \bar{B}_{\bar{\varepsilon}}(\bar{x}(t)) $ is epi-Lipschitzian for every $x \in C(t) \cap \bar{B}_{\bar{\varepsilon}}(\bar{x}(t)) $ and that \eqref{clint} is satisfied, we use \cite[Exercise 6.5 and  Corollary 6.10]{clsw}, and equations \eqref{conditionii}-\eqref{normalcone}.\\
\end{remark}	
Parallel to  Lemma \ref{lipschitz-local},  and based on Lemma \ref{final-a3.2(ii)} and Remark \ref{C-prop}, we shall  obtain here the Lipschitz continuity and the uniqueness of the solutions of the Cauchy problem corresponding to the \textit{truncated} system $(\bar{D})$, defined in \eqref{Dtilda}. We note that the existence of a solution for this more general Cauchy problem is obtained in Corollary \ref{corollary2}. 

\begin{lemma}\label{lipschitz-local-2} Consider $C(\cdot)$ satisfying (A2) for $\rho>0$. Consider  $(\bar{x}(\cdot),\bar{y}(\cdot))\in \mathcal{C}([0,T];\mathbb{R}^n \times \mathbb{R}^l)$ with 
$\bar{x}(t)\in C(t)$ for all $ t \in [0,T]$ and  $(\bar{x}(\cdot),\bar{y}(\cdot))$ is $L_{(\bar{x}, \bar{y})}$-Lipschitz on $[0,T]$ for some $L_{(\bar{x}, \bar{y})}\ge 1$. Let  $\bar{\delta}>0$  such that $(A3.1)$ and $(A3.2)$ hold at $(\bar{x};\bar{\delta})$ and $(A4.1)$ is satisfied by $(f,g)$ at $((\bar{x},\bar{y});\bar{\delta})$, and let $\bar{\varepsilon}\in (0,\rho)\cap (0,\varepsilon_o]$ with its corresponding $\psi_{r+1}$ given by \eqref{psi-r+1}.  Fix $u\in \mathcal{U}$ and $(x_0,y_0) \in \mathscr{N}_{(\bar\varepsilon, \bar{\delta})}(0)$.  Then, a pair  $(x(\cdot),y(\cdot))\in W^{1,1}([0,T];\R^{n+l})$ such that $(x(t),y(t)) \in  \mathscr{N}_{(\bar\varepsilon, \bar{\delta})}(t) \;\forall t \in [0,T]$  solves the system $(\bar{D})$  associated with $((x_0,y_0),u)$ 
if and only if  there exist measurable functions $(\lambda_1, \cdots, \lambda_{r+1})$  and  $\zeta$ such that, $\forall i=1,\cdots,r+1$,    $\lambda_i(t)= 0$   $ \forall t\in I_i^{\-}(x)$,   $\zeta(t) \varphi(t,y(t))=0$  $\forall t \in [0,T]$,  and $((x,y),u)$, $(\lambda_i)_{i=1}^{r+1}$, and $\zeta$ satisfy
\begin{eqnarray}	\begin{cases}					
		 &\dot{x}(t)=f(t,x(t),y(t),u(t))-\sum_{i\in \bar{\I}^0_{(t,{ x}(t))}} \lambda_i(t) \nabla_x\psi_i(t,x(t))\;\;\textnormal{a.e.}\; t\in[0,T], \\
		&\dot{y}(t) = g(t, x(t), y(t), u(t))- \zeta(t)\nabla_y\varphi(t,y(t)), \text{ a.e. } t \in [0,T],\\
&(x(0),y(0))=(x_0,y_0). \end{cases} \label{lipschitz-xy1}
\end{eqnarray}	
 Furthermore, we have the following bounds \begin{eqnarray}
	 &\max\{\|\sum_{i=1}^{r+1} \lambda_i\|_{\infty}, \; \|\zeta\|_{\infty}\} \le \frac{\bar{\mu}}{4 \bar{\eta}^2}, \quad 
	\max\{ \|\dot{x}\|_{\infty}, \|\dot{y}\|_{\infty}\} \le M_h + \frac{\bar{\mu}}{4 \bar{\eta}^2} \bar{L},\label{lipschitz-xy}\\
	& \hspace{-.7 in} \max\{\|\dot{x}(t)-f(t,x(t),y(t),u(t))\|, \|\dot{y}(t)-g(t,x(t),y(t),u(t))\|\} \le \frac{\bar{\mu}}{4 \bar{\eta}^2}\bar{L}, \;\; t\in [0,T] \,\text{a.e.},\label{dotx-f}
	\end{eqnarray} 
where
\begin{eqnarray}
	&\bar{L}:= \max \{ L_{\psi}, \bar{\delta} L_{(\bar{x},\bar{y})}\}  \ge \bar{\delta}, \quad  
	\bar{\mu}:=\bar{L} (1+M_h) \ge \mu.\label{tilde-mu}\end{eqnarray}
	Consequently, the pair $(x,y)$ is the unique  solution of \eqref{Dtilda} corresponding to $((x_0,y_0),u)$. 
\end{lemma}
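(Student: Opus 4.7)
The plan is to parallel the proof of Lemma \ref{lipschitz-local}, replacing the constraint qualification from Lemma \ref{final-a3.2(i)} by \eqref{conditionii} and the prox-regularity constant by the $\frac{2\bar\eta}{L_\psi}$ established in Remark \ref{C-prop}; the extra generators $\psi_{r+1}$ and $\varphi$ are handled as additional (respectively, standalone) constraints. For the equivalence, I would apply Filippov's selection theorem to both inclusions in $(\bar D)$: the explicit normal cone formula of Remark \ref{C-prop} for $C(t)\cap \bar B_{\bar\varepsilon}(\bar x(t))$, combined with $N_{\bar B_{\bar\delta}(\bar y(t))}(y)=\mathbb{R}_+\nabla_y\varphi(t,y)$ on the boundary and $\{0\}$ in the interior, produces measurable nonnegative multipliers $(\lambda_i)_{i=1}^{r+1}$ and $\zeta$ with the complementary slackness conditions, recasting $(\bar D)$ as \eqref{lipschitz-xy1}. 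The converse implication is immediate.

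For the multiplier bounds, enlarge the set $\mathscr{T}$ from \eqref{scriptT} to guarantee differentiability of $\psi_{r+1}(t,x(t))$ and $\varphi(t,y(t))$ on a full-measure subset. Fix $t\in\mathscr{T}\cap\bar I^0(x)$ with $\sum_{i\in\bar{\mathcal I}^0_{(t,x(t))}}\lambda_i(t)\neq 0$. The argument from Lemma \ref{lipschitz-local} (via \cite[equation (3.1)]{zeidan} and \eqref{deltahat}) provides measurable $\theta_i\in\hat\partial_t\psi_i(\cdot,x(\cdot))$ with
\[0=\theta_i(t)+\langle\nabla_x\psi_i(t,x(t)),\dot x(t)\rangle,\quad i\in\bar{\mathcal I}^0_{(t,x(t))}.\]
For $i\le r$, (A3.1) gives $|\theta_i(t)|\le L_\psi\le\bar L$; for $i=r+1$, writing $\psi_{r+1}(t,z)=\tfrac{1}{2}(\|z-\bar x(t)\|^2-\bar\varepsilon^2)$ and using that $\bar x$ is $L_{(\bar x,\bar y)}$-Lipschitz gives $|\theta_{r+1}(t)|\le\bar\varepsilon L_{(\bar x,\bar y)}\le\bar\delta L_{(\bar x,\bar y)}\le\bar L$. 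Combined with $\|f\|\le M_h$ and $\|\nabla_x\psi_i\|\le L_\psi\le\bar L$, multiplying each identity by $\lambda_i(t)$, summing over $\bar{\mathcal I}^0_{(t,x(t))}$, substituting $\dot x$ from \eqref{lipschitz-xy1}, and invoking the homogeneous form $\|\sum\lambda_i\nabla_x\psi_i\|\ge 2\bar\eta\sum\lambda_i$ of \eqref{conditionii} yields $\sum_i\lambda_i(t)\le\bar\mu/(4\bar\eta^2)$. The bound on $\zeta$ is analogous but simpler: on $\{\zeta(t)>0\}$ we have $\|y(t)-\bar y(t)\|=\bar\delta$, and differentiating $\varphi(t,y(t))=0$ yields $\zeta(t)\bar\delta^2\le\bar\delta(M_h+L_{(\bar x,\bar y)})\le\bar\mu$; since $\bar\eta\le\bar\delta/2$, this is $\le\bar\mu/(4\bar\eta^2)$. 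The derivative bounds \eqref{lipschitz-xy}--\eqref{dotx-f} then follow from these multiplier bounds together with (A4.1) and $\|\nabla_x\psi_i\|,\|\nabla_y\varphi\|\le\bar L$.

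Uniqueness proceeds exactly as in Lemma \ref{lipschitz-local}: the hypomonotonicity of the normal cones to the $\frac{2\bar\eta}{L_\psi}$-prox-regular $C(t)\cap\bar B_{\bar\varepsilon}(\bar x(t))$ and the $\bar\delta$-prox-regular ball $\bar B_{\bar\delta}(\bar y(t))$, combined with the $L_h(t)$-Lipschitz property of $(f,g)(t,\cdot,\cdot,u(t))$ from (A4.1) and the uniform multiplier bounds, yields $\tfrac{d}{dt}\|X(t)-\tilde X(t)\|^2\le 2\kappa(t)\|X(t)-\tilde X(t)\|^2$ for some $\kappa\in L^1([0,T])$, and Gronwall's lemma forces $X\equiv\tilde X$. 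The main subtlety relative to Lemma \ref{lipschitz-local} is bookkeeping the time-dependence of $\psi_{r+1}$ and $\varphi$ through the merely Lipschitz $\bar x,\bar y$: this pushes $L_{(\bar x,\bar y)}$ into the bounds through $\bar L$ and $\bar\mu$ in place of $L_\psi$ and $\mu$, and requires correctly identifying $\hat\partial_t\psi_{r+1}$ via \eqref{deltahat}. Everything else is a routine adaptation.
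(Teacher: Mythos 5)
Your proposal is correct and follows essentially the same route as the paper's proof: Filippov's selection theorem together with the normal cone formulas of Remark \ref{C-prop} for the equivalence; an enlarged full-measure set on which $\psi_{r+1}(\cdot,x(\cdot))$ and $\varphi(\cdot,y(\cdot))$ are differentiable; the bound $|\theta_{r+1}|\le\bar\varepsilon L_{(\bar x,\bar y)}\le\bar L$ from $\hat\partial_t\psi_{r+1}$; the homogeneous form of \eqref{conditionii} to bound $\sum_i\lambda_i$ and $\zeta$ by $\bar\mu/(4\bar\eta^2)$; and hypomonotonicity plus Gronwall for uniqueness. The only cosmetic difference is that you invoke $\bar\delta$-prox-regularity of the ball where the paper uses its $\infty$-prox-regularity (convexity), which changes nothing in the Gronwall estimate.
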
	
\begin{proof}
The equivalence follows from Filippov Selection theorem, the normal cone formula in \eqref{normalcone},  and the fact that    $N_{\bar{B}_{\bar{\delta}}(\bar{y}(t))}(y)$ equals $\{0\}$ if $\varphi(t,y)<0$, and equals $\{\lambda(y-\bar{y}(t)):\lambda\ge 0\}$ if $\varphi(t, y)=0.$\\
For the bounds  pertaining  $\|\sum_{i=1}^{r+1}\lambda_i \|_{\infty}$ and $\|\dot{\x}\|_{\infty}$, we follow the same steps as in proof of Lemma \ref{lipschitz-local}, with the main difference here is that we add an extra constraint to $C(t)$, namely, $\psi_{r+1}(t,x)\le 0$. For this reason,  it suffices to show  that \eqref{dpsi-r} and \eqref{mu}, where ${L}_{\psi}$ is enlarged to $\bar{L}$, are also valid for $i=r+1$, and  
that the set $\mathscr{T}$ can be modified to take into account   the addition of $\psi_{r+1}$. Once these goals are achieved,  the proof follows from that of  Lemma \ref{lipschitz-local}, where   $\bar{\I}^0_{(t,x(t))}$, Lemma \ref{final-a3.2(ii)}, $\bar\eta$, and $\bar\mu$  are used  instead of $\I^0_{(t,x(t))}$,  Lemma \ref{final-a3.2(i)}, $\eta_0$, and $\mu$, respectively.   \\
\\
Note that, by \eqref{psi-r+1},   $\nabla_x\psi_{r+1}(t,z)=z-\bar{x}(t)$ exists for all $(t,z)\in [0,T]\times \R^n$. Furthermore,  as $\x(\cdot)$ is $L_{(\x,\y)}$-Lipschitz, we have,  on $\text{Gr } C(\cdot)\cap\bar{B}_{\bar{\varepsilon}}(\bar{x}(\cdot))$, that  $\psi_{r+1}(\cdot,\cdot)$  is $\bar{\varepsilon} L_{(\bar{x},\bar{y})}$-Lipschitz,   $\nabla_x\psi_{r+1}(\cdot,\cdot)$ is  bounded by $\bar\varepsilon\le \bar{L}$, and  $\hat{\partial}_t\psi_{r+1}(\cdot,\cdot)$, defined via  \eqref{deltahat} for $i=r+1$, satisfies
\begin{equation}\label{deltahat-r+1}\hat{\partial}_t\psi_{r+1}(t,z)= \langle z-\x(t),-\partial\x(t)\rangle=\partial_t\psi_{r+1}(t,z), \;\;\forall (t,z)\in \text{Gr } C(\cdot)\cap\bar{B}_{\bar{\varepsilon}}(\bar{x}(\cdot)), \end{equation}
and hence,  $\forall \theta_{r+1} \in \hat{\partial}_t\psi_{r+1}(t,z), |\theta_{r+1}|\le \bar\varepsilon L_{(\x,\y)}\le \bar{L}$. Thus, for $t\in [0,T]$ a.e.,  and for all $  \theta_{r+1}(t)\in \hat{\partial}_t \psi_{r+1}(t,x(t))$,	we have
	\begin{eqnarray} \label{mutilde}
&&\hspace{-.2 in}| \theta_{r+1}(t) + \langle \nabla_x \psi_{r+1}(t,x(t)), {f}(t,x(t), y(t), u(t)) \rangle \le \bar{L} (1+\|{f}(t,x(t), y(t), u(t))\|). 		
		\end{eqnarray}
Therefore,  \eqref{mu} and \eqref{mutilde} yield  that  \eqref{mu} holds up to $i=r+1$, that is, for $t\in [0,T]$ a.e., for all $ \theta_i(t)\in \hat\partial\psi_i(t,x(t))$,	we have \begin{eqnarray} \label{mut}
	\hspace{-.9in}&& \hspace{-.2 in}|\theta_{i}(t) + \langle \nabla_x \psi_{i}(t,x(t)), {f}(t,x(t), y(t), u(t)) \rangle |\le \bar{L} (1+ \|{f}(t,x(t), y(t), u(t))\| ), \;  \forall i=1,\cdots,r+1. 
		\end{eqnarray}
On the other hand, from \eqref{psi-r+1}, \eqref{deltahat-r+1}, and the fact that $\dot{\x}(t)\in \partial\x(t)$  a.e.,  we have	
\begin{eqnarray}
\frac{d}{dt} \psi_{r+1}(t, x(t))&=& \langle x(t)-\bar{x}(t),  -\dot{\bar{x}}(t)+\dot{x}(t) \rangle, \; t\in[0,T]\;  \text{a.e.,}\color{red}\label{dpsi}\\
&=& \theta_{r+1}(t)+ \langle \nabla_x\psi_{r+1}(t,x(t)), \dot{x}(t) \rangle, \; t\in[0,T]\;  \text{a.e.,}\label{dpsi-theta}\end{eqnarray}
where $\theta_{r+1}(t)= \langle x(t)-\x(t), -\dot{\x}(t)\rangle \in \hat{\partial}_t\psi_{r+1}(t,x(t))$ a.e. \\
Therefore, \eqref{dpsi-r} holds up to $i=r+1$, that is, $\forall i$, there is measurable $\theta_i(\cdot) \in \hat\partial_t\psi_i(\cdot,x(\cdot))$ a.e.,  with 
\begin{equation}\label{dpsi-all}
\frac{d}{dt} \psi_{i}(t, x(t))= \theta_i(t)+	\langle \nabla_x\psi_{i}(t,x(t)), \dot{x}(t) \rangle, \;  \text{a.e.,}\; \; \forall i=1,\cdots, r+1.
\end{equation}
Instead of the set  $\mathscr{T}$ given in \eqref{scriptT} in the proof of Lemma  \ref{lipschitz-local}, we use  the following modified set $\bar{\mathscr{T}}$ 
 that involves $\dot{\bar{x}}$ and on which $\frac{d}{dt} \psi_{r+1}(\cdot, x(\cdot))$ readily exists, 
	$$\bar{\mathscr{T}}:= \{t\in (0,T):  \dot{x}(t),  \dot{\bar{x}}(t),  \; \text{and}\;\frac{d}{dt} \psi_i(t, x(t))  \; \text{exist}, \; \forall i=1,\cdots,r\}.$$ 
Therefore, similarly to \eqref{lambda-bound} we obtain
\begin{eqnarray}
			&& \sum_{i\in  \bar{\I}^0_{(t,x(t))}} \lambda_i(t) 
			\le \frac{ \bar{L}}{4\bar\eta^2}(1+  \|f(t,x(t),y(t),u(t))\|)\overset{(A4.1)} {\le} \frac{\bar\mu}{4 \bar\eta^2},   \label{lambda+1-bound}		\end{eqnarray} 
implying, via \eqref{lipschitz-xy1}(i) and $(A4.1)$, the required bound  in \eqref{lipschitz-xy} for $\|\dot{x}\|_{\infty}$ and the first bound in \eqref{dotx-f}.  	
	\\
For the bounds of $\zeta$ and $\dot{y}$ in \eqref{lipschitz-xy}, we use the full- measure set  $\bar{\mathscr{A}}:=\{t\in(0,T): \dot{\bar{y}}(t)\;\text{and}\;\dot{{y}}(t)\;\text{exist}\}$. If $t\in \bar{\mathscr{A}}$ and $\varphi(t,y(t))< 0$, then $\zeta(t)=0$ and the bound  on $\dot{y}$ follows using $(A4.1)$. If $t\in \bar{\mathscr{A}}$ and $\varphi(t,y(t))=0$, then $\|y(t)-\bar{y}(t)\|=\bar{\delta}$ and, since $\varphi(\cdot,y(\cdot))\le 0$, $\frac{d}{dt}\varphi(t,y(t))=0$. Hence, as \eqref{phi}  implies that 
\begin{equation}\label{dphi}\frac{d}{dt} \varphi(t, y(t))= \langle y(t)-\bar{y}(t),  -\dot{\bar{y}}(t)+\dot{y}(t) \rangle, \; t\in[0,T]\;  \text{a.e.,}\end{equation}
 then,  using  $\bar{\eta}<\frac{\bar{\varepsilon}}{2}<\frac{\bar{\delta}}{2}$ (by Lemma \ref{final-a3.2(ii)}),  \eqref{lipschitz-xy1}$(ii)$,    $L_{(\bar{x},\bar{y})}\ge 1$, and $\bar\delta L_{(\x,\y)}\le \bar{L}$ (by  \eqref{tilde-mu}), we get that for $t\in [0,T]$ a.e., 
\begin{equation}\label{zeta-bound}4\bar{\eta}^2 \zeta(t)\le { \bar{\delta}}^2\zeta(t)= \langle y(t)-\bar{y}(t) ,g(t,x(t), y(t), u(t))-\dot{\bar{y}}(t)\rangle \le \bar{L} (1+ \|g(t,x(t), y(t), u(t))\|).\end{equation}  
Therefore, by $(A4.1)$  we have,  $\|\zeta\|_{\infty}\le \frac{\bar{\mu}}{4\bar{\eta}^2}$, which  when combined with the second equation of \eqref{lipschitz-xy1},  yields the bound on $\|\dot{{y}}\|_{\infty}$ in \eqref{lipschitz-xy} and the second bound in \eqref{dotx-f}.	\\
The uniqueness proof of $(x,y)$  is similar to that in Lemma \ref{lipschitz-local}, where system $(D)$ is replaced  by  $(\bar{D})$,  the $\rho$-prox-regularily of $C(t)$ is replaced by the $\frac{2\bar{\eta}}{L_{\psi}}$-prox-regularity of $C(t)\cap \bar{B}_{\bar{\varepsilon}}(\x(t))$ obtained in Remark \ref{C-prop}, and  \eqref{lipschitz-D}-\eqref{bound}, $\mu$,  $\eta_o$, and $L_{\psi}$,  are  replaced by \eqref{lipschitz-xy1}-\eqref{lipschitz-xy}, $\bar\mu$,  $\bar\eta$, and $\bar{L}$,  respectively.  The $\infty$-prox-regularity of $\bar{B}_{\bar\delta}(\y(t))$  keeps the inequality in \eqref{gron}  valid.
\end{proof}

\subsection{Exponential penalty approximation  for the system $(\bar{D})$ }\label{approx} 
This subsection aims to establish the relationship between $(\bar{D})$ and  its approximating  standard control system} $(\bar{D}_{\gk})$, as well as the existence and uniqueness of Lipschitz solutions to the Cauchy problem associated with $(\bar{D})$. Throughout this whole subsection, we assume 
$C(\cdot)$ satisfying (A2) for $\rho>0$, and   $(\bar{x}(\cdot),\bar{y}(\cdot))\in \mathcal{C}([0,T];\mathbb{R}^n \times \mathbb{R}^l)$ with 
	$\bar{x}(t)\in C(t)$ for all $t\in [0,T]$  and  $(\bar{x}(\cdot),\bar{y}(\cdot))$ is $L_{(\bar{x}, \bar{y})}$-Lipschitz on $[0,T]$ for some $L_{(\bar{x}, \bar{y})}\ge 1$. Let  $\bar{\delta}>0$ be such that $(A3.1)$ and $(A3.2)$ hold at $(\bar{x};\bar{\delta})$ and $(A4.1)$ is satisfied by $(f,g)$ at $((\bar{x},\bar{y});\bar{\delta})$. Fix  $0<\bar{\varepsilon} < \bar{\delta}$, its corresponding $\psi_{r+1}$ given by \eqref{psi-r+1}, and $\bar{\eta}\in (0,\frac{\bar{\varepsilon}}{2}),$ such that $\bar\varepsilon, \psi_{r+1}$, and $\bar\eta$ satisfy Lemma \ref{final-a3.2(ii)}.  Assuming that $L_{\psi}\ge \frac{4\bar{\eta}}{\rho_o}$, set $\bar{\rho}:=\frac{2\bar{\eta}}{L_\psi}$, the prox-regular constant for the sets $C(t)\cap\bar{B}_{\bar{\varepsilon}}(\bar{x}(t))$ from Remark \ref{C-prop}. \\
  \\
We start by extending the function $h(t,x,\cdot,u)$ {from $\bar{B}_{\bar{\delta}}(\y(t))$} to $\mathbb{R}^l$ so that this extension satisfies  for all $ y \in \mathbb{R}^l,$  $(A4.1),$ {and  also  (A4.2)  whenever it is satisfied by $h$.} This extension shall be later used in Theorem \ref{invariance}.
\begin{remark} \textnormal{\textbf{[Extension]}}\label{extension} For  $t\in[0,T]$ a.e.,  {$x\in \left[C(t) \cap \bar{B}_{\bar\delta}(\x(t))\right],$} and for $u\in U(t)$,  it is possible to extend the function $h(t,{x} ,\cdot,u):=(f,g)(t,{x},\cdot,u)$ so that,  whenever $h$  satisfies $(A4)$ $($including $(A4.2)$$)$, its extension also satisfies $(A4)$  for all $y\in  \mathbb{R}^l$. Indeed, the convexity for all $t\in [0,T]$ of  $\bar{B}_{\bar{\delta}}(\bar{y}(t)) $  yields that $\pi(t,\cdot):=\pi_{ \bar{B}_{{\bar\delta}}(\bar{y}(t))}(\cdot)$	is well-defined and $1$-Lipschitz on $ \mathbb{R}^l$.
		\\	 
 Define  for $t\in[0,T]$ a.e., and  $(x,y,u)\in \left[C(t) \cap \bar{B}_{\bar\delta}(\x(t))\right] \times \mathbb{R}^l \times U(t)$, $\bar{h}(t,x,y,u):= h(t,x,\pi(t,y), u).$	 
{Whenever} $h$ satisfies $(A4)$ at $((\bar{x},\bar{y}),\bar{\delta})$, arguments similar to those in  {\cite[Remark 4.1] {nourzeidan}} show that   $\bar{h}$ (whose name we  keep  as $h$) also satisfies $(A4)$, where $\bar{\mathscr{N}}_{(\bar{\delta},\bar{\delta})}(t)$, which is $\left[C(t) \cap \bar{B}_{\bar{\delta}}(\bar{x}(t)) \right]\times \bar{B}_{\bar{\delta}}(\bar{y}(t)),$ is now replaced   by  $\left[C(t) \cap \bar{B}_{\bar{\delta}}(\bar{x}(t)) \right] \times \mathbb{R}^l$.
\end{remark}
The following notations, which depend on $(\bar{x}; \bar{\varepsilon})$ and $(\bar{y};\bar{\delta})$,  will be used in the proofs of the results that follow as well as the proof of Theorem \ref{pmp-ps}. They are instrumental in constructing  a  dynamic  $(\bar{D}_{\gk})$ that approximates  $(\bar{D})$  and has rich properties. 
	\begin{itemize}	
		\item 	Let $\bar{L}$ and $\bar{\mu}$ be the constants given in  \eqref{tilde-mu}. Define a sequence $(\gk)_k$ such that,  for all  $k \in \mathbb{N}$,
	$\gk >\dfrac{2 \bar{\mu}}{\bar{\eta}^2}e \;(> \dfrac{e}{\bar{\delta}}) $  \text{ and } $\gk \to \infty\text{ as } k \longrightarrow \infty,$
		and the real sequences $(\bar{\alpha}_k)_k, (\bar{\sigma}_k)_k,$ and $(\bar{\rho}_k)_k$ by  \begin{equation} \label{sigma}
		\hspace{-.4 in}	\bar{\alpha}_k := \frac{1}{\gk} \ln\left(\frac{{\bar{\eta}}^2  \gk}{2 \bar{\mu}}\right);\;  \bar{\sigma}_k:=\frac{(r+1)\bar{L}}{2\bar{\eta}^2}\left(\frac{\ln(r+1)}{\gk}+\bar{\alpha}_k\right);\;\bar{\rho}_k:= \sqrt{\bar{\delta}^2-2\bar{\alpha}_{\gk}}. \end{equation}
		Our choice of $\gk$ with  the fact that $\bar{\mu} > \bar{\delta}>\bar{\eta}$ yield that  ${\bar{\delta}}^2>\dfrac{2\ln({\gk\bar{\delta}})}{\gk}>2\bar{\alpha}_{\gk},$ and  
		\begin{equation}\label{alpha-mu}
			\gk e^{-\gk\bar{\a}_{k}}=\frac{2\bar{\mu}}{\bar{\eta}^2},\;\;(\bar{\alpha}_k, \bar{\sigma}_k, \bar{\rho}_k) >0\;\;\forall \;k\in\mathbb{N},\;\; \bar{\alpha}_k\searrow 0, \;\bar{\sigma}_k\searrow 0\;\;\hbox{and}\;\;\bar{\rho}_k\nearrow \bar{\delta}. 
		\end{equation}
		\item For each $t \in [0,T]$ and $k\in\N$, we define the compact sets
		\begin{eqnarray}
				\label{tildeCgkdef} &&\hspace{-.4 in}\bar{C}^{\gk}(t):=\bigg\{x\in\R^n : \sum_{i=1}^{r+1}e^{\gk \psi_i(t,x)}\leq 1\bigg\} \subset  \inte C(t) \cap {B}_{\bar{\varepsilon}}(\bar{x}(t)), \\   
			 &&\hspace{-.4 in}\bar{C}^{\gk}(t,k):=\bigg\{x\in\R^n: \sum_{i=1}^{r+1}e^{\gk \psi_i(t,x)}\leq \frac{2\bar{\mu}}{\bar{\eta}^2\gk}=e^{-\gk\bar{\a}_{k}}\bigg\}
			\subset \inte \bar{C}^{\gk}(t). \label{tildeCgkkdef}			
			\end{eqnarray}		
				\item {For $u\in \mathcal{U}$}, the approximation dynamic $(\bar{D}_{\gk})$ of $(\bar{D})$ is defined by
		\begin{equation}\label{Dgk1} \hspace{-2 cm}({\bar{D}}_{\gk}) \begin{cases} \dot{x}(t)=f(t,x(t),y(t),u(t))-\sum\limits_{i=1}^{r+1} \gk e^{\gk\psi_i(t,x(t))} \nabla_x\psi_i(t,x(t)),\;\;\textnormal{a.e.}\; t\in[0,T],\\ 
\dot{y}(t)=g(t,x(t),y(t),u(t))- \gk e^{\gk\varphi(t,y(t))}\nabla_y \varphi(t,y(t)), \text{ a.e. } t \in [0,T]. \end{cases}
		\end{equation}	
		\end{itemize}
	Using Lemma \ref{final-a3.2(ii)}, a translation of \cite[equation (8)]{nourzeidan3},  and arguments parallel to   those used in the proofs of \cite[Propositions 4.4 \& 4.6]{nourzeidan3}  and \cite[Proposition {5.3}]{nourzeidan4},   it is not difficult to derive the following properties for our sets $\bar{C}^{\gk}(t)$ and $\bar{C}^{\gk}(t,k)$, { knowing  that the sets $C(t)\cap \bar{B}_{\bar{\varepsilon}}(\x(t))$  are $\frac{2\bar{\eta}}{L_{\psi}}$- prox-regular  and are epi-Lipschitzian.  Notice,  from  \eqref{tildeCgkdef} and \eqref{tildeCgkkdef}, that  these sets here are  time-dependent,  uniformly localized  near $\bar{x}(t)$, and   are defined not only via $\psi_1,\cdots,\psi_r$ but also via the extra  function $\psi_{r+1}$.}
\begin{proposition}\label{propcgk(k)} The following holds true. 		\begin{enumerate}[$(i)$]
			\item  There exist $k_1\in\N$ and $r_1\in (0, { \frac{{\rho}_o}{2}}]$, such that  $\forall k\geq k_1$,   $\forall (t, x)\in\{(t,x)\in [0,T]\times\R^n:\sum_{i=1}^{r+1} e^{\gamma_k \psi_i(t,x)}=1\}$, and  $\forall \,(\tau,z)\in B_{2r_1}(t,x)$, we have 
			\begin{equation}\label{psiuniform}
				\left\|\sum_{i=1}^{r+1} e^{\gamma_k \psi_i(\tau,z)} \nabla_x \psi_i(\tau,z)\right\|> 2\bar{\eta} \sum_{i=1}^{r+1} e^{\gamma_k \psi_i(\tau,z)}.
			\end{equation} 
			\item There exists $k_2\geq k_1$ and $\bar{\epsilon}_{o}>0$ such that for all $k\geq k_2 $ we have  
\begin{eqnarray} 
\hspace{-.8 in}\left[x\in \bar{C}^{\gk}(t) \;\; \&\right.\left. \|\sum_{i=1}^{r+1} e^{\gamma_k \psi_i(t,x)} \nabla_x \psi_i(t,x)\| \le \bar{\eta} \sum_{i=1}^{r+1} e^{\gamma_k \psi_i(t,x)} \right] \hspace{-.1in} \implies \hspace{-.1in}\left[\sum_{i=1}^{r+1} e^{\gamma_k \psi_i(t,x)}<e^{-\bar{\epsilon}_{o} \gamma_k}\right].\label{epsilonpsigk}
			\end{eqnarray}
			\item   For all $ t \in [0,T]$,  for all $ k$,  $\bar{C}^{\gk}(t) \subset \inte \left( C(t) \cap \bar{B}_{\bar{\varepsilon}}(\bar{x}(t)) \right)$ and  $\bar{C}^{\gk}(t,k)\subset \inte \bar{C}^{\gk}(t)$, and these sets  are uniformly compact. Moreover, there exists $ k_3 \in \mathbb{N}$ such that for $k\geq k_3$, these sets are the closure of their interiors, their boundaries  and interiors are non-empty,  and  the formulae for their respective boundaries and interiors are obtained from  their own definitions in \eqref{tildeCgkdef} and \eqref{tildeCgkkdef} by replacing the inequalities therein  by equalities and strict inequalities, respectively. Furthermore, 
				$\bar{C}^{\gk}(t)$  and $\bar{C}^{\gk}(t,k)$ are amenable, epi-Lipschitz, and are respectively {$\frac{\bar\eta}{ L_{\psi}}$-  and $\frac{\bar\eta}{2 L_{\psi}}$}-prox-regular.
			\item  For every $t \in [0,T]$,  $(\bar{C}^{\gk}(t))_k$ and $(\bar{C}^{\gk}(t,k))_k$ are  nondecreasing sequences  whose  Painlev\'e-Kuratowski limit is $C(t) \cap \bar{B}_{\bar{\varepsilon}}(\bar{x}(t))$ and satisfy
			\begin{eqnarray}\hspace{-.7 in}{\small \inte\left(C(t) \cap \bar{B}_{\bar{\varepsilon}}(\bar{x}(t)) \right)=\bigcup_{k\in \N} \inte \bar{C}^{\gk}(t)=\bigcup_{k\in \N} \bar{C}^{\gk}(t) =\bigcup_{k\in \N} \inte \bar{C}^{\gk}(t,k)=\bigcup_{k\in \N} \bar{C}^{\gk}(t,k). }\label{union.Cgk(k)}			\end{eqnarray}
			\item  For $c\in  \bdry \left (C(0) \cap \bar{B}_{\bar{\varepsilon}}(\bar{x}(0)) \right) $, there exist $k_c\geq k_3$,  ${r}_{\c}>0$, and a vector $d_c\not=0$ such that  
			\begin{equation*}\label{lastideahope}
			\left(\left[\left(C(0)\cap \bar{B}_{\bar{\varepsilon}}(\bar{x}(0))\right) \cap \bar{B}_{{r}_{c}}({c})\right]+\bar{\sigma}_k\frac{d_c}{\|d_c\|}\right)\subset \inte \bar{C}^{\gk}(0,k),\;\;\forall k\geq k_c. 
			\end{equation*}
	In particular,  for $k\geq k_c$ we have 
	\begin{equation}\label{boundarypoint(k)}
				\left({c}+\bar{\sigma}_k\frac{d_c}{\|d_c\|}\right)\in \inte \bar{C}^{\gk}(0,k).
				 \end{equation}				
		\end{enumerate}
	\end{proposition}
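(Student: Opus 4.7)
The plan is to treat the five items in the order stated, as each builds on the previous one, and to lean on the uniform constraint qualification \eqref{conditionii} and the Lipschitz/continuity properties supplied by $(A3.1)$. Throughout, the extra function $\psi_{r+1}$ is smooth on $[0,T]\times\R^n$ with $\nabla_x\psi_{r+1}(t,x)=x-\bar{x}(t)$ bounded by $\bar{L}$ on the localized graph, so it fits seamlessly alongside $\psi_1,\dots,\psi_r$.

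For $(i)$, I would argue by contradiction: if the conclusion fails, then there are sequences $k_n\to\infty$, $(t_n,x_n)$ with $\sum_i e^{\gk_n\psi_i(t_n,x_n)}=1$, and $(\tau_n,z_n)\in B_{2/n}(t_n,x_n)$ violating \eqref{psiuniform}. Normalizing the nonnegative weights $\mu_i^n:=e^{\gk_n\psi_i(\tau_n,z_n)}/\sum_j e^{\gk_n\psi_j(\tau_n,z_n)}$ one has $\sum_i\mu_i^n=1$, and weights supported on indices $i$ with $\psi_i(t_n,x_n)$ bounded away from $0$ vanish exponentially. Up to subsequence, $(t_n,x_n),(\tau_n,z_n)\to(t_o,c_o)$ with $c_o\in C(t_o)\cap\bar{B}_{\bar{\varepsilon}}(\bar{x}(t_o))$ and all surviving weights are indexed in $\bar{\I}^0_{(t_o,c_o)}$; passing to the limit in the failing inequality and using $(A3.1)$ yields $\|\sum\mu_i^o\nabla_x\psi_i(t_o,c_o)\|\le 2\bar{\eta}$ with $\sum\mu_i^o=1$, contradicting \eqref{conditionii}. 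The existence of the uniform radius $r_1\le\rho_o/2$ comes from the same compactness. For $(ii)$, I would argue similarly by contradiction, combining the failure of the implication with Remark \ref{pisiepse} (which guarantees that a small gradient forces $\psi_i$ to be strictly negative, of size at least $m_o$), so that along a converging subsequence the nontrivial weights again concentrate on active indices and again contradict \eqref{conditionii}. The constant $\bar{\epsilon}_o$ can be taken as, e.g., $m_o/2$.

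For $(iii)$, the inclusion $\bar{C}^{\gk}(t)\subset\inte C(t)\cap B_{\bar{\varepsilon}}(\bar{x}(t))$ is immediate: if $\sum_{i=1}^{r+1}e^{\gk\psi_i(t,x)}\le 1$ then each exponential is $<1$, hence each $\psi_i(t,x)<0$, which places $x$ in the interior of every $C_i(t)$ and in $B_{\bar{\varepsilon}}(\bar{x}(t))$. The inclusion $\bar{C}^{\gk}(t,k)\subset\inte\bar{C}^{\gk}(t)$ follows from $e^{-\gk\bar{\a}_k}=2\bar{\mu}/(\bar{\eta}^2\gk)<1$. Uniform compactness is a consequence of $\bar{C}^{\gk}(t,k)\subset \bar{B}_{\bar{\varepsilon}}(\bar{x}(t))$ and the continuity of the $\psi_i$. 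To obtain the closure-of-interior property and the explicit boundary/interior formulae for $k\ge k_3$, I would apply the non-degeneracy in $(i)$: on the level set $\sum e^{\gk\psi_i(t,x)}=1$ the gradient of $x\mapsto\sum e^{\gk\psi_i(t,x)}$ is, by \eqref{psiuniform}, bounded below by $2\bar{\eta}$, which by the implicit function theorem makes the level set a smooth manifold and supplies outward-pointing directions, yielding epi-Lipschitzness, amenability, and the announced prox-regularity constants via standard calculus on sublevel sets of $\CO^{1,1}$ functions (see the arguments of \cite[Propositions 4.4, 4.6]{nourzeidan3}), with the numerical constants $\bar\eta/L_\psi$ and $\bar\eta/(2L_\psi)$ coming directly from plugging the bound $2\bar\eta$ from $(i)$ into those formulae.

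For $(iv)$, monotonicity of both sequences in $k$ follows from $\gk\mapsto\sum e^{\gk\psi_i(t,x)}$ being nonincreasing at any point where all $\psi_i\le 0$, together with $2\bar{\mu}/(\bar{\eta}^2\gk)\searrow 0$. The inclusions in \eqref{union.Cgk(k)} are obtained in one direction from $(iii)$, and in the other by noting that if $x\in\inte[C(t)\cap\bar{B}_{\bar{\varepsilon}}(\bar{x}(t))]$ then every $\psi_i(t,x)<0$, so $\sum e^{\gk\psi_i(t,x)}\to 0$ and eventually falls below $e^{-\gk\bar{\a}_k}\to 0$; Painlev\'e–Kuratowski convergence is then immediate since monotone unions coincide with both limits. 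Finally, $(v)$ is the step I expect to be the most delicate. Given $c$ on the boundary of $C(0)\cap\bar{B}_{\bar{\varepsilon}}(\bar{x}(0))$, I would choose $d_c$ as (minus) an element of the nonempty Clarke normal cone at $c$, or equivalently as the gradient of $\sum_i e^{\gk\psi_i(0,c)}$ divided by its norm, which is uniformly bounded below by $2\bar\eta$ thanks to $(i)$; this direction points strictly into $\inte\bar{C}^{\gk}(0,k)$. The quantitative step is to convert this first-order decrease into a uniform second-order estimate on $\bar B_{r_c}(c)$: a Taylor expansion of $F_k(x):=\sum_i e^{\gk\psi_i(0,x)}$ along $d_c/\|d_c\|$, using the $L_\psi$-Lipschitz property of $\nabla_x\psi_i$ and the lower bound from $(i)$, shows that shifting by $\bar{\sigma}_k$ decreases $F_k$ by at least $\bar{\sigma}_k\cdot 2\bar\eta-\bar\sigma_k^2\cdot \mathrm{const}$; the choice of $\bar{\sigma}_k$ in \eqref{sigma} is exactly what is needed so that this drop exceeds $1-e^{-\gk\bar{\a}_k}=1-2\bar{\mu}/(\bar{\eta}^2\gk)$ uniformly for $x\in C(0)\cap\bar{B}_{\bar{\varepsilon}}(\bar{x}(0))\cap\bar B_{r_c}(c)$ once $k\ge k_c$. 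The main obstacle is controlling the second-order remainder uniformly in $k$, since $F_k$ has Hessian of order $\gk$; the factor $\ln(r+1)/\gk+\bar{\a}_k$ in the definition of $\bar{\sigma}_k$ is tailored precisely to absorb this, mirroring the analysis in \cite[Proposition 5.3]{nourzeidan4}.
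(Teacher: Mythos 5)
Your overall strategy — compactness-plus-contradiction via the uniform constraint qualification \eqref{conditionii} for $(i)$–$(ii)$, direct verification of the inclusions for $(iii)$, and an inward-translation argument for $(v)$ — is the same route the paper intends, since the paper itself proves nothing here beyond invoking Lemma \ref{final-a3.2(ii)} and deferring the computations (in particular the uniform prox-regularity constants) to the cited propositions of \cite{nourzeidan3} and \cite{nourzeidan4}. Two of your stated justifications, however, would fail as written. First, in $(iv)$ your reason for the nondecreasingness of $(\bar{C}^{\gk}(t,k))_k$ is the wrong direction: the fact that $\gamma\mapsto\sum_i e^{\gamma\psi_i(t,x)}$ is nonincreasing, combined with the threshold $\tfrac{2\bar{\mu}}{\bar{\eta}^2\gk}=e^{-\gk\bar{\alpha}_k}\searrow 0$, makes the desired inclusion harder, not easier, because the defining bound shrinks with $k$. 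The inclusion is true, but one must use $\gk>\tfrac{2\bar{\mu}}{\bar{\eta}^2}e$: membership in $\bar{C}^{\gk}(t,k)$ forces $e^{\gk\psi_i(t,x)}\le\tfrac{2\bar{\mu}}{\bar{\eta}^2\gk}<e^{-1}$ for every $i$, hence $\gk\,|\psi_i(t,x)|\ge 1$, so each map $\gamma\mapsto\gamma e^{\gamma\psi_i(t,x)}$ is past its maximum and nonincreasing on $[\gk,\infty)$, giving $\gamma_{k+1}\sum_i e^{\gamma_{k+1}\psi_i(t,x)}\le\gk\sum_i e^{\gk\psi_i(t,x)}\le\tfrac{2\bar{\mu}}{\bar{\eta}^2}$.

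Second, in $(v)$ the quantitative mechanism you describe is mis-calibrated. An additive first-order drop of $F_k$ of size $\bar{\sigma}_k\cdot 2\bar{\eta}$ tends to $0$ and can never bring $F_k$ — which at points of $\left(C(0)\cap\bar{B}_{\bar{\varepsilon}}(\x(0))\right)\cap\bar{B}_{r_c}(c)$ can be as large as the number of active constraints, so in particular above $1$ — below the target $e^{-\gk\bar{\alpha}_k}$; moreover $\nabla_x F_k$ carries a factor $\gk$ and its Lipschitz constant a factor of order $\gk^2$, so the remainder is not an innocuous ``$\bar{\sigma}_k^2\cdot\mathrm{const}$''. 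The argument has to be multiplicative and per constraint: take $d_c$ to be minus the minimal-norm element of $\conv\{\nabla_x\psi_i(0,c):i\in\bar{\I}^0_{(0,c)}\}$ (norm $>2\bar{\eta}$ by \eqref{conditionii}), so that $\langle d_c/\|d_c\|,\nabla_x\psi_i\rangle\le-2\bar{\eta}$ for all active $i$, and, by $(A3.1)$ and the upper semicontinuity of the active set (Lemma \ref{lemmaaux1}), $\le-\bar{\eta}$ for all nearly active $i$ at all points of a small ball $\bar{B}_{r_c}(c)$; then the translation by $\bar{\sigma}_k d_c/\|d_c\|$ lowers each such $\psi_i$ by at least a fixed multiple of $\bar{\sigma}_k$, so each $e^{\gk\psi_i}$ is multiplied by $e^{-c\,\gk\bar{\sigma}_k}$, and the formula \eqref{sigma} for $\bar{\sigma}_k$ is exactly what makes $(r+1)e^{-c\,\gk\bar{\sigma}_k}<e^{-\gk\bar{\alpha}_k}$, the far-from-active indices being absorbed by an $e^{-\gk\cdot\mathrm{const}}$ bound. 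Note also that the statement requires a single $k$-independent $d_c$, so it cannot literally be the normalized gradient of $F_k$, which depends on $k$; the normal-cone choice above is the right one. The remaining items are fine at sketch level (in $(i)$ you should still record the lower bound on the normalizing denominator, obtained from the Lipschitz estimate in $(A3.1)$, which is what makes the weights of limit-inactive indices vanish).
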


	\begin{remark} \label{intelemma(k)} We deduce, from Proposition \ref{propcgk(k)}, that for any $c\in C(0) \cap \bar{B}_{\bar{\varepsilon}}(\bar{x}(0))$, there exists a sequence $({c}_{\gk})_k$ such that, for $k$ large enough, ${c}_{\gk}\in \inte \bar{C}^{\gk}(0,k)\subset \inte \bar{C}^{\gk}(0)$, and ${c}_{\gk}\f c$. Indeed: \begin{enumerate}[$(i)$]
			\item For $c\in  \bdry \left (C(0) \cap \bar{B}_{\bar{\varepsilon}}(\bar{x}(0)) \right) $,  we choose $c_{\gk} := c+\bar{\sigma}_k\frac{d_c}{\|d_c\|}$ for all $k$.  For $k\geq k_c$, we have from \eqref{boundarypoint(k)} that $c_{\gk}\in \inte \bar{C}^{\gk}(0,k)$. Moreover, since $\bar{\sigma}_k\f 0$  we have $c_{\gk}\f c$.
			\item For $c\in \inte \left(C(0) \cap \bar{B}_{\bar{\varepsilon}}(\bar{x}(0)) \right)$, {Proposition \ref{propcgk(k)}$(iv)$} 
yields the existence of $\hat{k}_c \in \N$, such that $c\in \inte \bar{C}^{\gk}(0,k)$ for all $k\ge \hat{k}_c$.
			Hence, there exists  $ \hat{r}_{c}>0 $  satisfying  \begin{equation*} \label{approxxbar}c\in \bar{B}_{\hat{r}_{c}}({c})\subset  \inte \bar{C}^{\gk}(0,k),\;\;\forall k\ge \hat{k}_c.
			\end{equation*}
			In this case,  we  take   the sequence ${c}_{\gk}\equiv c\in\inte \bar{C}^{\gk}(0,k)$ that converges to $c$. 
	\end{enumerate} \end{remark}
On the other hand, for the ball $\bar{B}_{\bar{\delta}}(\bar{y}(0))$ generated by the single function ${\varphi(0,\cdot)}$ in \eqref{phi}-\eqref{ball-phi}, we have the following property.
\begin{remark}\label{movingto-rhok0}
There exists  $k_o\in\N$  such that 
\begin{equation}\label{interior-rhok}
	\bar{B}_{\bar{\delta}}(\bar{y}(0))\cap \bar{B}_{\frac{\bar{\delta}}{4}}({\bf d}) - \frac{2\bar{\alpha}_k}{\bar{\delta}} \mathscr{V}({\bf d})\in {B}_{\bar{\rho}_k}(\bar{y}(0)),\;\;  \forall k\ge k_o\;\;\text{and}\; \; \;\forall {\bf d} \in \bdry \bar{B}_{\bar{\delta}}(\bar{y}(0)),
\end{equation}
where $\mathscr{V}({\bf d}):=\frac{\nabla_y\varphi(0,{\bf d})}{\|\nabla_y\varphi(0,{\bf {\bf d}})\|}=\frac{{\bf d}-\bar{y}(0)}{\bar{\delta}}$. This  property follows by applying  Theorem  3.1$(iii)$ of \cite{nourzeidan}  to $C:= \bar{B}_{\bar{\delta}}(\bar{y}(0))$, $r_o:=\frac{\bar{\delta}}{4}$, and $\eta:=\frac{\bar{\delta}}{2},$ and by noting the triangle inequality with  
$\| \nabla_y\varphi(0,{\bf d})\|=\bar{\delta}$   gives 
$$ \|\nabla_y\varphi(0,z)\| >\frac{\bar{\delta}}{2} \;\;\text{and}\;\;  \;\langle \nabla_y\varphi(0,z), \mathscr{V}({\bf d})\rangle > \frac{\bar{\delta}}{2},\, \;\; \forall {\bf d}\in \bdry \bar{B}_{\bar{\delta}}(\bar{y}(0)) \; \;\text{and}\;\; \forall z\in B_{\frac{\bar{\delta}}{2}}({\bf d}).$$
\end{remark}
Parallel to Remark \ref{intelemma(k)} and using Remark \ref{movingto-rhok0}, we deduce the following.
	\begin{remark}\label{movingto-rhok} For any ${\bf{d}} \in  \bar{B}_{\bar{\delta}}(\bar{y}(0))$, there exists a sequence $({d}_{\gk})_k$ such that, for $k$ large enough, ${d}_{\gk}\in \inte \bar{B}_{\bar{\rho}_k}(\bar{y}(0)) $, and ${d}_{\gk}\f {\bf d}$. Indeed: \begin{enumerate}[$(i)$]
			\item As $\bar{\rho}_k\nearrow \bar{\delta}$, we deduce from \eqref{interior-rhok} that for any ${\bf d}\in {\bdry} \bar{B}_{\bar{\delta}}(\bar{y}(0))$, there exists a sequence $({d}_{\gk})_k$ such that,{ for $k$ large enough, ${d}_{\gk}\in  B_{\bar{\rho}_k}(\bar{y}(0))\subset B_{\bar{\delta}}(\bar{y}(0))$, }and ${d}_{\gk}\f {\bf d}$.		
 			\item 	 For ${\bf{d}} \in \inte \bar{B}_{\bar{\delta}}(\bar{y}(0))  $, there exists $\textbf{k}_{\textbf{d}} \in \N$,  such that $\textbf{d}\in \inte \bar{B}_{\bar{\rho}_k}(\bar{y}(0))$ for all $k\ge \textbf{k}_{\textbf{d}}$.
 			Hence, there exists  $ \textbf{r}_{\textbf{d}}>0 $  satisfying  \begin{equation*} \label{approxxbar}\textbf{d}\in \bar{B}_{\textbf{r}_{\textbf{d}}}({\textbf{d}})\subset  \inte \bar{B}_{\bar{\rho}_k}(\bar{y}(0)),\;\;\forall k\ge \textbf{k}_{\textbf{d}}.
 			\end{equation*}
 			In this case,  we  take   the sequence ${d}_{\gk}\equiv {\bf d}\in\inte \bar{B}_{\bar{\rho}_k}(\bar{y}(0))$ that converges to ${\bf{d}}$. \\
\end{enumerate}
	\end{remark}
The next theorem is fundamental for the paper, as it illustrates two key ideas. First, it highlights the invariance for  $(\bar{D}_{\gk})$ of $\bar{C}^{\gk}(\cdot,k)\times \bar{B}_{\bar{\rho}_k}(\bar{y}(\cdot))\subset \inte \bar{C}^{\gk}(\cdot)\times {B}_{\bar{\delta}}(\bar{y}(\cdot)). $  More precisely, for $k$ large, if   the initial condition is  in $\bar{C}^{\gk}(0,k)\times \bar{B}_{\bar{\rho}_k}(\bar{y}(0))$,   then   $(\bar{D}_{\gk})$  has a  unique solution which is uniformly Lipschitz and remains in $\bar{C}^{\gk}(t,k)\times \bar{B}_{\bar{\rho}_k}(\bar{y}(t)) \;  \forall t \in [0,T]$. This result extends that in \cite{nourzeidan,nourzeidan3} in two directions:  (i)  when the original problem has {\it coupled} sweeping processes, and (ii) when the sweeping set is {\it time-dependent} and {\it localized} near $(\bar{x}, \bar{y})$.  Second, it  shows that   the solution of $(\bar{D}_{\gk})$ uniformly approximates  that of $(\bar{D})$. 
			
\begin{theorem}\label{invariance} \label{theoremrelationship}
			Let $(c_{\gk}, d_{\gk})_k$ be  such that $(c_{\gk}, d_{\gk}) \in    {\bar{C}}^{\gk}(0,k) \times  \bar{B}_{\bar{\rho}_k}(\bar{y}(0))  $ for every $k$, and $(c_{\gk}, d_{\gk}) \longrightarrow (x_0,y_0) \in \bar{\mathscr{N}}_{(\bar{\varepsilon}, \bar{\delta})} (0).$ Let  $u_{\gk}$ be a given sequence in $\mathcal{U}$. The following results hold:\\
\textnormal{\bf(I).} \textnormal{\bf[Existence of solution to $(\bar{D}_{\gk})$ and Invariance]} For $k$ large enough, the Cauchy problem of the system  $(\bar{D}_{\gk})$  corresponding to $(x(0),y(0))=(c_{\gk}, d_{\gk})$, and $u=u_{\gk}$, has  a unique solution $(x_{\gk},y_{\gk}) \in {W^{1,\infty}  }([0,T];\mathbb{R}^{n}\times \R^l)$   such that 
\begin{eqnarray}
& & (x_{\gk}(t),y_{\gk}(t)) \in \bar{C}^{\gamma_k}(t,k) \times  \bar{B}_{\bar{\rho}_k}(\bar{y}(t)) \quad \forall t \in [0,T], \label{boundxy} \\
& & \max\{ \|\xi_{\gk} \|_{\infty},  \| \zeta_{\gk} \|_{\infty}\} \le \frac{2\bar{\mu}}{\bar{\eta}^2}, \quad \max\{\|\dot{x}_{\gk}\|_{\infty},  \|\dot{y}_{\gk}\|_{\infty}\}     \le M_h + \frac{2\bar{\mu}}{\bar{\eta}^2} \bar{L}, \label{boundxi} 
			\end{eqnarray}
	where  $\xi_{\gk}(\cdot)$ and  $\zeta_{\gk}(\cdot)$ are the  positive continuous functions on $[0,T]$ corresponding respectively to the solutions  $x_{\gk}$ and $y_{\gk}$ via the formulae 
			\begin{equation} \label{defxi} \xi_{\gk}(\cdot):= \sum_{i=1}^{r+1}\xi_{\gk}^i(\cdot); \;\;\xi_{\gk}^i(\cdot):=\gk e^{\gk\psi_i(\cdot, x_{\gk}(\cdot))}  \; (i=1,\dots,r+1);\;\text{and}\; \;\zeta_{\gk}(\cdot):=  \gk e^{\gk\varphi(\cdot, y_{\gk}(\cdot))}. 
			\end{equation}
\textnormal{\bf(II).} \textnormal{\bf[Solution of (${\bar{D}}_{\gk}$) converges to a unique solution of $(\bar{D})$]} \\There exist  $(x, y) \in W^{1,\infty}([0,T];\mathbb{R}^n\times \R^l)$   and $(\xi^1, \cdots, \xi^r, \xi^{r+1}, \zeta)	\in L^{\infty}([0,T]; \mathbb{R}_+^{r+2})$ 		
			such that a subsequence of  $((x_{\gk}, y_{\gk}), (\xi^1_{\gk}, \cdots, \xi^r_{\gk},\xi^{r+1}_{\gk}, \zeta_{\gk}))$  $($we do not relabel$)$ satisfies \begin{eqnarray}
				& &\hspace{-.45 in}	(x_{\gk}, y_{\gk}) \xrightarrow[]{unif} (x, y), \quad 
					(\dot{x}_{\gk}, \dot{y}_{\gk}) \xrightarrow[in \; L^{\infty}]{w*} (\dot{x}, \dot{y}), \;\;\;\xi^i_{\gk}  \xrightarrow[in \; L^{\infty}]{w*} \xi^i \;(\forall i), \;\zeta_{\gk}  \xrightarrow[in \; L^{\infty}]{w*} \zeta, \label{converging}
			\end{eqnarray} 
		 and   $\xi_{\gk}$ converges weakly* in $L^{\infty}([0,T];\R_+)$ to $\xi:= \sum_{i=1}^{r+1} \xi^i $.  Moreover,
		 \begin{eqnarray} 						
&&(x(t), y(t)) \in {\bar{\mathscr{N}}_{(\bar\varepsilon, \bar{\delta})}(t) := (C(t) \cap \bar{B}_{\bar{\varepsilon}}(\bar{x}(t))) \times \bar{B}_{\bar{\delta}}(\bar{y}(t))}\; \;\;\forall t \in [0,T],   \label{boundofxyl} \\
&& \max\{\|{\xi}\|_{\infty},  \|{\zeta}\|_{\infty}\}  \le\; \; \frac{2\bar{\mu}}{\bar{\eta}^2},\quad \max\{\|\dot{x}\|_{\infty},  \|\dot{y}\|_{\infty}\}  \le M_h + \frac{2\bar{\mu}}{\bar{\eta}^2}  \bar{L}, \label{boundofxil}\\
	 					&&\begin{cases}
	\xi^i(t)=0 \textnormal{ for all } t \in I_i^{\-}(x), \;\; i \in \{1, \cdots, r, r+1\}, \\
			\xi(t)=0 \textnormal{ for all } t \in \bar{I}^{\-}(x), \;\; \textnormal{and} \;\;	 \zeta(t)=0 \textnormal{ for all } t \text{ such that } \varphi(t,y(t))<0.\label{xi-support}	\end{cases}\end{eqnarray}
If  $u_{\gk}$ admits a subsequence that converges a.e. to some $u \in \mathcal{U},$ or  if $(A1)$ and  $(A4.2)$ hold, then there exists $ u \in \mathcal{U}$ such that $(x, y)$ is the  unique solution of $(\bar{D})$ corresponding to $((x_0,y_0), u)$, and, for almost all $t\in [0,T]$,   
\begin{eqnarray}						
&& \dot{x}(t)=f(t,x(t),y(t),u(t))-\sum\limits_{i=1}^{r+1} \xi^i(t) \nabla_x\psi_i(t,x(t)),\label{dual1}\\
&& \hspace{.3 in}=f(t,x(t),y(t),u(t))-\sum_{i\in \bar{\I}{^0}_{(t,x(t))}} \xi^i(t) \nabla_x\psi_i(t,x(t)),\label{dual2}\\
&&\dot{y}(t)=g(t,x(t),y(t),u(t))- \zeta(t)\nabla_y\varphi(t,y(t)).\label{dual3}
\end{eqnarray}						
		\end{theorem}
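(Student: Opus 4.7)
The plan is to handle Parts (I) and (II) sequentially. For Part (I), since the right-hand side of $(\bar{D}_{\gk})$ is, for fixed $\gk$, locally Lipschitz in $(x,y)$ by (A3.1) and (A4.1), Carath\'eodory theory yields a unique maximally-defined solution $(x_{\gk}, y_{\gk})$ starting from $(c_{\gk}, d_{\gk})$. The crux is to prove the invariance \eqref{boundxy}, which forces the solution to be global on $[0,T]$. Introduce the scalar auxiliary functions
\[
\Phi_k(t) := \sum_{i=1}^{r+1} e^{\gk \psi_i(t, x_{\gk}(t))}, \qquad \Theta_k(t) := \varphi(t, y_{\gk}(t)),
\]
so that $x_{\gk}(t) \in \bar{C}^{\gk}(t,k)$ iff $\Phi_k(t) \le e^{-\gk \bar{\alpha}_k}$, and (using the definition of $\bar{\rho}_k$) $y_{\gk}(t) \in \bar{B}_{\bar{\rho}_k}(\bar{y}(t))$ iff $\Theta_k(t) \le -\bar{\alpha}_k$.

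Invariance for $\Phi_k$ is proved by an energy-type argument. Suppose for contradiction there is a first crossing $t^*$ with $\Phi_k(t^*) = e^{-\gk \bar{\alpha}_k}$. Differentiating $\Phi_k$, substituting the ODE for $x_{\gk}$, and using (A3.1)--(A4.1) to bound the terms involving $\partial_t \psi_i$ and $\nabla_x \psi_i \cdot f$ by $\gk \bar{\mu}\, \Phi_k$, the penalty sum contributes $-\gk^2 \|\sum_i e^{\gk \psi_i} \nabla_x \psi_i\|^2$. Since $\bar{\alpha}_k < \bar{\epsilon}_o$ for $k$ large, the contrapositive of Proposition \ref{propcgk(k)}(ii) at $t^*$ forces $\|\sum_i e^{\gk \psi_i} \nabla_x \psi_i\| > \bar{\eta}\, \Phi_k(t^*)$, and therefore
\[
\Phi_k'(t^*) \le \gk\, \bar{\mu}\, \Phi_k(t^*) - \gk^2 \bar{\eta}^2\, \Phi_k(t^*)^2
 = \gk e^{-\gk \bar{\alpha}_k}\bigl(\bar{\mu} - \bar{\eta}^2 \gk e^{-\gk \bar{\alpha}_k}\bigr) = -\gk\, \bar{\mu}\, e^{-\gk \bar{\alpha}_k} < 0,
\]
by the defining identity $\gk e^{-\gk\bar{\alpha}_k} = 2\bar{\mu}/\bar{\eta}^2$ in \eqref{alpha-mu}, contradicting the first crossing. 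The analogous argument for $\Theta_k$ is simpler: using $\nabla_y \varphi(t,y) = y-\bar{y}(t)$, the Lipschitz bound on $\bar{y}$, the identity $\bar{L} \ge \bar{\delta} L_{(\bar{x},\bar{y})}$, and the same choice of $\gk$, one gets $\Theta_k'(t^*) < 0$ at any first crossing. Once \eqref{boundxy} holds, each $\xi_{\gk}^i(t) \le \gk \Phi_k(t) \le 2\bar{\mu}/\bar{\eta}^2$ and likewise $\zeta_{\gk}(t) \le 2\bar{\mu}/\bar{\eta}^2$, giving \eqref{boundxi} together with $\|\nabla_x \psi_i\|, \|\nabla_y \varphi\| \le \bar{L}$ on the relevant domain and (A4.1).

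For Part (II), the uniform bounds \eqref{boundxi} and Arzel\`a--Ascoli yield (along a subsequence) the uniform convergence $(x_{\gk}, y_{\gk}) \to (x,y)$, together with $(\dot{x}_{\gk}, \dot{y}_{\gk}) \xrightarrow{w^*} (\dot{x}, \dot{y})$ and $(\xi_{\gk}^i, \zeta_{\gk}) \xrightarrow{w^*} (\xi^i, \zeta)$ in $L^\infty$, all satisfying \eqref{boundofxyl}--\eqref{boundofxil}. Passing to the limit in \eqref{boundxy} and invoking Proposition \ref{propcgk(k)}(iv) places $(x(t), y(t))$ in $\bar{\mathscr{N}}_{(\bar{\varepsilon}, \bar{\delta})}(t)$. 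The support properties \eqref{xi-support} follow because on any compact subset of $I_i^{\-}(x)$ continuity gives $\psi_i(t,x(t)) \le -\alpha < 0$, so uniform convergence of $x_{\gk}$ forces $\xi_{\gk}^i(t) = \gk e^{\gk \psi_i(t, x_{\gk}(t))} \to 0$ uniformly; the analogous statement holds for $\zeta_{\gk}$ on $\{\varphi(t,y(t))<0\}$.

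The main obstacle is identifying a control $u \in \mathcal{U}$ for which $(x,y)$ satisfies $(\bar{D})$. If a subsequence of $u_{\gk}$ converges a.e. to some $u \in \mathcal{U}$, passing to the limit in $(f,g)(t, x_{\gk}, y_{\gk}, u_{\gk})$ is routine using (A4.1) and dominated convergence, while the weak*--strong pairing handles $\xi_{\gk}^i \nabla_x \psi_i(t, x_{\gk})$ and $\zeta_{\gk} \nabla_y \varphi(t, y_{\gk})$, yielding \eqref{dual1} and \eqref{dual3}. Otherwise, under (A1) and (A4.2), the weak* limits $\dot{x} + \sum_i \xi^i \nabla_x \psi_i$ and $\dot{y} + \zeta \nabla_y \varphi$ lie a.e.\ in the closed convex set $(f,g)(t, x(t), y(t), U(t))$ (by Mazur's lemma applied to convex combinations of the $(f,g)(t, x_{\gk}, y_{\gk}, u_{\gk})$ together with (A4.1) and the convexity in (A4.2)); a Filippov-type measurable selection then produces $u \in \mathcal{U}$ realizing the equalities. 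Finally, \eqref{dual2} is immediate from \eqref{dual1} via \eqref{xi-support}, and Lemma \ref{lipschitz-local-2} yields uniqueness of $(x,y)$ as a solution of $(\bar{D})$; by a standard subsequence principle the full original sequence then converges.
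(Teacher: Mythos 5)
Your overall strategy coincides with the paper's: invariance of the penalized trajectories via the scalar functions $\sum_{i=1}^{r+1}e^{\gk\psi_i(t,x_{\gk}(t))}$ and $\varphi(t,y_{\gk}(t))$, using the constants $\bar\alpha_k,\bar\mu,\bar\eta$ of \eqref{alpha-mu} and Proposition \ref{propcgk(k)}, followed by Arzel\`a--Ascoli plus weak* compactness, the support argument for $\xi^i,\zeta$, and a convexity/Filippov step to recover a control (your Mazur-lemma route is a legitimate substitute for the paper's support-function computation in Lemma \ref{convergence}$(ii)$). Two genuine differences: you prove invariance of the small sets $\bar{C}^{\gk}(t,k)\times\bar B_{\bar\rho_k}(\y(t))$ in one shot, whereas the paper first proves invariance of the larger sets $\bar{C}^{\gk}(t)\times\bar B_{\bar\delta}(\y(t))$ (Step I.1, via \eqref{psiuniform}) precisely to keep the trajectory in a compact subset of the domain $\mathscr D$ on which the extended dynamics is defined and thus to justify continuation of the local solution to all of $[0,T]$; and your crossing argument is pointwise while the paper's is integral.

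The pointwise step is where your proof, as written, would fail. Since $\psi_i(\cdot,x)$, $\x$ and $\y$ are only Lipschitz in $t$ under (A3.1), the functions $\Phi_k$ and $\Theta_k$ are merely absolutely continuous, so $\Phi_k'(t^*)$ and $\Theta_k'(t^*)$ need not exist at the particular first-crossing time $t^*$; the paper avoids this by bounding $\Phi_k(t)-\Phi_k(t_1)=\int_{t_1}^{t}(\cdots)\,ds<0$ for $t>t_1$ close to any touching time $t_1$, using the a.e. chain rule \eqref{dpsi-all} and the measurable selections $\theta_i\in\hat\partial_t\psi_i$. Moreover, your contradiction scheme assumes a strict crossing from below, but the hypotheses allow $c_{\gk}\in\bdry\,\bar{C}^{\gk}(0,k)$ (equality of $\Phi_k(0)$ with the threshold $e^{-\gk\bar\alpha_k}$), so ``no crossing ever occurs'' is not the right statement; the correct conclusion, as in the paper, is that after any touching time the function strictly decreases locally, which combined with continuity gives $\Phi_k\le e^{-\gk\bar\alpha_k}$ throughout. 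Both issues are repairable by switching to the integral formulation (and, for the estimates on a short interval to the right of $t_1$, noting that $\Phi_k$ stays above $e^{-\gk\bar\epsilon_o}$ there so that Proposition \ref{propcgk(k)}$(ii)$ still applies), but you should also spell out the continuation argument ensuring the maximal solution reaches $T$ — either via the paper's Step I.1 or by arguing that your small compact invariant graphs preclude finite-time exit from the domain where (A3.1) and the (extended-in-$y$) condition (A4.1) hold. Finally, your closing claim that the full sequence converges is neither needed nor justified (different subsequences of $u_{\gk}$ could produce different limit controls in the convexity case); the theorem only asserts subsequential convergence.
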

		\begin{proof} \underline{\bf Part (I).}
			 Recall that  in Remark \ref{extension}, for  $t\in[0,T]$ a.e.,  {$x\in \left[C(t) \cap \bar{B}_{\bar\varepsilon}(\x(t))\right]$}, and for $u\in U(t)$, we extended $h(t,x,\cdot,u):=(f,g)(t,x,\cdot,u)$ so that $(A4)$ holds true  for all $y \in \mathbb{R}^l$. Hence,  for fixed $k\in \N$  and  for $u:=u_{\gk}$,  the system $(\bar{D}_{\gk})$ is well defined on the set 		
 \begin{equation} \label{Dlabel}
\mathscr{D}:=\{ (t,x,y) \in [0,T] \times \mathbb{R}^n \times \mathbb{R}^l: x \in 
\inte\left( C(t) \cap \bar{B}_{\bar{\varepsilon}}(\bar{x}(t))\right), \;\;y \in {B}_{2 \bar{\delta}}(\bar{y}(t)) \}.
 \end{equation}
 As $(0,c_{\gk}, d_{\gk})\in \mathscr{D}$,   standard local existence and uniqueness results from ordinary differential equations  (e.g., \cite[Theorem {{5.3}}]{hale}) confirm that 
 for some $T_1 \in (0,T]$, the Cauchy problem $(\bar{D}_{\gk})$ with $(x(0),y(0))=(c_{\gk}, d_{\gk})$  has a unique solution $(x_{\gk}, y_{\gk}) \in W^{1,1}([0,T_1];\mathbb{R}^n \times \mathbb{R}^l )$ such that $ (s,x_{\gk}(s), y_{\gk}(s)) \in  \mathscr{D} $ for all $s \in [0,T_1]$.	Set 
 \begin{eqnarray} \hat{T} := \sup \{T_1 : &(x, y) \text{ solves} \; ({\bar{D}}_{\gk}) \; \text{on} \;[0,T_1] \; \text{with}\; (x(0),y(0))=(c_{\gk}, d_{\gk}) \nonumber\\&\text{ and } (t,x(t),y(t)) \in \mathscr{D} \; \; \forall t \in [0,T_1] \}. \label{Tbar} \end{eqnarray}
The uniqueness  of the solution  yields that a solution $(x_{\gk},y_{\gk})$ of $({\bar{D}}_{\gk})$ with $(x(0),y(0))=(c_{\gk}, d_{\gk})$ exists on the interval $[0,\hat{T})$, and we have $(t,x_{\gk}(t),y_{\gk}(t)) \in \mathscr{D}$, $\forall t\in [0,\hat{T})$. \\
\\
\textbf{Step I.1. On ${[0,\hat{T}]},$ $(x_{\gk}(t),y_{\gk}(t)) \in\bar{C}^{\gk}(t) \times {\bar{B}_{\bar\delta}(\bar{y}(t))},$ and  $\hat{T}=T$. }  \\

Notice that  $x_{\gk}(0) = c_{\gk}\in \bar{C}^{\gk}(0,k)  \subset \inte \bar{C}^{\gk}(0)$ implies  that the function $\Delta(\cdot)$ given by $\Delta(\tau):= \sum_{i=1}^{r+1}e^{\gk \psi_i(\tau,x_{\gk}(\tau))}-1$ has $\Delta(0)<0$. If for some  $t_1 \in (0, \hat{T})$,   $ \Delta(t_1)=0$,  let $t>t_1$ close enough to $t_1$ so that $t \in (0, \hat{T})$.  Then, from   \eqref{dpsi-all} and  \eqref{mut},   we deduce that  for $i=1,\cdots, r+1$, there exists $\theta^i_{\gk}(\cdot) \in \hat{\partial}_s \psi_i(\cdot,x_{\gk}(\cdot))$ a.e., such that 
\begin{eqnarray} 
&&\frac{d}{ds} \psi_i(s,x_{\gk}(s)) = \theta^i_{\gk}(s) + \langle \nabla_x \psi_i(s,x_{\gk}(s)), \dot{x}_{\gk}(s) \rangle, \;\; s \in [t_1, t] \; \text{ a.e., }  \label{deripsi2}\\		&&	\hspace{-.6 in}	 \abs{\theta^i_{\gk}(s) + \langle \nabla_x \psi_i(s,x_{\gk}(s)), {f}(s,x_{\gk}(s), y_{\gk}(s), u_{\gk}(s)) \rangle}  \le \bar{L} (1+M_h) ={\bar{\mu}},\;\; s \in [t_1, t]  \text{ a.e. }\;\;    \label{thetai}
	\end{eqnarray} 
Then, using the first equation of $(\bar{D}_{\gk})$, we obtain 
				\begin{eqnarray}
					\Delta(t)-\Delta(t_1) &=&
					 \sum_{i=1}^{r+1} \int_{t_1}^{t}  \gk e^{\gamma_k \psi_i(s,x_{\gk}(s))} \frac{d}{ds} \psi_i(s,x_{\gk}(s)) ds\nonumber\\
					&\overset{\eqref{deripsi2}}{=}&  \int_{t_1}^{t}  \left( \sum_{i=1}^{r+1} \gk e^{\gamma_k \psi_i(s,x_{\gk}(s))} \left ( \theta^i_{\gk}(s) + \langle \nabla_x \psi_i(s,x_{\gk}(s)), {f}(s,x_{\gk}(s), y_{\gk}(s), u_{\gk}(s)) \rangle \right) \right. \nonumber \\
					& & \left. -  \langle \sum_{i=1}^{r+1} \gk e^{\gamma_k \psi_i(s,x_{\gk}(s))}\nabla_x \psi_i(s,x_{\gk}(s)), \sum_{j=1}^{r+1} \gk  e^{\gamma_k \psi_j(s,x_{\gk}(s))} \nabla_x \psi_j(s,x_{\gk}(s))   \rangle \right)  ds \nonumber\\
								&\overset{\eqref{thetai}}{\le}&  \int_{t_1}^{t}  \left( \sum_{i=1}^{r+1} \gk e^{\gamma_k \psi_i(s,x_{\gk}(s))}  {\bar{\mu}}   -  \left\| \sum_{i=1}^{r+1} \gk e^{\gamma_k \psi_i(s,x_{\gk}(s))}\nabla_x \psi_i(s,x_{\gk}(s))\right\|^2 \right)  ds\label{similarto}\\
										&\overset{\eqref{psiuniform}}{\le}&  \int_{t_1}^{t} \left(  \sum_{i=1}^{r+1} \gk e^{\gamma_k \psi_i(s,x_{\gk}(s))} \left({\bar{\mu}}  - 4 \bar{\eta}^2  \gk \sum_{i=1}^{r+1}   e^{\gamma_k \psi_i(s,x_{\gk}(s))}  \right) \right) ds \nonumber \\
					&\le&  \int_{t_1}^{t}  \sum_{i=1}^{r+1} \gk e^{\gamma_k \psi_i(s,x_{\gk}(s))} ( {\bar{\mu}}  - 2\bar{\eta}^2 \gk )  ds \;\; <0, \nonumber
				\end{eqnarray}
				the third and the second to last inequality are due to the fact that we can choose $t$ close enough to $t_1$ so that,  for $s\in [t_1,t]$, $x_{\gk}(s) \in B_{2r_1}(t_1,x_{\gk}(t_1))$ (needed to apply  \eqref{psiuniform}) and  $\sum_{j=1}^{r+1} \gk e^{\gamma_k \psi_j(s,x_{\gk}(s))} >\frac{1}{2}$, and the last inequality follows from $\gk > \frac{ {\bar{\mu}}}{2\bar{\eta}^2}$. This shows that, $\forall  t_1\in (0,\hat{T})$ with $\Delta(t_1)=0,$  $ \;\Delta(t)<0$ for all $ t>t_1$  close enough to $t_1$. Whence,  the continuity of $\Delta(\cdot)$  on $[0,\hat{T})$ and $\Delta(0) <0$ yield that $\Delta(t)\le 0$ for all $ t \in [0,\hat{T})$, that is,  $x_{\gk}(t) \in  \bar{C}^{\gk}(t) \subset \inte \left(C(t) \cap \bar{B}_{\bar{\varepsilon}}(\bar{x}(t))\right) \forall t \in [0,\hat{T})$.\\
				\\
		On the other hand, as $y_{\gk}(0)=d_{\gk}\in \bar{B}_{\bar{\rho}_k}(\bar{y}(0)) \subset B_{\bar{\delta}}(\bar{y}(0))$, we have $\varphi(0,{y}_{\gk}(0))<0$.  If for some  $t_1 \in (0, \hat{T})$,   $ \varphi(t_1,y_{\gk}(t_1))=0$, that is,  $\|y_{\gk}(t_1)-\bar{y}(t_1)\|=\bar\delta$, choose $t>t_1$ close enough  to $t_1$ so that,    $\forall  s\in[t_1,t]$,  $e^{\gamma_k \varphi(s,y_{\gk}(s))} >\frac{1}{2}$  and 
		$ \|y_{\gk}(s)-\bar{y}(s)\|>  \frac{\bar{\delta}}{2}$.  Hence,  \eqref{dphi}, $(\bar{D}_{\gk})$,  (A4.1),  \eqref{tilde-mu}, $y_{\gk}(\cdot)\in B_{2\bar{\delta}}(\y(\cdot))$, and   ${\bar{\eta}}< \frac{\bar{\delta}}{2}$ (by Lemma \ref{final-a3.2(ii)}), yield that, for $s \in [t_1,t]$ a.e.,
		 \begin{eqnarray} \label{dphi2}
		\hspace{-.8in}\frac{d}{ds} \varphi(s,y_{\gk}(s)) &=&\langle y_{\gk}(s)-\bar{y}(s) ,-\dot{\bar{y}}(s)+{g}(s,x_{\gk}(s), y_{\gk}(s), u_{\gk}(s))\rangle  \nonumber \\&&  - \gk  e^{\gamma_k \varphi(s,y_{\gk}(s))}  \| y_{\gk}(s)-\bar{y}(s)\|^2  \nonumber\\
&\le& \|y_{\gk}(s)-\bar{y}(s)\|\;L_{(\bar{x},\bar{y})}(1+M_h) - \gk  e^{\gamma_k \varphi(s,y_{\gk}(s))}  \| y_{\gk}(s)-\bar{y}(s)\|^2\label{eq}\\		
	&<& 2\bar{\mu}-\frac{\gk}{2} \bar{\eta}^2 <0,\nonumber
		\end{eqnarray}
		the last inequality follows from $\gk > \frac{ 2{\bar{\mu}}}{\bar{\eta}^2} e$.
		Hence, for all $t$ close enough to $t_1$, we have $$ 	\varphi(t,y_{\gk}(t)) =	\varphi(t,y_{\gk}(t)) 	-\varphi(t_1,y_{\gk}(t_1))  =  \int_{t_1}^{t}  \frac{d}{ds}\varphi(s,y_{\gk}(s)) ds <0.$$   
		This shows that $y_{\gk}(t) \in \bar{B}_{\bar{\delta}}(\bar{y}(t))$ for all $t\in [0,\hat{T})$. \\
							\\
				Since for  $t\in [0,\hat{T})$,   $(t,x_{\gk}(t), y_{\gk}(t))$ remains in the compact  set Gr$\left(\bar{C}^{\gk}(\cdot)\times \bar{B}_{\bar{\delta}}(\bar{y}(\cdot))\right)$ 
	then it is possible to extend in this compact set the solution $(x_{\gk},y_{\gk})$ to the whole interval $[0, \hat{T}]$. If $\hat{T}<T$,  the local existence of a solution starting  at $\hat{T}$ contradicts the definition of $\hat{T}$,  proving that 				 $\hat{T}=  T$. This completes Step {\bf I.1}.\\
	\\
	\textbf{Step I.2. Invariance of $\bar{C}^{\gk}(t,k) \times  \bar{B}_{\bar{\rho}_k}(\bar{y}(t))$, i.e.,  { \eqref{boundxy}  is valid.}}\\				As $c_{\gk} \in \bar{C}^{\gk}(0,k)$, we have $\sum_{i=1}^{r+1}  e^{\gamma_k \psi_i(0,c_{\gk})}\le \frac{2{\bar{\mu}}}{\bar{\eta}^2 \gk}$.  
				Since $\gamma_k \to \infty$, there exists $k_4 \in \mathbb{N}$ large enough such that for all $k \ge k_4$, we have that  
\begin{equation} \label{max}
	\frac{2{\bar{\mu}}}{\bar{\eta}^2 \gk} \ge \max \{ e^{-\frac{\gk\bar{\epsilon}_o}{2}}, \sum_{i=1}^{r+1}  e^{\gamma_k \psi_i(0,c_{\gk})}\},
				\end{equation}
				where $\bar{\epsilon}_o$ the constant from \eqref{epsilonpsigk}. Fix $k\ge k_4$. Let $t_1  \in [0,T)$ such that $\sum_{i=1}^{r+1}  e^{\gamma_k \psi_i(t_1,x_{\gk}(t_1))} =\frac{2\bar{\mu}}{\bar{\eta}^2 \gk}$. Let $\bar{\epsilon}_k = \min \{ \frac{\bar{\epsilon}_o}{2}, \frac{\ln {2}}{ 2\gk}\}$. Using the continuity of $x_{\gk}$ and $\psi_i(\cdot, \cdot)$, we can choose $t$ close enough to $t_1$ such that for all $s \in [t_1,t]$, 		
				\begin{eqnarray}
					\sum_{i=1}^{r+1} e^{\gamma_k \psi_i(s,x_{\gk}(s))}  \ge \sum_{i=1}^{r+1} e^{\gamma_k \psi_i(t_1,x_{\gk}(t_1))} e^{-\gk \bar{\epsilon}_k}{{=}} \frac{2\bar{\mu}}{\gk\bar{\eta}^2 } e^{-\gk \bar{\epsilon}_k} \label{psi-bound}\\
					\overset{\eqref{max}}{\ge} e^{-\frac{\gk\bar{\epsilon}_o}{2}} e^{-\gk \bar{\epsilon}_k} \ge e^{-\frac{\gk\bar{\epsilon}_o}{2}} e^{-\frac{\gk\bar{\epsilon}_o}{2}} =e^{-\gk\bar{\epsilon}_o}.\nonumber
				\end{eqnarray}  
				Hence, by Proposition $\ref{propcgk(k)}(ii)  $, and the fact that $x_{\gk}(\tau) \in \bar{C}^{\gk}(\tau)$  for all $\tau \in [0,T]$, we have 
				\begin{equation}\label{bound-grad}\|\sum_{i=1}^{r+1} e^{\gamma_k \psi_i(s,x_{\gk}(s))} \nabla_x \psi_i(s,x_{\gk}(s))\| > \bar{\eta} \sum_{i=1}^{r+1} e^{\gamma_k \psi_i(s,x_{\gk}(s))}, \quad\forall s\in[t_1,t].\end{equation} 
Thus, for $\bar{\Delta}(\cdot):= \sum_{i=1}^{r+1} e^{\gamma_k \psi_i(\cdot,x_{\gk}(\cdot))}-\frac{2\bar{\mu}}{\bar{\eta}^2 \gk}$, we have				\begin{eqnarray*}
					\bar{\Delta}(t)-\bar{\Delta}(t_1)&=&\sum_{i=1}^{r+1} e^{\gamma_k \psi_i(t,x_{\gk}(t))} 	-\sum_{i=1}^{r+1} e^{\gamma_k \psi_i(t_1,x_{\gk}(t_1))}\\ 
&\overset{(\refeq{similarto})}{\le}& \int_{t_1}^{t}  \left( \sum_{i=1}^{r+1} \gk e^{\gamma_k \psi_i(s,x_{\gk}(s))}  {\bar{\mu}}   -  \left\| \sum_{i=1}^{r+1} \gk e^{\gamma_k \psi_i(s,x_{\gk}(s))}\nabla_x \psi_i(s,x_{\gk}(s))\right\|^2 \right)  ds\nonumber\\
&\overset{(\refeq{bound-grad})}{\le}&\int_{t_1}^{t} \left(  \sum_{i=1}^{r+1} \gk e^{\gamma_k \psi_i(s,x_{\gk}(s))} \left({\bar{\mu}}  -  \bar{\eta}^2  \gk \sum_{i=1}^{r+1}   e^{\gamma_k \psi_i(s,x_{\gk}(s))}  \right) \right) ds\nonumber\\					
					&\overset{(\refeq{psi-bound})}{\le}&  \int_{t_1}^{t}  \sum_{i=1}^{r+1} \gk e^{\gamma_k \psi_i(s,x_{\gk}(s))} \bar{\mu}\left( 1  -  2     e^{-\gk \bar{\epsilon}_k}  \right)  ds <0,
					\end{eqnarray*}
				the last  inequality follows from  the definition of $\bar{\epsilon}_k$. This proves that $x_{\gk}(t) \in \bar{C}^{\gk}(t,k)$ for all $ t>t_1$ close enough to $t_1$. Whence, similarly to Step 1, the continuity of  
$\bar{\Delta}(\cdot)$ and $\bar{\Delta}(0)\le 0$ imply   that $x_{\gk}(t) \in\bar{C}^{\gk}(t,k), \; \forall t\in [0,T]$.  \\ 
\\
On the other hand, having $y_{\gk}(0)=d_{\gk}\in \bar{B}_{\bar{\rho}_k}(\bar{y}(0))$, where $\bar{\rho}_k$ is given in \eqref{sigma},  means  that $\varphi(0,y_{\gk}(0)) \le -\bar{\alpha}_k$. Since $\bar{\alpha}_k \to 0$, and $\bar{\alpha}_k>0$ for all $k$, then we can find  $k_5\ge k_4$ such that $$\bar{\alpha}_k \le \min \left\{ \frac{\bar{\delta}^2}{4} , - \varphi(0,d_{\gk})\right\} \text{ for all } k\ge k_5.$$
Define $ \hat{\epsilon}_k:= \min \{\frac{\bar{\delta}^2}{8}, \frac{\ln{2}}{{ 2}\gk} \}$.   
If for some  $t_1 \in [0, T)$,   $ \varphi(t_1,y_{\gk}(t_1))=-\bar{\alpha}_k$,  let $t>t_1$ close enough to $t_1$ such that 
\begin{equation} \varphi(s,y_{\gk}(s)) \ge -\bar{\alpha}_k  - \hat{\epsilon}_k, \quad \forall s \in [t_1,t]. \label{eq3} \end{equation}
Then, for all $s \in [t_1,t]$, we have \begin{equation} \|y_{\gk}(s)-\bar{y}(s)\|^2 \ge \bar{\delta}^2-2\bar{\alpha}_k - 2\hat{\epsilon}_k \ge \frac{\bar{\delta}^2}{4} \ge \bar{\eta}^2 .  \label{eq2} \end{equation} 
Hence, using, respectively, \eqref{eq}, $y_{\gk}(\cdot)\in \bar{B}_{\bar{\delta}}(\bar{y}(\cdot))$,  \eqref{tilde-mu}, \eqref{eq3}, \eqref{alpha-mu}$(i)$,  and \eqref{eq2}, we deduce that 
	\begin{eqnarray*}
	\varphi(t,y_{\gk}(t)) 	-\varphi(t_1,y_{\gk}(t_1))  &=&  \int_{t_1}^{t}  \frac{d}{ds}\varphi(s,y_{\gk}(s)) ds \\
	&{\le}& \int_{t_1}^{t} \left ( \bar{\mu}  
	- \gk  e^{-\gamma_k \bar{\alpha}_k} e^{-\gk \hat{\epsilon}_k}   \|y_{\gk}(s) -\bar{y}(s)\|^2 \right)  ds \nonumber\\ 
	&{\le}& \int_{t_1}^{t} \left ( \bar{\mu}  
	- \frac{2 \bar{\mu}}{\bar{\eta}^2} e^{-\gk \hat{\epsilon}_k}   \bar{\eta}^2 \right)  ds \nonumber\\ 
	&{=}& \int_{t_1}^{t} \bar{\mu} \left (  1
	- 2 e^{-\gk \hat{\epsilon}_k}    \right)  ds<0 \nonumber 	
\end{eqnarray*}
proving that $\varphi(t,y_{\gk}(t))< -\bar\alpha_{\gk}$. Thus, the continuity of $\varphi(\cdot,y_{\gk}(\cdot))$  yields,  $y_{\gk}(t) \in \bar{B}_{\bar{\rho}_k}(\bar{y}(t))$ $ \forall t\in [0,T]$.\\
\\
\textbf{Step I.3.	 $(x_{\gk},y_{\gk},\xi_{\gk}, \zeta_{\gk})$  satisfy equation {(\ref{boundxi}).}}\\	
So far, we proved that  a solution  $(x_{\gk}, y_{\gk})$ of the Cauchy problem of $(\bar{D}_{\gk})$ exists and  satisfies (\ref{boundxy}). Hence,     the definitions of $\bar{C}^{\gk}(t, k)$ and $\xi_{\gk}$ given in \eqref{tildeCgkkdef} and \eqref{defxi}, respectively,  yield that $\|\xi_{\gk}\|_{\infty}\le \frac{2\bar{\mu}}{\bar{\eta}^2}$. On the other hand, the definition of $\bar{B}_{\bar{\rho}_k}(\bar{y}(t))$ yields that $\varphi(t,y_{\gk}(t))\le -\bar{\alpha}_k,$ and thus,  the same bound is immediately obtained for the norm of   $\zeta_{\gk},$ defined  in \eqref{defxi}. Whence, the first inequality in  \eqref{boundxi} is satisfied.  
Employing  this latter  in $(\bar{D}_{\gk})$ and then calling on the  definition of $\bar{L}$ in \eqref{tilde-mu},  we obtain that the second inequality in  \eqref{boundxi} is valid. \\
  \\				
\underline{\bf Part (II).}
\textbf{Step II.1. Existence of $(\xi^1,\cdots, \xi^{r+1}, \zeta)$  and $(x,y)$ satisfying   \eqref{converging}-\eqref{xi-support}.}\\
Using \eqref{boundxy}-\eqref{boundxi}, it follows  that \eqref{boundxyxi-n} holds for $\mathcal{R}:=r+1$ and  $(x_k,y_k, \xi_k^i,\zeta_k):=(x_{\gk},y_{\gk}, \xi_{\gk}^i,\zeta_{\gk})$. 
 Hence, by Lemma \ref{convergence}$(i)$,  there is a subsequence (not relabeled)
of   $(x_{\gk}, y_{\gk})$, $  (\xi_{\gk}^1, \cdots, \xi_{\gk}^{r+1}, \zeta_{\gk})$, that  converges,  respectively,  to some $(x,y)\in W^{1,\infty}([0,T];\mathbb{R}^{n+l})$,    $( \xi^1, \cdots, \xi^{r+1},\zeta)\in L^{\infty}([0,T]; \mathbb{R}^{r+2}_{+}),$ such that  \eqref{converging} and \eqref{boundofxil} are satisfied. Moreover,  \eqref{boundofxyl} follows from \eqref{boundxy}, \eqref{alpha-mu}, and  Proposition \ref{propcgk(k)} $(iv)$.\\
\\
Now, we show that \eqref{xi-support} holds.  Let $ i\in \{1, \cdots, r+1\}$ and $ t \in I^{\-}_i(x),$ that is, $\psi_i(t,x(t)) <0$.  Then,  by $(A3.1)$ and the uniform convergence of $x_{\gk}$ to $x$, there exist  $ k_t \in \mathbb{N}$, $\alpha_t >0$, and  $\tau_t >0 $ such that   $\forall k \ge k_t$, we have $$\psi_i(s,x_{\gk}(s)) < -\frac{\alpha_t}{2}, \;\; \forall s \in (t-\tau_t, t+\tau_t )\cap [0,T].$$ 
Hence, $\xi_{\gk}^i(s) < \gk e^{-\gk \frac{\alpha_t}{2}} \xrightarrow[k \to \infty]{} 0,  \textnormal{ uniformly on } (t-\tau_t, t+\tau_t ) \cap [0,T]\text { and } \xi^i(t)=0 . $ Let $t \in \bar{I}^{\-}(x),$ then $ t \in I^{\-}_i(x)$  $ \forall i\in \{1, \cdots, r+1\}$, and hence, $\xi^i(t)=0$  $\forall i \in \{1, \cdots, r+1\}$,    implying that also $\xi(t)=0$. Similarly, let $t \in [0,T]$ such that $\varphi(t,y(t)) <0$. The same arguments  now applied to $\varphi(t,y(t))$ yield the existence of  $\hat{k}_t \in \mathbb{N}$, $\hat{\alpha}_t>0$ and $\hat{\tau}_t >0$ such that, $\forall s\in (t-\hat{\tau}_t, t+\hat{\tau}_t ) \cap [0,T],$
$$ \zeta_{\gk}(s):= \gamma_k e^{\gk \varphi(s,y_{\gk}(s))} < \gk e^{ \frac{-\gk\hat{\alpha}_t}{2}} \xrightarrow[k \to \infty]{\textnormal{uniformly}} 0, \;  \text { and hence,} \; \zeta(t)=0. $$
\textbf{Step II.2. Existence of $u\in \mathcal{U}$: $((x,y),u)$ $\&$ $(\xi^i,\zeta)$ satisfy $(\bar{D})  $ and  \eqref{dual1}-\eqref{dual3}, $(x,y)$ unique.}\\ Whether  $u_{\gk}$ admits a subsequence that converges   to some $u \in \mathcal{U}$, for $t\in[0,T]$ a.e., or  {assumptions (A1) and} (A4.2) are satisfied,   apply in each of the two cases the corresponding result in Lemma \ref{convergence} $(ii)$ to  $Q(\cdot):=C(\cdot) \cap \bar{B}_{\bar\varepsilon}(\bar{x}(\cdot))$, $\r:=r+1$,  $q_i(\cdot,\cdot):=\psi_i(\cdot,\cdot)$, and to the sequences  $((x_k,y_k), u_k):=  ((x_{\gk}, y_{\gk}),u_{\gk})$,  $\xi_k^i:= \xi^i_{\gk}$, and $\zeta_k:=\zeta_{\gk}$  in \eqref{defxi}, and  their respective limits  $(x, y)$, $\xi^i$ and $\zeta$. Then, there exists   $u(\cdot)$  such that $((x,y),u)$, $\xi^i$ $(i=1,\cdots,r+1)$ and $\zeta$ satisfy  (\ref{dual1})-\eqref{dual3}. 
 The facts that  $(x,y)$ is a solution of  $(\bar{D})$ corresponding to $((x_0,y_0),u)$ and is unique follow now directly from Lemma \ref{lipschitz-local-2}. This completes the proof of this Theorem.
 \end{proof}			
\begin{remark} \label{invariant} Similar arguments to steps \textbf{I.1-2} in the proof of Theorem \ref{theoremrelationship} also show the invariance of  the larger sets $\bar{C}^{\gk}(t)\times \bar{B}_{ \bar{\rho}_k}(\bar{y}(t));$  this means that if $(c_{\gk}, d_{\gk})$ is taken in $\bar{C}^{\gk}(0)\times \bar{B}_{\bar{\rho}_k}(\bar{y}(0))$, then for all $t \in [0,T],$ $(x_{\gk}(t),y_{\gk}(t)) \in \bar{C}^{\gk}(t) \times \bar{B}_{\bar{\rho}_k}(\bar{y}(t))$.\\
\end{remark}		
		
	The following corollary is an immediate consequence of Theorem \ref{theoremrelationship}, { in which we take  $u_{\gk}= u$ for all $k$, and hence, neither (A1) nor (A4.2)  is required.}  It also consists  of a {\it Lipschitz-existence and uniqueness} result for the Cauchy problem of  $(\bar{D})$ via  the solution of the  Cauchy problem of  $(\bar{D}_{\gk})$, whose initial condition is carefully chosen. 
\noindent\begin{corollary}\label{corollary2}  For given $(x_0,y_0) \in {\bar{\mathscr{N}}_{(\bar{\varepsilon}, \bar{\delta})} (0)}$ and $u \in \mathcal{U}$, the system  $(\bar{D})$ corresponding to $((x_0,y_0);u)$ has a unique solution $(x,y)$,  and hence it is Lipschitz and satisfies \eqref{lipschitz-xy1}-\eqref{dotx-f}.  This solution is the uniform limit of a subsequence $($not relabeled $)$ of $(x_{\gk},y_{\gk})_k,$ which is obtained via Theorem \ref{theoremrelationship} as the solution of $(\bar{D}_{\gk})$ with $((x(0),y(0));u)=((c_{\gk},d_{\gk});u)$, where ${c}_{\gk}$ and $d_{\gk}$ are the sequences from Remarks \ref{intelemma(k)} and \ref{movingto-rhok}  corresponding to $c=x_0$ and ${\bf{d}}=y_0$, respectively. Hence, for $k$ sufficiently large, we have that $(x_{\gk}(t),y_{\gk}(t)) \in \bar{C}^{\gk}(t,k) \times \bar{B}_{\bar{\rho}_k}(\bar{y}(t)) \; \forall t \in [0,T]$, $(x_{\gk},y_{\gk})_k$ is uniformly lipschitz, and all conclusions of Theorem $\ref{theoremrelationship}$ hold. 
		\end{corollary}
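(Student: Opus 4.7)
The plan is to specialize Theorem \ref{theoremrelationship} to a constant control sequence and a carefully chosen sequence of initial data, then invoke Lemma \ref{lipschitz-local-2} to transfer existence into the uniqueness/Lipschitz framework.

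\textbf{Step 1: Setup of approximating initial data.} Fix $(x_0,y_0)\in \bar{\mathscr{N}}_{(\bar\varepsilon,\bar\delta)}(0)$ and $u\in\mathcal{U}$. Apply Remark \ref{intelemma(k)} with $c=x_0$ to produce a sequence $c_{\gk}\in \inte \bar{C}^{\gk}(0,k)$ with $c_{\gk}\to x_0$; and apply Remark \ref{movingto-rhok} with $\mathbf{d}=y_0$ to get $d_{\gk}\in \inte \bar{B}_{\bar{\rho}_k}(\bar{y}(0))$ with $d_{\gk}\to y_0$. Set the constant control sequence $u_{\gk}\equiv u$ for every $k$.

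\textbf{Step 2: Application of Theorem \ref{theoremrelationship}.} The chosen $(c_{\gk},d_{\gk})$ lies in $\bar{C}^{\gk}(0,k)\times \bar{B}_{\bar{\rho}_k}(\bar{y}(0))$ and converges to $(x_0,y_0)\in \bar{\mathscr{N}}_{(\bar\varepsilon,\bar\delta)}(0)$, so the hypotheses of Theorem \ref{theoremrelationship} are met. Part (I) yields, for $k$ large, a unique solution $(x_{\gk},y_{\gk})\in W^{1,\infty}([0,T];\R^{n+l})$ of $(\bar{D}_{\gk})$ with $(x_{\gk}(0),y_{\gk}(0))=(c_{\gk},d_{\gk})$ satisfying the invariance \eqref{boundxy} and the uniform bounds \eqref{boundxi}; in particular the sequence $(x_{\gk},y_{\gk})_k$ is equi-Lipschitz. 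Part (II), applied in the case where $u_{\gk}$ converges a.e. to some $u\in\mathcal{U}$ (here trivially, since $u_{\gk}\equiv u$), then produces along a subsequence (not relabeled) a pair $(x,y)\in W^{1,\infty}([0,T];\R^{n+l})$ and multipliers $(\xi^1,\dots,\xi^{r+1},\zeta)$ such that $(x_{\gk},y_{\gk})\xrightarrow{\text{unif}}(x,y)$, the bounds \eqref{boundofxyl}--\eqref{boundofxil} hold, and $((x,y),u)$ together with $(\xi^i)$ and $\zeta$ satisfy \eqref{dual1}--\eqref{dual3}. In particular, by the equivalence in Lemma \ref{lipschitz-local-2}, $(x,y)$ is a solution of $(\bar D)$ associated with $((x_0,y_0),u)$, with $(x(0),y(0))=(x_0,y_0)$ since $(c_{\gk},d_{\gk})\to(x_0,y_0)$.

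\textbf{Step 3: Uniqueness, Lipschitz regularity, and the bounds \eqref{lipschitz-xy1}--\eqref{dotx-f}.} Since $(x,y)$ solves $(\bar D)$ with initial condition $(x_0,y_0)$ and control $u$, Lemma \ref{lipschitz-local-2} is directly applicable and simultaneously delivers: (a) the representation \eqref{lipschitz-xy1} with measurable multipliers $(\lambda_i)_{i=1}^{r+1}$ and $\zeta$, which necessarily coincide with the $(\xi^i)$ and $\zeta$ obtained in Step 2 by support considerations \eqref{xi-support}; (b) the a priori bounds \eqref{lipschitz-xy}--\eqref{dotx-f}, hence the Lipschitz regularity of $(x,y)$; and (c) uniqueness of any solution of $(\bar D)$ inside $\bar{\mathscr{N}}_{(\bar\varepsilon,\bar\delta)}(\cdot)$ corresponding to $((x_0,y_0),u)$. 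The inclusion $(x_{\gk}(t),y_{\gk}(t))\in \bar{C}^{\gk}(t,k)\times \bar{B}_{\bar{\rho}_k}(\bar{y}(t))$ for every $t\in[0,T]$ is simply \eqref{boundxy}, and all remaining conclusions of Theorem \ref{theoremrelationship} are automatic.

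There is no real obstacle: the corollary is essentially a repackaging, and the only small point worth being careful with is that taking $u_{\gk}\equiv u$ lets us invoke the convergence clause of Theorem \ref{theoremrelationship}(II) without calling upon $(A1)$ or $(A4.2)$, which is precisely the feature exploited in the statement.
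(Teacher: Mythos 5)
Your proposal is correct and follows essentially the same route as the paper: take $u_{\gk}\equiv u$ (so the a.e.-convergence clause of Theorem \ref{theoremrelationship}(II) applies without (A1) or (A4.2)), feed in the initial data $(c_{\gk},d_{\gk})$ from Remarks \ref{intelemma(k)} and \ref{movingto-rhok}, and let Theorem \ref{theoremrelationship} together with Lemma \ref{lipschitz-local-2} deliver existence, uniqueness, the Lipschitz bounds \eqref{lipschitz-xy1}--\eqref{dotx-f}, and the invariance \eqref{boundxy}. The only caveat is your aside that the multipliers from Lemma \ref{lipschitz-local-2} "necessarily coincide" with the $(\xi^i,\zeta)$ of Step 2: this is neither needed for the corollary nor fully justified (under (A3.2) the active gradients are only positively linearly independent, so the normal-cone representation need not be unique), but it is harmless.
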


\section{{ Proof of }Global results}\label{global}
The goal of this section is to prove Theorems \ref{cauchyglobal} -- \ref{existenceglobal}, whose statements  require  the global assumptions, presented in Subsection \ref{globalmain},  which  are obtained from the local ones by considering  $\bar\delta=\infty$,  in addition to assuming the boundedness of Gr $C(\cdot)$. We first start by generalizing some of the results presented in Section \ref{auxiliary}. \\
\\
The compactness of  \text{Gr} $C(\cdot)$ assumed in the following lemma allows us to easily imitate  the proof of  Lemma \ref{final-a3.2(i)}  and  produce  the following equivalence 
between (A3.2)$_{G}$ and a global version of condition {\eqref{3.1(i)}}, namely, \eqref{eta-global},  in which $\bar{x}$ and the localization around it are absent.
\begin{lemma}\label{global-a3.2}
Assume that $\psi_i(\cdot,\cdot)$ is continuous and,  for all $t \in [0,T]$, the set $C(t)$ is nonempty, closed, and given by \eqref{C}. Assume that $({A3.1})_G$ holds and that \text{Gr} $C(\cdot)$ is compact. Then $({A3.2})_G$  is equivalent to {the existence of a constant $\eta>0$} such that 
	\begin{equation}\label{eta-global} \left\|\sum_{i\in\I^0_{(t,c)}}\lambda_i\nabla_x\psi_i(t,c)\right\|>2\eta, \;\;  \forall (t,c)\in 
		\{(\tau,x)\in \text{Gr} \; C(\cdot) : \I^0_{(\tau,x)}\not=\emptyset\},
	\end{equation}
	where $\I^0_{(\tau,x)}$ is defined in \eqref{Isub(t,x)} and $(\l_i)_{i\in\I^0_{(t,c)}}$ is any sequence of nonnegative numbers satisfying $\sum_{i\in\I^0_{(t,c)}}\l_i=1$.\end{lemma}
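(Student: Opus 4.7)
The plan is to mimic closely the proof of Lemma \ref{final-a3.2(i)}, with the essential difference being that the localization ``$c\in \bar{B}_{1/n}(\bar{x}(t))$'' used there is now replaced by the compactness of $\mathrm{Gr}\,C(\cdot)$. The ``easy'' direction is that \eqref{eta-global} implies $(A3.2)_G$: given any $(t,x)\in \mathrm{Gr}\,C(\cdot)$ with $\mathcal{I}^0_{(t,x)}\neq\emptyset$ and nonnegative scalars $(\lambda_i)_{i\in\mathcal{I}^0_{(t,x)}}$ with $\sum_i\lambda_i\nabla_x\psi_i(t,x)=0$, if some $\lambda_i$ were strictly positive we could renormalize so that $\sum_i\lambda_i=1$, giving the norm equal to $0$ and contradicting the strict inequality $>2\eta>0$ in \eqref{eta-global}.

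For the nontrivial direction, I would argue by contradiction. Suppose $(A3.2)_G$ holds but \eqref{eta-global} fails. Then for each $n\in\mathbb{N}$ there exist $(t_n,c_n)\in \mathrm{Gr}\,C(\cdot)$ with $\mathcal{I}^0_{(t_n,c_n)}\neq\emptyset$ and nonnegative $(\lambda_i^n)_{i\in\mathcal{I}^0_{(t_n,c_n)}}$ satisfying $\sum_{i\in\mathcal{I}^0_{(t_n,c_n)}}\lambda_i^n=1$ and
\[
\left\|\sum_{i\in\mathcal{I}^0_{(t_n,c_n)}}\lambda_i^n\nabla_x\psi_i(t_n,c_n)\right\|\le \frac{2}{n}.
\]
By compactness of $\mathrm{Gr}\,C(\cdot)$, after passing to a subsequence (not relabeled) $(t_n,c_n)\to (t_o,c_o)\in\mathrm{Gr}\,C(\cdot)$. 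Apply Lemma \ref{lemmaaux1} (exactly as in Lemma \ref{final-a3.2(i)}) to extract a further subsequence and a nonempty $\mathcal{J}_o\subset\{1,\dots,r\}$ such that $\mathcal{I}^0_{(t_n,c_n)}=\mathcal{J}_o\subset \mathcal{I}^0_{(t_o,c_o)}$ for all large $n$.

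Since the family $(\lambda_i^n)_{i\in\mathcal{J}_o}$ lies in the compact simplex, a further subsequence yields $\lambda_i^n\to \lambda_i^o\ge 0$ with $\sum_{i\in\mathcal{J}_o}\lambda_i^o=1$. Passing to the limit in the displayed inequality, using the continuity of $\nabla_x\psi_i(\cdot,\cdot)$ on $\mathrm{Gr}\,C(\cdot)$ granted by $(A3.1)_G$, produces $\sum_{i\in\mathcal{J}_o}\lambda_i^o\nabla_x\psi_i(t_o,c_o)=0$. Extending by $\lambda_i^o:=0$ for $i\in \mathcal{I}^0_{(t_o,c_o)}\setminus\mathcal{J}_o$ gives a nontrivial nonnegative combination (since the $\lambda_i^o$ still sum to $1$) that vanishes, directly contradicting $(A3.2)_G$ at $(t_o,c_o)$.

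I do not expect any serious obstacle, as the argument is structurally identical to that of Lemma \ref{final-a3.2(i)}. The only adaptation is the replacement of the continuity-based pre-compactness of $\{(t,c):c\in \bar B_{1/n}(\bar x(t))\}$ by the global compactness of $\mathrm{Gr}\,C(\cdot)$; once the limit point $(t_o,c_o)$ is secured inside $\mathrm{Gr}\,C(\cdot)$, Lemma \ref{lemmaaux1} and the continuity in $(A3.1)_G$ carry the argument through verbatim.
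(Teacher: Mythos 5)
Your proof is correct and is exactly the argument the paper intends: the paper gives no separate proof but states that compactness of $\Gr C(\cdot)$ allows one to imitate the proof of Lemma \ref{final-a3.2(i)}, which is precisely what you do (compactness of the graph replacing the shrinking balls $\bar{B}_{1/n}(\bar{x}(t_n))$ to secure the limit point $(t_o,c_o)\in \Gr C(\cdot)$, then Lemma \ref{lemmaaux1}, simplex compactness, and the continuity from $(A3.1)_G$). The easy direction you spell out is also handled the same way (trivially) in the localized lemma.
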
 
	{As a consequence of Lemma \ref{global-a3.2}, we obtain the uniform prox-regularity of $C(t)$, as well as a formula for the normal cone to $C(t)$}. For  ${L}_{\psi}$ the common  bound of $\{\|\nabla_x \psi_i(\cdot,\cdot)\|\}_{i=1}^{r}$ and {the common Lipschitz constant of} $\{\nabla_x \psi_i(\cdot,\cdot)\}_{i=1}^r$ on  the compact set Gr $C(\cdot) + \{0\} \times \frac{\rho_o}{2} \bar{B}$, {we assume without loss of generality that ${L_{\psi}} \ge \frac{8\eta}{\rho_o}$. }
\begin{remark}\label{global-prox}	Under the assumptions of Lemma \ref{global-a3.2} and $(A3.2)$$_G$, condition \eqref{eta-global}  	implies via  \cite[Proposition 4.1$(i)$]{nourzeidan3} and \cite[Theorem 9.1]{adly} that  for all $t \in [0,T]$,  $C(t)$ is amenable (in the sense of \cite{Rockawets}), epi-lipschitzian,  $C(t)= \clo(\inte C(t))$, and is {\it uniformly} {$\frac{2\eta}{L_{\psi}}$-prox-regular}.  In this global setting, the normal cone  formula \eqref{normalcone-C} is now valid for all $(t,x) \in \text{Gr } C(\cdot).$ In particular,   
	\begin{equation} \label{normalconeg}
	 \hspace{-.8 in}	N_{C(t)}(x)= N_{C(t)}^P(x)= N_{C(t)}^L(x) = 
		\left\{\sum_{i\in\I^0_{(t,x)}}\lambda_i\nabla_x\psi_i(t,x): \lambda_i \ge 0\right\}  \neq\{0\},\quad {\forall x\in\bdry C(t)}. \end{equation}	
\end{remark}	
The following lemma, which requires $\Gr C(\cdot)$ bounded,  is  a global version of Lemma \ref{lipschitz-local}. 
\begin{lemma}\label{lipschitz-global} 	Let the assumptions of Lemma \ref{global-a3.2}, $(A3.2)$$_G$ and $(A4.1)$$_G$ be satisfied. Let $u\in \mathcal{U}$,  $(x_0,y_0)\in C(0)\times \mathbb{R}^l$ be fixed, and  $({x}(\cdot),{y}(\cdot))\in W^{1,1}([0,T];\mathbb{R}^n\times \mathbb{R}^l)$ with $(x(0),y(0))=(x_0,y_0)$ {and $x(t)\in C(t),\;\forall t\in[0,T]$}.  Then, $(x,y)$ solves $(D)$ {corresponding to $((x_0,y_0),u)$ } if and only if  there exist measurable  functions $(\lambda_1(\cdot), \cdots, \lambda_{r}(\cdot) )$ such that, for all $i=1,\cdots,r$,   $\lambda_i(t)= 0$  for $ t\in I_i^{\-}(x)$,  and {$((x,y),u)$} together with  $(\lambda_1,\cdots,\lambda_r)$ satisfies \eqref{lipschitz-D}.
	Furthermore, the bounds in \eqref{bound} are satisfied with $\eta_0$ being replaced by $\eta$, the constant from Lemma \refeq{global-a3.2}. {Furthermore,} $(x,y)$ is the unique solution of $(D)$ corresponding to $((x_0, y_0), u)$. 	
\end{lemma}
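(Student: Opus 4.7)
The plan is to mirror, essentially verbatim, the argument used for the local version (Lemma \ref{lipschitz-local}), but with every ingredient now available globally rather than only near $\bar{x}$: Lemma \ref{global-a3.2} replaces Lemma \ref{final-a3.2(i)}, Remark \ref{global-prox} supplies the global normal-cone formula \eqref{normalconeg} together with the uniform $\frac{2\eta}{L_\psi}$-prox-regularity of $C(t)$, and the constant $\eta$ replaces $\eta_o$. Since the new bound in \eqref{eta-global} is available at \emph{every} $(t,c) \in \Gr C(\cdot)$ with $\I^0_{(t,c)} \neq \emptyset$ (not only near $\x$), no localization via $\bar\delta$-balls is needed.

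For the equivalence, I would first suppose $(x,y)$ solves $(D)$. Since $x(t)\in C(t)$ and $\dot{x}(t)\in f(t,x(t),y(t),u(t)) - N_{C(t)}(x(t))$ a.e., the normal-cone formula \eqref{normalconeg} (valid globally) gives, at a.e.\ $t$, a representation
\[
\dot{x}(t) = f(t,x(t),y(t),u(t)) - \sum_{i\in\I^0_{(t,x(t))}}\lambda_i(t)\nabla_x\psi_i(t,x(t))
\]
with nonnegative scalars. A standard Filippov-type measurable selection (as in the proof of Lemma \ref{lipschitz-local}) produces measurable functions $\lambda_i(\cdot)$ defined on all of $[0,T]$ by extending $\lambda_i$ by zero on $I_i^{-}(x)$. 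The converse direction is immediate from \eqref{normalconeg}.

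For the bounds, I would reproduce the computation in Lemma \ref{lipschitz-local} verbatim, using the set $\mathscr{T}$ from \eqref{scriptT}. On $I^{-}(x)\cap\mathscr{T}$ all multipliers vanish, so $\|\dot{x}(t)\|\le M_h$ by $(A4.1)_G$. On $I^0(x)\cap\mathscr{T}$ with nonzero total multiplier, the identity $\frac{d}{dt}\psi_i(t,x(t))=0$ for $i\in\I^0_{(t,x(t))}$ combined with \eqref{dpsi-r}--\eqref{mu} yields, upon multiplying by $\lambda_i(t)$ and summing,
\[
\Big\|\sum_{i\in\I^0_{(t,x(t))}}\lambda_i(t)\nabla_x\psi_i(t,x(t))\Big\|^2 \le L_\psi(1+\|f(t,x(t),y(t),u(t))\|)\sum_{i\in\I^0_{(t,x(t))}}\lambda_i(t).
\]
Applying \eqref{eta-global} on the left side (after normalizing to a convex combination), dividing, and using the bound $\|f\|\le M_h$ from $(A4.1)_G$ gives $\sum_i\lambda_i(t)\le \frac{\mu}{4\eta^2}$ with $\mu=L_\psi(1+M_h)$; the bounds on $\|\dot{x}\|_\infty$ and $\|\dot{y}\|_\infty$ then follow directly from \eqref{lipschitz-D} and $(A4.1)_G$.

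For uniqueness, I would let $X=(x,y)$ and $\tilde X=(\tilde x,\tilde y)$ be two solutions sharing the initial data, with multipliers $(\lambda_i)$ and $(\tilde\lambda_i)$. Exactly as in \eqref{gron}, the hypomonotonicity of $N_{C(t)}$ (available because $C(t)$ is uniformly $\frac{2\eta}{L_\psi}$-prox-regular by Remark \ref{global-prox}), the $L_h$-Lipschitz property of $h$, and the just-proved multiplier bound give
\[
\tfrac12\tfrac{d}{dt}\|X(t)-\tilde X(t)\|^2 \le \kappa(t)\|X(t)-\tilde X(t)\|^2
\]
for an integrable $\kappa$, whence Gronwall's lemma forces $X\equiv \tilde X$. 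The only conceptual point requiring care — and the main (very mild) obstacle — is confirming that the prox-regularity constant in Remark \ref{global-prox} is genuinely uniform in $t$ so that the hypomonotonicity estimate holds with a $t$-independent modulus; this is precisely what Remark \ref{global-prox} provides via \cite[Theorem 9.1]{adly}, so no new work is needed.
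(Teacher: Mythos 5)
Your proposal is correct and follows essentially the same route as the paper, which likewise proves this lemma by rerunning the proof of Lemma \ref{lipschitz-local} with the global normal-cone formula \eqref{normalconeg} in place of \eqref{normalcone-C}, Lemma \ref{global-a3.2} in place of Lemma \ref{final-a3.2(i)}, and the uniform $\frac{2\eta}{L_{\psi}}$-prox-regularity from Remark \ref{global-prox} in place of $\rho$. Your closing observation about the $t$-uniformity of the prox-regularity constant is exactly the point the paper delegates to Remark \ref{global-prox}, so nothing is missing.
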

\begin{proof} 
	We follow the same proof of Lemma \ref{lipschitz-local}  using the normal cone formula in \eqref{normalconeg} instead of \eqref{normalcone-C}, Lemma \ref{global-a3.2} instead of Lemma \ref{final-a3.2(i)}, and the prox-regularity constant  {$\frac{2\eta}{L_{\psi}}$} provided by Remark \ref{global-prox}   instead of $\rho$. 
\end{proof}
We now introduce the following notations that are going to be used in our proofs. 
\begin{itemize}
		\item Recall  from \eqref{def-mu}  that $\mu:=L_{\psi}(1+M_h)$. Define the sequences $\gamma_k > \frac{2 \mu}{\eta^2}e$, $\alpha_k$, $\sigma_k$ as in \eqref{sigma}, where $\bar{\eta}$, $\bar{\mu}$, $\bar{L}$, and $r+1$, therein, are replaced  by $\eta$, $\mu$, $L_{\psi}$ and $r$, respectively. 
	\item  For each $t \in [0,T]$ and $k\in\N$, we define
	\begin{eqnarray}
		\label{Cgkdef} &&\hspace{-0.6 in} C^{\gk}(t):=\bigg\{x\in\R^n : \sum_{i=1}^{r}e^{\gk \psi_i(t,x)}\leq 1\bigg\} \subset \inte C(t) \; \text{for}\; r>1, \;\&\; C^{\gk}(t):=C(t)\,\;\text{for}\; r=1,\\
		\label{Cgkkdef}&& \hspace{-.6 in}C^{\gk}(t,k):=\bigg\{x\in\R^n : \sum_{i=1}^{r}e^{\gk \psi_i(t,x)}\leq \frac{2\mu}{{\eta}^2\gk}=e^{-\a_{k}\gk} \bigg\} \subset \inte C^{\gk}(t).
	\end{eqnarray}		
	\end{itemize}
\begin{remark}\label{global-results}
Under the assumptions of Remark \ref{global-prox}, proposition \ref{propcgk(k)}
and Remark \ref{intelemma(k)} hold true in a global setting,  that is,  the ball around $\bar{x}$ is now omitted in those  statements. More precisely, the summations therein are only up to $r$ instead of $r+1$ terms, with $C^{\gk}(t,k)$, $C^{\gk}(t)$, and $C(t)$ replacing $\bar{C}^{\gk}(t,k)$, $\bar{C}^{\gk}(t)$, and $C(t) \cap \bar{B}_{\bar{\varepsilon}}(\bar{x}(t))$, respectively. 	\end{remark} 

We now prove Theorem \ref{cauchyglobal}, which says that under global assumptions, the Cauchy problem corresponding to $(D)$  has a unique solution that is Lipschitz. Similar to the proof of Corollary \ref{corollary2}, the proof of Theorem \ref{cauchyglobal}  follows closely the arguments  used to prove Theorem \ref{theoremrelationship} after  removal of the truncation on $C(t)$ and $\R^l$.  However, doing so requires important modifications. For instance,  removing the truncation on $C(t)$ in the set $\mathscr{D}$ defined in  \eqref{Dlabel},  makes it   unsuitable for the global setting, and hence, it will have to be redefined (see \eqref{Dlabel2}). This discrepancy is due to having $C(t)\cap \bar{B}_{\bar{\varepsilon}}(\x(t))$  always generated by at least {\it two} functions, and hence,  $\bar{C}^{\gk}(t) \subset\inte C(t)$ is always valid.    While in the global setting,  for the case  $r=1$, \eqref{Cgkdef} yields that $C^{\gk}(t)=C(t)$ and hence, $\mathscr{D}$ must be modified to include Gr $C(\cdot)$.

\begin{proof}[\textbf{Proof of Theorem \ref{cauchyglobal}}]
 {We denote by $M_C$ the bound of Gr $C(\cdot)$}. Consider the Cauchy problem $(D)$ corresponding to $((x(0),y(0));u) = ((x_0,y_0);u)\in  (C(0)\times \R^l)\times \mathcal{U}$. The existence of a solution that is Lipschitz and unique, will be shown by approximating $(D)$ with $(D_{\gk})$, {defined below as} the global version of $(\bar{D}_{\gk})$. Let ${c}_{\gk}{\in  C^{\gk}(0,k)}$ be the sequence from the global version of Remark \ref{intelemma(k)} corresponding (and converging) to $c=x_0$ (see Remark \ref{global-results}). We now proceed with the proof by imitating the same steps of the proof of Theorem \ref{theoremrelationship},  in which we employ $(c_{\gk},d_{\gk}):=(c_{\gk},y_0) $ and  we make the following notable modifications. Inspired by the idea at the beginning  of  \cite[Section 4] {nourzeidan}, the uniform  {$\frac{2\eta}{L_{\psi}}$}-prox-regularity of $C(t)$ {and the $2$-Lipschitz property of $\pi_{C(t)}(\cdot)$ produce an extension of $h=(f,g)(t,\cdot,y,u)$ from $C(t)$ to { $C(t) + \frac{\eta}{ L_{\psi}} {B}$} satisfying}  (A4)$_G$,  where $C(t)$ is now replaced by $C(t) + \frac{\eta}{{ L_{\psi}}} {B}$, and $L_h(t)$ is now replaced by $2L_h(t)$. On the other hand, we note that since $\frac{\eta}{L_{\psi}} < \frac{{\rho}_o}{2}$, then $\psi_1, \cdots, \psi_r$ satisfy (A3.1)$_G$ on $\Gr \left(C(\cdot)\right) +\{0\}\times \frac{\eta}{L_{\psi}} \bar{B}$.  For fixed $k\in \N$ {large enough}, we consider the system $({D}_{\gk})$ corresponding to $((c_{\gk},y_0),u)$ to be 	\begin{equation}\label{Dgk2} \hspace{-1.6 cm}({{D}}_{\gk}) \begin{cases} \dot{x}(t)=f(t,x(t),y(t),u(t))-\sum\limits_{i=1}^{r} \gk e^{\gk\psi_i(t,x(t))} \nabla_x\psi_i(t,x(t)),\;\;\textnormal{a.e.}\; t\in[0,T],\\ 
			\dot{y}(t)=g(t,x(t),y(t),u(t)), \text{ a.e. } t \in [0,T]. \end{cases}
	\end{equation}
	This system is well defined on the following modified version of the set $\mathscr{D}$, given in \eqref{Dlabel},  		
	\begin{equation} \label{Dlabel2}
		\mathscr{D}_G:=\{ (t,x,y) \in [0,T] \times \mathbb{R}^n \times \mathbb{R}^l: \;\;x \in C(t) + \frac{\eta}{{ L_{\psi}}} B  \}.
	\end{equation}
	As $(0,c_{\gk}, y_0)\in \mathscr{D}_G$,  we follow steps similar to the ones used to reach \eqref{Tbar}, and we deduce that  a solution $(x_{\gk},y_{\gk})$ of $({D}_{\gk})$ with $(x(0),y(0))=(c_{\gk}, y_0)$ exists on the interval $[0,\hat{T}_G)\subset [0,T]$, and  $(t,x_{\gk}(t),y_{\gk}(t)) \in \mathscr{D}_G$, $\forall t\in [0,\hat{T}_G)$, where  $\hat{T}_G$ is $\hat{T}$ defined in \eqref{Tbar}, in which $\mathscr{D}$ is replaced by $\mathscr{D}_G$.
	{Simlarily to  Step I.1 in the proof of Theorem \ref{theoremrelationship}}, we conclude  that $x_{\gk}(t) \in C^{\gk}(t)$ for all $ t\in[0,\hat{T}_G)$.
	On the other hand,  since $\varphi$ is absent in $(D_{\gk})$ and $g$ is bounded by $M_h$ (from A4.1), we immediately  obtain  that,  for $ t\in [0,\hat{T}_G)$, $y_{\gk}(t) \in M_0\bar{B}$, where ${M_0:=\|y_0\|+M_hT}$. The boundedness of Gr $C(\cdot)$ by $M_C$  guarantees that the solution remains in the bounded set $M_C \bar{B} \times M_0 \bar{B}$ and hence,   $\hat{T}_G= T$. By mimicking  Step I.2.{ of  the proof of Theorem \ref{theoremrelationship}}, we obtain the invariance of $C^{\gk}(t,k)$ in $x$ for $(D_{\gk})$, and hence, our solution   $(x_{\gk}, y_{\gk})$ for  the Cauchy problem $({D}_{\gk})$ with $((c_{\gk}, y_0),u)$, also satisfies  $(x_{\gk}(t),y_{\gk}(t)) \in C^{\gk}(t,k) \times M_0\bar{B}$ for all $ t\in[0,T]$. Thus,  the definition of ${C}^{\gk}(t, k)$ given in \eqref{Cgkkdef}, $\xi_{\gk}^i(\cdot)=\gk e^{\gk\psi_i(\cdot, x_{\gk}(\cdot))}$ ($i=1,\cdots,r$) and $\xi_{\gk}(\cdot):= \sum_{i=1}^{r}\xi_{\gk}^i(\cdot)$,  yield that  $\|\xi_{\gk}\|_{\infty}\le \frac{2{\mu}}{{\eta}^2}$.  Employing this bound of $\xi_{\gk}$ in $({D}_{\gk})$, we obtain that $\max\{\|\dot{x}_{\gk}\|_{\infty},  \|\dot{y}_{\gk}\|_{\infty}\}  \le M_h + \frac{2{\mu}}{{\eta^2}} {L}_{\psi}$.
	It follows  that \eqref{boundxyxi-n} holds for $\mathcal{R}:=r$ and  $(x_k,y_k, \xi_k^i,\zeta_k):=(x_{\gk},y_{\gk}, \xi_{\gk}^i,0)$. 
	Whence, Lemma \ref{convergence}$(i)$ together with the global version of  Proposition \ref{propcgk(k)} $(iv)$ (see Remark \ref{global-results})  implies that a subsequence
	of $(x_{\gk}, y_{\gk})$, and $  \xi_{\gk}^i$  $(\forall i=1,\cdots,{r})$ converge  respectively   to some $(x,y)\in W^{1,\infty}([0,T];\mathbb{R}^n \times  \mathbb{R}^l)$,  and $( \xi^1, \cdots, \xi^{r})\in L^{\infty}([0,T]; \mathbb{R}^{r}_{+}),$ satisfying 
	\begin{eqnarray*}
		& &	(x_{\gk}, y_{\gk}) \xrightarrow[]{unif} (x, y), \quad 
		(\dot{x}_{\gk}, \dot{y}_{\gk}) \xrightarrow[in \; L^{\infty}]{w*} (\dot{x}, \dot{y}), \;\;\;\xi^i_{\gk}  \xrightarrow[in \; L^{\infty}]{w*} \xi^i \;(\forall i=1,\cdots,r), \label{converging2}\\
&&(x(t), y(t)) \in C(t)  \times M_0\bar{B}\; \;\;\forall t \in [0,T]; \;\;  \max\{\|\dot{x}\|_{\infty},  \|\dot{y}\|_{\infty}\}  \le M_h + \frac{2{\mu}}{{\eta^2}} {L}_{\psi};\; \; \|\sum_{i=1}^r\xi^i\|_{\infty}\le \frac{2{\mu}}{{\eta}^2},\label{boundofxy2l} \\
		&&
			\xi^i(t)=0 \textnormal{ for all } t \in I_i^{\-}(x), \;\; i \in \{1, \cdots, r\}, \quad
			\xi(t):=\sum_{i=1}^r\xi^i=0 \textnormal{ for all } t \in {I}^{\-}(x), \;\;  
	\end{eqnarray*}
	the validation of the last equations is  similar to that  for \eqref{xi-support}. 
	We now apply the dominated convergence theorem to {$(D_{\gk})$ at $((x_{\gk},y_{\gk}),u_{\gk}:=u)$  (as done in the proof of Case 1 in Lemma \ref{convergence} $(ii)$), and we deduce that $((x,y),u)$ and $\lambda_i=\xi^i$ satisfy \eqref{lipschitz-D}.  By means of Lemma \ref{lipschitz-global},  we conclude }that $(x,y)$ is the {\it unique} solution of  $({D})$ corresponding to $((x_0,y_0),u)$ and is {\it Lipschitz}. 
	\end{proof} 
We now prove the third main result of our article, namely, the  \textit{global} existence of optimal solutions {for our problem $(P)$} under global assumptions.
\begin{proof}  [\textbf{Proof of Theorem \ref{existenceglobal}}]
Let $M_C$ and $M_{\pi_2(S)}$ be the bounds of Gr $C(\cdot)$ and $\pi_2(S)$, respectively.	Observe that  $(({x},{y}),{u})$ is admissible  for $(P)$ is {\it equivalent} to $((x,y),u)$ solving $(D)$ and  \begin{equation*}\label{SG}
		({x}(0), {y}(0), {x}(T), {y}(T)) \in  S_G:= S \cap (C(0) \times M_{\pi_2(S)} \times C(T) \times M \bar{B} ),
	\end{equation*}
where $M:=M_{\pi_2(S)}  + T M_h$. Since $({P})$ has an admissible solution, $J$ is lower semicontinuous,   and  $S_G$ is compact, then the infimum of $J$  over {$((x,y),u)$ satisfying $({D})$ and  ({\bf{B.C.}})} exists. Let $((x_n, y_n), u_n)$ be a minimizing sequence for $({P})$. {Then,  for each $n\in\N,  ((x_n,y_n),u_n)$ satisfies  $(D)$, starting at $(x_0,y_0):=(x_n(0),y_n(0))$, and ({\bf{B.C.}}). Hence, by Lemma \ref{lipschitz-global},}   there exists a  sequence of $(\lambda_i^n)_{i=1}^r$ such that, for all $i=1,\cdots,r$,   $\lambda_i^n \in L^{\infty}([0,T];\R_+)$, $\lambda_i^n=0$ on $I^\-_i(x_n)$, and, $(\lambda_i^n)_{i=1}^r$ with $((x_n,y_n), u_n)$   satisfy \eqref{lipschitz-D}  and the bounds in \eqref{bound},  where $\eta_0$ is replaced by $\eta$.  Apply lemma \ref{convergence}$(i)$  to $\mathcal{R}:=r$, $(x_n, y_n)$,  $\xi^i_n:=\lambda_i^n,$ $\zeta_n:=0$, $M_1:= \max\{M_C,M\}$, $M_2:=M_h +\frac{\mu}{4\eta^2} L_{\psi}$, and $M_3:=\frac{\mu}{4{\eta}^2}$, 
	we obtain  $(\hat{x},\hat{y}) \in W^{1,\infty}$, $(\lambda_1,\cdots,\lambda_r)\in L^{\infty}([0,T];\R^r_+)$,  $\zeta:=0$, and  subsequences (not relabeled) of $(x_n,y_n)_n$ and $(\lambda_i^n)_n$ such that 
	$(x_{n}, y_{n}) \xrightarrow[]{unif}(\hat{x},\hat{y}),  (\dot{x}_{n}, \dot{y}_{n})  \xrightarrow[in \; L^{\infty}]{w*} (\dot{\hat{x}}, \dot{\hat{y}}),  \lambda_i^n \xrightarrow[in \; L^{\infty} ]{w*} \lambda_i $, for all $i=1,\cdots,r$, and $(\dot{\hat{x}}, \dot{\hat{y}})$ and $(\lambda_1,\cdots, \lambda_r)$ satisfy the bounds in \eqref{boundxyxi}. 
	Furthermore,  we have $(\hat{x}(0),\hat{y}(0),\hat{x}(T),\hat{y}(T)) \in {S}$.\\
	On the other hand, as $((x_n,y_n), u_n)$ and $(\lambda_i^n)_{i=1}^r$   satisfy  \eqref{lipschitz-D}, this means that  they solve \eqref{equiv} for $\zeta:=0$, 
	$q_i:=\psi_i,$ and $ Q(t):=C(t)$. Noting that $(A1)$ and $ {(A4.2)_G}$ hold,  then by applying the global version of Lemma \ref{convergence} $(ii)$ (see Remark \ref{global-lemma3}), we obtain $\hat{u}\in \mathcal{U}$ such that $((\hat{x},\hat{y}),\hat{u})$ and $(\lambda_i)_{i=1}^r$ also satisfy \eqref{equiv},  which is \eqref{lipschitz-D}.  Thus, to prove that $((\hat{x},\hat{y}),\hat{u})$ is admissible for $(D)$,  it suffices by the equivalence in Lemma \ref{lipschitz-global} to show that  for all $i=1,\cdots,r$,  $\lambda_i$ is supported on $I_i^0(\hat{x})$, knowing  that   for all $i=1,\cdots,r$,    $\lambda_i^n (t)=0 $ for $t \in I^{\-}_i(x_n)$. 	Fix $t \in I^{\-}_i(\hat{x})$,  then,  $\psi_i(t,\hat{x}(t))<0$. Since $x_n $ converges uniformly to $\hat{x}$ and $\psi_i(\cdot,\cdot)$ is continuous, we can find $\hat{\delta} >0$ and $\hat{n} \in \mathbb{N}$ such that $\forall s \in (t-\hat{\delta}, t+\hat{\delta}) \cap [0,T]$ and for all $n \ge \hat{n}$, we have $\psi_i(s, x_n(s)) <0$ and hence $\lambda_i^n(s)=0$. Thus, as $n\to\infty$,  $0=\lambda_i^n(\cdot) \to 0$ on $ (t-\hat{\delta}, t+\hat{\delta}) \cap [0,T]$,  and so  $\lambda_i(t)=0$. Therefore,  Lemma \ref{lipschitz-global} yields that $((\hat{x},\hat{y}),\hat{u})$ is admissible  for $({D})$.
	Using the lower semicontinuinity of $J$, we deduce that \begin{eqnarray*}
		J(\hat{x}(0), \hat{y}(0), \hat{x}(T), \hat{y}(T))&\le& \lim\limits_{n \to \infty}J(x_n(0), y_n(0), x_n(T), y_n(T) ) \\&=& \inf \limits_{((x,y),u)   \text{ admissible for } ({P}) }  J(x(0), y(0), x(T), y(T)),
	\end{eqnarray*}
	showing that   $((\hat{x},\hat{y}),\hat{u})$ is optimal  for $(P)$ over all admissible pairs  $((x,y),u)$.  
\end{proof}		
\section{ Maximum Principle for $(P)$ - Proof}\label{maximumprinciple}		
In this section, we present the proof of the {\textit{first}} main result of our paper: the maximum principle for the problem $(P)$. We employ a modification of the exponential penalization technique used  in \cite{depinho1,verachadihassan, nourzeidan} {for special cases of $(P)$}. {We first  approximate  the  given} optimal solution of $(P)$ with optimal solutions for some  approximating problems having {\it joint}-endpoint constraints,    $(P^{\a,\b}_{\gk})$, which are standard optimal control problems {involving exponential penalty terms} (Proposition \ref{approximating}).  Then, we  find necessary conditions for the approximating problems (Proposition \ref{maxapp}), and we finally conclude the necessary conditions for $(P)$ by taking the limit of the necessary conditions for $(P^{\a,\b}_{\gk})$ (see subsection \ref{proofpmp}).
\subsection{Preparatory results - PMP}
We start the first subsection by presenting   consequences of {$(A3.3)$} that shall be crucial for our approximating problem and the proof of the maximum principle. For this subsection, let $C(\cdot)$ satisfying $(A2)$ for $\rho>0$. Consider  $\bar{x}(\cdot)\in \mathcal{C}([0,T];\mathbb{R}^n)$ with $\bar{x}(t)\in C(t)$ for all $ t \in [0,T]$, and $\bar{\delta}>0$  such that $(A3.1)$ holds at $(\bar{x};\bar{\delta})$.\\
\\
The next remark discusses the significance of (A3.3) in  the proof of the maximum principle. In particular, it  highlights why it is sufficient to prove Theorem \ref{maxprincipleS} under a {\it stronger} assumption.
\begin{remark}\label{gram-a3.2}\textnormal{[Assumption (A3.3)]} Note that when $r=1$, the sets $C(t)$ are smooth and condition $(A3.3)$ trivially holds.  Let  $r>1$, then  the sets $C(t)$ are nonsmooth.  In this case, a condition closely related to $(A3.3)$, see \cite[Theorem 1.3.1]{li97},  has been first mentioned in  \cite{harrison}  to be useful when  sweeping (or reflected)  processes over {\it nonsmooth} sets are studied.
For $t\in I^0(\bar{x})$,  denote by $\mathcal{G}_{\psi}(t)$ the Gramian matrix of the vectors $\{\nabla_x\psi_i(t,\bar{x}(t)): i\in\I^0_{(t,\bar{x}(t))}\}$. 
	If for all  $i\in\I^0_{(t,\bar{x}(t))}$, we have $(A3.3)$ holds for $\bar{\beta}_i(t)\equiv1 $, then  the  matrix $\mathcal{G}_{\psi}(t)$ is   {\it  strictly diagonally dominant}.  For the general case,   $(A3.3)$ says  that for some positive Lipschitz vector function, $\bar{\beta}(\cdot)$,  the matrix  $\mathcal{G}_{\psi}(t) D_{\bar{\beta}(t)}(t)$ is strictly diagonally dominant for all $t\in I^0(\bar{x})$, where  $ D_{\bar{\beta}(t)}(t)$  is the diagonal matrix whose diagonal entries are $(\bar{\beta}_i(t))_{i\in\I^0_{(t,\bar{x}(t))}}$.    Thus,  
	\begin{enumerate}[label=$({\roman*})$]
		\item  $(A3.3)$ yields that the vectors  $\{\nabla_x\psi_i(t,\bar{x}(t)): i\in\I^0_{(t,\bar{x}(t))}\}$ are linearly independent, and hence, when $(A3.3)$ is assumed to hold, $(A3.2)$ is automatically satisfied;
		\item {Setting    $\tilde{\psi}_i(t,x):=\bar{\beta}_i(t) \psi_i(t,x)$,  it easily follows that   $C(t)$ is also the zero-sublevel sets of $(\tilde{\psi}_i(t,\cdot))_{i=1}^r$,   for $i=1,\cdots,r$, for some $L_{\tilde\psi}>0$, $\tilde\psi_i$ satisfies $(A3.1)$ for all $i=1,\cdots, r$, and {condition $(A3.3)$ is equivalent to saying that}  for $t\in I^0(\bar{x})$, the Gramian matrix  $\mathcal{G}_{\tilde\psi}(t)$ of  the vectors $\{\nabla_x\tilde\psi_i(t,\bar{x}(t)): i\in\I^0_{(t,\bar{x}(t))}\}$ is strictly diagonally dominant; a fact that shall be used in the proof of the maximum principle;
            \item From parts $(i)-(ii)$ of this remark, we have $\tilde\psi_1,\cdots, \tilde\psi_r$ satisfy  $(A3.2)$, and hence, \eqref{conditionii} of Lemma \ref{final-a3.2(ii)} is valid at 
           $\tilde\psi_1,\cdots, \tilde\psi_r, \psi_{r+1}$   when replacing   $\bar{\eta}$  by $\tilde{\eta}:=\bar{\eta}\;  {\bf{b}}_{\bar{\beta}}$, where
            \begin{equation*}
            {\bf{b}}_{\bar{\beta}}:= \min\left\{1,\min\{\bar{\beta}_i(t): t\in [0,T], \; i=1,\cdots,r\}\right\}.
             \end{equation*} }
\end{enumerate}
	\end{remark}
Equivalent forms for the strict diagonally dominance {of $\mathcal{G}_{\psi}(t)$}  are given in the following.
\begin{lemma}\label{A2.2lemma} The following assertions are equivalent:
	\begin{enumerate}[label=$({\roman*})$]
	\item For all $t\in I^0(\x)$,  the Gramian matrix  $\mathcal{G}_{\psi}(t)$ of the vectors $\{\nabla_x\psi_i(t,{\x}(t)): i\in\I^0_{(t,{\x}(t))}\}$, is strictly diagonally dominant.
	\item There exists $b\in(0,1)$ such that,  for all $t\in I^0(\x)$ and for all $i\in\I^0_{(t,\x(t))}$, we have \begin{equation}\label{A2.3Bis} \sum_{\substack{j\in\I^0_{(t,\x(t))}\\ j\not=i}}|\<\nabla_x \psi_i(t,\x(t)),\nabla_x\psi_j(t,\x(t)) \>|\leq b\|\nabla_x\psi_i(t,\x(t))\|^2.\end{equation} 
	\item  
	There exist  $\bar{c}>0$,  $\bar{b}\in(0,1)$, and $\bar{a}>0$ such that $\forall (t,x) \in \Gr C(\cdot)\cap \bar{B}_{\bar{c}}(\x(\cdot))$ with  $\I_{(t,x)}^{\bar{a}} \ne \emptyset$, \;\; \text{and}\;\; $\forall \; i\in\I_{(t,x)}^{\bar{a}},\;\; \text{we have}$  
	\begin{equation}\label{supp2} \hspace{-.2 in}\sum_{\substack{j\in\I^{\bar{a}}_{(t,x)}\\ j\not=i}}\abs{\<\nabla_x \psi_i(t,x),\nabla_x\psi_j(t,x)\>}\leq \bar{b}\|\nabla_x\psi_i(t, x)\|^2,\end{equation}
	\begin{equation}\label{Iax}\hspace{-0.26cm}\text{	where } \I^{\bar{a}}_{(t,x)}:=\{i\in\{1,\dots, r\}: -\bar{a}\le\psi_i(t,x)\le0\}.\;\;	\end{equation}	
\end{enumerate}
\end{lemma}
\begin{proof} $(i)\implies(ii)$: For $t\in I^0(\x)$ and $i\in\I^0_{(t,\x(t))}$, we define \begin{equation} \label{defbtj}b(t,i):=\frac{ 1} {\|\nabla_x\psi_i(t,\x(t))\|^2}              \displaystyle\sum_{\substack{j\in\I^0_{(t,\x(t))}\\ j\not=i}}|\<\nabla_x \psi_i(t,\x(t)),\nabla_x\psi_j(t,\x(t))\>|<1,\end{equation}
and set  $b:=\sup\big\{b(t,i) : t\in I^0(\x)\;\hbox{and}\; i\in\I^0_{(t,\x(t))}\big\}.$ Then,  \eqref{A2.3Bis} holds true. To show that $b<1$,  use an argument by contradiction, together  with Lemma \ref{lemmaaux1} and inequality \eqref{defbtj}. \vspace{0.1cm}\\
$(ii)\implies(iii)$:  We fix $\bar{b} \in (b,1)$ and we use an argument by contradiction in conjunction with Lemma \ref{lemmaaux1}. \\
$(iii)\implies(i)$: Follows directly by taking $\bar{a}:=\bar{c}:=0$, $x=\bar{x}(t)$ and using $\bar{b}<1$.\end{proof} 
	
\subsection{Approximating problem}\label{approx-problem}
 Assume that $(A1)$-$(A2)$ are satisfied, and $((\bar{x}, \bar{y}), \bar{u})$ is an {{\it admissible solution } }for $(P)$ with  $\bar{\delta}>0$ such that $(A3.1)$, $(A3.2)$, $(A4)$ and $(A5)$ are satisfied at $((\bar{x}, \bar{y}); \bar{\delta})$. Throughout the rest of this paper, let $ \bar{\varepsilon} \in (0, \bar{\delta})$, $\psi_{r+1}$, $\bar{\eta}$ and $\bar{\rho}=\frac{2\bar\eta}{L_\psi}$ be fixed as in Subsection \ref{approx},  with $L_{\psi} \ge \frac{4 \bar{\eta}}{\rho_o}$.  Let $L_{(\x,\y)}$ denote the Lipschitz constant of $(\bar{x}(\cdot),\bar{y}(\cdot))$,  which, by Lemma \ref{lipschitz-local}, is {\it Lipschitz} and  {\it uniquely} solves  $(D)$ corresponding to $((\x(0),\y(0)),\bar{u})$. Without loss of generality, we assume $L_{(\x,\y)}\ge1$. Therefore,  all the results of Subsection \ref{approx} are valid for the systems $(\bar{D})$ and $(\bar{D}_{\gk})$,  given respectively by \eqref{Dtilda} and \eqref{Dgk1}.\\ 
\\
For given $\delta\in (0,\bar\varepsilon]$,  define {\it the problem} $(\bar{P}_\delta)$ to be the problem $(P)$, in which $(D)$ is replaced by $(\bar{D})$, and $S$ is replaced by  ${S}_\delta$, where
\begin{eqnarray}\label{Sdelta} &&{S}_\delta:= S \cap \bar{\mathscr{B}}_{\delta},\; \,\text{and} \, \; \bar{\mathscr{B}}_{\delta} \;\text{is defined in}\;\; \eqref{mathscrB}.
 \end{eqnarray}
 Notice that when ${S}_\delta$ is replaced by the following set ${S}_{ \delta,\delta}$,  \begin{equation}\label{Sdeltadelta}
	 S_{\delta,\delta}=S\cap [{\bar{\mathscr{N}}}_{(\delta,\delta)}(0)\times {\bar{\mathscr{N}}}_{(\delta,\delta)}(T)]\subset S(\bar\delta) \subset \text{domain of }J,
\end{equation}
the resulting problem - named $(\bar{P}_{ \delta,\delta})$ - has the same sets of admissible and optimal solutions as $(\bar{P}_{\delta})$.\\
\\
The following is an {\it existence result  of an optimal solution}  for $(\bar{P}_{\delta})$ without requiring $(A5)$. 
\begin{theorem}\label{existencebarP} \textnormal{\textbf{[Existence of global optimal solution for $(\bar{P}_{\delta})$] }} 
Assume that all the aforementioned assumptions in the beginning of this subsection are satisfied except for $(A5)$. 
 Let ${J}:\mathbb{R}^n \times \mathbb{R}^l \times \mathbb{R}^n \times \mathbb{R}^l \to \mathbb{R} \cup \{ \infty\}$ be merely lower semicontinuous on $S(\bar\delta)$.  Then, for any $\delta\in (0, \bar{\varepsilon}]$, $(\bar{P}_{\delta})$ has a global optimal solution. 
\end{theorem}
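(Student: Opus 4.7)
The plan is to adapt the argument for Theorem~\ref{existenceglobal} to the localized/truncated setting $(\bar{D})$, $S_\delta$, relying on the machinery developed in Section~\ref{auxiliary}.

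First I would verify that $(\bar{P}_\delta)$ has at least one admissible pair in $\dom J$. Since $((\x,\y),\bar{u})$ is admissible for $(P)$, Lemma~\ref{lipschitz-local} gives that $(\x,\y)$ is the unique Lipschitz solution of $(D)$ corresponding to $((\x(0),\y(0)),\bar{u})$, and since $(\x(t),\y(t))\in\bar{\mathscr{N}}_{(\bar\varepsilon,\bar\delta)}(t)$ trivially, Remark~\ref{admissibility} yields that $((\x,\y),\bar{u})$ also solves $(\bar{D})$. Clearly $(\x(0),\y(0),\x(T),\y(T))\in S_\delta\cap\dom J$. Since $S_\delta$ is closed and bounded hence compact, and $J$ is lsc on $S(\bar\delta)\supset S_\delta$, the infimum over admissible pairs is finite.

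Next, pick a minimizing sequence $((x_n,y_n),u_n)$ of admissible pairs for $(\bar{P}_\delta)$. By Lemma~\ref{lipschitz-local-2}, each $(x_n,y_n)$ lies in $\bar{\mathscr{N}}_{(\bar\varepsilon,\bar\delta)}(\cdot)$ and comes with measurable multipliers $(\lambda_i^n)_{i=1}^{r+1}$, $\zeta_n$ satisfying \eqref{lipschitz-xy1}, the corresponding support conditions, and the uniform bounds in \eqref{lipschitz-xy}--\eqref{dotx-f}. Invoking Lemma~\ref{convergence}$(i)$ with $\mathcal{R}:=r+1$, a subsequence (not relabeled) satisfies $(x_n,y_n)\xrightarrow{\mathrm{unif}}(x^*,y^*)\in W^{1,\infty}$, $(\dot x_n,\dot y_n)\xrightarrow{w^*}(\dot x^*,\dot y^*)$ in $L^\infty$, and $\lambda_i^n\xrightarrow{w^*}\lambda_i^*$, $\zeta_n\xrightarrow{w^*}\zeta^*$ in $L^\infty([0,T];\R_+)$, preserving the uniform bounds. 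The endpoints $(x_n(0),y_n(0),x_n(T),y_n(T))\in S_\delta$ converge, by closedness of $S_\delta$, to a point of $S_\delta$.

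Then I would apply Lemma~\ref{convergence}$(ii)$---valid because $(A1)$ and $(A4.2)$ are in force---to produce $u^*\in\mathcal{U}$ such that $((x^*,y^*),u^*)$ together with $(\lambda_i^*)_{i=1}^{r+1}$ and $\zeta^*$ satisfies \eqref{lipschitz-xy1}. The support conditions $\lambda_i^*=0$ on $I_i^{-}(x^*)$ and $\zeta^*(t)\varphi(t,y^*(t))=0$ transfer from each $n$ to the limit by the same pointwise continuity argument as in Step~II.1 of the proof of Theorem~\ref{theoremrelationship}: if $\psi_i(t,x^*(t))<0$, uniform convergence of $x_n$ and continuity of $\psi_i$ force $\lambda_i^n\equiv 0$ on a neighborhood of $t$ for $n$ large, whence $\lambda_i^*(t)=0$; analogously for $\zeta^*$. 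Lemma~\ref{lipschitz-local-2} then certifies that $((x^*,y^*),u^*)$ solves $(\bar{D})$, so it is admissible for $(\bar{P}_\delta)$, and lower semicontinuity of $J$ on $S(\bar\delta)$ closes the argument via
\[
J(x^*(0),y^*(0),x^*(T),y^*(T))\le \liminf_{n\to\infty} J(x_n(0),y_n(0),x_n(T),y_n(T))=\inf(\bar{P}_\delta).
\]
The only delicate step is the invocation of Lemma~\ref{convergence}$(ii)$ in the truncated dynamic: it requires the structural properties of $N_{C(t)\cap\bar B_{\bar\varepsilon}(\x(t))}$ and $N_{\bar B_{\bar\delta}(\y(t))}$ provided by Remark~\ref{C-prop} and Lemma~\ref{final-a3.2(ii)}, together with the uniform prox-regularity of the truncated sweeping set. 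These are exactly the pieces built in Section~\ref{auxiliary}, so no new technical estimate beyond that framework is needed.
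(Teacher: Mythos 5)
Your proposal is correct and follows essentially the same route as the paper: the paper also verifies admissibility of $((\x,\y),\u)$ via Remark \ref{admissibility}, takes a minimizing sequence, and then repeats the proof of Theorem \ref{existenceglobal} with Lemma \ref{lipschitz-local-2} (system \eqref{lipschitz-xy1} and the bounds \eqref{lipschitz-xy}) in place of Lemma \ref{lipschitz-global}, applying Lemma \ref{convergence} with $\mathcal{R}=r+1$, $Q(t)=C(t)\cap\bar{B}_{\bar\varepsilon}(\x(t))$ and $\zeta_n$ present, before transferring the support conditions and concluding by lower semicontinuity of $J$. The only slight inaccuracy is your claim $S_\delta\subset S(\bar\delta)$ (false in general, since points of $S_\delta$ need not have their $T$-endpoint in $C(T)$); the paper instead uses that admissible endpoints automatically lie in the compact set ${S}_{\delta,\bar\varepsilon}\subset S(\bar\delta)$, which is exactly what your argument needs and does not affect its validity.
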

\begin{proof}
Fix $\delta \in (0,\bar\varepsilon]$.  Being  admissible  for $(P)$,  $(({\x},{\y}),{\u})$ is also admissible for $(\bar{P}_{\delta})$, due to  Remark \ref{admissibility}   and that $( \x(0),\y(0),\x(T),\y(T))\in {S}_{ \delta}$. As  any admissible $((x,y),u)$ to $(\bar{P}_{\delta})$ also satisfies $( x(0),y(0),x(T),y(T)) \in {S}_{ \delta,\delta} \subset S(\bar{\delta})$,    then,  the lower 
semicontinuity of $J$ on $S(\bar{\delta})$ and the compactness of ${S}_{ \delta,\delta}$  yield  the infimum of $J$  over $((x,y),u)$ satisfying $(\bar{D})$ and having  its states endpoints in ${S}_{\delta}$, is finite. Let $((x_n, y_n), u_n)$ be a minimizing sequence for $(\bar{P}_{\delta})$. 
The proof from this point on continues as  done in the proof of Theorem \ref{existenceglobal}, in which $(D)$ and $S$ are  now $(\bar{D})$ and ${S}_{\delta}$, respectively,   and we use Lemma \ref{lipschitz-local-2},  system \eqref{lipschitz-xy1}, and the bounds in \eqref{lipschitz-xy} instead of Lemma \ref{lipschitz-global}, system \eqref{lipschitz-D}, and the bounds in \eqref{bound}($\eta_0$ was replaced by $\eta$),  respectively, and we apply Lemma \ref{convergence}  itself, where $\mathcal{R}=r+1$,  $Q(t):=C(t)\cap \bar{B}_{\bar{\varepsilon}}(\x(t))$  and $\zeta_n$ and $\zeta$ are present,  instead of its global version that was used  for $\mathcal{R}:=r$, $Q(t):=C(t)$ and $\zeta_n=\zeta=0$. We deduce the existence of {$((\tilde{x}_{\delta},\tilde{y}_{\delta}),\tilde{u}_{\delta})$} optimal for $(\bar{P}_{\delta})$.
\end{proof}
\begin{remark}\label{existenceL} 
Note that  Theorem \ref{existencebarP}$(I)$ remains valid if  we replace the objective function of $(\bar{P}_{\delta})$,  $J(x(0),y(0),x(T),y(T))$,  by  $J(x(0),y(0),x(T),y(T))+ \int_0^T \mathbb{L}(t,x(t),y(t))\, dt$,  where $\mathbb{L}$  is a {\it Carath\'eodory} function satisfying, for some $\sigma \in L^1([0,T],\R_+)$,   
     \begin{equation}\label{mathbbL} 
     |\mathbb{L}(t,x,y)| \le  \sigma(t),  \quad  \forall (x,y)\in {\bar{\mathscr{N}}}_{(\bar\varepsilon,\bar\delta)}(t), \;\text{and}\; \; t\in[0,T]. 
     \end{equation}
This is so, because for any $u\in \mathcal{U}$, the solution $(x,y)$ of $(\bar{D})$ belongs to the uniformly bounded set valued map ${\bar{\mathscr{N}}}_{(\bar\varepsilon,\bar\delta)}(\cdot)$, and $\mathbb{L}$ is a {\it Carath\'eodory} function satisfying \eqref{mathbbL}, and does not explicitly depend on the control.  Indeed, in the proof of Theorem \ref{existencebarP}(I), the existence of a  minimizing sequence $((x_n,y_n),u_n)$ for $(\bar{P}_{\delta})$, in which this change is implemented, remains valid, and  the limit as $n\to \infty$ of the added term is $\int_0^T\mathbb{L}(t,\tilde{x}_{\delta}(t),\tilde{y}_{\delta}(t)) dt $, by the dominated convergence theorem.\\
 \end{remark}
\begin{remark}\label{optimality} Using Theorem \ref{existencebarP} and  Remark \ref{admissibility},   we have the following.
\begin{itemize}
  \item[$(i)$] If  $((\bar{x}, \bar{y}), \bar{u})$ is a $\bar{\delta}$-{\it strong local minimizer} for $(P)$, {then, for any $\delta\in (0,\bar\varepsilon)$,   $((\bar{x}, \bar{y}), \bar{u})$ is a $\delta$-{\it strong local minimizer} for $(\bar{P}_{\delta})$.\\    
This fact  motivates  formulating  in Proposition \ref{approximating} the approximating problem for $(P)$ near $((\x,\y),\bar{u})$ as being that for $(\bar{P}_{ \delta_o,\delta_o})$, where  $\delta_o$ is chosen strictly less than $\bar\varepsilon$.  It  also plays a key role  in  step 4 of   the proof  of Theorem \ref{pmp-ps} $($Subsection  \ref{proofpmp}$)$   by relaxing instead of $(P)$, the problem $(\mathscr{\bar{P}})$, which is $(\bar{P}_{\frac{\delta}{2}})$ with extended $J$ and added $\mathbb{L}$}.
\item[$(ii)$] Conversely, given $\delta\in (0,\bar\varepsilon]$, if $((\x,\y),\bar{u})$   is a $\hat{\delta}$-strong local minimum for $(\bar{P}_{\delta})$ for  $\hat{\delta} \in (0,\delta]$, then $((\x,\y),\bar{u})$  is a $\hat\delta$-strong local minimum for $(P)$.  \\
\end{itemize}
\end{remark}

{For the rest of the paper,  $((\x,\y),\u)$ is  taken to be a $\bar{\delta}$-{\it  strong local minimum} for $(P)$.  We shall employ the following notations.
 \begin{itemize}
	  	\item  If $\bar{x}(0) \in \inte C(0)$, then,  $\bar{x}(0) \in \inte (C(0)\cap \bar{B}_{\bar{\varepsilon}}(\bar{x}(0))),$ and hence, taking  $c:=\bar{x}(0)$ in Remark \ref{intelemma(k)}$(ii)$,  we deduce that there exist $\hat{k}_{\bar{x}(0)} \in \N$ and $ \hat{r}_{\bar{x}(0)} {\in (0,\bar{\varepsilon})}$,  satisfying  \begin{equation} \label{lastideahope0} \bar{x}(0)\in \bar{B}_{\hat{r}_{\bar{x}(0)}}(\bar{x}(0))\subset  \inte \bar{C}^{\gk}(0,k),\;\;\forall k\ge \hat{k}_{\bar{x}(0)}.
 	\end{equation} If $\bar{x}(0) \in \bdry C(0)$, then $\bar{x}(0) \in \bdry (C(0)\cap \bar{B}_{\bar{\varepsilon}}(\bar{x}(0))),$ and hence, taking  $c:=\bar{x}(0)$ in Proposition \ref{propcgk(k)}$(v)$,  we deduce that there exist a vector $d_{\bar{x}(0)} \ne 0  $, ${k}_{\bar{x}(0)} \ge k_3 $, and  ${r}_{\bar{x}(0)} {\in (0,\bar{\varepsilon})}$, such that						
 		\begin{equation}\label{lastideahope1} \left(C(0)\cap \bar{B}_{{r}_{\bar{x}(0)}}({\bar{x}(0)})\right)+\bar{\sigma}_k\frac{d_{\bar{x}(0)}}{\|d_{\bar{x}(0)}\|}\subset \inte \bar{C}^{\gk}(0,k),\;\;\forall k\geq {k}_{\bar{x}(0)}.
 		\end{equation} 
 		\item Since $\bar{y}(0) \in \inte \bar{B}_{\bar{\delta}}(\bar{y}(0))$, then taking  $\textbf{d}:=\bar{y}(0)$ in Remark \ref{movingto-rhok}$(ii)$,  we deduce that there exist $\textbf{k}_{\bar{y}(0)} \in \N$  and  $ \textbf{r}_{\bar{y}(0)} >0$  satisfying  \begin{equation} \label{lastideahope2} \bar{y}(0)\in \bar{B}_{\textbf{r}_{\bar{y}(0)}}(\bar{y}(0))\subset  \inte  \bar{B}_{\bar{\rho}_k}(\bar{y}(0)),\;\;\forall k\ge \textbf{k}_{\bar{y}(0)}.
 		\end{equation}  
\item {Motivated by Remark \ref{optimality}$(i)$, and equations \eqref{lastideahope0}-\eqref{lastideahope2},} let   $\delta_o>0$  to be the fixed constant 
	\begin{equation}\label{delta_0} \delta_o := \begin{cases}
	\min \{\frac{\bar{\varepsilon}}{2} ,\hat{r}_{\bar{x}(0)}, \textbf{r}_{\bar{y}(0)}\}  \; & \text{ if } \bar{x}(0) \in \inte C(0) \\
		\min \{\frac{\bar{\varepsilon}}{2} ,r_{\bar{x}(0)}, \textbf{r}_{\bar{y}(0)}\} \; & \text{ if } \bar{x}(0) \in \bdry C(0).
	\end{cases}\end{equation}
\item For $ \beta \in (0,1]$,	we define for  $ t \in [0,T]$ \;a.e., and $ (x,y,u) \in { \bar{\mathscr{N}}_{(\bar{\delta},\bar{\delta})}(t)\times U(t): }$
	\begin{eqnarray*}
		f^{\beta}(t,x, y, u):= (1-\beta) f(t,x,y,\bar{u}(t)) + \beta f(t,x,y, u),  \\
		g^{\beta}(t,x, y, u):= (1-\beta) g(t,x,y,\bar{u}(t)) + \beta g(t,x,y, u).
	\end{eqnarray*}
	Note that also {$h^{\beta}:= ({f}^{\beta}, {g}^{\beta})$}  satisfy (A4) as $h$ does, and   hence,  all the results of {Section \ref{auxiliary}} hold true  for  $(\bar{D}^{\beta})$ and $(\bar{D}_{\gk}^{\beta})$,  which are respectively obtained from  $(\bar{D})$ and $(\bar{D}_{\gk})$ by replacing   $h$  by ${h}^{\beta}$. Observe that  $h^{\beta}(t,x,y,\bar{u}(t))=h(t,x,y,\bar{u}(t))$.
\item Let $(\bar{x}_{\gk},\bar{y}_{\gk} )$ the solution of $(\bar{D}_{\gk}^{\beta})$ corresponding to  $\left((\bar{c}_{\gk}, \bar{y}(0)), \bar{u}\right)$,  where, for $k$ large enough,  $\bar{c}_{\gk}\in \inte \bar{C}^{\gk}(0,k)$ is  the sequence  corresponding (and converging) to $c=\bar{x}(0)$ via  Remark \ref{intelemma(k)}, namely, \begin{equation*}
		\bar{c}_{\gk} =	\begin{cases}
		 \bar{x}(0) & \textnormal{ if }  \bar{x}(0) \in \inte C(0) \\
			 \bar{x}(0) + \bar{\sigma}_k \frac{d_{\bar{x}(0)}}{\|d_{\bar{x}(0)}\|}  & \textnormal{ if }  \bar{x}(0) \in \bdry C(0), 
		\end{cases}
	\end{equation*} 
{where $d_{\bar{x}(0)}$ is the vector from Proposition \ref{propcgk(k)}$(v)$ corresponding to $\bar{x}(0)$.} Then, by Corollary \ref{corollary2}, along a subsequence, $(\bar{x}_{\gk}, \bar{y}_{\gk})$ converges uniformly to $(\bar{x}, \bar{y})$ and satisfies all conclusions of Theorem $\ref{theoremrelationship}$. In particular, we have that $(\x_{\gk}(t),\y_{\gk}(t)) \in \bar{C}^{\gk}(t,k) \times \bar{B}_{\bar{\rho}_k}(\bar{y}(t)) \; \forall t \in [0,T]$ and $(\x_{\gk},\y_{\gk})_k$ is uniformly lipschitz.
	\item   We  define for all $k \in \mathbb{N}$
	\begin{eqnarray*} \hspace{-.1 in}&&S^{\gk}(k):= \begin{cases}
			[S_{\delta_o} + (0, 0,\bar{e}_{\gk},\bar{\omega}_{\gk})]\cap [\bar{\mathscr{N}}_{(\bar\varepsilon, \bar{\delta})}(0) \times \bar{\mathscr{N}}_{(\bar\varepsilon, \bar{\delta})}(T)],\quad & \textnormal{ if }  \bar{x}(0) \in \inte C(0) \\
				[S_{\delta_o}  + (\bar{\sigma}_k \frac{d_{\bar{x}(0)}}{\|d_{\bar{x}(0)}\|}, 0,\bar{e}_{\gk},\bar{\omega}_{\gk})]\cap [\bar{\mathscr{N}}_{(\bar\varepsilon, \bar{\delta})}(0)\times \bar{\mathscr{N}}_{(\bar\varepsilon, \bar{\delta})}(T)],\quad & \textnormal{ if } \bar{x}(0) \in \bdry C(0),
		\end{cases} 
\end{eqnarray*}
where, $S_{\delta_o}$ is defined in \eqref{Sdelta}, and   
\begin{eqnarray*}
&&(\bar{e}_{\gk}, \bar{\omega}_{\gk}):= (\bar{x}_{\gk}(T)-\bar{x}(T), \bar{y}_{\gk}(T)-\bar{y}(T))\xrightarrow[k \to \infty]{} (0,0).\end{eqnarray*}
	\end{itemize}
\begin{remark}
	Our sets $S^{\gk}(k)$ satisfy the following properties: 
	\begin{eqnarray}
		&& \hspace{-.7 in} \forall k\in \N,  \; S^{\gk}(k)  \text{ is closed},  \;  \quad \text{and} \; \quad {S^{\gk}(k) \subset {S}(\bar{\delta})}, \text{ for } k \text{ sufficiently large, } \label{boundS}\\          
		& &\hspace{-.8 in}\{ (c,d): (c,d,e,\omega) \in S^{\gk}(k) \} \subset \inte \bar{C}^{\gk}(0,k)  \times \inte \bar{B}_{\bar{\rho}_k}(\bar{y}(0)) \subset\inte\bar{\mathscr{N}}_{(\bar\varepsilon, \bar{\delta})}(0)\; \text{ for }\; k \; \text{large, }\label{subset} \\
		&&\hspace{-.7 in} \lim\limits_{k \to \infty} S^{\gk}(k) = {S}_{\delta_o,\delta_o }, \label{limit}  \\
		&&	\hspace{-.7 in}(\bar{c}_{\gk}, \bar{y}(0),\bar{x}_{\gk}(T), \bar{y}_{\gk}(T)) \in S^{\gk} (k),  \text{ for } k \text{ sufficiently large}.
		\label{inclusion}\end{eqnarray}
	In addition, for $(c,d,e,\omega) \in S^{\gk}(k)$ with   $(e,\omega)\in \inte \bar{\mathscr{N}}_{(\bar\varepsilon, \bar{\delta})}(T)= (\inte C(T)\cap {B}_{\bar\varepsilon}(\bar{x}(T))) \times {B}_{\bar\delta}(\bar{y}(T)),$ we have 
	\begin{eqnarray} \label{normalconeS} N_{S^{\gk}(k)}^L (c,d,e,\omega)= \begin{cases}
			N_{S}^L(c,d ,e-\bar{e}_{\gk}, \omega-\bar{\omega}_{\gk})  \hspace{1 in} \textnormal{ if }  \bar{x}(0) \in \inte C(0) \textnormal{ and } \\
			 \hspace{2.4 in}(c,d,e-\bar{e}_{\gk},\omega-\bar{\omega}_{\gk})  \in   \inte \bar{\mathscr{B}}_{\delta_o} \\
			\\
			N_{S}^L(c - \bar{\sigma}_k \frac{d_{\bar{x}(0)}}{\|d_{\bar{x}(0)}\|},d,e-\bar{e}_{\gk},\omega-\bar{\omega}_{\gk})  \hspace{.24 in}  \textnormal{ if } \bar{x}(0) \in \bdry C(0) \textnormal{ and } \\
		\hspace{1.6in} (c - \bar{\sigma}_k \frac{d_{\bar{x}(0)}}{\|d_{\bar{x}(0)}\|},d,e-\bar{e}_{\gk}, \omega-\bar{\omega}_{\gk}) \in  \inte \bar{\mathscr{B}}_{\delta_o}. \\
	\end{cases} \end{eqnarray}
\end{remark}
This next proposition provides a sequence of optimal control problems with specific \textit{joint} endpoint constraints that approximates our initial problem $(P)$ near $((\bar{x},\bar{y}),\bar{u})$, that is, the problem $(\bar{P}_{ \delta_o,\delta_o})$. \\	

\begin{proposition} \textnormal{\textbf{[Approximating problems for $(P)$]}}
		\label{approximating} 
For all $\alpha >0$ and $ \beta \in (0,1]$, there exists a subsequence of $(\gk)_k$ (we do not relabel) and a sequence  $\left(c_{\gk}, d_{\gk}, e_{\gk},\omega_{\gk}; u_{\gk}\right)\in S^{\gk}(k)\times \mathcal{U}$ such that the associated problem $(P_{\gk}^{\alpha,\beta}) $ defined by:\\
\\\begin{equation*}\label{dgkba} \begin{array}{l} ({P}_{\gk}^{\a,\b})\colon\; \textnormal{Minimize}\;J(x(0),y(0), x(T), y(T)) \;+ \; \alpha\|u-u_{\gk}\|_1  \\ \hspace{3cm} + \; \alpha \; \|\left(x(0),y(0),x(T),y(T)\right)-\left(c_{\gk} ,d_{\gk},e_{\gk},{\omega}_{\gk}\right)\| ,  \\ \hspace{1.5cm} \textnormal{over}\;((x,y),u)\;\textnormal{such that}\;u(\cdot)\in \mathscr{U}\;\textnormal{and}\\[1pt] \hspace{1.5cm} \begin{cases} ({\bar{D}}_{\gk}^{\beta})\begin{sqcases} 	\dot{x}(t) = f^{\beta}(t,x(t), y(t), u(t)) - \sum_{i=1}^{r+1}\gk e^{\gk \psi_i(t,x(t))} \nabla_x \psi_i(t,x(t))  \textnormal{ a.e. } t \in [0,T],\\ \dot{y}(t) = g^{\beta}(t, x(t), y(t), u(t))- \gamma_k e^{\gk \varphi(t,y(t))} \nabla_y \varphi(t,y(t)) \textnormal{ a.e. } t \in [0,T],\end{sqcases} \vspace{0.1cm}\\ 	(x(t),y(t)) \in \bar{B}_{\delta_o}(\bar{x}(t),\bar{y}(t)) \quad \forall t \in [0,T], \quad \quad \textnormal{\textbf{(S.C)}},	\\ 
			(x(0), y(0),x(T),y(T)) \in S^{\gk}(k),\\ \end{cases}
\end{array}
\end{equation*}
has an optimal solution $((x_{\gk},y_{\gk}),u_{\gk})$ such that 
$(x_{\gk}(0),y_{\gk}(0), x_{\gk}(T), y_{\gk}(T))=(c_{\gk},d_{\gk}, e_{\gk},\omega_{\gk})$ and $(x_{\gk})_k$ and $(y_{\gk})_k$ are uniformly Lipschitz. Moreover,  
\begin{eqnarray}
& &\hspace{-.8 in} (x_{{\gk}}(0), y_{{\gk}}(0),x_{{\gk}}(T), y_{{\gk}}(T)) \in 
			\left (S_{\delta_o}  + {\rho}_1 B \right) \cap \inte\left(\bar{\mathscr{N}}_{(\bar\varepsilon, \bar{\delta})}(0) \times \bar{\mathscr{N}}_{(\bar\varepsilon, \bar{\delta})}(T)\right) \subset \inte S(\bar{\delta}), \label{b11}\\
	& &\hspace{-.2 in}(x_{\gk}(t),y_{\gk}(t)) \in  \bar{C}^{\gk}(t,k) \times \bar{B}_{\bar{\rho}_k}(\bar{y}(t)), \quad \forall  t \in [0,T], \label{xkykdomain}\\		
						& &\hspace{-.2 in}(x_{\gk}, y_{\gk}) \xrightarrow[]{unif} (\bar{x}, \bar{y}), \quad  u_{\gk} \xrightarrow[in \; L^{1}]{strongly} \bar{u},\quad\text{and}\quad   (\dot{x}_{\gk}, \dot{y}_{\gk}) \xrightarrow[in \; L^{\infty}]{w*} (\dot{\x}, \dot{\y}). \label{convapp}		\end{eqnarray}
 The functions  $\xi^i_{\gk}$ $(i=1,\cdots,r+1)$ and $\zeta_{\gk}$,  corresponding to $x_{\gk}$ and $y_{\gk}$ via \eqref{defxi}, satisfy \eqref{boundxi}   and there exists $(\xi^1,\cdots,\xi^r)\in L^{\infty}([0,T],\R^r_+)$  such that		
\begin{eqnarray}			
& &\hspace{-1 in}    \xi^i_{\gk}  \xrightarrow[in \; L^{\infty}]{w*} \xi^i, \; \xi^i=0\; \text{on}\; I_i^{\-}(\x) \;(\forall i=1,\cdots,r), \; \|\sum_{i=1}^r\xi^i\|_{\infty} \le \frac{2\bar{\mu}}{\bar{\eta}^2}, \quad 
(\gk\xi^{r+1}_{\gk}, \gk\zeta_{\gk})  \xrightarrow[]{unif} 0,			
			\label{xi-zetaconv}
\end{eqnarray}			 
and $((\x,\y), \bar{u})$ together with $(\xi^1,\cdots,\xi^r)$ satisfies Theorem \ref{maxprincipleS} $(i)$.	\end{proposition}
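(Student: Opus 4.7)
I would prove this via Ekeland's variational principle applied to the ``base'' $\gk$-approximating problem $(\tilde{P}^{\beta}_{\gk})$ obtained from $(P^{\alpha,\beta}_{\gk})$ by dropping the two penalty terms in the cost, using the Theorem~\ref{theoremrelationship} approximation $((\bar{x}_{\gk},\bar{y}_{\gk}),\bar{u})$ of $((\bar{x},\bar{y}),\bar{u})$ as the reference point. This pair is admissible for $(\tilde{P}^{\beta}_{\gk})$ for $k$ large: the boundary condition $(\bar{c}_{\gk},\bar{y}(0),\bar{x}_{\gk}(T),\bar{y}_{\gk}(T))\in S^{\gk}(k)$ is \eqref{inclusion}, the state constraint \textbf{(S.C.1)} follows from the uniform convergence $(\bar{x}_{\gk},\bar{y}_{\gk})\to(\bar{x},\bar{y})$ together with $\delta_o<\bar{\delta}$, and $(\bar{x}_{\gk},\bar{y}_{\gk})$ solves $(\bar{D}_{\gk}^\beta)$ because $h^\beta(\cdot,\cdot,\cdot,\bar{u})=h(\cdot,\cdot,\cdot,\bar{u})$. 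Let $m_k:=\inf(\tilde{P}^{\beta}_{\gk})$. The upper bound $\limsup_k m_k\le J(\bar{x}(0),\bar{y}(0),\bar{x}(T),\bar{y}(T))$ follows by evaluating $J$ at $((\bar{x}_{\gk},\bar{y}_{\gk}),\bar{u})$ and invoking (A5) with \eqref{boundS}. For $\liminf_k m_k\ge J(\bar{x}(0),\ldots,\bar{y}(T))$, take an $\epsilon_k$-minimizing sequence and extract, via Theorem~\ref{theoremrelationship}$(II)$, a limit pair $((\hat{x},\hat{y}),\hat{u})$ solving $(\bar{D}^\beta)$ with boundary in $S_{\delta_o,\bar\varepsilon}$ (by \eqref{limit}) and $\|(\hat{x},\hat{y})-(\bar{x},\bar{y})\|_\infty\le\delta_o$. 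Under the temporary (A4.2), a Filippov-type selection converts the $(\bar{D}^\beta)$-solution into a genuine $(\bar{D})$-solution with some $\tilde{u}\in\mathcal{U}$; Remark~\ref{optimality}$(i)$ (the $\delta_o$-strong local minimality of $((\bar{x},\bar{y}),\bar{u})$ for $(\bar{P}_{\delta_o})$) then yields the required inequality.

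\textbf{Ekeland step.} Endow the admissible set $X_k$ of $(\tilde{P}^{\beta}_{\gk})$ with
\[d\bigl(((x,y),u),((x',y'),u')\bigr):=\|u-u'\|_1+\|(x(0),y(0),x(T),y(T))-(x'(0),y'(0),x'(T),y'(T))\|.\]
Completeness of $(X_k,d)$ follows from (A1), the $L^\infty$-bound on admissible controls inherited from \textbf{(S.C.1)} and $(A4.1)$, and the uniqueness/continuous dependence of Lemma~\ref{lipschitz-local-2} applied to $(\bar{D}_{\gk}^\beta)$; lower semicontinuity of $J$ on $X_k$ follows from (A5) and \eqref{boundS}. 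Using the first paragraph, choose $\epsilon_k\downarrow 0$ with $J(\bar{x}_{\gk}(0),\bar{y}_{\gk}(0),\bar{x}_{\gk}(T),\bar{y}_{\gk}(T))\le m_k+\epsilon_k$ and apply Ekeland with parameter $\lambda_k:=\epsilon_k/\alpha\to 0$. This produces $((x_{\gk},y_{\gk}),u_{\gk})\in X_k$ with $d(((x_{\gk},y_{\gk}),u_{\gk}),((\bar{x}_{\gk},\bar{y}_{\gk}),\bar{u}))\le\lambda_k$, minimizing $J(x(0),y(0),x(T),y(T))+\alpha\,d(((x,y),u),((x_{\gk},y_{\gk}),u_{\gk}))$ on $X_k$. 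Setting $(c_{\gk},d_{\gk},e_{\gk},\omega_{\gk}):=(x_{\gk}(0),y_{\gk}(0),x_{\gk}(T),y_{\gk}(T))$, this is exactly the minimization in $(P^{\alpha,\beta}_{\gk})$.

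\textbf{Closing the convergences.} From $\lambda_k\to 0$: $u_{\gk}\to\bar{u}$ in $L^1$ and the boundaries converge to $(\bar{x}(0),\bar{y}(0),\bar{x}(T),\bar{y}(T))$. Applying Theorem~\ref{theoremrelationship}$(II)$ to $((x_{\gk},y_{\gk}),u_{\gk})$ (the a.e.-convergent-subsequence case of its hypothesis holds), the uniform limit of $(x_{\gk},y_{\gk})$ is a $(\bar{D}^\beta)$-solution with data $((\bar{x}(0),\bar{y}(0)),\bar{u})$, which by $h^\beta(\cdot,\cdot,\cdot,\bar{u})=h(\cdot,\cdot,\cdot,\bar{u})$ and Lemma~\ref{lipschitz-local-2} coincides with $(\bar{x},\bar{y})$, giving \eqref{convapp}. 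The invariance \eqref{xkykdomain} is Theorem~\ref{theoremrelationship}$(I)$, and \eqref{b11} follows from $(c_{\gk},d_{\gk},e_{\gk},\omega_{\gk})\to(\bar{x}(0),\bar{y}(0),\bar{x}(T),\bar{y}(T))\in\inte S(\bar{\delta})$. Uniform Lipschitz continuity of $(x_{\gk},y_{\gk})$ is part of \eqref{boundxi}. The multipliers $\xi^i_{\gk},\zeta_{\gk}$, their weak* limits, the bound on $\|\sum_{i=1}^r\xi^i\|_\infty$, and the support statements come directly from \eqref{converging}, \eqref{boundofxil}, \eqref{xi-support}. For the uniform convergence $(\gk\xi^{r+1}_{\gk},\gk\zeta_{\gk})\to 0$: since $\psi_{r+1}(t,\bar{x}(t))=-\bar{\varepsilon}^2/2$ and $\varphi(t,\bar{y}(t))=-\bar{\delta}^2/2$ are strictly negative uniformly in $t$, uniform convergence of $(x_{\gk},y_{\gk})$ gives $\psi_{r+1}(t,x_{\gk}(t))\le-\bar{\varepsilon}^2/4$ and $\varphi(t,y_{\gk}(t))\le-\bar{\delta}^2/4$ for $k$ large, so $\gk\xi^{r+1}_{\gk}(t)=\gk^2 e^{\gk\psi_{r+1}(t,x_{\gk}(t))}\le\gk^2 e^{-\gk\bar{\varepsilon}^2/4}\to 0$ uniformly, and similarly for $\gk\zeta_{\gk}$. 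In particular $\xi^{r+1}\equiv 0$ and $\zeta\equiv 0$, so the limiting equations \eqref{dual2}--\eqref{dual3} collapse to Theorem~\ref{maxprincipleS}$(i)$.

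\textbf{Main obstacle.} The principal difficulty is the lower bound in the first paragraph, where one must pass to the limit along an $\epsilon_k$-minimizer and then use a Filippov-type selection under (A4.2) to produce an admissible pair for $(\bar{P}_{\delta_o})$ inside the $\delta_o$-ball of $(\bar{x},\bar{y})$, so as to invoke strong local minimality. A secondary subtlety is coordinating $\epsilon_k\downarrow 0$ so that $\lambda_k=\epsilon_k/\alpha\to 0$ while remaining large enough that $((\bar{x}_{\gk},\bar{y}_{\gk}),\bar{u})$ is genuinely an $\epsilon_k$-minimizer; the order of these smallness requirements must be tuned to both conditions simultaneously.
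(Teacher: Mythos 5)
Your proposal follows essentially the same route as the paper's proof: show that $((\bar{x}_{\gk},\bar{y}_{\gk}),\bar{u})$ from Theorem \ref{theoremrelationship} is admissible, compare the infimum of the unpenalized ($\alpha=0$) problem with $J(\bar{x}(0),\bar{y}(0),\bar{x}(T),\bar{y}(T))$ by passing to the limit via Theorem \ref{theoremrelationship}$(II)$, a Filippov selection under the temporary (A4.2), and Remark \ref{optimality}$(i)$, then run Ekeland's variational principle on the endpoint--control metric space to manufacture $(c_{\gk},d_{\gk},e_{\gk},\omega_{\gk};u_{\gk})$ and the optimal pairs for $({P}_{\gk}^{\alpha,\beta})$, and finally harvest \eqref{xkykdomain}--\eqref{xi-zetaconv} from Theorem \ref{theoremrelationship} together with the strict-negativity argument for $\psi_{r+1}$ and $\varphi$. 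The only real difference is that you bypass the existence theorem for the base problem by working with $\epsilon_k$-minimizing sequences and taking the Ekeland radius $\lambda_k=\epsilon_k/\alpha$, whereas the paper first obtains exact minimizers of $({P}_{\gk}^{0,\beta})$ (via Clarke's existence theorem) and then exhibits the reference pair as an $\tfrac{\alpha}{n}$-minimizer along a subsequence; the two parametrizations of Ekeland are interchangeable and yield the same penalized problem with weight $\alpha$ and distance bound tending to $0$.

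One step as written is not justified: you derive \eqref{b11} from the convergence of the endpoints to $(\bar{x}(0),\bar{y}(0),\bar{x}(T),\bar{y}(T))$, claiming this limit lies in $\inte S(\bar{\delta})$. If $\bar{x}(0)$ or $\bar{x}(T)$ lies on $\bdry C(\cdot)$ (which is precisely the interesting case), the limit point is \emph{not} interior to $\bar{\mathscr{N}}_{(\bar\varepsilon,\bar{\delta})}(0)\times\bar{\mathscr{N}}_{(\bar\varepsilon,\bar{\delta})}(T)$, hence not in $\inte S(\bar{\delta})$, and convergence alone gives nothing. The correct argument is already at your disposal: the interior membership comes from \eqref{xkykdomain} at $t=0,T$ together with $\bar{C}^{\gk}(t,k)\times\bar{B}_{\bar{\rho}_k}(\bar{y}(t))\subset\inte\bar{\mathscr{N}}_{(\bar\varepsilon,\bar{\delta})}(t)$ (Proposition \ref{propcgk(k)}), and the $(S_{\delta_o}+\rho_1 B)$ part comes from the endpoints lying in $S^{\gk}(k)$ with translation vectors $\bar{\sigma}_k\frac{d_{\bar{x}(0)}}{\|d_{\bar{x}(0)}\|}$ and $(\bar{e}_{\gk},\bar{\omega}_{\gk})$ tending to $0$. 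A minor cosmetic point: completeness of your metric space is a continuous-dependence statement for the standard ODE system $(\bar{D}_{\gk}^{\beta})$ (not an application of Lemma \ref{lipschitz-local-2}, which concerns $(\bar{D})$), and no $L^{\infty}$ bound on the controls is needed nor furnished by \textbf{(S.C.1)}; the paper likewise asserts completeness without detail, so this does not affect correctness.
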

	\begin{proof} 
Given that  $(\bar{x}_{\gk},\bar{y}_{\gk} )$ is the solution of $(\bar{D}_{\gk}^{\beta})$ corresponding to  $\left((\bar{c}_{\gk}, \bar{y}(0)), \bar{u}\right)$,  the inclusion \eqref{inclusion} holds, and  $(\bar{x}_{\gk},\bar{y}_{\gk} ) \to (\bar{x},\bar{y})$,  then for $k$ large enough, the sequence $((\bar{x}_{\gk},\bar{y}_{\gk} ),\bar{u})$ is admissible for $({P}_{\gk}^{\alpha, \beta})$   for every $\alpha \ge 0$ and $\beta \in (0,1]$.  In particular, for $k$ large enough,  $((\bar{x}_{\gk}, \bar{y}_{\gk}), \bar{u})$ is admissible for $({P}_{\gk}^{0, \beta})$. 	We fix $k$ large enough. Hence, for fixed $\beta\in(0,1]$,  by \cite[Theorem 23.10]{clarkebook}, $({P}_{\gk}^{0, \beta})$ admits an optimal solution $((\hat{x}_{\gk}, \hat{y}_{\gk}),  \hat{u}_{\gk})$. Using equations \eqref{boundS} and \eqref{limit}, we deduce that there exists $(c,d,e,\omega) $ such that,  up to a subsequence \begin{eqnarray*}
	(\hat{x}_{\gk}(0),\hat{y}_{\gk}(0),\hat{x}_{\gk}(T),\hat{y}_{\gk}(T)) \f (c,d,e,\omega) \in {S}_{ \delta_o,\delta_o}
		\end{eqnarray*}
			As $(\hat{x}_{\gk}(0),\hat{y}_{\gk}(0)) \in \bar{C}^{\gk}(0,k) \times \bar{B}_{\bar{\rho}_k}(\bar{y}(0))$ (see equation \eqref{subset}) and its limit $(c,d) \in \bar{\mathscr{N}}_{(\bar\varepsilon, \bar{\delta})}(0) $, then,  applying  Theorem \ref{theoremrelationship}(I) to {$((\hat{x}_{\gk}(0), \hat{y}_{\gk}(0)),  \hat{u}_{\gk})$}, we deduce {that the resulting unique solution of $(\bar{D}^{\beta}_{\gk})$ is} $(\hat{x}_{\gk}, \hat{y}_{\gk})$ and  satisfies \eqref{boundxy}-\eqref{defxi}, and hence,  by $(A1)$, $(A4.2)$,   and Theorem	 \ref{theoremrelationship}(II), 	 there exists  $((\hat{x},\hat{y}), u)$, such that along a subsequence of $(\hat{x}_{\gk}, \hat{y}_{\gk})$ (we do not relabel), we have $
			(\hat{x}_{\gk}, \hat{y}_{\gk}) \xrightarrow[]{unif} (\hat{x}, \hat{y}), 
	$
		   $(\hat{x}(t),\hat{y}(t)) \in  {\bar{\mathscr{N}}_{(\bar\varepsilon, \bar{\delta})}(t)}$ for all $ t \in [0,T]$, and $((\hat{x}, \hat{y}), u) \text{ uniquely solves } (\bar{D}^{\beta})$ starting at  $(c,d)$.  It follows that $(\hat{x}(T),\hat{y}(T))=(e,\omega)$. Moreover,  as $(\hat{x}_{\gk},\hat{y}_{\gk})$ satisfies \textbf{(S.C)},  then  we have $(\hat{x}(t),\hat{y}(t)) \in \bar{B}_{\delta_o}(\bar{x}(t),\bar{y}(t))$ for all $ t \in [0,T]$.  
		Using $(A4.2)$ and Filipov Selection Theorem, we can find $\hat{u} \in \mathcal{U}$ such that $((\hat{x}, \hat{y}), \hat{u})$ satisfies $(\bar{D})$, and hence $((\hat{x}, \hat{y}), \hat{u})$ is admissible for $(\bar{P}_{ \delta_o,\delta_o})$ with $\| (\hat{x},\hat{y})   -(\x,\y) \|_{\infty} \le \delta_o$. 

Since $((\bar{x}, \bar{y}), \bar{u})$ is a $\bar{\delta}$-strong local minimizer for $(P)$, then, by Remark \ref{optimality}$(i)$ and $\delta_o<\bar{\varepsilon}$, $((\bar{x}, \bar{y}), \bar{u})$ is a $\delta_o$-strong local minimizer for $(\bar{P}_{\delta_o})$ and hence for $(\bar{P}_{ \delta_o,\delta_o})$,	and hence, we have $$	J(\bar{x}(0),\bar{y}(0), \bar{x}(T), \bar{y}(T) ) \le J(\hat{x}(0),\hat{y}(0), \hat{x}(T), \hat{y}(T)). $$
		On the other hand,  $((\hat{x}_{\gk}, \hat{y}_{\gk}), \hat{u}_{\gk})$ is an optimal solution for $(P_{\gk}^{0, \beta})$ for which $((\bar{x}_{\gk}, \bar{y}_{\gk}),\bar{u}_{\gk})$ is admissible, we deduce that 
		\begin{equation*}
		J(\hat{x}_{\gk}(0),\hat{y}_{\gk}(0), \hat{x}_{\gk}(T), \hat{y}_{\gk}(T)) \le J(\bar{x}_{\gk}(0),\bar{y}_{\gk}(0), \bar{x}_{\gk}(T), \bar{y}_{\gk}(T) ).
		\end{equation*}
		Combining the above two inequalities and using the continuity of $J(\cdot,\cdot,\cdot,\cdot)$, we deduce that \begin{equation*}
		\lim\limits_{k \to \infty} \big [ J(\bar{x}_{\gk}(0),\bar{y}_{\gk}(0), \bar{x}_{\gk}(T), \bar{y}_{\gk}(T) )- J(\hat{x}_{\gk}(0),\hat{y}_{\gk}(0), \hat{x}_{\gk}(T), \hat{y}_{\gk}(T))  \big]=0.
		\end{equation*}
		Thus, for  fixed $\alpha >0$, there exists an increasing sequence $(k_n)_n$ such that  $\forall\;n  \ge 1$, $\forall\;k_n >n$, \begin{eqnarray}
			 J(\bar{x}_{{\gk}_n}(0),\bar{y}_{{\gk}_n}(0), \bar{x}_{{\gk}_n}(T), \bar{y}_{{\gk}_n}(T) ) &\le&  J(\hat{x}_{{\gk}_n}(0),\hat{y}_{{\gk}_n}(0), \hat{x}_{{\gk}_n}(T), \hat{y}_{{\gk}_n}(T)) + \frac{\alpha}{n}.  \nonumber 
		\end{eqnarray}
		 The rest of the proof follows from imitating the proof of \cite[Proposition 6.2]{nourzeidan}, and applying Ekeland Variational Principle (\cite[Theorem 3.3.1]{vinter}), to the following version of the data corresponding to our problem: 
		\begin{itemize}
			\item $X= \{ \left(c,d,e,\omega; u\right) \in S^{\gk}(k_n)  \times \mathcal{U}: \text{the unique solution }((x,y),u) \text{ of}\; (\bar{D}_{{\gk}_n}^{\beta}) \text{  with }\\ (x(0),y(0))=(c,d) \text{ satisfies } (x(T),y(T))=(e,\omega) \text{ and } (x(t),y(t) ) \in \bar{B}_{\delta_o} (\bar{x}(t),\bar{y}(t)) \; \forall t\}.$
			\item For $(c,d,e,\omega; u), \; (c',d',e',\omega'; u')  \in X$, we define the distance  $$\mathbb{D}\left((c,d,e,\omega; u), (c',d',e',\omega'; u') \right):= \|u-u'\|_{L^1} + \|(c,d,e,\omega)-(c', d',e',\omega')\| .$$
			\item For\; $(c,d,e,\omega; u) \in X$,\;$\mathbb{F}(c,d,e,\omega; u):= J(c,d,e,\omega).$ 
			\item $\alpha:=\alpha$ and $\lambda:= \frac{1}{n}$.
		\end{itemize}
		Notice that $(X, \mathbb{D})$ is a non-empty complete metric space, and $\mathbb{F}$ is continuous on $X$. Therefore, we deduce the existence of $(c_{{\gk}_n},d_{{\gk}_n}, e_{{\gk}_n},\omega_{{\gk}_n}; u_{{\gk}_n} ) \in X$ such that, for  $(x_{{\gk}_n}, y_{{\gk}_n})$, the solution  of $(\bar{D}_{{\gk}_n}^\beta)$ corresponding to $((c_{{\gk}_n},d_{{\gk}_n}), u_{{\gk}_n} ),$ satisfies   $(x_{{\gk}_n}(T), y_{{\gk}_n}(T)) =(e_{{\gk}_n},\omega_{{\gk}_n}) $ and  $(x_{{\gk}_n}(t), y_{{\gk}_n}(t)) \in \bar{B}_{\delta_o}(\bar{x}(t),\bar{y}(t))\; \forall t,$  and the following holds: \begin{itemize}
			\item $J(x_{{\gk}_n}(0), y_{{\gk}_n}(0), x_{{\gk}_n}(T), y_{{\gk}_n}(T)) \le J(\bar{x}_{{\gk}_n}(0),\bar{y}_{{\gk}_n}(0), \bar{x}_{{\gk}_n}(T), \bar{y}_{{\gk}_n}(T) ),$ 
			\item $\|u_{{\gk}_n} - \bar{u}\|_{L^1} + \|\left(c_{{\gk}_n},d_{{\gk}_n},e_{{\gk}_n},\omega_{{\gk}_n}\right)  - \left( \bar{c}_{{\gk}_n},\bar{y}(0), \bar{x}_{{\gk}_n}(T),\bar{y}_{{\gk}_n}(T) \right)\|   \le \frac{1}{n}, $
			\item  $\forall\; ((c,d,e,\omega);u) \in X$, we have \begin{eqnarray*}
			&J(x_{{\gk}_n}(0), y_{{\gk}_n}(0), x_{{\gk}_n}(T), y_{{\gk}_n}(T)) \le J(x(0), y(0), x(T), y(T))  \\&+ \alpha (\|u-u_{{\gk}_n} \|_{L^1} + \|\left(c,d,e,\omega \right)- \left(c_{{\gk}_n}, d_{{\gk}_n}, e_{{\gk}_n}, \omega_{{\gk}_n}\right)\|, 
			\end{eqnarray*}
			where $((x,y),u) \text{is the unique solution of}\; (\bar{D}_{{\gk}_n}^{\beta})$ starting with $ (x(0),y(0))=(c,d) $ and satisfying $ (x(T),y(T))=(e,\omega) \text{ and } (x(t),y(t) ) \in \bar{B}_{\delta_o} (\bar{x}(t),\bar{y}(t))\; \forall\, t\in[0,T]$. 
		\end{itemize}
		Hence, for $n$ large, the problem $(P_{{\gk}_n}^{\alpha, \beta})$ defined by means of  $((c_{{\gk}_n}, d_{{\gk}_n},e_{{\gk}_n}, \omega_{{\gk}_n}); u_{{\gk}_n})$, has $((x_{{\gk}_n}, y_{{\gk}_n}), u_{{\gk}_n})$ as optimal solution satisfying \begin{eqnarray*}
			& &\hspace{-.3in}	(x_{{\gk}_n}(0), y_{{\gk}_n}(0), x_{{\gk}_n}(T), y_{{\gk}_n}(T))=(c_{{\gk}_n}, d_{{\gk}_n},e_{{\gk}_n}, \omega_{{\gk}_n}) \xrightarrow[n \to \infty]{} (\bar{x}(0), \bar{y}(0),\bar{x}(T), \bar{y}(T)) \in S,\\
			& &	\hspace{1.7 in}u_{{\gk}_n} \xrightarrow[L^1]{strongly} \bar{u}, \quad  (x_{{\gk}_n}, y_{{\gk}_n}) \xrightarrow[n \to \infty]{unif} (\bar{x}, \bar{y}),
		\end{eqnarray*}
and all conclusions of Theorem $\ref{theoremrelationship}$.  Hence, \eqref{convapp}   is valid,  and,  for $(\xi_{\gk}^i)_{i=1}^{r+1}$ and $\zeta_{\gk}$ corresponding to $(x_{\gk}, y_{\gk})$ via \eqref{defxi}, there exist $(\xi^i)_{i=1}^{r+1}$ and $\zeta$ such that   \eqref{boundxi}, \eqref{converging}, \eqref{boundofxil}, \eqref{xi-support}, and \eqref{dual1}-\eqref{dual3} hold. 
Notice that,  as  $\psi_{r+1}(t,\x(t))=-\frac{\bar{\varepsilon}^2}{2} <0$ and $\varphi(t,\y(t))=-\frac{\bar{\delta}^2}{2}<0$  $\forall t\in [0,T]$,  we have that $\xi^{r+1}\equiv 0$, $\zeta \equiv 0$, and,  for some $\tilde{k}\in \N$,   $\psi_{r+1}(t,x_{\gk}(t))\le-\frac{\bar{\varepsilon}^2}{4} $ and $\varphi(t,y_{\gk}(t))\le-\frac{\bar{\delta}^2}{4},$ $\forall k\ge \tilde{k}$ and  $\forall t\in [0,T]$,  and hence,  \vspace{-.1in}
\begin{equation}\label{gk-xi-gk}\gk \xi_{\gk}^{r+1}(t)\le \gk^2 e^{-\gk\frac{{\bar\varepsilon}^2}{4}}  \quad \text{and}\quad \gk\zeta_{\gk}(t)\le  \gk^2 e^{-\gk\frac{{\bar\delta}^2}{4}}, \;\; \forall k\ge \tilde{k}, \;\; \text{and}\;\; \forall t\in [0,T].\end{equation}
That is,  $(\gk \xi_{\gk}^{r+1},\gk \zeta_{\gk})\xrightarrow[]{unif}  0,$  and thus, \eqref{xi-zetaconv} holds. Furthermore,  since  $h^{\beta}(t, \bar{x}(t), \bar{y}(t), \bar{u}(t))= h(t, \bar{x}(t), \bar{y}(t), \bar{u}(t))$, it follows  that Theorem \ref{maxprincipleS} $(i)$ is valid at $((\x,\y), \bar{u})$ and $(\xi^1,\cdots,\xi^r)$.\\
Finally,  \eqref{b11} is also valid, due to having  $(x_{{\gk}_n}(0), y_{{\gk}_n}(0), x_{{\gk}_n}(T), y_{{\gk}_n}(T)) \in S^{\gk}(k_n)$, ${\bar{\sigma}}_{k_n} \to 0$,   $(\bar{e}_{{\gk}_n}, \bar{\omega}_{{\gk}_n}) \to (0, 0)$ (as $n \to \infty$), and $(x_{{\gk}_n},y_{{\gk}_n}) \in \bar{C}^{{\gk}_n}(\cdot,k) \times \bar{B}_{\bar{\rho}_{k_n}}(\bar{y}(\cdot)) \subset  \inte \bar{\mathscr{N}}_{(\bar{\varepsilon}, \bar{\delta})}(\cdot )$. 
		  					 \end{proof}					 
The next result is obtained as a direct application  of the nonsmooth Pontryagin maximum principle for state constrained problems to each of the approximating problem $({P}_{\gk}^{\alpha,\beta})$ defined in Proposition $\ref{approximating}$. 
	\begin{proposition} \textnormal{\textbf{[Maximum principle for the approximating problems $(P_{\gk}^{\alpha,\beta})$]}} \label{maxapp}
Let $\alpha >0$ and $\beta \in (0,1]$ be fixed. Let $((x_{\gk}, y_{\gk}), u_{\gk})$ be the sequence from Proposition $\ref{approximating}$ which is optimal for $({P}_{\gk}^{\alpha,\beta})$ and satisfying $\lim\limits_{k \to \infty} \left(x_{\gk}(0), y_{\gk}(0),x_{\gk}(T), y_{\gk}(T)\right)=(\bar{x}(0), \bar{y}(0),\bar{x}(T), \bar{y}(T))$. Then, for each $k \in \mathbb{N}$,  there exist 
$ p_{\gk}=({q}_{\gk}, {v}_{\gk})\in W^{1,1}([0,T]; \mathbb{R}^n \times \mathbb{R}^l)$ and a scalar $\lambda_{\gk} \ge 0$ such that
	\begin{enumerate}[label=$({\roman*})$]
		\item \textbf{Nontriviality condition} For all $k \in \mathbb{N}$, we have \begin{equation} \|p_{\gk}{(T)}\| + \lambda_{\gk} =1. \label{m1}
		\end{equation}  
		\item  \textbf{Transversality equation} \begin{eqnarray} 
			 	(p_{\gk}(0), -p_{\gk}(T))\in&\lambda_{\gk} \partial^{L} J(x_{\gk}(0), y_{\gk}(0), x_{\gk}(T), y_{\gk}(T)) + \label{m2} \\
			& \alpha \bar{B} + N_{S^{\gk}(k)}^L\left(x_{\gk}(0), y_{\gk}(0),x_{\gk}(T), y_{\gk}(T)\right).  \nonumber
		\end{eqnarray}
		\item  \textbf{Maximization condition} \begin{equation} \max\limits_{u \in U(t)} \bigg \{ \left\langle (q_{\gk}(t),v_{\gk}(t)),  (f,g)(t, x_{\gk}(t), y_{\gk}(t), u)) \right\rangle  - \frac{\lambda_{\gk} \alpha}{\beta} \|u-u_{\gk}(t)\|   \bigg  \} 
			\label{m3}  
		\end{equation} is attained at $u=u_{\gk}(t)$ $\text{ a.e. }t \in [0,T]$. 
		\item  \textbf{Adjoint equation} For almost all $ t \in [0,T]$, 				\begin{eqnarray} 
			\hspace{-1 in} -\dot{p}_{\gk}(t) = \begin{bmatrix}
				-\dot{q}_{\gk}(t)  \\
				-\dot{v}_{\gk}(t)
			\end{bmatrix} 
			&\in& (1-\beta)(\partial^{(x,y)} f(t,x_{\gk}(t), y_{\gk}(t), \bar{u}(t)))^{T} q_{\gk}(t)  \nonumber\\
			& &+\beta (\partial^{(x,y)} f(t,x_{\gk}(t), y_{\gk}(t), u_{\gk}(t)))^{T} q_{\gk}(t) \nonumber\\ &&+(1-\beta)(\partial^{(x,y)}g(t,x_{\gk}(t), y_{\gk}(t),\bar{u}(t)))^{T} v_{\gk}(t)  \nonumber\\ & &+\beta(\partial^{(x,y)}g(t,x_{\gk}(t), y_{\gk}(t),u_{\gk}(t)))^{T} v_{\gk}(t) \nonumber\\
			&&- \begin{bmatrix}
				\left(\partial^{x} \left(\sum_{i=1}^{r+1}\gk e^{\gk \psi_i(t,x_{\gk}(t))} \nabla_x \psi_i (t,x_{\gk}(t))\right)\right)^{T} q_{\gk}(t)  \\
				{	\left({\nabla_{y}} \left(\gk e^{\gk \varphi(t,y_{\gk}(t))} \nabla_y \varphi (t,y_{\gk}(t))\right)\right)^{T} v_{\gk}(t)  }
			\end{bmatrix} , \label{m4}
		\end{eqnarray}
	where,
		\begin{eqnarray*}
			\hspace{-.2 in}
			\partial^{x} \left(\sum_{i=1}^{r+1}\gk e^{\gk \psi_i(t,x_{\gk}(t))}     \nabla_x \psi_i (t,x_{\gk}(t))\right)&\subset& \sum_{i=1}^{r+1}\gk e^{\gk \psi_i(t,x_{\gk}(t))} \partial^{xx} \psi_i(t,x_{\gk}(t))  \\
			&&+\sum_{i=1}^{r+1}\gk^2 e^{\gk \psi_i(t,x_{\gk}(t))} \nabla_x \psi_i(t,x_{\gk}(t))\nabla_x \psi_i(t,x_{\gk}(t))^{T} ,  \nonumber \\
			\hspace{-.5 in}{\nabla_y} \left(  \gk e^{\gk \varphi(t,y_{\gk}(t))} \nabla_y \varphi (t,y_{\gk}(t)) \right)&=& \gk e^{\gk \varphi(t,y_{\gk}(t))} \textnormal{ I}_{l\times l}  \\
			&&+\gk^2 e^{\gk \varphi(t,y_{\gk}(t))} \nabla_y \varphi(t,y_{\gk}(t))\nabla_y \varphi(t,y_{\gk}(t))^{T}.  \nonumber
		\end{eqnarray*}
	\end{enumerate}
	{In addition, if $S= C_0\times \R^{n+l}$ for a closed $C_0\subset C(0)\times \R^l$, then $\lambda_{\gk}\neq 0$ and it is taken to be $1$ and the nontriviality
condition $(i)$ is eliminated.} \end{proposition}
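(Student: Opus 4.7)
The plan is to treat $(P_{\gk}^{\alpha,\beta})$, by its very construction, as a \emph{standard} Mayer--Lagrange optimal control problem with an ODE dynamic (not a differential inclusion) and invoke a classical nonsmooth Pontryagin maximum principle with pointwise state and joint endpoint constraints (e.g.\ \cite[Theorem 9.3.1]{vinter}). The right-hand side of $(\bar D_{\gk}^{\beta})$ is Lipschitz in $(x,y)$ on a compact tube around $\Gr(\x,\y)$ by $(A3.1)$, $(A4.1)$, and the $\CO^{1,1}$ character of the exponential penalty built from $\psi_1,\dots,\psi_{r+1}$ and $\varphi$; the cost decomposes as the Lipschitz endpoint function $\Phi(\cdot):=J(\cdot)+\alpha\|\cdot-(c_{\gk},d_{\gk},e_{\gk},\omega_{\gk})\|$, well defined on $S(\bar\delta)\supset S^{\gk}(k)$ by \eqref{boundS}, plus the Lipschitz running cost $\ell(t,u):=\alpha\|u-u_{\gk}(t)\|$; the endpoint set $S^{\gk}(k)$ is closed, and the pointwise state constraint $(x(t),y(t))\in\bar B_{\delta_o}(\x(t),\y(t))$ is closed and explicit.

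Before applying the state-constrained principle I would first observe that this explicit state constraint is \emph{inactive} along the optimal trajectory for all $k$ sufficiently large: the uniform convergence $(x_{\gk},y_{\gk})\to(\x,\y)$ in \eqref{convapp} yields $\|(x_{\gk},y_{\gk})-(\x,\y)\|_\infty<\delta_o$, so $(x_{\gk}(t),y_{\gk}(t))\in\inte\bar B_{\delta_o}(\x(t),\y(t))$ for every $t\in[0,T]$. Consequently the Radon measure associated to the state constraint in the general principle vanishes on $[0,T]$, the adjoint arc is absolutely continuous, $p_{\gk}=(q_{\gk},v_{\gk})\in W^{1,1}([0,T];\R^{n+l})$, and the adjoint becomes an ODE inclusion rather than a measure-driven one.

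The four conditions are then read off directly. Writing the pre-Hamiltonian $H_{\gk}(t,z,p,u):=\langle p,\Phi_{\gk}(t,z,u)\rangle-\lambda_{\gk}\ell(t,u)$, where $\Phi_{\gk}$ denotes the right-hand side of $(\bar D_{\gk}^{\beta})$, the $u$-maximality \eqref{m3} follows since the $(1-\beta)$-pieces and the penalty terms are independent of $u$: maximizing $\beta[\langle q_{\gk},f(\cdot,u)\rangle+\langle v_{\gk},g(\cdot,u)\rangle]-\lambda_{\gk}\alpha\|u-u_{\gk}(t)\|$ over $u\in U(t)$ and dividing by $\beta$ gives \eqref{m3}. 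The transversality \eqref{m2} is the limiting-subdifferential sum rule applied to $\Phi$, combined with $\partial^L\|\cdot-z_0\|\subset\bar B$ evaluated at $z_0=(c_{\gk},d_{\gk},e_{\gk},\omega_{\gk})=(x_{\gk}(0),y_{\gk}(0),x_{\gk}(T),y_{\gk}(T))$, which yields the $\alpha\bar B$ perturbation of $\lambda_{\gk}\partial^LJ$. The adjoint inclusion \eqref{m4} is the Clarke chain rule for $\partial^{(x,y)}_zH_{\gk}$: linearity extracts the $\partial^{(x,y)}f$ and $\partial^{(x,y)}g$ terms with weights $(1-\beta),\beta$; the Clarke product rule applied to $x\mapsto\gk e^{\gk\psi_i(t,x)}\nabla_x\psi_i(t,x)$ produces the two summands in the displayed formula; and the analogous computation for the smooth $\varphi$ yields the $\mathrm{I}_{l\times l}$ and rank-one contributions. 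The nontriviality \eqref{m1} is the standard $\|p_{\gk}(T)\|+\lambda_{\gk}=1$ obtained after normalization.

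The step deserving the most care is the Clarke chain rule for the penalty term $x\mapsto\gk e^{\gk\psi_i(t,x)}\nabla_x\psi_i(t,x)$ in the Hamiltonian: one must verify that this composition is Lipschitz in $x$ on a neighborhood of $\{(t,x_{\gk}(t)):t\in[0,T]\}$ wide enough for the product rule of \cite[Theorem 2.6.6]{clarkeold} to apply and to deliver $\partial^{xx}\psi_i$ in the sense of the Clarke generalized Hessian. This is guaranteed by $(A3.1)$ at $(\x;\bar\delta)$ together with \eqref{xkykdomain}, which places $x_{\gk}(t)$ in $\bar C^{\gk}(t,k)\subset\inte(C(t)\cap\bar B_{\bar\varepsilon}(\x(t)))$ with $\bar\varepsilon<\bar\delta$. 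Finally, when $S=C_0\times\R^{n+l}$, the final-time component of $N^L_{S^{\gk}(k)}$ vanishes at points interior to $\bar{\mathscr N}_{(\bar\varepsilon,\bar\delta)}(T)$ by \eqref{normalconeS}; hence $\lambda_{\gk}=0$ would force $\|p_{\gk}(T)\|\le\alpha<1$, contradicting \eqref{m1}, so $\lambda_{\gk}>0$ and may be normalized to $1$, eliminating \eqref{m1}.
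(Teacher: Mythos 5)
Your overall route is the same as the paper's: treat $(P_{\gamma_k}^{\alpha,\beta})$ as a standard nonsmooth optimal control problem, apply Vinter's state-constrained maximum principle (Theorem 9.3.1 together with the implicit-state-constraint discussion on p.~332), observe via the uniform convergence \eqref{convapp} that $(x_{\gamma_k},y_{\gamma_k})$ stays in the interior of the tube $\bar B_{\delta_o}(\bar x(\cdot),\bar y(\cdot))$ so that the measure attached to the state constraint is null and $p_{\gamma_k}\in W^{1,1}$, and then translate the transversality, maximization and adjoint conditions exactly as the paper does. Two points, however, need repair.

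First, your normality argument for the case $S=C_0\times\mathbb{R}^{n+l}$ is wrong as written: from $\lambda_{\gamma_k}=0$ you conclude $\|p_{\gamma_k}(T)\|\le\alpha<1$, but $\alpha>0$ is an arbitrary fixed constant in this proposition --- nothing forces $\alpha<1$, so for $\alpha\ge 1$ there is no contradiction with \eqref{m1}. The correct argument is already contained in your own derivation of transversality: the endpoint penalty $\alpha\|\cdot-(c_{\gamma_k},d_{\gamma_k},e_{\gamma_k},\omega_{\gamma_k})\|$ is part of the cost, hence enters transversality multiplied by $\lambda_{\gamma_k}$, i.e.\ $(p_{\gamma_k}(0),-p_{\gamma_k}(T))\in\lambda_{\gamma_k}\partial^{L}J+\lambda_{\gamma_k}\alpha\bar B+N^{L}_{S^{\gamma_k}(k)}$, of which \eqref{m2} is only the weakened form obtained from $\lambda_{\gamma_k}\le 1$. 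Using the sharp form, $\lambda_{\gamma_k}=0$ together with the vanishing of the final-time component of $N^{L}_{S^{\gamma_k}(k)}$ (valid for $k$ large by \eqref{normalconeS}, since the endpoints converge to $(\bar x(T),\bar y(T))$) forces $p_{\gamma_k}(T)=0$, which contradicts \eqref{m1} for every $\alpha>0$.

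Second, before invoking the implicit-state-constraint version of the principle you must verify the constraint qualification it requires, namely that $\conv \bar N^{L}_{\bar B_{\delta_o}(\bar x(t),\bar y(t))}\left(x_{\gamma_k}(t),y_{\gamma_k}(t)\right)$ is pointed for all $t$, where $\bar N^{L}$ is taken through the closure of the graph of the normal cone map; the paper checks this using that the tube is lower semicontinuous with closed, convex values having nonempty interior. In your argument the check is immediate (for $k$ large the trajectory lies uniformly in the interior, so this cone reduces to $\{0\}$), but it must be stated, since you apply the theorem with the state constraint present rather than deleting it from the problem.
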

\begin{proof}
	As $({P}_{\gk}^{\alpha,\beta})$ is a standard optimal control problem with implicit state constraints, we shall apply   \cite[Theorem 9.3.1 and P.332]{vinter}  for  the optimal solution $((x_{\gk}, y_{\gk}),u_{\gk})$ of $(P_{\gk}^{\alpha,\beta})$ obtained in Proposition $\ref{approximating}$.
Note that  for $k$ large enough, the required constraint qualification $(CQ)$ in  \cite[Page 332]{vinter}  is satisfied  by the multifunction $\bar{B}_{{\delta_o}}(\bar{x}(\cdot),\bar{y}(\cdot))$ at $(x_{\gk}(t),y_{\gk}(t))$.  In other words,   $\conv$($\bar{N}^L_{\bar{B}_{\delta_o}(\bar{x}(t),\bar{y}(t))}(x_{\gk}(t),y_{\gk}(t)) $) is pointed $\forall t \in [0,T]$, where the graph of $\bar{N}^L_{\bar{B}_{\delta_o}(\bar{x}(\cdot),\bar{y}(\cdot))}(\cdot)$ is defined to be the closure of the graph of  ${N}^L_{\bar{B}_{\delta_o}(\bar{x}(\cdot),\bar{y}(\cdot))}(\cdot)$.  This is due to  $(x_{\gk}, y_{\gk})$ converging uniformly to $(\x,\y)$  and to ${\bar{B}_{\delta_o}(\bar{x}(\cdot),\bar{y}(\cdot))}$   being lower semicontinuous, with  closed, convex, and  nonempty interior values (hence epi-Lipschitz), (see \cite[Remark 4.8$(ii)$]{nourzeidan2}). Notice that 
the measure $\eta_{\gk} \in C^{*}([0,T],\mathbb{R}^{n+l})$  corresponding to the state constraint (S.C)  produced  by \cite[Theorem 9.3.1]{vinter},    is actually null. This is due to the fact that   its support satisfies 	 \begin{eqnarray*}
		\text{supp }\{ \eta_{\gk} \} &\subset& \{t \in [0,T]: (t,x_{\gk}(t),y_{\gk}(t)) \in \bdry \text{Gr } \bar{B}_{\delta_o}(\bar{x}(\cdot),\bar{y}(\cdot)) \},\\
			&=& \{t \in [0,T]: (t,x_{\gk}(t),y_{\gk}(t)) \in \cup_{t\in [0,T]} \{ t\} \times \mathscr{S}_{\delta_o} (\bar{x}(t),\bar{y}(t)) \}\\
			&=& \emptyset, 
		\end{eqnarray*} 
		where $\mathscr{S}_{\delta_o} (\bar{x}(t),\bar{y}(t)) =\{ (x,y): \|(x,y)-(\bar{x}(t),\bar{y}(t)) \| = \delta_o\}$. The last equality follows from the uniform convergence to $(\x,\y)$ of  $(x_{\gk}(t),y_{\gk}(t))$, \eqref{convapp}.  The rest of the proof is obtained from  translating the conditions of \cite[Theorem 9.3.1]{vinter} to our data, and using the standard state augmentation technique. 
			\end{proof}
\subsection{Proof of the maximum principle}\label{proofpmp}
 We now prove the \textit{first} main result of our paper, that is, the maximum principle for $(P)$. We start by proving the theorem under the temporary assumption (A4.2), and without assuming any uniform bound on the sets $U(t)$ (Steps 1-2-3). In Step 4, we show that, when the compact sets $U(t)$ are {\it uniformly} bounded, the convexity assumption (A4.2) can be removed.     
 \begin{proof}[\textnormal{\textbf{Proof of Theorem \ref{maxprincipleS}}}]\label{maxinter} 
Assume for now the temporary assumption (A4.2) holds true. All the  previous results including the consequences in subsection \ref{approx-problem} are valid. In particular, $(\bar{x}(\cdot),\bar{y}(\cdot))$ is $L_{(\x,\y)}$-Lipschitz with  $L_{(\x,\y)} \ge 1.$ By {Remark \ref{gram-a3.2}$(ii)$-$(iii)$ and using arguments similar to those  at the last step of the proof of \cite[Theorem 3.1]{nourzeidan3}},     it is sufficient to prove Theorem \ref{maxprincipleS} under the additional assumption:\\
 (\textbf{A3.3})$'$  $\forall t\in I^0(\x)$,  $\mathcal{G}_{\psi}(t)$, {\it the Gramian matrix of the vectors} $\{\nabla_x\psi_i(t,{\x}(t)): i\in\I^0_{(t,{\x}(t))}\}$, 
is\vspace{-.2 in}\\  

$\hspace{.34 in}${\it strictly diagonally dominant.}  \\  
 \\
 Since $\{\psi_i\}_{i=1}^r$ satisfy (A3.3)$'$, then by Lemma \ref{A2.2lemma}, there exist $0<\bar{a}\le 2a_o$, $0<\bar{b}<1$, and $\bar{c}>0$ such that \eqref{supp2} is satisfied, where $a_o$ is the constant in  Remark \ref{pisiepse}.\\
 \\
 We begin our proof by introducing the functions $\hat{\psi}_i$ ($i=1,\cdots, r$),  which generate the same $C(t)$ as $\psi_i$, and have the special feature that their corresponding adjoint variable $q_{\gk}$, via Proposition \ref{maxapp}, is of bounded variation (Step 2). Theorem \ref{maxprincipleS} will be first established in terms of  $\hat{\psi}_i$'s and then translated back in terms of  $\psi_i$'s (Step 3). \\
 \\
 Define the following function $\hat{\psi}_i(\cdot,\cdot)$ on the same domain of $\psi_i(\cdot,\cdot)$ as 
 \begin{equation}\label{hatpsi}\hat{\psi}_i(t,x) := \begin{cases}
 	\psi_i(t,x)   \quad &\text{ if } -\frac{\bar{a}}{2} \le \psi_i(t,x) \le 0  \;\; \text{ or }\;\;  \psi_i(t,x)>0 \\
 	s(\psi_i(t,x)) \quad &\text{ if } -\bar{a} \le \psi_i(t,x) < -\frac{\bar{a}}{2} \\
 	s(-\bar{a})  \quad  &\text{ if } \psi_i(t,x) <-\bar{a},
 \end{cases}\end{equation}
 where $$s(z):=-\frac{3}{4}\bar{a} + \frac{1}{\bar{a}}(z+\bar{a})^2, \text{ for } -\bar{a} \le z \le -\frac{\bar{a}}{2}.  $$
 Notice that $s(\cdot)$ is a quadratic function with: \begin{itemize}
 	\item $s(-\bar{a})=-\frac{3}{4} \bar{a}$ and $s(-\frac{\bar{a}}{2})=-\frac{\bar{a}}{2}$.
 	\item $s'(-\bar{a}) =0 $ and $s'(-\frac{\bar{a}}{2})=1$.
 	\item $0\le s'(z)\le 1$ for all $-\bar{a} \le z \le -\frac{\bar{a}}{2}$. 
 \end{itemize}
 We also have 
 \begin{equation}\label{hatgradpsi}\nabla_x\hat{\psi}_i(t,x) := \begin{cases}
 	\nabla_x\psi_i(t,x)   \quad &\text{ if } -\frac{\bar{a}}{2} \le \psi_i(t,x) \le 0 \;\; \text{ or }\;\;  \psi_i(t,x)>0 \\
 	s'(\psi_i(t,x))\;\nabla_x\psi_i(t,x) \quad &\text{ if } -\bar{a} \le \psi_i(t,x) < -\frac{\bar{a}}{2} \\
 	0  \quad  &\text{ if } \psi_i(t,x) <-\bar{a}.
 \end{cases}\end{equation}
 Notice the following:
 \begin{itemize}
 	\item $ \{x \in \mathbb{R}^n: \hat{\psi}_i(t,x) \le 0, \forall i=1,\cdots, r \}= \{x \in \mathbb{R}^n: {\psi}_i(t,x) \le 0, \forall i=1,\cdots, r \}= C(t).$
 	\item Since $\{\psi_i\}_{i=1}^r$ satisfy (A3.1) and (A3.2), then $\{\hat{\psi}_i\}_{i=1}^r$ satisfy (A3.1) and (A3.2) with \\  $L_{\hat{\psi}}:=L_{\psi}(1+\frac{2}{\bar{a}}L_{\psi})$ replacing $L_{\psi}$.
 	\item Hence, all results of Subsection \ref{approx-problem}, in particular, Proposition \ref{approximating} and Proposition \ref{maxapp}, are also valid when we replace ${\psi}_i$ ($i=1,\cdots,r$) by  $\hat{\psi}_i$($i=1,\cdots,r$). 
 	\item Since $\{\psi_i\}_{i=1}^r$ satisfy (A3.3)$'$, and equation \eqref{supp2} is satisfied, we deduce that $\forall (t,x) \in \Gr C(\cdot)\cap \bar{B}_{\bar{c}}(\x(\cdot))$ with  $\I_{(t,x)}^{\frac{\bar{a}}{2}} \ne \emptyset$, \;\; \text{and}\;\; $\forall \; i\in\I_{(t,x)}^{\frac{\bar{a}}{2}},\;\; \text{we have}$  
 	\begin{equation} \hspace{-.2 in}\sum_{\substack{j\in\I^{\bar{a}}_{(t,x)}\\ j\not=i}}\abs{\<\nabla_x \hat{\psi}_i(t,x),\nabla_x\hat{\psi}_j(t,x)\>}\leq \bar{b}\|\nabla_x\hat{\psi}_i(t, x)\|^2. \label{supp3}\end{equation}	
 	This is due to the fact that $\hat{\psi}_i(t,x) =\psi_i(t,x)$ for $i \in \I_{(t,x)}^{\frac{\bar{a}}{2}}$, and $s'(z)\le 1$ $\;\; \forall -\bar{a} \le z \le -\frac{\bar{a}}{2}$. Note that, unlike $\psi_i$'s, the functions $\hat{\psi}_i$'s do not necessarily satisfy  \eqref{supp2}, since \eqref{supp3} holds only for $i\in\I_{(t,x)}^{\frac{\bar{a}}{2}}$ and not necessarily for $i\in\I_{(t,x)}^{\bar{a}}$.\\
 \end{itemize} 
 \textbf{Step 1.} Results from Proposition \ref{approximating} and  Proposition \ref{maxapp}.\\
 Fix $ \alpha >0$ and $\beta \in (0,1]$. Since Proposition $\ref{approximating}$ holds for $\hat{\psi}_i$, then there exist a subsequence of $(\gamma_k)_k$ (we do not relabel),  an optimal solution $((x_{\gk}, y_{\gk}), u_{\gk})$ for $(P_{\gk}^{\alpha,\beta})$ with corresponding $({\xi}_{\gk}^1, \cdots, {\xi}_{\gk}^r,  \xi_{\gk}^{r+1},\zeta_{\gk})$ via \eqref{defxi}, and $({\xi}^1,\cdots,{\xi}^{r})\in L^{\infty}([0,T];\R^{r}_+),$ such that     \eqref{b11}-\eqref{xi-zetaconv} hold   and  $((\x,\y),\bar{u})$ together with $({\xi}^1,\cdots,{\xi}^r)$ satisfies Theorem \ref{maxprincipleS} $(i)$ in terms of $\hat\psi_i$'s.   Moreover, Proposition $\ref{maxapp}$ produces $\forall k \in \mathbb{N}$,   $ {p}_{\gamma_k}=({q}_{\gamma_k}, {v}_{\gamma_k}) \in W^{1,1}([0,T]; \mathbb{R}^n \times \mathbb{R}^l ) $, and  ${\lambda}_{\gamma_k} \ge 0$ such that equations (\ref{m1})-(\ref{m4}) are valid. For simplicity of the presentation, we suppress the ``hat'' notations from $x_{\gk}, y_{\gk}, u_{\gk}, \xi_{\gk}^i, \xi^i, p_{\gk}, q_{\gk}, v_{\gk}$, and $\lambda_{\gk} $, as well as the $(\alpha, \beta)$-dependency, which shall only be made visible at the stage when the limit in  $(\alpha, \beta)$ is performed. 
	\\
	Since $(x_{\gk}(t), y_{\gk}(t)) \in  \inte  \left(C(t) \cap \bar{B}_{\bar{\delta}}(\bar{x}(t))\right) \times B_{\bar{\delta}}(\bar{y}(t)) $ for all  $t \in [0,T]$, then \begin{eqnarray*}
		\partial^{(x,y)} (f,g)(t, x_{\gk}(t), y_{\gk}(t), u_{\gk}(t)) &=& \partial^{(x,y)}_{\ell} (f,g)(t, x_{\gk}(t), y_{\gk}(t), u_{\gk}(t)), \\
		\partial^{(x,y)} (f,g)(t, x_{\gk}(t), y_{\gk}(t), \bar{u}(t)) &=& \partial^{(x,y)}_{\ell} (f,g)(t, x_{\gk}(t), y_{\gk}(t), \bar{u}(t)),  \\
		\partial^{xx} \hat{\psi}_i(t,x_{\gk}(t)) &=& \partial^{xx}_{\ell} \; \hat{\psi}_i(t,x_{\gk}(t)) \textnormal{ for } i=1, \cdots, r+1.   
	\end{eqnarray*}
	Also,  
	$(x_{{\gk}}(0), y_{{\gk}}(0),x_{{\gk}}(T), y_{{\gk}}(T)) \in \inte {S}(\bar{\delta}),$ 
	 yields
	\begin{equation*}
		\partial^{L} J(x_{\gk}(0), y_{\gk}(0), x_{\gk}(T), y_{\gk}(T)) = \partial^{L}_{\ell} J(x_{\gk}(0), y_{\gk}(0), x_{\gk}(T), y_{\gk}(T)).
	\end{equation*}
 Using $(A4.1)$, \eqref{convapp}$(i)$, and Filipov Selection Theorem,  equation $(\ref{m4})$ yields  the existence of  measurable ${\bar{A}}_{\gk}(\cdot)$, ${A}_{\gk}(\cdot)$ in $\mathcal{M}_{n \times n}[0,T]$, ${\bar{E}}_{\gk}(\cdot)$, ${E}_{\gk}(\cdot)$ in $\mathcal{M}_{n \times l}[0,T]$, ${\bar{\mathcal{A}}}_{\gk}(\cdot)$,  ${\mathcal{A}}_{\gk}(\cdot)$ in $\mathcal{M}_{l \times n}[0,T]$, ${\mathcal{E}}_{\gk}(\cdot)$,  ${\bar{\mathcal{E}}}_{\gk}(\cdot)$ in $\mathcal{M}_{l \times l}[0,T]$, ${\vartheta}^i_{\gk}(\cdot)$  in $\mathcal{M}_{n \times n}[0,T]$ such that for almost all $t\in[0,T]$, 
 \begin{eqnarray*}
 	&&\hspace{-.2 in}({\bar{A}}_{\gamma_k}, {\bar{E}}_{\gamma_k})(t) \in \partial^{(x,y)}_{\ell} f(t,x_{\gamma_k}(t), y_{\gamma_k}(t), \bar{u}(t)), \;\; 
 	({A}_{\gamma_k}, {E}_{\gamma_k})(t) \in \partial^{(x,y)}_{\ell} f(t,x_{\gamma_k}(t), y_{\gamma_k}(t), u_{\gamma_k}(t));\\
 	& &\hspace{-.2 in}({\bar{\mathcal{A}}}_{\gamma_k}, {\bar{\mathcal{E}}}_{\gamma_k})(t) \in \partial^{(x,y)}_{\ell} g(t,x_{\gamma_k}(t), y_{\gamma_k}(t), \bar{u}(t)),\;\;
 	({\mathcal{A}}_{\gamma_k}, {\mathcal{E}}_{\gamma_k})(t) \in \partial^{(x,y)}_{\ell} g(t,x_{\gamma_k}(t), y_{\gamma_k}(t), u_{\gamma_k}(t)); \\
 	&&\hspace{-.2 in}{\vartheta}^i_{\gk}(t) \in \partial^{xx}_{\ell} \hat{\psi}_i(t,x_{\gk}(t))  \text{ for } i=1, \cdots, r, \quad \;\;\; \vartheta^{r+1}_{\gk}(t)= \textnormal{I}_{n \times n}; \\
 	&&\hspace{-.2 in}\max\left\{{\|({\bar{A}}_{\gk}, {\bar{E}}_{\gk})\|}_2,\; {\|({A}_{\gamma_k}, {E}_{\gamma_k})\|}_2,\; {\|({\bar{\mathcal{A}}}_{\gamma_k},\; {\bar{\mathcal{E}}}_{\gamma_k})\|}_2,\;
 	{\|({\mathcal{A}}_{\gamma_k}, {\mathcal{E}}_{\gamma_k})\|}_2\right\} \le {\|L_h\|}_2; \\
 	&&\hspace{-.2 in}  \|{\vartheta}^i_{\gk}\|_{\infty} \le L_{\hat{\psi}}  \text{ for } i=1, \cdots, r, \quad\;\; \; \; \|\vartheta^{r+1}_{\gk}\|_{\infty}=1,\;\; \text{ and } 
 \end{eqnarray*}
 \vspace{-0.3 in}
 \begin{eqnarray}
 	\dot{{q}}_{\gamma_k}(t) &=& \underbrace{-\big[(1-\beta) {\bar{A}}_{\gk}^T(t)   + \beta {A}_{\gk}^T(t) \big]{q}_{\gk}(t) - \left[(1-\beta) {\bar{\mathcal{A}}}_{\gk}(t)^T  + \beta {\mathcal{A}}_{\gk}^T(t) \right] {v}_{\gamma_k}(t) }_{\mathcal{Q}_{\gk(t)}} \nonumber\\ &&
 	+ \underbrace{ \sum_{i=1}^{r} 
 		\gamma_k e^{\gk \hat{\psi}_i(t,x_{\gk}(t))}  {\vartheta}^i_{\gk}(t){q}_{\gk}(t) + \gamma_k e^{\gk \psi_{r+1}(t,x_{\gk}(t))} {q}_{\gk}(t)}_{\mathcal{X}_{\gk(t)}} \nonumber \\ &&
 	+\underbrace{\sum_{i=1}^{r}\gamma_k^2 e^{\gk \hat{\psi}_i(t,x_{\gk}(t))} \nabla_x \hat{\psi}_i(t,x_{\gk}(t)) \<\nabla_x \hat{\psi}_i(t,x_{\gk}(t)), {q}_{\gamma_k}(t) \> }_{\mathcal{Y}_{\gk}(t)} 
 	\nonumber\\	
 	& & +\underbrace{\gamma_k^2 e^{\gk \psi_{r+1}(t,x_{\gk}(t))} \nabla_x \psi_{r+1}(t,x_{\gk}(t)) \<\nabla_x \psi_{r+1}(t,x_{\gk}(t)), {q}_{\gamma_k}(t) \>}_{\mathcal{Z}_{\gk}(t)}, \label{ref4a} \\
 	\hspace{-.2 in}	\dot{{v}}_{\gamma_k}(t) &=& -\left[(1-\beta) {\bar{E}}_{\gamma_k}(t)^T + \beta {E}_{\gamma_k}(t)^T\right] {q}_{\gamma_k}(t) - \left[(1-\beta) {\bar{\mathcal{E}}}_{\gamma_k}(t)^T  + \beta {\mathcal{E}}_{\gamma_k}(t)^T\right] {v}_{\gamma_k}(t)  \nonumber \\& &  \hspace{-0.5cm}+{\gk e^{\gk \varphi(t,y_{\gk}(t))}  {v}_{\gamma_k}(t)} +\gamma_k^2 e^{\gk \varphi(t,y_{\gk}(t))} \nabla_y \varphi(t,y_{\gk}(t)) \<\nabla_y \varphi(t,y_{\gk}(t)), {v}_{\gamma_k}(t) \>.  \label{ref4b}
 \end{eqnarray}
\textbf{Step 2.} Uniform boundedness of $\{{p}_{\gk}\}$,  $\{\|\dot{{v}}_{\gk}\|_2\}$, and $\{\|\dot{{q}}_{\gk}\|_1\}$  \\
Our  proof of the uniform boundedness of $\{{p}_{\gk}\}$  is a generalization to our general setting of  the corresponding  proof in  \cite[Theorem 3.1]{nourzeidan3}, while that of  $\{\|\dot{{q}}_{\gk}\|_1\}$ requires a new approach.  Observe that
\begin{eqnarray*}
	\frac{1}{2}\frac{d}{dt}\|{p}_{\gk}(t)\|^2&= &
	\< {q}_{\gk}(t), \dot{{q}}_{\gk}(t) \> + \< {v}_{\gk}(t), \dot{{v}}_{\gk}(t) \> \\
	& &\hspace{-2.5cm}\overset{(\ref{ref4a})+(\ref{ref4b})}{=}  \left\< {q}_{\gk}(t), -[\beta {A}_{\gk}(t)^T+ (1-\beta) {\bar{A}}_{\gk}(t)^T] {q}_{\gk}(t)-[\beta {\mathcal{A}}_{\gk}(t)^{T} +(1-\beta) {\bar{\mathcal{A}}}_{\gk}(t)^{T}]{v}_{\gk}(t) \right\>    \\ 
	& \hspace{-2.5cm}+ & \sum_{i=1}^{r}\gk e^{\gk \hat{\psi}_i(t,x_{\gk}(t))}\left[ \< {q}_{\gk}(t),  
	{{\vartheta}^i_{\gk}(t)} {q}_{\gk}(t) \> +  \underbrace{ \gk  \lvert \< {q}_{\gk}(t), \nabla_x \hat{\psi}_i(t,x_{\gk}(t))\> \rvert{^2}}_{\text{positive term}}\right] \\
	& \hspace{-2.5cm}+ & \underbrace{ \gk e^{\gk \psi_{r+1}(t,x_{\gk}(t))} \| {q}_{\gk}(t)\|^2+ \gk^2 e^{\gk \psi_{r+1}(t,x_{\gk}(t))}  \lvert \< {q}_{\gk}(t), \nabla_x \psi_{r+1}(t,x_{\gk}(t))\>\rvert{^2}}_{\text{positive term}} \\
	&\hspace{-2.5cm}+& \left\< {v}_{\gk}(t), - [\beta {E}_{\gk}(t)^T +(1-\beta) {\bar{E}}_{\gk}(t)^T]{q}_{\gk}(t) - [\beta{\mathcal{E}}_{\gk}(t)^{T} +(1-\beta){\bar{\mathcal{E}}}_{\gk}(t)^{T}] {v}_{\gk}(t) \right\>  \\ &\hspace{-2.5cm} + & \underbrace{ \gk e^{\gk \varphi(t,y_{\gk}(t))} \| {v}_{\gk}(t)\|^2+ \gk^2 e^{\gk \varphi(t,y_{\gk}(t))}  \lvert \< {v}_{\gk}(t), \nabla_y \varphi(t,y_{\gk}(t))\> \rvert{^2}}_{\text{positive term}} \\
	&\hspace{-2.5cm} \ge & \left[- 2L_h(t) -L_{{\psi}} \frac{2{\bar{\mu}}}{ \bar{\eta}^2}\right] \|{p}_{\gk}(t)\|^2:=-{L}_p(t) \|{p}_{\gk}(t)\|^2,
\end{eqnarray*}
where \eqref{boundxi} is employed and ${L}_p(\cdot) \in L^2([0,T],\mathbb{R}_{+})$. 
Using Gronwall's Lemma, we deduce that there exists a constant  ${ M_p>0}$ such that 
\begin{eqnarray}
	\|{p}_{\gk}(t)\| \le e^{\|{L}_p(\cdot)\|_1} \|{p}_{\gk}(T)\| \le M_p, \quad \forall t \in [0,T], \quad \forall k \in \mathbb{N}, \label{bmd}
\end{eqnarray}
where the last inequality is due to the uniform boundedness of $\|{p}_{\gk}(T)\|$ obtained from the nontriviality condition \eqref{m1} when $S$ has a general form,  and to  the transversality condition \eqref{m2}, ${\lambda}_{\gamma_k}=1$, and equation \eqref{normalconeS}, when $S=C_0\times \R^{n+l}$. \\
\\
For the uniform boundedness of  $\{\|\dot{{v}}_{\gk}\|_2\}$,  notice that by \eqref{ref4b}, \eqref{bmd},  \eqref{boundxi}, and  \eqref{gk-xi-gk},  there exist   $L_v(\cdot)\in L^2([0,T],\mathbb{R}_+)$  and $k_v\in \N$, such that  for $k\ge k_v$ we have
\begin{eqnarray*}
	\|\dot{{v}}_{\gk}(t)\| &\le& \|\left[(1-\beta) {\bar{E}}_{\gamma_k}(t)^T + \beta {E}_{\gamma_k}(t)^T\right] {q}_{\gamma_k}(t) + \left[(1-\beta) {\bar{\mathcal{E}}}_{\gamma_k}(t)^T  + \beta {\mathcal{E}}_{\gamma_k}(t)^T\right] {v}_{\gamma_k}(t) \|\\ 
	&\hspace{.3 in}+& \frac{2\bar{\mu}}{\bar{\eta}^2} M_p + \gk^2 e^{-\gk \frac{\bar\delta^2}{4}}  \bar{\delta}^2 M_p
	\le L_v(t) M_p, \quad \forall t \in [0,T].
\end{eqnarray*}
Thus, for all $k\ge k_v$, $(\dot{{v}}_{\gk})$ is uniformly bounded in $L^2$ by a constant $M_v$.\\
\\
We now proceed to prove that $(\dot{{q}}_{\gk})$ is uniformly bounded in $L^1$.  Observe that \eqref{gk-xi-gk} together with \eqref{xkykdomain} and \eqref{bmd}, yields that for some $\bar{k}_1\in\N$, $\bar{k}_1\ge k_v$, we have 
\begin{eqnarray}\label{bds}
	\hspace{-0.5cm}	\|\mathcal{Q}_{\gk}(t) \| \le 2L_h(t) M_p; \quad  \|\mathcal{X}_{\gk}(t) \| \le \frac{2\bar{\mu}}{\bar{\eta}^2} \max\{1,L_{\hat{\psi}}\} M_p; \quad 	\| \mathcal{Z}_{\gk}(t) \| \le \gamma_k^2  \; e^{-\gk \frac{ {\bar{\varepsilon}}^2}{4} }\; \bar{\varepsilon}^2\; M_p.  
\end{eqnarray}
Hence, using \eqref{ref4a} and \eqref{bds}, we can see $\{{q}_{\gk}\}$ is  of uniformly bounded variation once we prove $$	\int_{0}^{T} \|\mathcal{Y}_{\gk}(t)\|dt= \int_0^T \sum_{i=1}^{r}\gamma_k^2 e^{\gk \hat{\psi}_i(t,x_{\gk}(t))} \|\nabla_x \hat{\psi}_i(t,x_{\gk}(t))\| \abs{\<\nabla_x \hat{\psi}_i(t,x_{\gk}(t)), {q}_{\gamma_k}(t) \>} dt $$ is uniformly bounded, for $k$ large. \\
We note that our proof here deviates from the corresponding proofs in \cite{nourzeidan3} and \cite{depinho3}. Denote by \begin{equation}
		\I_k^{\bar{a}}{:}=\I^{\bar{a}}_{(t,x_{\gk}(t))}=\{i\in\{1,\dots, r\}: -\bar{a}\le\psi_i(t,x_{\gk}(t))\le 0\}
	\end{equation} and define  \begin{equation}
		I^{\bar{a}}(x_{\gk}):=\{t\in [0,T]: \I^{\bar{a}}_{(t,x_{\gk}(t))}\neq\emptyset\}.
	\end{equation}
	Using the above definition of $I^{\bar{a}}(x_{\gk})$,  $\I_k^{\bar{a}}$ and $\I_k^{\frac{\bar{a}}{2}}$, and equations \eqref{hatpsi}-\eqref{hatgradpsi}, we deduce that 
	\begin{eqnarray}  
		&& \hspace{-0.3in} \forall t\in [I^{\bar{a}}(x_{\gk})]^c, \;\; \forall i=1,\cdots,r, \;\;  \hat{\psi}_i(t,x_{\gk}(t))  =  - \frac{3\bar{a}}{4}, \quad \nabla_x \hat{\psi}_i(t,x_{\gk}(t))=0,  \label{a11}\\
		&& \hspace{-0.3in}\forall t\in I^{\bar{a}}(x_{\gk}),\;\; \forall i\in { [\I_k^{\bar{a}}]^c} , \;\; \hat{\psi}_i(t,x_{\gk}(t))  = - \frac{3\bar{a}}{4}, \quad \nabla_x \hat{\psi}_i(t,x_{\gk}(t))=0, \label{a111} \\
		&& \hspace{-0.3in} \forall t\in I^{\bar{a}}(x_{\gk}),\; \forall i\in \I_k^{\frac{\bar{a}}{2}}, \;\; \hat{\psi}_i(t,x_{\gk}(t))  = {\psi}_i(t,x_{\gk}(t)), \; \nabla_x \hat{\psi}_i(t,x_{\gk}(t))=\nabla_x {\psi}_i(t,x_{\gk}(t)), 	\label{a21} \\
		&& \hspace{-0.3in}\forall t\in I^{\bar{a}}(x_{\gk}), \;\; \forall i\in \I_k^{\bar{a}} \setminus \I_k^{\frac{\bar{a}}{2}}, \;\; \hat{\psi}_i(t,x_{\gk}(t))  <  -\frac{\bar{a}}{2}, \; \nabla_x \hat{\psi}_i(t,x_{\gk}(t))=s'({\psi}_i(t,x_{\gk}(t)))\nabla_x {\psi}_i(t,x_{\gk}(t)). \nonumber \\ \label{a23}
	\end{eqnarray} 
	As a result of \eqref{a11}-\eqref{a23} and the fact that $0\le s'(z)\le 1$ for all $-\bar{a} \le z \le -\frac{\bar{a}}{2}$, to prove $\int_{0}^{T} \|\mathcal{Y}_{\gk}(t)\|dt$ is uniformly bounded, it remains to prove that, for a certain constant $M_1>0,$\begin{equation}	\textbf{I}_1:= \int_{I^{\bar{a}}(x_{\gk})} \sum_{i\in \I_k^{\frac{\bar{a}}{2}}}\gamma_k^2 e^{\gk {\psi}_i(t,x_{\gk}(t))} \|\nabla_x {\psi}_i(t,x_{\gk}(t))\| \abs{\<\nabla_x {\psi}_i(t,x_{\gk}(t)), {q}_{\gamma_k}(t) \>} dt \le M_1, \label{desired1} \end{equation}
	for $k$ large enough. 
	For that, it is sufficient to prove that there exists $M_2>0$ such that, for $k$ large,  
	\begin{equation} 	\textbf{I}_2:= \int_{I^{\bar{a}}(x_{\gk})}\sum_{i\in \I_k^{\frac{\bar{a}}{2}}}\gamma_k^2 e^{\gk {\psi}_i(t,x_{\gk}(t))} \|\nabla_x {\psi}_i(t,x_{\gk}(t))\|^2 \abs{\<\nabla_x {\psi}_i(t,x_{\gk}(t)), {q}_{\gamma_k}(t) \>} dt \le M_2. \label{desired21} \end{equation}
	Indeed, for  $t\in I^{\bar{a}}(x_{\gk})$ and $i\in \I_k^{\frac{\bar{a}}{2}}$, we have  ${\psi}_{i}(t,x_{\gk}(t))\ge -\frac{\bar{a}}{2} \ge -a_o$, and hence 
	the uniform convergence of $x_{\gk}$ to $\x$ and Remark \ref{pisiepse} yield the existence of $\bar{k}_2\in \N$  such that for all $k\ge\bar{k}_2$, we have $\| \nabla_x {\psi}_i(t,x_{\gk}(t)) \|> \bar{\eta}$.
	Thus, if  $\textbf{I}_2$ is uniformly bounded by a constant $M_2$, then it follows that $\textbf{I}_1 \le \frac{M_2}{\bar{\eta}}$, for $k$ large enough.\\
	We proceed to prove that \eqref{desired21} holds true.
	We first calculate for each {$j=1, \cdots, r$} and $ t\in [0,T]$ a.e.: \begin{eqnarray}
		\frac{d}{dt} \left\lvert \< {q}_{\gamma_k}(t), \nabla_x \hat{\psi}_j(t,x_{\gk}(t)) \>\right \rvert & = &\< \dot{{q}}_{\gamma_k}(t), \nabla_x \hat{\psi}_j(t,x_{\gk}(t)) \> s^j_{\gk}(t) \nonumber\\&+& \< {q}_{\gamma_k}(t), \Theta^j_{\gamma_k}(t). (1,\dot{x}_{\gamma_k}(t)) \>s^j_{\gk}(t),\; \text{ a.e.} \label{derivinner21}
	\end{eqnarray}
	where $s^j_{\gk}(t)$ is the sign of $\< {q}_{\gamma_k}(t), \nabla_x \hat{\psi}_j(t,x_{\gk}(t)) \>$ and ${\Theta}^{j}_{\gamma_k}(t) \in \partial^{(t,x)} \nabla_x \hat{\psi}_j(t,x_{\gamma_k}(t))$.\\ 
	Using equation \eqref{ref4a}  in (\ref{derivinner21}), we get 
	\begin{eqnarray}
		&&\frac{d}{dt} \left\lvert \< {q}_{\gamma_k}(t), \nabla_x \hat{\psi}_j(t,x_{\gk}(t)) \>\right \rvert  = \nonumber\\
		&&\<\mathcal{Q}_{\gk}(t)+ \mathcal{X}_{\gk}(t)+\mathcal{Z}_{\gk}(t), \nabla_x \hat{\psi}_j(t,x_{\gk}(t)) \> s^j_{\gk}(t) +\< {q}_{\gamma_k}(t), \Theta^j_{\gamma_k}(t). (1,\dot{x}_{\gamma_k}(t)) \>s^j_{\gk}(t) \nonumber\\
		&&+\sum_{i=1}^{r}\gamma_k^2 e^{\gk \hat{\psi}_i(t,x_{\gk}(t))} \<\nabla_x \hat{\psi}_i, \nabla_x \hat{\psi}_j\>|_{(t,x_{\gk}(t))}  \langle \nabla_x \hat{\psi}_i(t,x_{\gk}(t)), {q}_{\gk}(t)\rangle s^j_{\gk}(t)  \text{ a.e.}
		\label{derivative21}\end{eqnarray}
	Let $t \in I^{\bar{a}}(x_{\gk})$. Summing the previous equality over $  j \in \I_k^{\bar{a}} $, we obtain that: 
	\begin{eqnarray}
		&&\textbf{J}_1:= \sum_{j \in \I_k^{\bar{a}}}\sum_{i=1}^{r}\gamma_k^2 e^{\gk \hat{\psi}_i(t,x_{\gk}(t))} \<\nabla_x \hat{\psi}_i, \nabla_x \hat{\psi}_j\>|_{(t,x_{\gk}(t))}  \langle \nabla_x \hat{\psi}_i(t,x_{\gk}(t)), {q}_{\gk}(t)\rangle s^j_{\gk}(t)\nonumber \\
		&&=\sum_{j \in \I_k^{\bar{a}}}\frac{d}{dt} \left\lvert \< {q}_{\gamma_k}(t), \nabla_x \hat{\psi}_j(t,x_{\gk}(t)) \>\right \rvert  -\sum_{j \in \I_k^{\bar{a}}}\<\mathcal{Q}_{\gk}(t)+ \mathcal{X}_{\gk}(t)+\mathcal{Z}_{\gk}(t), \nabla_x \hat{\psi}_j(t,x_{\gk}(t)) \> s^j_{\gk}(t) \nonumber\\
		&& -\sum_{j \in \I_k^{\bar{a}}}\< {q}_{\gamma_k}(t), \Theta^j_{\gamma_k}(t). (1,\dot{x}_{\gamma_k}(t)) \>s^j_{\gk}(t)	 \label{bdse1}  \text{ a.e.}
	\end{eqnarray}
	\noindent On the other hand, splitting in the definition of $\textbf{J}_1$ the summation over $i$ and switching the order of summation between $i$ and $j$, we have 
	\begin{eqnarray*}
		&&  \textbf{J}_1 =  \sum_{i=1}^r \sum_{j \in \I_k^{\bar{a}}} \gamma_k^2 e^{\gk \hat{\psi}_i(t,x_{\gk}(t))}  \<\nabla_x \hat{\psi}_i, \nabla_x \hat{\psi}_j\>|_{(t,x_{\gk}(t))} \;\; \langle \nabla_x \hat{\psi}_i(t,x_{\gk}(t)), {q}_{\gk}(t)\rangle s^j_{\gk}(t)  \\ 
	&&\hspace{.1 in}	=   \sum_{i\in\I_k^{\bar{a}}} \sum_{j\in\I_k^{\bar{a}}} \gamma_k^2 e^{\gk \hat{\psi}_i(t,x_{\gk}(t))}  \<\nabla_x \hat{\psi}_i, \nabla_x \hat{\psi}_j\>|_{(t,x_{\gk}(t))} \;\; \langle \nabla_x \hat{\psi}_i(t,x_{\gk}(t)), {q}_{\gk}(t)\rangle s^j_{\gk}(t)  \\ 
		&&\hspace{.1 in}	=   \sum_{i\in\I_k^{\frac{\bar{a}}{2}}} \sum_{j\in\I_k^{\bar{a}}} \gamma_k^2 e^{\gk \hat{\psi}_i(t,x_{\gk}(t))}  \<\nabla_x \hat{\psi}_i, \nabla_x \hat{\psi}_j\>|_{(t,x_{\gk}(t))} \;\; \langle \nabla_x \hat{\psi}_i(t,x_{\gk}(t)), {q}_{\gk}(t)\rangle s^j_{\gk}(t)  \\ 
		&&\hspace{.1 in}	+   \sum_{i\in\I_k^{\bar{a}} \setminus \I_k^{\frac{\bar{a}}{2}}} \sum_{j\in\I_k^{\bar{a}}} \gamma_k^2 e^{\gk \hat{\psi}_i(t,x_{\gk}(t))}  \<\nabla_x \hat{\psi}_i, \nabla_x \hat{\psi}_j\>|_{(t,x_{\gk}(t))} \;\; \langle \nabla_x \hat{\psi}_i(t,x_{\gk}(t)), {q}_{\gk}(t)\rangle s^j_{\gk}(t)  \\ 
		&&\hspace{.1 in}=  \sum_{i\in\I_k^{\frac{\bar{a}}{2}}}  \gamma_k^2 e^{\gk \hat{\psi}_i(t,x_{\gk}(t))} \Biggl(  \| \nabla_x \hat{\psi}_i(t,x_{\gk}(t))\|^2 +  \\ 
		&&\hspace{.2 in}\sum_{\substack{j\in\I_k^{\bar{a}} \\ j \ne i}} s^j_{\gk}(t) s^i_{\gk}(t) \<\nabla_x \hat{\psi}_i(t,x_{\gk}(t)), \nabla_x \hat{\psi}_j(t,x_{\gk}(t))\>        \Biggl) \;\; | \langle \nabla_x \hat{\psi}_i(t,x_{\gk}(t)), {q}_{\gk}(t)\rangle |  \\
		&&\hspace{.1 in}	+   \sum_{i\in\I_k^{\bar{a}} \setminus \I_k^{\frac{\bar{a}}{2}}} \sum_{j\in\I_k^{\bar{a}}} \gamma_k^2 e^{\gk \hat{\psi}_i(t,x_{\gk}(t))}  \<\nabla_x \hat{\psi}_i, \nabla_x \hat{\psi}_j\>|_{(t,x_{\gk}(t))} \;\; \langle \nabla_x \hat{\psi}_i(t,x_{\gk}(t)), {q}_{\gk}(t)\rangle s^j_{\gk}(t).  \\ 
	\end{eqnarray*}
	Using the fact that $x_{\gk}$ converges uniformly to $\bar{x}$, we deduce from equation \eqref{supp3} that,  there exists $\bar{k}_3 \in \mathbb{N} $ such that for $k \ge \bar{k}_3$, we have for $i \in \I_k^{\frac{\bar{a}}{2}}$, 	$$\sum_{\substack{j\in\I_k^{\bar{a}} \\ j \ne i}} s^j_{\gk}(t) s^i_{\gk}(t) \<\nabla_x \hat{\psi}_i(t,x_{\gk}(t)), \nabla_x \hat{\psi}_j(t,x_{\gk}(t))\>  \ge -\bar{b} \| \nabla_x \hat{\psi}_i(t,x_{\gk}(t))\|^2. $$
	Then, \begin{eqnarray*}
		&&\hspace{.1 in} \textbf{J}_1 \ge (1-\bar{b}) \sum_{i\in\I_k^{\frac{\bar{a}}{2}}}  \gamma_k^2 e^{\gk \hat{\psi}_i(t,x_{\gk}(t))} \| \nabla_x \hat{\psi}_i(t,x_{\gk}(t))\|^2 | \langle \nabla_x \hat{\psi}_i(t,x_{\gk}(t)), {q}_{\gk}(t)\rangle | \\
		&&\hspace{.1 in}	+  \underbrace{ \sum_{i\in\I_k^{\bar{a}} \setminus \I_k^{\frac{\bar{a}}{2}}} \sum_{j\in\I_k^{\bar{a}}} \gamma_k^2 e^{\gk \hat{\psi}_i(t,x_{\gk}(t))}  \<\nabla_x \hat{\psi}_i, \nabla_x \hat{\psi}_j\>|_{(t,x_{\gk}(t))} \;\; \langle \nabla_x \hat{\psi}_i(t,x_{\gk}(t)), {q}_{\gk}(t)\rangle s^j_{\gk}(t)}_{\textbf{J}_2}. 
	\end{eqnarray*}
	Hence, (recalling \eqref{a21}) we have 
	\begin{eqnarray}
		\hspace{-.5 in}	\sum_{i\in\I_k^{\frac{\bar{a}}{2}}}  \gamma_k^2 e^{\gk {\psi}_i(t,x_{\gk}(t))} \| \nabla_x {\psi}_i(t,x_{\gk}(t))\|^2 | \langle \nabla_x {\psi}_i(t,x_{\gk}(t)), {q}_{\gk}(t)\rangle | \le  \frac{1}{1-\bar{b}}   (\textbf{J}_1-\textbf{J}_2).  \end{eqnarray}
	Integrating the last inequality over $I^{\bar{a}}(x_{\gk})$, we deduce from the definition of $\textbf{I}_2$ that \begin{eqnarray*}
		&&\hspace{-0.5cm}0\le \textbf{I}_2 \le \frac{1}{1-\bar{b}} \int_{I^{\bar{a}}(x_{\gk})} \left( \textbf{J}_1  - \textbf{J}_2 \right) dt \le  \frac{1}{1-\bar{b}} \left|\int_{I^{\bar{a}}(x_{\gk})} \textbf{J}_1 \;dt\right| +  \frac{1}{1-\bar{b}}\int_{I^{\bar{a}}(x_{\gk})} |\textbf{J}_2|  dt.
	\end{eqnarray*}
	Using \eqref{a23}, we deduce that, there exists $\bar{k}_4 \in \mathbb{N}$,  there exists  constant $M_3>0$ such that for all $k \ge \bar{k}_4$, we have \begin{equation} \int_{I^{\bar{a}}(x_{\gk})} |\textbf{J}_2|  dt \le \frac{ \gamma_k^2 e^{-\gamma_k \frac{\bar{a}}{2}}  L_{\psi}^3 r^2M_p T}{1-\bar{b}} \le M_3. \end{equation}
	Hence, \vspace{-.2 in}\begin{eqnarray}
		&&\hspace{-0.5cm}0\le \textbf{I}_2   \le  \frac{1}{1-\bar{b}} \left|\int_{I^{\bar{a}}(x_{\gk})} \textbf{J}_1 \;dt\right| +  M_3,  \quad \;\forall k\ge \bar{k}_4. \label{I5}
	\end{eqnarray}\vspace{-.1 in}	
	Note that,  $\eqref{bdse1}$ yields that the uniform boundedness of  $\left|\int_{I^{\bar{a}}(x_{\gk})} \textbf{J}_1\; dt\right|$ is equivalent to that of  
	$$  \Big|\int_{I^{\bar{a}}(x_{\gk})}  \sum_{ j \in \I_k^{\bar{a}}}	\frac{d}{dt} \lvert \< {q}_{\gamma_k}(t), \nabla_x \hat{\psi}_j(t,x_{\gk}(t)) \> \rvert \;  dt\Big|, $$
	since \begin{eqnarray*}  &&\hspace{-0.3in} \Big|\int_{I^{\bar{a}}(x_{\gk})}  \sum_{j \in \I_k^{\bar{a}}}\< {q}_{\gamma_k}(t), \Theta^j_{\gamma_k}(t). (1,\dot{x}_{\gamma_k}(t)) \>s^j_{\gk}(t) dt\Big| \le M_p L_{\psi} (1 + M_h + \frac{2 \bar{\mu}}{\bar{\eta}^2} \bar{L}) rT, \\
		&& \hspace{-0.3in}
		\Big|\int_{I^{\bar{a}}(x_{\gk})}  \sum_{j \in \I_k^{\bar{a}}} \< \mathcal{Q}_{\gk}(t)+\mathcal{X}_{\gk}(t)+\mathcal{Z}_{\gk}(t), \nabla_x\hat{\psi}_j(t,x_{\gk}(t)) \> s_{\gk}^j(t) dt\Big|  \\ 
		&& \hspace{1.5in} \le \left[(2L_h(t)M_p) + (\frac{2\bar{\mu}}{\bar{\eta}^2}\max\{L_{\psi},1\} M_p) + (\gk^2 e^{-\gk \frac{\bar{\varepsilon}^2}{4}} \bar{\varepsilon}^2 M_p)   \right] L_{\psi} r T. 
	\end{eqnarray*}
	We now proceed to prove the boundedness of $$  \Big|\int_{I^{\bar{a}}(x_{\gk})}  \sum_{ j \in \I_k^{\bar{a}}}	\frac{d}{dt} \lvert \< {q}_{\gamma_k}(t), \nabla_x \hat{\psi}_j(t,x_{\gk}(t)) \> \rvert \;  dt\Big|. $$
	Using the Fundamental Theorem of Calculus, we have that \begin{equation}
		\Big|\int_{0}^T  \sum_{j=1}^r	\frac{d}{dt} \lvert \< {q}_{\gamma_k}(t), \nabla_x \hat{\psi}_j(t,x_{\gk}(t)) \> \rvert \;  dt\Big| \le 2 r L_{\psi}M_p.\label{bdse21}
	\end{equation}
	Using \eqref{a11}, \eqref{a111}, \eqref{derivative21}, and the uniform boundedness of $(\dot{x}_{\gk})$, we deduce that that there exists a constant $M_4>0$ such that   \begin{eqnarray}
		\Big|\int_{ [I^{\bar{a}}(x_{\gk})]^c}  \sum_{j=1}^r	\frac{d}{dt} \lvert \< {q}_{\gamma_k}(t), \nabla_x \hat{\psi}_j(t,x_{\gk}(t)) \> \rvert \;  dt\Big| \le M_4,  \label{bdse22}  \\
		\Big|\int_{I^{\bar{a}}(x_{\gk})}  \sum_{j \in [\I_k^{\bar{a}}]^c} \frac{d}{dt} \lvert \< {q}_{\gamma_k}(t), \nabla_x \hat{\psi}_j(t,x_{\gk}(t)) \> \rvert \;  dt\Big| \le M_4. \label{bdse23}
	\end{eqnarray} 
	Hence, combining \eqref{bdse21} and \eqref{bdse22}, we conclude that there exists a constant $M_5>0$ such that $$\Big|\int_{I^{\bar{a}}(x_{\gk})}  \sum_{ j =1}^r	\frac{d}{dt} \lvert \< {q}_{\gamma_k}(t), \nabla_x \hat{\psi}_j(t,x_{\gk}(t)) \> \rvert \;  dt\Big| \le M_5.$$
	This last inequality with \eqref{bdse23} yield that there exists a constant $M_6>0$ such that
	$$\Big|\int_{I^{\bar{a}}(x_{\gk})}  \sum_{ j \in \I_k^{\bar{a}}}	\frac{d}{dt} \lvert \< {q}_{\gamma_k}(t), \nabla_x \hat{\psi}_j(t,x_{\gk}(t)) \> \rvert \;  dt\Big| \le M_6.$$  	Hence, $\left|\int_{I^{\bar{a}}(\x)} \textbf{J}_1\; dt\right|$  is uniformly bounded, and by \eqref{I5}, $\textbf{I}_2$ is uniformly bounded. Hence, $\{\| \dot{{q}}_{\gk} \|_1\}$ uniformly bounded by a constant $M_q$.\\
	\\
\textbf{Step 3.} Construction of $p=(q,v)$, {$\lambda \ge 0$}, $\vartheta^i$ (for each $i$), $\nu^i$ (for each $i$ ), $\bar{A}$, $\bar{\mathcal{A}}$, $\bar{E}$ and $\bar{\mathcal{E}}$ satisfying conditions $(i)$-$(vii)$ of Theorem \ref{maxprincipleS}.  \\
In Step 1, we  proved the existence of $({\xi}^i)_{i=1}^r$ in $L^{\infty}([0,T]; \mathbb{R}_+)$ such that condition $(i)$ is satisfied in terms of the $\hat\psi_i$'s. From Step 2, we conclude that along a subsequence,  ${q}_{\gk} \xrightarrow[n \to \infty]{a.e.} {q}\in BV([0,T];\R^n)$ and ${v}_{\gk} \xrightarrow[n \to \infty]{unif}  {v}\in W^{1,2}([0,T];\R^l)$. Following steps similar to steps 3-10 in the proof of \cite[Theorem 6.1]{nourzeidan}, we deduce the existence of ${\lambda}$,  ${\bar{A}}$, ${A}$, ${\bar{\mathcal{A}}}$, ${\mathcal{A}}$, ${\bar{E}}$, ${\bar{\mathcal{E}}}$, ${E}$, ${\mathcal{E}}$, ${\vartheta}^i$ (for $i=1,\cdots, r$) and ${\nu}^i$ (for $i=1,\cdots,r$) such that all the conditions of Theorem \ref{maxprincipleS} are satisfied in terms of $\hat\psi_i$'s, except condition $(vi)$, which will be proved at the end of this step. Note that  the  terms 
in \eqref{ref4a} and \eqref{ref4b}    involving $\psi_{r+1}$ and $\varphi$  actually vanish upon taking the limit. This is due to \eqref{xi-zetaconv} in which we have  $(\gk\xi_{\gk}^{r+1}, \gk \zeta_{\gk}) \longrightarrow 0$ {\it uniformly}. More specifically we have, for all $(z(\cdot),w(\cdot)) \in \mathcal{C}([0,T];\mathbb{R}^n) \times \mathcal{C}([0,T];\mathbb{R}^l)$, we have
\begin{eqnarray} &&\hspace{-.5 in}\lim\limits_{k \to \infty}\int_{0}^T \<z(t),\gk e^{\gk \psi_{r+1}(t,x_{\gk}(t))} {q}_{\gamma_k}(t) \> dt =0, \;\;\;\; \lim\limits_{k \to \infty}\int_{0}^T \<w(t),\gk e^{\gk \varphi(t,y_{\gk}(t))}  {v}_{\gamma_k}(t) \> dt =0,\label{eqa} \\
&&\hspace{-.5 in}\lim\limits_{k \to \infty} \int_{0}^T \gamma_k^2 e^{\gk \psi_{r+1}(t,x_{\gk}(t))} \<\nabla_x \psi_{r+1}(t,x_{\gk}(t)), {q}_{\gamma_k}(t) \>  \<z(t), \nabla_x \psi_{r+1}(t,x_{\gk}(t))\> dt=0, \;\;\text{and,}\\
	&&\hspace{-.5 in}\lim\limits_{k \to \infty} \int_{0}^T \gamma_k^2 e^{\gk \varphi(t,y_{\gk}(t))} \<\nabla_y \varphi(t,y_{\gk}(t)), {v}_{\gamma_k}(t) \>  \<w(t),	 \nabla_y \varphi(t,y_{\gk}(t))\> dt=0. \;\;  \label{eqb} \end{eqnarray}
\\
  Observe that, similarly  to the proof of step 4 in \cite[Theorem 5.1]{verachadihassan}, the finite signed Radon measure on $[0,T]$, ${\nu}^i_{\gk}$, corresponding to each $\hat{\psi}_i$, is defined by 
\begin{equation} \label{dnuigk} d{\nu}^i_{\gk}(t):= \gk {\xi}^i_{\gk}(t)\<\nabla_x\hat{\psi}_i(t,x_{\gk}(t)),{q}_{\gk}(t)\>  dt,\; i=1,\cdots, r,\end{equation}
and the signed Radon measure ${\nu}^i$ in condition $(iii)$ of Theorem \ref{maxprincipleS} is the weak* limit of the corresponding ${\nu}_{\gk}^i$. 
\\
 For condition $(vi)$, equation \eqref{dnuigk} yields the following \begin{equation*}
\left\<{q}_{\gk}(t), \nabla_x\hat{\psi}_i(t,x_{\gk}(t)) \right\>	d{\nu}^i_{\gk}(t) =\gk {\xi}^i_{\gk}(t)\<\nabla_x\hat{\psi}_i(t,x_{\gk}(t)),{q}_{\gk}(t)\>^2\ge 0,
	\end{equation*}
and hence, upon taking the limit, we get the desired condition. Thus, Theorem \ref{maxprincipleS} is valid in terms of the $\hat\psi_i$'s.\\
Notice that ${\nu}^i$ and ${\xi}^i$ are supported in $\{t \in [0,T]: \hat{\psi}_i(t,\x(t))=0 \}\overset{(\ref{hatpsi})}=\{t \in [0,T]: {\psi}_i(t,\x(t))=0 \}= I^0_i(\bar{x})$, and on this set,  $\nabla_x  \hat{\psi}_i(t,\x(t))\overset{(\ref{hatgradpsi})}= \nabla_x\psi_i(t,\x(t))$. 
Therefore, we conclude that  Theorem \ref{maxprincipleS}  is proved  in terms of $\psi_i$, under the temporary assumption (A4.2). \\
\\
\textbf{Step 4.} Removing assumption (A4.2) when  the sets $U(t)$ are  uniformly bounded. \\
Consider $h$  not  satisfying $(A4.2)$. To remove (A4.2), that is, the convexity assumption  of $h(t,x,y,U(t))$   for $(x,y)\in \bar{\mathscr{N}}_{(\bar{\delta},\bar\delta)}(t)$ and  $t\in [0, T]$ a.e., we shall extend  the {\it relaxation technique}  in \cite[Section 5.2]{verachadihassan},  developed for {\it global} minimizers of Mayer optimal control problems over sweeping processes having {\it constant} compact sweeping sets and constant control set $U,$ to the case of {\it strong local} minimizers,  the sweeping sets being $\bar{\mathscr{N}}_{(\bar\varepsilon,\bar\delta)}(t)$,  which are {\it time-dependent} and {\it not} necessarily moving in an  absolutely continuous way,  $U$ being time-dependent, and  the {\it joint}-endpoints constraint  $S_{\frac{\delta}{2}}$ being present, where $\delta \in (0,\bar{\varepsilon})$ is fixed.\\
\textit{\underline{Step $4(a)$.} $(\bar{X}:=(\x,\y),\bar{u})$ is a $\delta$-strong local minimizer for $(\bar{P}_{\delta})$ with extended $J$}\\  
Fix $\delta\in (0,\bar\varepsilon)$. Using \cite[Theorem 1]{hiriart}, there is an $L_J$-Lipschitz function $\bar{J}: \R^{n+l}\times \R^{n+l} \rightarrow \R$ that extends $J$ to $\R^{2(n+l)}$ from $S(\bar\delta)$.  By Remark \ref{optimality}$(i)$,  $((\x,\y),\bar{u})$ being a $\bar{\delta}$-strong local minimizer  for $(P)$, then it is  also a $\delta$-{\textit strong} local minimum  for $ (\bar{P}_{ \delta})$ in which we use the extension $\bar{J}$ instead of $J$. \\  
  \\
\textit{\underline{Step $4(b)$.} $(\bar{X},\bar{u})$ is a global minimum for $(\mathscr{\bar{P}})$}\\
 Performing  appropriate modifications to the technique presented in the  proof of  \cite[Theorem 6.2]{nourzeidan},  we are then  able   to formulate the following   problem $(\mathscr{\bar{P}})$ associated with $(\bar{P}_{\delta})$ for which the same solution $(\bar{X},\bar{u})$ is a {\it global} minimum:    
 \begin{equation*}
	(\mathscr{\bar{P}}) \begin{cases}
		\text{minimize} \quad   {\bar{J}}(X(0),X(T)) +\bar{K} \int_0^T {\mathcal{L}(t,X(t))}\; dt\\
		\text{over } X:=(x,y) \in W^{1,1}( [0,T], \mathbb{R}^{n+l}),\; u\in \mathcal{U}, \;\textnormal{ such that } \\
		\hspace{.2 in}(\bar{D})\begin{cases}
			\dot{X}(t) \in h(t,x(t),y(t), u(t)) - N_{\bar{\mathscr{N}}_{(\bar\varepsilon,\bar\delta)}(t)}(X(t)), \text{ a.e. } t \in [0,T], \\
		\end{cases} \\
		(X(0),X(T)) \in {S}_{\frac{\delta}{2}}= S\cap \bar{\mathscr{B}}_{\frac{\delta}{2}},
	\end{cases}
\end{equation*} where   $\mathcal{L}:[0,T] \times \R^{n+l} \rightarrow \R$ and {$\bar{K}>0$} are defined by
\begin{eqnarray}
\hspace{-.5 in}&&\mathcal{L}(t,X)=\mathcal{L}(t,x,y):= \max\{\|x-\x(t)\|^2 -\frac{\delta^2}{4},\; \|y-\y(t)\|^2 -\frac{\delta^2}{4},\; 0\} \ge 0, \;
\label{L}\\ 
\hspace{-.5 in}&& \bar{K}:=\frac{512 \bar{M}_{\ell} M_J}{5 \delta^3}, \text{where} \; \; 2\bar{M}_{\ell}:= \max\{L_{(\x,\y)},\; M_h+\frac{\bar{\mu}}{4\bar{\eta}^2}\bar{L}\},\;\; M_J:=\max_{ {S}(\bar{\delta})} |J(X_1,X_2)|,\label{barK}\vspace{-.1 in}\end{eqnarray}
and hence, as $\mathcal{L}(t,\bar{X}(t)) \equiv 0$, we deduce ${\min(\mathscr{\bar{P}})= J(\bar{X}(0),\bar{X}(T)).}$\\ 
\\
\textit{\underline{Step $4(c)$.} $({\bar{X}},\bar{w}):=(\bar{X},((\overbrace{\u,...,\u}^{n+l+1}),(\overbrace{1,0,...,0}^{n+l+1})))$ is a global minimum for $	({\mathscr{\tilde{P}}}) $}\\
Define the problem $({\mathscr{\tilde{P}}})$
\begin{equation*}
	({\mathscr{\tilde{P}}}) \begin{cases}
		\text{minimize} \quad   {\bar{J}}(X(0),X(T)) +\bar{K} \int_0^T\mathcal{L}(t,X(t))\; dt\\
		\text{over } X:=(x,y) \in W^{1,1}( [0,T], \mathbb{R}^{n+l}),\;\\\;\;\;\;\;\; w(\cdot):= \big((u_0(\cdot),\cdots,u_{n+l}(\cdot)), (\lambda_0(\cdot),\cdots,\lambda_{n+l}(\cdot))\big) \in \mathscr{W}\textnormal{ such that } \\
		\hspace{.2 in}\tilde{({\mathscr{D}})}\begin{cases}
			\dot{X}(t) \in \tilde{h}(t,X(t), w(t)) - N_{\bar{\mathscr{N}}_{(\bar\varepsilon,\bar\delta)}(t)}(X(t)), \text{ a.e. } t \in [0,T], \\
		\end{cases} \\
		(X(0),X(T)) \in {S}_{\frac{\delta}{2}},
	\end{cases}\vspace{-.13 in}\end{equation*}\vspace{-.13 in}
where
\begin{eqnarray}
	&&\hspace{-.5 in} \tilde{h}: \text{Gr}\,[{\bar{\mathscr{N}}_{(\bar\delta,\bar\delta)}}(\cdot) \times (U(\cdot))^{n+l+1}]\times \Lambda \rightarrow \R^{n+l} \;\text{defined as}\;\;  \tilde{h}(t, X,w):=  \sum_{i=0}^{n+l}\lambda_i h(t, X,u_i)\label{tilde-h},\\
	&&\hspace{-.5in}\displaystyle\Lambda:=\big\{(\lambda_0,\cdots,\lambda_{n+l})\in \R^{n+l+1} : \l_i\geq 0\;\,\hbox{for}\;\,i=0,...,n+l\;\,\hbox{and}\;\,\sum_{i=0}^{n+l}\l_i=1\big\},\nonumber\\
	&&\hspace{-.5 in}\mathscr{W}:=\left\{ w\colon[0,T]\f\R^{(m+1)(n+l+1)}\;\,\hbox{measurable}\;: \;w(t) \in W(t):= (U(t))^{n+l+1}\times \Lambda \;\hbox{a.e.}\right\} \nonumber
\end{eqnarray}
First, we note the following two facts that are going to be useful for our goal: 
\begin{itemize}
	\item   Corollary \ref{corollary2} yields that for $X_0=(x_0,y_0) \in {\bar{\mathscr{N}}}_{(\bar\varepsilon,\bar\delta)}(0)$, and for $w\in \mathcal{W}$,  $(\tilde{\mathscr{D}})$ has a {\it unique} solution $X(\cdot)$  corresponding to $(X_0,w)$ which is $(M_h+\frac{\bar{\mu}}{4\bar{\eta}^2}\bar{L})$-Lipschitz and satisfies \eqref{lipschitz-xy1}-\eqref{dotx-f}.
	\item   Using that $0<\delta<\bar\varepsilon<\bar\delta$ and that the function $\bar{X}(\cdot):=(\bar{x}(\cdot),\bar{y}(\cdot))$ is $L_{(\x,\y)}$-Lipschitz, then the function $\mathcal{L}$, defined in \eqref{L}, is Lipschitz on Gr ${\bar{\mathscr{N}}_{(\bar\delta,\bar\delta)}}(\cdot)$ and satisfies \vspace{-.07 in}
	\begin{equation} \mathcal{L}\equiv 0 \;\;\;\text{on \; Gr}\; \bar{\mathscr{{N}}}_{(\frac{\delta}{2},\frac{\delta}{2})}(\cdot), \;\label{propL} \;\; \;\text{and}\;\;\;0\le \mathcal{L}(t, X) \le \ {\bar\delta}^2\, \; \text{on} \;   \text{Gr}\; {\bar{\mathscr{N}}_{(\bar\delta,\bar\delta)}}(\cdot).\end{equation}
	Hence, by  the convexity of $\tilde{h}(t,X,W(t))$,  and by Remark \ref{existenceL}, where $\mathbb{L}:=\bar{K}\;\mathcal{L}$, it follows that $ ({\mathscr{\tilde{P}}}) \text{ admits a }{\it global \;optimal \; minimizer\;} (\tilde{X},\tilde{w}).$ 
\end{itemize}
We  show that $\min (\tilde{\mathscr{P}})=\min (\mathscr{\bar{P}})$  and  $(\bar{X}, \bar{w})$ is optimal for $(\tilde{\mathscr{P}}).$ For, let  $\mathbb{U}$ the set in \eqref{mathbbU}, the compact set  $\mathbb{V}:= {\text{cl }} \mathbb{U}$, and $\mathscr{R}:=\left\{\sigma\colon[0,T]\rightarrow \mathfrak{M}^1_+(\mathbb{V})\;: \;\sigma\;\hbox{is measurable and } {\sigma(t)(U(t))=1},  \;\; t\in [0,T]\right\}$.  This set of  relaxed controls satisfies $\mathscr{R}\subset  L^1\left([0,T], \mathcal{C}(\mathbb{V}; \R)\right)^*$, which  is endowed with the weak* topology.  Each regular control  function $u\in\mathscr{U}$  is identified with its associated Dirac relaxed control $\sigma(\cdot)=\delta_{u(\cdot)}$, and thereby  $\mathscr{U} \subset \mathscr{R}$ (see e.g.,  \cite{warga}).  Define $h_\sigma (t,X)$ and the problem $({\mathscr{P}})_r$  by
$$h_\sigma (t,X):= \int_{U(t)} h(t,X,u)\sigma(t)(du),\;\;\;\forall (t,X)\in \text{Gr}\,{\bar{\mathscr{N}}_{(\bar\delta,\bar\delta)}}(\cdot),  \;\;\; \sigma\in \mathscr{R}, $$  
\begin{equation*}
	({\mathscr{P}})_r \begin{cases}
		\text{minimize} \quad   {\bar{J}}(X(0),X(T)) +\bar{K} \int_0^T\mathcal{L}(t,X(t))\; dt\\
		\text{over } X:=(x,y) \in W^{1,1}( [0,T], \mathbb{R}^{n+l}),\; \sigma\in \mathscr{R}, \textnormal{ such that } \\
		\hspace{.2 in}(\mathscr{D})_r \begin{cases}\dot{X}(t)\in  h_{\sigma}(t,X(t)) -N_{\bar{\mathscr{N}}_{(\bar\varepsilon,\bar\delta)}(t)}(X(t)),\;\;\hbox{a.e.}\;t\in[0,T],\end{cases}\\
	(X(0),X(T)) \in {S}_{\frac{\delta}{2}}.
	\end{cases}			 			
\end{equation*}
Since $h$ satisfies $(A4.1)$ and $\sigma(t) (U(t))=1 \,(\forall t\in[0,T])$, then ${h}_\sigma(t,X)$ is {\it uniformly} bounded by $M_h$,  a Carath\'eodory function in $(t, X)$, and $L_{h}(t)$-{\it Lipschitz} in $X,$ for all $t$, that is, $h_{\sigma}(t,X)$ satisfies $(A4)$. \\
\\
Using Corollary \ref{corollary2}  for $X_0=(x_0,y_0) \in {\bar{\mathscr{N}}}_{(\bar\varepsilon,\bar\delta)}(0)$,  $\sigma\in \mathcal{R}$,  and $(f,g)(t,X, u)= h(t,X, u):= h_{\sigma}(t,X)$, the Cauchy problem of $(\mathscr{D})_r$ corresponding to $(X_0,\sigma)$ admits a unique solution which is Lipschitz and satisfies \eqref{lipschitz-xy1}-\eqref{dotx-f}. {It follows that the results in \cite[Lemmas 5.1 \&5.2]{verachadihassan} remain valid for  the systems $(\tilde{\mathscr{D}})$ and $(\mathscr{D})_r$, defined above, and also for the system $(\mathscr{D})_c$}, where \vspace{-.1 in}
$$(\mathscr{D})_c \begin{cases}\dot{X}(t)\in \conv  h(t,X(t), U(t)) -N_{\bar{\mathscr{N}}_{(\bar\varepsilon,\bar\delta)}(t)}(X(t)),\;\;\hbox{a.e.}\;t\in[0,T].\;\; \end{cases} \vspace{-.1 in}$$
Therefore, for  $X:=(x,y) \in W^{1,1}( [0,T], \mathbb{R}^{n+l})$ and $X(0)\in {\bar{\mathscr{N}}}_{(\bar\varepsilon,\bar\delta)}(0)$, we have 
\begin{eqnarray*}
(X,w) \text{ satisfies } (\tilde{\mathscr{D}}), \text{ for some } w \in \mathscr{W} &&\iff (X,\sigma) \text{ satisfies }(\mathscr{D})_r \text{ for some } \sigma\in \mathscr{R} \\
	&& \iff X  \text{ satisfies } (\mathscr{D})_c.
\end{eqnarray*}	
{Furthermore, due to having \eqref{dotx-f} satisfied by the solutions of $(\mathscr{D})_r$ and due to the hypomonotonicity property of the uniform prox-regular sets ${\bar{\mathscr{N}}}_{(\bar\varepsilon,\bar\delta)}(t)$ (which we recall it to be the product of  the uniform 
	$\frac{2\bar\eta}{L_{\psi}}$-prox-regular set $C(t) \cap \bar{B}_{\bar\varepsilon}(\x(t))$ with $\bar{B}_{\bar\delta}(\y(t))$), it follows that \cite[Theorem 2]{edmond} (also \cite[Proposition 3.5]{cst}) is valid. Hence, using that $(\tilde{X},\tilde{w})$ is optimal for $(\tilde{\mathscr{P}})$, the proof of \cite[Proposition 5.2]{verachadihassan} holds true for our setting, and therefore, as $(\bar{X},\bar{u})$ is optimal for $(\mathscr{\bar{P}})$,  we conclude that
	$$  \min(\mathscr{{P}})_r=\min (\mathscr{\tilde{P}})= \min (\mathscr{\bar{P}})= J(\bar{X}(0),\bar{X}(T)). $$
Now, since   $({\bar{X}},\bar{w})$  is  admissible for $(\mathscr{\tilde{P}})$ at which the objective value is $J(\bar{X}(0),\bar{X}(T))$,  we deduce that $(\bar{X},\bar{w})$ is a {\it global} minimum  for $(\mathscr{\tilde{P}})$. This terminates proving Step 4$(c)$. \\
\\
\textit{\underline{Step $4(d)$.} $((\x,\y),\bar{w})$ a $\frac{\delta}{2}$-strong local minimum for $	({{\tilde{P}}}) $ to which we apply Theorem \ref{maxprincipleS}}\\
As $(\bar{X}, \bar{w})$ is a global minimizer for  $(\mathscr{\tilde{P}})$, it follows that it is also a  $\frac{\delta}{2}$-{\it strong} local minimum  for $(\mathscr{\tilde{P}})$}, which, by \eqref{propL}$(i)$, has now  $\bar{J}(X(0),X(T))$ as objective function. Hence, we conclude that  $((\x,\y), \bar{w})$ is a $\frac{\delta}{2}$-strong local minimum  for the problem $	({{\tilde{P}}})$ 
\begin{equation*}
	({{\tilde{P}}}) \begin{cases}
		\text{minimize} \quad   {\bar{J}}(x(0),y(0), x(T), y(T)) \\
		\text{over } X:=(x,y) \in W^{1,1}( [0,T], \mathbb{R}^{n+l}),\;\\\;\;\;\;\;\; w(\cdot):= \big((u_0(\cdot),\cdots,u_{n+l}(\cdot)), (\lambda_0(\cdot),\cdots,\lambda_{n+l}(\cdot))\big) \in \mathscr{W}\textnormal{ such that } \\
		\hspace{.2 in}\tilde{({{D}})}\begin{cases}
			\dot{x}(t) \in \tilde{f}(t,x(t), y(t), w(t)) - N_{C(t)}(x(t)), \text{ a.e. } t \in [0,T], \\
			\dot{y}(t) \in \tilde{g}(t, x(t), y(t), w(t)), \text{ a.e. } t \in [0,T], 			
		\end{cases} \\
		(x(0),y(0),x(T),y(T)) \in {S}_{\frac{\delta}{2}},
	\end{cases}
\end{equation*}\vspace{-.15 in}
where $(\tilde{f},\tilde{g})=\tilde{h}$ defined in \eqref{tilde-h}, that is, 
$$\tilde{f}(t,x,y,w):=  \sum_{i=0}^{n+l}\lambda_i f(t, x,y,u_i),\; \;\text{and}\; \;
\tilde{g}(t, x,y,w):=  \sum_{i=0}^{n+l}\lambda_i g(t, x,y,u_i).$$
Clearly  $({\tilde{P}})$ is of the form of $(P)$, where  $f(t,x,y,w):= \tilde{f}(t,x,y,w)$, $g(t,x,y,w): = \tilde{g}(t,x,y,w)$,  $S:={S}_{\frac{\delta}{2}}$,  $U(t):=W(t)$, and $J:= \bar{J}$. Furthermore, the  associated $\tilde{h}(t,x,y, w) =(\tilde{f},\tilde{g})(t,x,y,w)$ satisfies that $\tilde{h}(t,x,y,W(t))$  {\it convex} for each $(t,x,y)\in \text{Gr}\,{\bar{\mathscr{N}}}_{(\bar\delta,\bar\delta)}(\cdot)$. Thus, assumptions (A1)-(A5) hold at the strong local minimizer $((\x,\y),\bar{w})$ for $(\tilde{P})$ to which  the already proven $(i)$-$(vii)$ of Theorem \ref{pmp-ps} apply. Doing so, and noticing these facts: 
\begin{itemize}
	\item $\bar{J}=J$ on $ S(\bar{\delta})$, and hence, $\partial_{\ell}^L\bar{J}(\x(0),\y(0),\x(T),\y(T))=\partial_{\ell}^L{J}(\x(0),\y(0),\x(T),\y(T))$,
	\item $\tilde{h}(t, \x(t),\y(t), \bar{w}(t))=h(t,\x(t),\y(t),\bar{u}(t))$,
	\item $\partial^{ (x,y)}_\ell \tilde{f}(t,\x(t),\y(t),\bar{w}(t))\subset \partial^{ (x,y)}_\ell f(t,\x(t),\y(t),\bar{u}(t))$, 
	\item  $\partial^{ (x,y)}_\ell \tilde{g}(t,\x(t),\y(t),\bar{w}(t))\subset \partial^{ (x,y)}_\ell g(t,\x(t),\y(t),\bar{u}(t)),$
	\item $\<\tilde{h}(t,\x(t),\y(t),w), p(t)\>=\<h(t,\x(t),\y(t), u), p(t)\>,$  \; $\forall w=((u,...,u),(1,0,...,0))\in U^{n+l+1}\times \Lambda$,  
	\item $N^L_{{S}_{\frac{\delta}{2}}}(\x(0),\y(0))=N^L_{S}(\x(0),\y(0))$,  
\end{itemize}
 we conclude  that Theorem \ref{pmp-ps} holds for $(P)$ without assumption $(A4.2)$.\\
\\
\textbf{Step 5.} Proof of  the ``In addition'' part of the theorem. \\
When $S= C_0\times \mathbb{R}^{n+l}$, for $C_0\subset C(0)\times \mathbb{R}^l $  closed,  Proposition \ref{maxapp}  yields that  $\lambda =1$. \\
This completes the proof of  the theorem.
\end{proof} 

\section{Example}\label{examplesec}
This example illustrates how one can use Theorem \ref{maxprincipleS}  to find an optimal solution for a numerical problem.  
  
\begin{example}\label{example}
Consider the problem $(P)$ with the following data $($see Figure \ref{Fig1}$)$. 	
\begin{itemize}
	\item The perturbation mappings $f\colon [0,\frac{\pi}{2}] \times \R^3\times\R \times \mathbb{R}\f\R^3$ and $g\colon [0,\frac{\pi}{2}] \times \R^3\times\R \times \mathbb{R}\f\R$  are defined by \begin{eqnarray*}
	&&	f\left(t,(x_1,x_2,x_3),y,u\right)=(x_1-x_2-u+y^2,x_1+x_2+u+y^3,x_3+t-\pi-1);\\
	&& g\left(t,(x_1,x_2,x_3),y,u\right) = x_1^2+x_2^2-16+u+y.
	\end{eqnarray*}
	\item The two functions $\psi_1,\;\psi_2\colon [0,\frac{\pi}{2}]\times \R^3\f\R$ are defined by \begin{eqnarray*}
	\psi_1(t,x_1,x_2,x_3):=x_1^2+x_2^2+\frac{32}{\pi}x_3 + \frac{32}{\pi}t-48, \;\; 	\psi_2(t,x_1,x_2,x_3):=x_1^2+x_2^2-\frac{32}{\pi}x_3-\frac{32}{\pi}t+16,
	\end{eqnarray*} 
	and hence, for each $t \in [0, \frac{\pi}{2}]$, the set $C(t)$ is the {\it nonsmooth}, convex and {\it  bounded} set \begin{eqnarray*}
		C(t)&=&C_1(t)\cap C_2(t) \\
		&:=&\{(x_1,x_2,x_3) : \psi_1(t,x_1,x_2,x_3)\leq 0\}\cap \{(x_1,x_2,x_3) : \psi_2(t,x_1,x_2,x_3)\leq 0\}.
	\end{eqnarray*}
\item The objective function ${J}\colon\R^8\f\R\cup\{\infty\}$  is defined  by $$J(x_1,x_2,x_3,y_1,x_4,x_5,x_6,y_2):=\begin{cases} -x_4^2-x_5^2+16+ \abs{\frac{\pi}{2}-x_6}& \;\;(x_4,x_5,x_6)\in C(\frac{\pi}{2}), \vspace{0.1cm}\\ \infty&\;\;\hbox{Otherwise}. \end{cases}$$
\item \sloppy The control multifunction is the constant $U(t):=[0,1]$ for all $t\in [0,\frac{\pi}{2}]$. 
\item The set $S$ is given by  \begin{eqnarray*}S:=\{(x_1,x_2,x_3,y_1,x_4,x_5,x_6,y_2) \in\R^8: x_1^2+x_2^2=16, x_3=\pi, x_2+x_6^2=\frac{\pi^2}{4}, \\ \frac{x_1^2}{8}+x_4=2, y_1+x_2^2=0\}.\end{eqnarray*}
\end{itemize}	
\begin{figure}[tb]
	\centering
\end{figure}  
 Define, for each $ t\in [0, \frac{\pi}{2}]$, the curve
$$\Gamma(t):=\left\{(x_1,x_2,x_3) : x_1^2+x_2^2=16\;\hbox{and}\;x_3=\pi-t\right\}= \left(\bdry C_1(t)\cap \bdry C_2(t)\right)\subset\bdry C(t).$$ 
Since $S\subset \Gamma(0) \times \mathbb{R} \times \mathbb{R}^3 \times \mathbb{R}$ and  ${J}$ vanishes on $\mathbb{R}^3 \times \mathbb{R} \times \Gamma(\frac{\pi}{2}) \times \mathbb{R}$   and is strictly positive elsewhere in $\mathbb{R}^3 \times \mathbb{R} \times C(\frac{\pi}{2}) \times \mathbb{R}$, we may seek  for $(P)$  a candidate $((\x,\y),\u)$ for  optimality  with  $\x(t):=(\x_1(t),\x_2(t),\x_3(t))$ belonging to $\Gamma(t)$ for every $t$, if possible,  and hence we have \begin{equation} \label{ex1}\begin{cases} \x_1^2(t)+\x_2^2(t)=16\;\,\hbox{and}\,\;\x_3(t)=\pi-t\;\forall t\in [0,\tfrac{\pi}{2}]\,\;\hbox{and}\\[1pt]
		\x_1(t)\dot{\x}_1(t)+	\x_2(t)\dot{\x}_2(t)=0 \text{ a.e. }\,\;\hbox{and}\\[1pt]
		 (\x(0)\tran,\y(0)\tran,\x(\tfrac{\pi}{2})\tran,\y(\tfrac{\pi}{2})\tran)\in \{ (4,0,\pi,0,0,4,\frac{\pi}{2},a), (-4,0,\pi,0,0,4,\frac{\pi}{2},b),\\[1pt] \; \;\;\;\;\;\;\;\;\;\;\;\;\;\;\;\; \;(4,0,\pi,0,0,-4,\frac{\pi}{2},c), (-4,0,\pi,0,0,-4,\frac{\pi}{2},d) ; \; a,b,c,d \in \mathbb{R} \}.
\end{cases} \end{equation}
One can readily verify that all assumptions of Theorem \ref{maxprincipleS} are satisfied for any choice of $(\bar{x},\bar{y})$ such that $\bar{x}(t) \in \Gamma(t)$ for all $t$, with \textbf{$(A3.3)$} being satisfied for $\bar{\beta}=(1, 1)$. Applying  Theorem  \ref{maxprincipleS}\footnote{Note that for $(x_1,x_2,x_3)\in\Gamma(t)$ with $-\tfrac{\sqrt{3}}{2} < x_1< \tfrac{\sqrt{3}}{2}$, we have $\<\nabla\psi_1(x_1,x_2,x_3),\nabla\psi_2(x_1,x_2,x_3)\>=4x_1^2-3<0,$ and hence, the maximum principle of \cite{depinho3} cannot be applied to this sweeping set $C(t)$.}  to such candidate $((\x,\y),\u)$ we obtain the existence of an adjoint vector $p=(q,v)$ where $q:=(q_1,q_2,q_3)\in BV([0,\frac{\pi}{2}];\R^3)$, $v \in W^{1,1}([0,\frac{\pi}{2}];\R)$, two finite signed Radon measures $\nu_1$, $\nu_2$ on $\left[0,\frac{\pi}{2}\right]$, $\xi_1$, $\xi_2\in L^{\infty}([0,\tfrac{\pi}{2}];\R^{+})$, and $\l\geq 0$,  such that when incorporating  equations \eqref{ex1}  into Theorem  \ref{maxprincipleS}$(i)$-$(vii)$, we obtain 
\begin{enumerate}[(a)]
\item  $\|p(\frac{\pi}{2})\|+\l=1.$
\item The admissibility equation holds, that is, for $t\in[0,\tfrac{1}{2}]$ a.e.,  \begin{equation*}\begin{cases}\dot{\bar{x}}_1(t)= \x_1(t)-\x_2(t)-\u(t)+\y^2(t)-2\x_1(t)(\xi_1(t)+\xi_2(t)),\\
		\dot{\bar{x}}_2(t)= \x_1(t)+\x_2(t)+\u(t)+\y^3(t)-2\x_2(t)(\xi_1(t)+\xi_2(t)),\\
		 \dot{\bar{x}}_3(t)= \x_3(t)+t-\pi-1-\frac{32}{\pi}(\xi_1(t)-\xi_2(t)),\\
	\dot{\bar{y}}(t)= \x_1^2(t)+\x_2^2(t)-16+\u(t)+\y(t). 
\end{cases} \end{equation*} 
\item The adjoint equation is satisfied, that is, for  $t\in [0,\frac{\pi}{2}]$, 
\begin{eqnarray*} dq(t)&=&\begin{pmatrix*}\begin{array}{rrr}
			\bp -1\, & -1\;\;\, & 0 \\
			\bp 1\, & -1\;\;\, & 0 \\
			\bp 0\, & 0\;\;\,  & -1 
	\end{array}\end{pmatrix*}q(t)\; dt\; + \begin{pmatrix} 
	-2\x_1(t) \\
-	2\x_2(t)\\
	0
\end{pmatrix} v(t)dt \;\\[2pt]&+& (\xi_1(t)+\xi_2(t))\begin{pmatrix} 
		2\;\; & 0\;\; & 0  \\
		0 \;\; & 2\;\; & 0 \\
		0\;\; & 0 \;\; & 0
	\end{pmatrix}q(t)\; dt\;+\; \begin{pmatrix} 
		2\x_1(t) \\
		2\x_2(t)\\
		\frac{32}{\pi}
	\end{pmatrix} d\nu_1 + \begin{pmatrix} 
		2\x_1(t) \\
		2\x_2(t)\\
		-\frac{32}{\pi}
	\end{pmatrix} d\nu_2, \\
\dot{v}(t)&=&\begin{pmatrix*}\begin{array}{rrr}
		\bp 0\, & 0\;\;\, & 0 \\
\end{array}\end{pmatrix*}q(t)\; dt\; -v(t)
\end{eqnarray*}
\item The complementary slackness condition is valid, that is, for $t\in[0,\tfrac{\pi}{2}]$ a.e.,  $$\begin{cases}\xi_1(t)(2q_1(t)\x_1(t) + 2q_2(t)\x_2(t)+\frac{32}{\pi}q_3(t))=0, \\ \xi_2(t)(2q_1(t)\x_1(t) + 2q_2(t)\x_2(t)-\frac{32}{\pi}q_3(t))=0.\end{cases}$$
\item The transversality condition holds, that is, \begin{eqnarray*}
&	(q(0),v(0),-q(\frac{\pi}{2}), -v(\frac{\pi}{2}))\tran\in  \l\{(0,0,0,0,0,-8,\alpha,0): \a \in [-1,1]\}\\&+\{(8\a_1+\a_4,\a_3,\a_2,\a_5,\a_4,0,\a_3 \pi,0): \a_1, \a_2, \a_3, \a_4, \a_5 \in\R\} \\ &\hbox{if} \;\; (\x(0)\tran,\y(0)\tran,\x(\tfrac{\pi}{2})\tran,\y(\tfrac{\pi}{2})\tran)\in \{(4,0,\pi,0,0,4,\frac{\pi}{2},a): a \in \mathbb{R}\}.
\end{eqnarray*}
Similarly, we work on deriving transversality conditions for each of the four cases in \eqref{ex1}. 
\item $\max\{u\;(-q_1(t)+ q_2(t)+v(t)) : u\in [0,1]\}$ is attained at $\u(t)$ for $t\in[0,\frac{\pi}{2}]$ a.e.
\end{enumerate}
We temporarily assume that \begin{equation}\label{exam5} -q_1(t)+q_2(t)+v(t)< 0,\;\;\forall t\in[0,\tfrac{\pi}{2}]\;\hbox{a.e.}\end{equation} 
This gives from (f) that $\u(t)=0$ for $t\in[0,\frac{\pi}{2}]$ a.e. Now solving the differential equations of (b) and using \eqref{ex1}, we obtain that \begin{equation*} \label{lastformu} \xi_1(t)=\xi_2(t)=\frac{1}{4},\;\,\x(t)\tran=(4 \cos t,4 \sin t,\pi-t) \footnote{Note that another possible choice for $\x(\cdot)$ is $\x(t)\tran=(-4 \cos t,-4 \sin t,\pi-t)$.}, \;\;\hbox{and} \;\;\y(t)=0 \;\;\forall t\in[0,\tfrac{\pi}{2}].\end{equation*}
 Hence, from (d), we deduce that $q_3(t)=0$ for $t\in[0,\frac{\pi}{2}]$ a.e., and \begin{equation}\label{ex4} 
	\cos t\; q_1(t)+\sin t\; q_2(t)=0,\;\,\forall t\in[0,\tfrac{\pi}{2}]\;\textnormal{a.e.}, \end{equation}
and the adjoint equation (c) simplifies to the following
\begin{equation}\label{ex3} \begin{cases}
		\dot{v}(t)=-v(t), \\
		d{q}_1(t)= (-q_1(t)-q_2(t))dt -8 \cos t \;v(t)dt + q_1(t)\; dt + 8 \cos t\; (d\nu_1+d\nu_2),\\
d{q}_2(t)= (q_1(t)-q_2(t))dt -8 \sin t \;v(t)dt + q_2(t)\; dt + 8 \sin t\; (d\nu_1+d\nu_2),\\
	d{{q}}_3(t)=-q_3(t) \;dt+ \frac{32}{\pi}( d\nu_1- d\nu_2).  \end{cases} \end{equation}  
	Since $v(\frac{\pi}{2})=0$ then $v(t)=0 \;\forall t \in [0, \frac{\pi}{2}]$. Using (a), \eqref{ex4}, (e), and \eqref{ex3}, one can get the following
	$$\begin{cases} \l=\tfrac{2\pi}{2\pi + \sqrt{1+(16 \pi)^2}}\;\, \hbox{and}\;\, A =\tfrac{1}{2\pi + \sqrt{1+(16 \pi)^2}},\\[2pt]
		q(t)\tran=\left(A \sin t,-A \cos t,0\right)\;\hbox{on}\;[0,\tfrac{\pi}{2}),\;\;\quad	q(\tfrac{\pi}{2})\tran=(A,16A\pi,0),\\[2pt]
		 d\nu_1=d\nu_2=A\pi\delta_{\big\{\bbp\tfrac{\pi}{2}\bbp\big\}},
		\end{cases}\vspace{0.01cm}
		$$
	where $\delta_{\{a\}}$ denotes the unit measure concentrated on the point $a$. Note that for all $t\in [0,\tfrac{\pi}{2}]$, we have $-q_1(t)+q_3(t)+v(t)< 0 $, and hence, the temporary assumption \eqref{exam5} is satisfied.
	\\
	Therefore, the above analysis, realized via Theorem \ref{maxprincipleS}, produces an admissible pair $((\x,\y),\u)$, where $$\textstyle \x(t)\tran=(4 \cos t,4 \sin t,\pi-t),\;\;\y(t)=0,\;\;\hbox{and}\;\;\u(t)=0,\;\;\forall t\in [0,\tfrac{\pi}{2}],$$
	which is optimal for $(P)$. 
	\begin{figure}[H]
		\centering
		\includegraphics[scale=0.45]{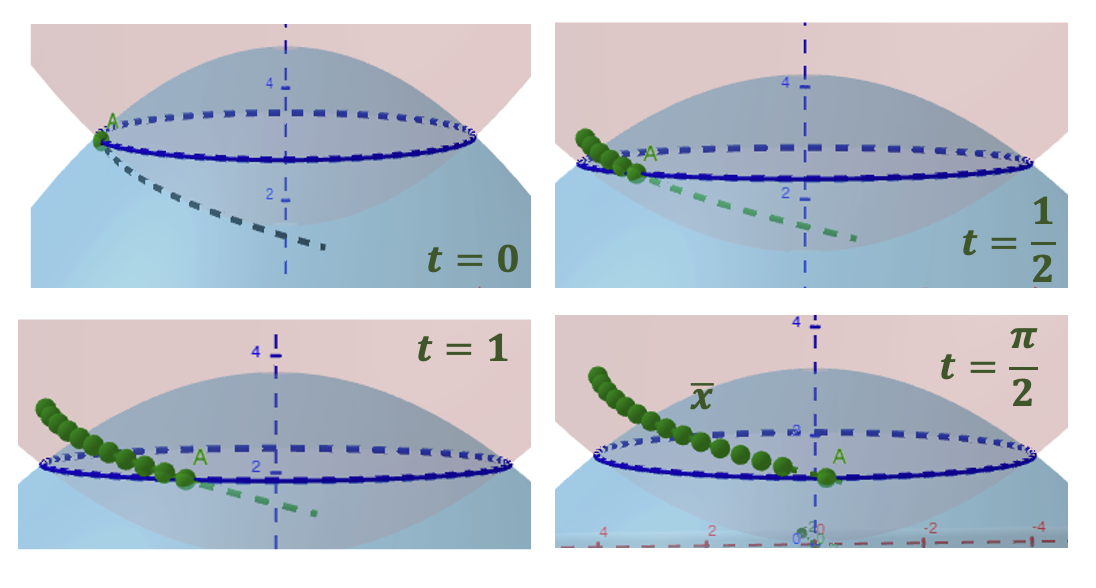}
		\caption{\label{Fig1} The solution $\bar{x}(t)$ (in green) evolving on the set $C(t)=C_1(t)\cap C_2(t)$ over different time instances.}
	\end{figure}
\end{example}
 
\section{Appendix} \label{appendix}
The following result holds for uniform prox-regular sets $S(t)$ not necessarily being  sublevel sets. It is an extension of a {\it special case} of \cite[Lemma 3.2]{nourzeidan} from a {\it constant compact } set $S$  to the case of set-valued maps $S(\cdot)$ with non-compact values. It requires $N_{S(\cdot)}(\cdot)$  to have closed graph. 
\begin{lemma}\label{proxregular}
	Let $S(\cdot): [0,T] \rightsquigarrow \mathbb{R}^n$ be such that, for all $t\in [0,T]$, $S(t)$ is nonempty, closed, and uniformly $\rho^*$-prox-regular, for some $\rho^*>0$. Let
 $\bar{x}(\cdot)\in \mathcal{C}([0,T],\mathbb{R}^n)$ with $\bar{x}(t)\in S(t) $ for all $ t \in [0,T]$.  Assume that for some $\delta \in (0,\rho^*)$,  the map $(t,y) \to N_{S(t)}(y)$  
	has closed graph on the domain   Gr\,$\left(S(\cdot)\cap\bar{B}_{\delta}(\bar{x}(\cdot))\right)$.Then, the following holds. 
	\begin{itemize}
		\item[$(i)$] There exists $ {\rho}_{\delta} >0$ such that for all $t\in[0,T]$ the set $ S(t) \cap \bar{B}_{\delta}(\bar{x}(t))$  is ${\rho}_{\delta}$-prox-regular. 
				\item[$(ii)$] For  $t\in [0,T]$, 
		$\pi(t,\cdot):=\pi_{\left (S(t) \cap \bar{B}_{\delta}(\bar{x}(t)) \right)}(\cdot)$
		is well-defined on $\left (S(t) \cap \bar{B}_{\delta}(\bar{x}(t)) \right) + {\rho}_{\delta}  B$ and $2$-lipschitz on $\left (S(t) \cap \bar{B}_{\delta}(\bar{x}(t)) \right)+ \frac{{\rho}_{\delta}}{2} \bar{B}$. 
	\end{itemize}
\end{lemma}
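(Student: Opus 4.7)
\textbf{Proof proposal for Lemma \ref{proxregular}.} The plan is to prove part $(i)$ by combining a uniform transversality estimate between the normals to $S(t)$ and to $\bar{B}_\delta(\bar{x}(t))$ with the sum rule for normal cones to intersections of prox-regular sets, and then to deduce part $(ii)$ from the standard projection theory on uniformly prox-regular sets.

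For $(i)$, I would first establish the transversality fact: for every $t\in[0,T]$ and every $x\in S(t)\cap\bar{B}_\delta(\bar{x}(t))$,
\[
N_{S(t)}(x)\cap\bigl(-N_{\bar{B}_\delta(\bar{x}(t))}(x)\bigr)=\{0\}.
\]
Indeed, a nonzero $\eta$ in this intersection must (by convexity of the ball) satisfy $x\in\bdry\bar{B}_\delta(\bar{x}(t))$ and $\eta=-\lambda(x-\bar{x}(t))$ with $\lambda>0$; testing the $\rho^{*}$-prox-regularity of $S(t)$ at $x$ with $z:=\bar{x}(t)\in S(t)$ gives $\lambda\delta^2=\langle\eta,\bar{x}(t)-x\rangle\le\frac{\|\eta\|}{2\rho^{*}}\delta^2=\frac{\lambda\delta^3}{2\rho^{*}}$, contradicting $\delta<\rho^{*}$. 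Next, since both $S(t)$ and $\bar{B}_\delta(\bar{x}(t))$ are prox-regular and (by the hypothesis of closed graph for $(t,y)\mapsto N_{S(t)}(y)$) limiting and proximal normals coincide on $S(t)$ in the region of interest, the Mordukhovich--Thibault sum rule under the qualification just proved yields, for $x\in S(t)\cap\bar{B}_\delta(\bar{x}(t))$,
\[
N^{P}_{S(t)\cap\bar{B}_\delta(\bar{x}(t))}(x)\subset N_{S(t)}(x)+N_{\bar{B}_\delta(\bar{x}(t))}(x).
\]
Writing a unit $\xi=\eta_1+\eta_2$ with $\eta_2=\lambda(x-\bar{x}(t))$, $\lambda\ge 0$, the hypomonotonicity of $N_{S(t)}$ applied at $\bar{x}(t)$ gives $\lambda\delta^2+\langle\xi,\bar{x}(t)-x\rangle\le\frac{\|\eta_1\|\delta^2}{2\rho^{*}}$, which combined with $\langle\xi,\bar{x}(t)-x\rangle\ge-\delta$ and $\|\eta_1\|\le 1+\lambda\delta$ produces the $t$-independent bound $\lambda\delta\le K_\delta:=\frac{2\rho^{*}+\delta}{2\rho^{*}-\delta}$ (using $\delta<\rho^{*}$ to divide by the positive quantity $1-\delta/(2\rho^{*})$), hence $\|\eta_1\|+\|\eta_2\|\le 1+2K_\delta=:\gamma$. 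Finally, for any $y\in S(t)\cap\bar{B}_\delta(\bar{x}(t))$, prox-regularity of $S(t)$ gives $\langle\eta_1,y-x\rangle\le\frac{\|\eta_1\|}{2\rho^{*}}\|y-x\|^2$, while convexity of the ball gives $\langle\eta_2,y-x\rangle\le 0$; adding the two yields $\langle\xi,y-x\rangle\le\frac{\gamma}{2\rho^{*}}\|y-x\|^2$, so the choice $\rho_\delta:=\rho^{*}/\gamma$ works, uniformly in $t$.

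Part $(ii)$ is then a direct appeal to the standard theory of uniformly prox-regular sets: any $\rho_\delta$-prox-regular closed set admits a well-defined single-valued metric projection on its $\rho_\delta$-enlargement, and for $r\in(0,\rho_\delta)$ this projection is $\rho_\delta/(\rho_\delta-r)$-Lipschitz on the $r$-enlargement (Poliquin--Rockafellar--Thibault); setting $r:=\rho_\delta/2$ produces the $2$-Lipschitz constant, which, because $\rho_\delta$ is independent of $t$, gives the stated conclusion uniformly in $t$. The main obstacle will be the rigorous justification of the normal cone sum rule step, where the closed-graph hypothesis on $(t,y)\mapsto N_{S(t)}(y)$ is essential both to legitimize the interchange between proximal and limiting normals along the time parameter and to transport the qualification condition uniformly across $t$; everything else reduces to careful bookkeeping with the hypomonotonicity inequalities.
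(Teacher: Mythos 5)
Your argument is correct, but it takes a genuinely different route from the paper. The paper proves $(i)$ by invoking, for each fixed $t$, the proof of a cited result for a constant compact set (Lemma 3.2 of the reference on separable endpoints), which produces a local prox-regularity constant $\rho_t$ at $\bar{x}(t)$; uniformity is then obtained by taking $\hat\rho:=\inf_t\rho_t$ and using the closed-graph hypothesis on $(t,y)\mapsto N_{S(t)}(y)$ together with compactness of $[0,T]$ to show $\hat\rho>0$, yielding the (non-explicit) constant $\rho_\delta=\rho^*\hat\rho/2$; part $(ii)$ is then, as in your proposal, a citation of the Clarke--Stern--Wolenski projection theorem. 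Your proof instead verifies the proximal-normal inequality for the intersection directly: the transversality $N_{S(t)}(x)\cap(-N_{\bar{B}_\delta(\bar{x}(t))}(x))=\{0\}$ follows from testing hypomonotonicity at $z=\bar{x}(t)$ and $\delta<\rho^*$, the limiting-normal intersection rule then decomposes any unit normal of $S(t)\cap\bar{B}_\delta(\bar{x}(t))$, and your hypomonotonicity estimate forces \emph{any} such decomposition to satisfy $\lambda\delta\le K_\delta=\frac{2\rho^*+\delta}{2\rho^*-\delta}$, giving the explicit, $t$-independent constant $\rho_\delta=\rho^*/(1+2K_\delta)$. What your approach buys is an explicit constant depending only on $\rho^*$ and $\delta$, and the observation (which you do not quite claim, but which your argument shows) that the closed-graph hypothesis and even the continuity of $\bar{x}$ are not needed: the sum rule is applied at each fixed $(t,x)$ separately, its qualification is supplied pointwise by prox-regularity of $S(t)$, and the coincidence of proximal, limiting and Clarke normals on $S(t)$ is automatic from $\rho^*$-prox-regularity rather than from the closed-graph assumption, so your closing remark mislocates the role of that hypothesis (in the paper it is used only to make the compactness argument for uniformity go through, a step you bypass entirely). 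One small point worth making explicit if you write this up: you verify the inequality for proximal (indeed limiting) unit normals of the intersection, whereas the paper's definition of prox-regularity is phrased with Clarke normals; the passage from the proximal-normal inequality with a uniform constant to full $\rho_\delta$-prox-regularity (and hence the coincidence of all the cones for the truncated set) is the standard Clarke--Stern--Wolenski/Poliquin--Rockafellar--Thibault equivalence, which is also exactly what justifies your part $(ii)$.
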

\begin{proof}
	$(i)$:  The result is derived  by following the proof of \cite[Lemma 3.2]{nourzeidan}, where for  $t \in [0,T]$, we take $S:=S(t)$ and  $x:=\bar{x}(t)$, and  hence, $\mathcal{N}_x$ and  $\rho_x$ there , are respectively  $\mathcal{N}_{t}:=\mathcal{N}_{\x(t)} $ and  $\rho_t:=\rho_{\x(t)}$.  It follows that   $\rho$ there is now  $\hat\rho:=\inf\{\rho_t: t\in [0,T]\}$.  Following the rest of the proof there, and employing that  the map $(t,y) \to N_{S(t)}(y)$  has  closed  graph on the domain   Gr\,$\left(S(\cdot)\cap\bar{B}_{\delta}(\bar{x}(\cdot))\right)$, we conclude that  $\hat\rho >0$. Thus,  for all $t\in [0,T]$, we deduce that $S(t)\cap \bar{B}_{\delta}(\bar{x}(t))$ is $\rho_{\delta}$-prox regular, where $\rho_{\delta}:=\frac{\rho^*\hat\rho}{2}$.\\
	\noindent$(ii)$: It follows  from $(i)$ and \cite[Theorem 4.8]{prox}. 
\end{proof}

Translating Lemma 6.2 in \cite{nourzeidan3} to our setting gives us the following lemma. 
\begin{lemma}\label{lemmaaux1} Assume that  $\psi_i$ is continuous for all $i=1,\dots,r$. Let $\a_n\ge 0$,  for all $n\in \N$, with   $\a_n\f\a_o$ and  let  $(t_n,c_n) \in $ Gr $C(\cdot)$ be a sequence such that  $\I^{\a_n}_{(t_n,c_n)}\neq\emptyset$, for all $n\in \N$, and $(t_n,c_n)\f (t_o,c_o)$. Then, $\I^{\a_o}_{(t_o,c_o)}\neq\emptyset$ and there exist $ \emptyset\not=\J_o\subset \{1,\dots,r\}$ and a subsequence of  $(\a_n,t_n, c_n)_n$  we do not relabel, such that  
	$$\I^{\a_n}_{(t_n,c_n)}= \J_o\subset \I^{\a_o}_{(t_o,c_o)}\;\,\hbox{for all}\;\,n\in\N.$$
	In particular, for all  $a\ge 0$, for any continuous function $x\colon[0,T]\f 	\mathbb{R}^n$  such that $x(t) \in C(t)$ for all $t \in [0,T]$, we have $I^a(x)$ is closed, and hence compact. 
\end{lemma}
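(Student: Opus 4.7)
The plan is to exploit the finiteness of the power set of $\{1,\dots,r\}$ via a pigeonhole-type argument, and then pass to the limit using the continuity of each $\psi_i$ together with the convergence $\a_n\to\a_o$. The closedness of $I^a(x)$ will follow as a direct application of the main statement to a constant sequence $\a_n\equiv a$.

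First I would observe that $\I^{\a_n}_{(t_n,c_n)}$ is a nonempty subset of the finite set $\{1,\dots,r\}$ for each $n\in\N$, so the sequence $(\I^{\a_n}_{(t_n,c_n)})_n$ takes only finitely many distinct values. By the pigeonhole principle, there exist a nonempty $\J_o\subset\{1,\dots,r\}$ and a subsequence (which I relabel as $n$) along which $\I^{\a_n}_{(t_n,c_n)}=\J_o$ for every $n\in\N$. This handles the equality part of the conclusion.

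Next, I would establish $\J_o\subset\I^{\a_o}_{(t_o,c_o)}$. Fix $i\in\J_o$. By the definition in \eqref{Iax}, for every $n$ we have
\[
-\a_n\le\psi_i(t_n,c_n)\le 0.
\]
The continuity of $\psi_i$ together with $(t_n,c_n)\to(t_o,c_o)$ and $\a_n\to\a_o$ then yields, upon passing to the limit, $-\a_o\le\psi_i(t_o,c_o)\le 0$, i.e., $i\in\I^{\a_o}_{(t_o,c_o)}$. Since $\J_o\neq\emptyset$, this also gives $\I^{\a_o}_{(t_o,c_o)}\neq\emptyset$.

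For the ``In particular'' statement, fix $a\ge 0$ and let $x\colon[0,T]\to\R^n$ be continuous with $x(t)\in C(t)$ for all $t$. To show $I^a(x)$ is closed in $[0,T]$, I would take any sequence $t_n\in I^a(x)$ with $t_n\to t_o\in[0,T]$ and set $c_n:=x(t_n)$, $c_o:=x(t_o)$. The continuity of $x$ together with $x(t)\in C(t)$ gives $(t_n,c_n)\in \Gr C(\cdot)$ and $(t_n,c_n)\to(t_o,c_o)$. Applying the first part with the constant sequence $\a_n\equiv a$ yields $\I^a_{(t_o,c_o)}\neq\emptyset$, hence $t_o\in I^a(x)$. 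Compactness then follows from $I^a(x)\subset[0,T]$. The argument has no serious technical obstacle: it relies only on the finiteness of $\{1,\dots,r\}$ and the continuity of the $\psi_i$, both of which are assumed; the only care required is the correct bookkeeping when passing to the limit in the two-sided inequality defining $\I^{\a_n}_{(\cdot,\cdot)}$.
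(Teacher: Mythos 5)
Your proof is correct; the paper offers no argument of its own for this lemma (it simply defers to Lemma 6.2 of \cite{nourzeidan3}), and your pigeonhole argument on the finitely many nonempty subsets of $\{1,\dots,r\}$, followed by passing to the limit in $-\a_n\le\psi_i(t_n,c_n)\le 0$ via continuity, is exactly the standard proof of that result. The ``in particular'' part, obtained by applying the first part with the constant sequence $\a_n\equiv a$ and $c_n:=x(t_n)$, is likewise correct, so there is nothing to fix.
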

	
This result shall be used in different places of this paper. 
 \begin{lemma}\label{convergence} 
 \textnormal{\bf(i).} 
 Let $(x_n,y_n) \in W^{1,\infty}([0,T];\mathbb{R}^{n+d}) $,  $(\xi_n^1,\cdots, \xi_n^{\mathcal{R}}, \zeta_n) \in L^{\infty}([0,T], \mathbb{R}_{+}^{\mathcal{R}+1})$,  \  be such that, for some positive constants $M_1, M_2, M_3$  we have, $\forall  n\in\mathbb{N}$ and  $\forall i \in \{ 1, \cdots, \mathcal{R}\},$   
 \begin{equation}\label{boundxyxi-n} 		 
 \|(x_{n}, y_n)\|_{\infty}\le M_1,\;\; \|(\dot{x}_{n}, \dot{y}_n)\|_{\infty}  \le M_2,\; \;
   \|(\xi^i_n, \zeta_n) \|_{\infty} \le M_3.  
 	\end{equation} 
	Then, there exist  $(x,y)\in W^{1,\infty}([0,T];\mathbb{R}^{n+d})$ and   $( \xi^1, \cdots, \xi^{\mathcal{R}}, \zeta)\in L^{\infty}([0,T]; \mathbb{R}^{\mathcal{R}+1}_{+})$ such that  $(x_n, y_n)$, and $ ( \xi_n^1, \cdots, \xi_n^{\mathcal{R}},\zeta_n)$  admit a subsequence  $($not relabeled$)$ satisfying  $\forall i \in \{ 1, \cdots, \mathcal{R}\},$ \begin{equation}\label{boundxyxi} \begin{cases}
 		(x_n, y_n) \xrightarrow[]{unif} (x, y),  \quad  (\dot{x}_n, \dot{y}_n) \xrightarrow[in \; L^{\infty}]{w*} (\dot{x}, \dot{y}), \;\; 
	(\xi^i_n,  \zeta_n) \xrightarrow[in \; L^{\infty}]{w*} ( \xi^i, \zeta),  \\
 		 \|(x, y)\|_{\infty}\le M_1,\;\; \|(\dot{x}, \dot{y})\|_{\infty}  \le M_2, \;\;
 		  \|(\xi^i, \zeta)\|_{\infty} \le M_3. \end{cases} 
 \end{equation}	
 \textnormal{\bf(ii).}  	 
For  given  $q_i: [0,T] \times \mathbb{R}^n \longrightarrow \mathbb{R}$, let       $Q(t): =\cap_{i=1}^{\mathcal{R}} \{ x \in \mathbb{R}^n: q_i(t,x) \le 0\}$  
 and  $(\bar{x}, \bar{y})\in \mathcal{C}([0,T];\R^n \times \R^l)$ be such that  $\bar{x}(t) \in Q(t) \; \forall t\in [0,T]$.  Assume $(A2)$ is satisfied by $C(t):=Q(t)$, and, for some $\bar{\delta}\, >0$,   $(A3.1)$ and {$(A4.1)$} hold at $((\bar{x},\bar{y}); \bar{\delta})$  respectively by  $\psi_i:=q_i$ and  $h=(f,g): [0,T] \times \mathbb{R}^n \times \mathbb{R}^l \times \mathbb{R}^m  \longrightarrow \mathbb{R}^n\times \R^l.$  Let $(x_n,y_n)$ and $(\xi_n,\cdots,\xi_n^{\r}, \zeta_n) $ be such that  $(x_n(t),y_n(t))\in [Q(t)\cap \bar{B}_{\bar{\delta}}(\bar{x}(t))]\times \bar{B}_{\bar{\delta}}(\bar{y}(t)) \, (\forall t\in[0,T])$ and    \eqref{boundxyxi-n}  is satisfied, and let $(x,y,\xi^i,\zeta)$ be their corresponding limits via \eqref{boundxyxi}.  Consider $u_n\in \mathcal{U}$ such that,  
 for all $n\in \N$, $((x_n,y_n),u_n)$, and $(\xi_n^1,\cdots,\xi_n^{\r}, \zeta_n)$ satisfy
 \begin{eqnarray} 
		\begin{cases}
			\dot{x}(t) = f(t,x(t), y(t),u(t)) - \sum_{i=1}^{\mathcal{R}}\xi^i(t) \nabla_x q_i(t,x(t)) \text{ a.e. } t \in [0,T],\\
			 \dot{y}(t) = g(t, x(t), y(t), u(t)) -\zeta(t) \nabla_y \varphi(t,y(t)) \text{ a.e. } t \in [0,T],
		\end{cases}  \label{equiv} \end{eqnarray}	
where $\varphi$ is given by \eqref{phi}.Then, in either of the following cases,  there exists $u\in \mathcal{U}$ such that $((x,y),u)$, and $( \xi^1, \cdots, \xi^{\mathcal{R}}, \zeta)$ also satisfy   system \eqref{equiv}.  \\
 \textbf{Case 1.} If there exists a subsequence of $u_n$ that converges pointwise a.e. to some $u \in \mathcal{U}$.  \\
 \textbf{Case 2.} If {$(A1)$ and}  $(A4.2)$  are satisfied.
 \end{lemma}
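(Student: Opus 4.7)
\textbf{Proof sketch of Lemma \ref{convergence}.}

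For part (i), the uniform bounds in \eqref{boundxyxi-n} together with the fundamental theorem of calculus show that $(x_n,y_n)_n$ is bounded and equi-Lipschitz, so by Arzel\`a--Ascoli a subsequence (not relabeled) converges uniformly to some $(x,y) \in \mathcal{C}([0,T]; \mathbb{R}^{n+l})$. The sequence $(\dot{x}_n,\dot{y}_n)_n$ is bounded in $L^\infty([0,T]; \mathbb{R}^{n+l}) \cong (L^1)^*$, so by Banach--Alaoglu a further subsequence converges weakly-$*$ to some $w \in L^\infty$; passing to the limit in $x_n(t) = x_n(0) + \int_0^t \dot{x}_n(s)\,ds$ (by testing against $\chi_{[0,t]} \in L^1$) identifies $w = (\dot{x},\dot{y})$, so $(x,y) \in W^{1,\infty}$. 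The same argument yields a weakly-$*$ convergent subsequence of $(\xi_n^1,\ldots,\xi_n^{\mathcal{R}}, \zeta_n)$ with limit $(\xi^1,\ldots,\xi^{\mathcal{R}},\zeta) \in L^{\infty}([0,T]; \mathbb{R}^{\mathcal{R}+1}_+)$, with nonnegativity preserved in the limit. Lower semicontinuity of $\|\cdot\|_\infty$ under weak-$*$ limits (and direct inspection for the uniform limit) transfers the bounds $M_1, M_2, M_3$ to the limits.

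For part (ii), Case 1 is essentially direct. By continuity of $h=(f,g)$ in $(x,y,u)$ from (A4.1) and uniform convergence of $(x_n, y_n)$, we have $h(t, x_n(t), y_n(t), u_n(t)) \to h(t, x(t), y(t), u(t))$ pointwise a.e.; the uniform bound $M_h$ plus dominated convergence upgrades this to $L^1$ convergence. For the multiplier terms, $\nabla_x q_i(t, x_n(t)) \to \nabla_x q_i(t, x(t))$ uniformly in $t$ by continuity of $\nabla_x q_i$ on the compact set $\Gr(Q(\cdot) \cap \bar{B}_{\bar\delta}(\bar{x}(\cdot)))$ together with uniform convergence of $x_n$, so the product $\xi_n^i(t)\, \nabla_x q_i(t, x_n(t))$ converges weakly-$*$ in $L^\infty$ to $\xi^i(t)\, \nabla_x q_i(t, x(t))$; analogously for $\zeta_n\, \nabla_y \varphi(t, y_n)$. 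Integrating \eqref{equiv} on $[0,t]$, passing to the limit, and differentiating recovers \eqref{equiv} for $((x,y),u)$ and the limiting multipliers.

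For Case 2, set $H_n(t) := h(t, x_n(t), y_n(t), u_n(t))$, which is bounded in $L^\infty$. Arguing as in Case 1 (without using any convergence of $u_n$), the same identifications of weak-$*$ limits show $H_n \rightharpoonup^* H$, where
\[
H(t) := \Bigl(\dot{x}(t) + \sum_{i=1}^{\mathcal{R}} \xi^i(t) \nabla_x q_i(t,x(t)),\; \dot{y}(t) + \zeta(t) \nabla_y \varphi(t,y(t))\Bigr).
\]
Under (A1), $U(t)$ is compact, so $\Theta_n(t) := h(t, x_n(t), y_n(t), U(t))$ is compact; under (A4.2), the limit set $\Theta(t) := h(t, x(t), y(t), U(t))$ is compact and convex. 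A standard closure theorem for differential inclusions (via Mazur's lemma: the weak-$*$ limit lies in $\bigcap_{k} \overline{\mathrm{conv}}\, \bigcup_{n \ge k} \Theta_n(t)$, which by upper semicontinuity of $(x,y)\mapsto h(t,x,y,U(t))$ and the convexity of $\Theta(t)$ reduces to $\Theta(t)$) yields $H(t) \in \Theta(t)$ for a.e.\ $t$. Filippov's measurable selection theorem, applied to the Carath\'eodory map $h(t,x(t),y(t),\cdot)$ and the measurable set-valued map $U(\cdot)$, produces $u \in \mathcal{U}$ with $H(t) = h(t, x(t), y(t), u(t))$ a.e., which is the required control.

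The only genuinely delicate point is the Case 2 step certifying that the weak-$*$ limit $H(t)$ lies in the \emph{pointwise} set $\Theta(t)$; this is where both hypotheses of Case 2 are essential (compactness from (A1) to give closed $\Theta_n(t)$ and convexity from (A4.2) to identify $\overline{\mathrm{conv}}\,\Theta(t)$ with $\Theta(t)$), and where the Filippov selection must be handled carefully so that the resulting $u$ is indeed measurable with values in $U(t)$.
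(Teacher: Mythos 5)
Your proof is correct, and parts (i) and (ii)--Case 1 follow essentially the paper's own route (Arzel\`a--Ascoli plus Banach--Alaoglu for (i); pointwise convergence of $h(t,x_n,y_n,u_n)$ with dominated convergence for the drift and ``weak-$*$ times uniformly convergent'' for the multiplier terms in Case 1 — the paper does the same, only integrating over small intervals $[t,t+\tau]$ at Lebesgue points rather than over $[0,t]$). Where you genuinely diverge is Case 2: the paper never forms the weak-$*$ limit of $H_n(t):=h(t,x_n(t),y_n(t),u_n(t))$ and invokes a Mazur-type closure theorem; instead it uses Filippov at the outset to rewrite \eqref{equiv} as the family of support-function inequalities $\langle z,(\dot x_n,\dot y_n)\rangle\le\sigma(z,S_n(s))-\langle z,(\text{multiplier terms})\rangle$, shows via (A1), (A4.1), (A4.2) and the Salinetti--Wets equivalence \eqref{limitsetconvergence} that $S_n(s)$ Hausdorff-converges to $S(s)$ so that $\sigma(z,S_n(s))\to\sigma(z,S(s))$ with a uniform bound $2\|z\|M_h$, passes to the limit in the integrated inequalities at Lebesgue points, and then applies Filippov again. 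These are dual formulations of the same convexity step — your Mazur argument needs the (standard) observation that weak-$*$ convergence of the uniformly bounded $H_n$ gives weak $L^2$ convergence so convex combinations converge a.e., plus the fact that $\Theta_n(t)\subset\Theta(t)+L_h(t)\|(x_n,y_n)-(x,y)\|_\infty\bar B$ from (A4.1) — whereas the paper's support-function route avoids Mazur entirely at the price of quoting the Hausdorff-convergence characterization and handling Lebesgue points by hand. Both arguments use (A1) for compactness, (A4.2) for convexity, and a final Filippov selection, so the approaches are of comparable strength; yours is marginally more self-contained on the set-convergence side, the paper's is marginally lighter on functional-analytic machinery.
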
 
 	\begin{proof}
	{\textnormal{\bf(i)}}: By \eqref{boundxyxi-n}, the sequence $(x_n,y_n)_n$ is equicontinuous and uniformly bounded. Hence, using Arzela-Ascoli theorem  and that $(\dot{x}_n,\dot{y}_n)$ is uniformly bounded  in $L^{\infty}$,  it follows that there exists $(x,y)\in W^{1,\infty}([0,T];\mathbb{R}^{n+d})$ such that along a subsequence (we do not relabel) of $(x_n, y_n)$, we have $(x_n,y_n) \xrightarrow[]{unif} (x,y)$, $(\dot{x}_n, \dot{y}_n) \xrightarrow[in \; L^{\infty}]{w*} (\dot{x}, \dot{y})$, 	with $(x,y)$ and $(\dot{x},\dot{y})$ satisfy the bounds in \eqref{boundxyxi}.
As  $\|(\xi^i_n, \zeta_n) \|_{\infty} \le M_3	$ for all $i=1,\cdots,\mathcal{R}$ and for all $n\in\N$,   some subsequences of $ ( \xi_n^1, \cdots, \xi_n^{\mathcal{R}}, \zeta_n)$  converge in the weak$^*$-topology to  some $ ( \xi^1, \cdots, \xi^{\mathcal{R}},\zeta) \in L^{\infty}$   which satisfy the required bound in \eqref{boundxyxi}.\\
\\
\textbf{ \textnormal{\bf(ii)} Case 1:}  	Let $ t \in [0,T)$ lebesgue point of $\dot{x}(\cdot), \dot{y}(\cdot), f(\cdot, x(\cdot), y(\cdot), u(\cdot)), G(\cdot, x(\cdot), y(\cdot), u(\cdot))$,  $\xi^i(\cdot)$ for all $i=1,\cdots, \r$ and $\zeta$,  and let $\tau \in (0,T-t)$. Then, \eqref{equiv} implies 
	\begin{eqnarray} \label{xkeq}\begin{cases}
\frac{x_{n}(t+\tau) - x_{n}(t)}{\tau} = \frac{1}{\tau} \int_{t}^{t +\tau} \left[f(s,x_{n}(s),y_{n}(s), u_{n}(s)) - \sum_{i=1}^{\mathcal{R}}\xi_{n}^i(s) \nabla_x q_i(s,x_{n}(s))\right] ds,   \\ 
 		\frac{y_{n}(t+\tau) - y_{n}(t)}{\tau} = \frac{1}{\tau} \int_{t}^{t +\tau} \left[ g(s,x_{n}(s), y_{n}(s), u_{n}(s)) - \zeta_n(s) \nabla_y \varphi(s, y_n(s)) \right]  ds.    \end{cases}
 	\end{eqnarray}
 	Using Dominated Convergence Theorem, and taking the limit as $n\rightarrow \infty$ of (\ref{xkeq}), we deduce that 
 	\begin{eqnarray}\label{xeq} \begin{cases}
 		\frac{x(t+\tau) - x(t)}{\tau} = \frac{1}{\tau} \int_{t}^{t +\tau} [f(s,x(s),y(s), u(s)) - \sum_{i=1}^{\mathcal{R}}\xi^i(s) \nabla_x q_i(s,x(s))] ds, \\
 		\frac{y(t+\tau) - y(t)}{\tau} = \frac{1}{\tau} \int_{t}^{t +\tau} \left[ g(s,x(s), y(s), u(s)) - \zeta(s) \nabla_y \varphi(s, y(s)) \right] ds. \label{yeq} \end{cases}
 	\end{eqnarray}
 	Now, let $\tau \to 0 $ in $(\ref{xeq})$, we get that \eqref{equiv} is satisfied for every $t$ lebesgue point, hence it hold for a.e. $t \in [0,T]$. \\
	\textbf{ \textnormal{\bf(ii)} Case 2:} For $s\in [0,T]$ a.e., define  in $\mathbb{R}^n \times \mathbb{R}^l$ the sets $S_n(s):= h(s,x_{n}(s), y_{n}(s), U(s))$ and $S(s):=  h(s, x(s), y(s), U(s))$.  Using  (A1), the continuity of $h(s,\cdot,\cdot,\cdot)$ in (A4.1),  and (A4.2), it follows that $S_n(s)$ and $S(s)$ are nonempty closed convex sets and $S_n(s)$ {\it Hausdorff}-converges to $S(s)$. 	
Hence, Filippov Selection Theorem   yields  that    $((x_n,y_n),u_n)$ and $(\xi^i_n,\zeta_n)$  satisfying  \eqref{equiv} is equivalent to, 	$\forall z \in \mathbb{R}^{n+d} $ and  $s\in[0,T]$  a.e.,
	
		 \begin{eqnarray}		\langle z,(\dot{x}_{n}(s), \dot{y}_{n}(s))  \rangle \le &&\sigma(z, h(s,x_{n}(s),y_{n}(s), U(s)))\nonumber\\ &&- \langle z, (\sum_{i=1}^{\mathcal{R}} \xi_n^i(s) \nabla_x q_i (t,x_n(s)), \zeta_n(s) \nabla_y \varphi(s,y_n(s)) ) \rangle.   \label{ineqk}	\end{eqnarray}
Furthermore,  by $\eqref{limitsetconvergence}$ and the positive homogeneity of $\sigma(\cdot, S_n)$ and $\sigma(\cdot,S),$  we deduce that
 		\begin{equation*}
 			\sigma(z, h(s,x_{n}(s),y_{n}(s), U(s))) \xrightarrow[k \to \infty]{} \sigma(z,h(s,x(s),y(s), U(s))),\, \forall z \in \mathbb{R}^{n+d} \, \mbox{and }\;  s\in [0,T]\; \mbox{a.e.}, 		\end{equation*}
 and  the bound of $h$ in (A4.1) gives  that, for $z\in \R^{n+d}$,
 		\begin{eqnarray*}
 			|\sigma\left(z, h(s,x_{n}(s), y_{n}(s), U(s))\right)| 
 			&\le& 2\|z\| M_h.
 		\end{eqnarray*} 
 Thus, for $ t \in [0,T)$  a lebesgue point of $\xi^{i}(\cdot), \zeta(\cdot), \dot{x}(\cdot), \dot{y}(\cdot), \sigma(z,h(\cdot,x(\cdot),y(\cdot), U(\cdot)))$, and for $\tau \in (0,T-t)$, when integrating $(\ref{ineqk})$ on $[t,t+\tau]$ and then taking the limit as $n \to \infty$,  the Dominated Convergence Theorem yields that, $\forall z=(z_1,z_2) \in \mathbb{R}^n \times \mathbb{R}^l$,  
 \begin{eqnarray*}
 	& &\hspace{-.6 in}\int_{t}^{t+\tau} \langle z, (\dot{x}(s), \dot{y}(s)) \rangle  ds 
	\nonumber\\ 	&\le& \int_{t}^{t+\tau} [ \sigma (z, h(s, x(s), y(s),  U(s))) -\langle z, (\sum_{i=1}^{\mathcal{R}} \xi^i(s) \nabla_x q_i (s,x(s)), \zeta(s) \nabla_y \varphi(s,y(s)) ) \rangle  ] ds. \label{ineq}
 \end{eqnarray*}	 
 Dividing the last equation by $\tau$ and taking the limit when $\tau \to 0$, we get that, $\forall z \in \mathbb{R}^n \times \mathbb{R}^l,$ and for $t$ lebesgue point,	
 \vspace{-0.3cm} 	
 \begin{eqnarray}
 		\langle z,(\dot{x}(t), \dot{y}(t)) \rangle \le \sigma(z, h(t,x(t),y(t), U(t))) -  \langle z, (\sum_{i=1}^{\mathcal{R}} \xi^i(t) \nabla_x q_i (t,x(t)), \zeta(t) \nabla_y \varphi(t,y(t)) ) \rangle, \nonumber
		\end{eqnarray}		
   and hence this inequality is valid for  $ t \in [0,T]$ a.e. Therefore, by means of  Filipov Selection Theorem,  there exists $u\in \mathcal{U}$, such that $((x,y),u)$, and $( \xi^1, \cdots, \xi^{\mathcal{R}}, \zeta)$ satisfy   system \eqref{equiv}. 	
 		\end{proof}
\begin{remark}\label{global-lemma3}  When $\bar\delta =\infty$,  Lemma \ref{convergence} remains valid with  $(\bar{x},\bar{y})$  and the assumptions  involving them  are now superfluous. In this case, recall that $(A3.1)$,$(A4.1)$, and $(A4.2)$  are replaced by  $(A3.1)_G$,  $(A4.1)_G$, and $(A4.2)_G$, respectively.
 \end{remark}
 
 \subsection*{Acknowledgment}
 The authors of this paper would like to thank the anonymous reviewer(s)  for their suggestions and comments that helped improving the presentation of the paper, and the associate editor for the positive feedback.

 \section*{Statements and Declarations}
 \subsection*{Funding}
 
 \noindent The research of Vera Zeidan is partially supported by SIMONS FOUNDATION under grant MP-TSM-00002506.
 \subsection*{Competing Interests}
 \noindent The authors have no relevant financial or non-financial interests to disclose.
 \subsection*{Authors Contributions}
 \noindent Both authors contributed to the study conception and proofs of the results. Material preparation,  and analysis were performed by Samara Chamoun and Vera Zeidan. The first draft of the manuscript was written by Samara Chamoun  and   Vera Zeidan edited,  revised, and commented on previous versions of the manuscript. Both authors read and approved the final manuscript.

\end{document}